\newtheorem{thm}{Theorem}[section]
\newtheorem{proposition}[thm]{Proposition}
\newtheorem{lemma}[thm]{Lemma}
\newtheorem{remark}[thm]{Remark}
\newcommand{\eps}{\varepsilon}
\newcommand{\R}{\mathbb{R}}
\newcommand{\me}{\mathrm{e}}
\newcommand\SJ[1]{{\color{black}{#1}}} 
\newcommand\SJJ[1]{{\color{black}{#1}}} 
\title{Geometric blow-up of a dynamic Turing instability in the Swift-Hohenberg equation}
\author{S.~Jelbart, F.~Hummel$^\ast$ \& C.~Kuehn}
\author{S.~Jelbart\thanks{Department of Mathematics, Technical University of Munich, 85748 Garching b.~M\"unchen, Germany.} \footnote{Corresponding author. Email: jelbart@ma.tum.de}, F.~Hummel$^\ast$ \& C.~Kuehn$^\ast$}
\date{\today}
\begin{document}
	
	\maketitle
	
	
	\begin{abstract}
		We present a rigorous analysis of the slow passage through a Turing bifurcation in the Swift-Hohenberg equation using a novel approach based on geometric blow-up. We show that the formally derived multiple scales ansatz which is known from classical modulation theory can be adapted for use in the fast-slow setting, by reformulating it as a blow-up transformation. This leads to dynamically simpler modulation equations posed in the blown-up space, via a formal procedure which directly extends the established approach to the time-dependent setting. The modulation equations take the form of non-autonomous Ginzburg-Landau equations, which can be analysed within the blow-up. The asymptotics of solutions in weighted Sobolev spaces are given in two different cases: (i) A symmetric case featuring a delayed loss of stability, and (ii) A second case in which the symmetry is broken by a source term. In order to characterise the dynamics of the Swift-Hohenberg equation itself we derive rigorous estimates on the error of the dynamic modulation approximation. These estimates are obtained by bounding weak solutions to an evolution equation for the error which is also posed in the blown-up space. Using the error estimates obtained, we are able to infer the asymptotics of a large class of solutions to the dynamic Swift-Hohenberg equation. We provide rigorous asymptotics for solutions in both cases (i) and (ii). We also prove the existence of the delayed loss of stability in the symmetric case (i), and provide a lower bound for the delay time.
	\end{abstract}
	
	\noindent {\small \textbf{Keywords:} Swift-Hohenberg equation, geometric blow-up, modulation theory, amplitude equation, singular perturbation theory, Ginzburg-Landau equation, delayed stability loss.}
	
	\noindent {\small \textbf{MSC2020:} 35B25, 35B32, 35B36, 37L99, 37G10}

	\section{Introduction}
	\label{sec:Introduction}

	Following the first study of spatially induced instabilities in reaction-diffusion systems \cite{Turing1952}, so-called \textit{Turing instabilities} have been identified in a wide variety of biological and bio-chemical models \cite{Murray1989}, hydrodynamical systems \cite{Chossat2012,Manneville1995,Straughan2013}, problems in optics \cite{Lega1994,Lega1995,Longhi1996}, population dynamics \cite{Genieys2006,Volpert2014} and convection \cite{Andreev2012,Lappa2009,Straughan2008,Straughan2013}. Turing instabilities are characterised by the destabilisation of a spatially homogeneous steady-state in response to spatial (but not temporal) perturbations under parameter variation; see e.g.~\cite[Ch.~9]{Schneider2017}. Many authors have shown using \textit{modulation theory} that the corresponding bifurcation is associated with the emergence of spatially inhomogeneous steady states or ``patterns", which can be approximated close to the onset of instability by multiple-scale, quasi-periodic functions of the form
	\begin{equation}
		\label{eq:modulation_ansatz}
		u(x,t) \approx A(\delta x, \delta^2 t) \me^{i k_c x} + c.c. + \ldots ,
	\end{equation}
	where $A(\delta x, \delta^2 t) \in \mathbb C$, the small parameter $0 < \delta \ll 1$ is related to the bifurcation parameter, and $k_c \in \R$ is the so-called \textit{critical mode} corresponding to the onset of linear instability \cite{diPrima1971,Schneider2017}. It is known from seminal works, e.g.~\cite{diPrima1971,Newell1969,Segel1969}, that the modulation equation which governs the leading order approximation near a Turing instability takes the \textit{universal} form
	\begin{equation}
		\label{eq:GL_eqn_intro}
		\partial_{\bar t} A = \alpha_1 \partial_{\bar x}^2 A + \alpha_2 A + \alpha_3 A |A|^2 ,
	\end{equation}
	where $\bar x = \delta x$, $\bar t = \delta^2 t$ and the coefficients $\alpha_1, \alpha_2, \alpha_2 \in \mathbb C$. Equation \eqref{eq:GL_eqn_intro} is called the \textit{Ginzburg-Landau (GL)} equation, and was originally derived as a model for superconductivity in \cite{Ginzburg2002}. It is universal in the sense that it appears as a modulation equation governing the emergence of patterned states in a wide variety of applications which differ only through the values taken by the coefficients $\alpha_1, \alpha_2, \alpha_3$. We refer to \cite{Aranson2002,Saarloos1994,Schneider2017} for detailed reviews. 
	
	
	Rigorous justifications in the form of quantitative measures for the validity the GL approximation did not appear until the early 1990s. The first rigorous approximation results concerned the validity of the GL reduction near the Turing instability in the \textit{Swift-Hohenberg (SH) equation}
	\begin{equation}
		\label{eq:sh_intro}
		\partial_t u = - (1 + \partial_x^2)^2 u + v u - u^3 ,
	\end{equation}
	which were derived in \cite{Collet1990} and shortly thereafter by a different approach in \cite{Kirrmann1992}. In the special case of ``essential solutions", i.e.~solutions which exist globally both forward and backward in time, the GL reduction has been shown to arise as an exact reduction via Lyapunov-Schmidt reduction, c.f.~\cite{Mielke1992,Mielke1998}. For larger solution spaces of interest and in applications posed on large or unbounded spatial domains, however, exact reduction methods based on center manifold theory or Lyapunov-Schmidt reductions are typically not useful (in the case of large domains), or even impossible (in the case of unbounded domains). Pattern forming systems arise naturally as problems posed on large domains because the patterns emerging near a Turing instability have a spatial frequency which is small by comparison to the size of the domain. Modulation theory has emerged as an alternative to approaches based on center manifold theory or Lyapunov-Schmidt reduction, which applies for problems posed on large or unbounded domains. In this more general setting, attractivity results were given originally given in \cite{Eckhaus1993,Schneider1995b}, paving the way for subsequent results including upper-semicontinuity for the attractor \cite{Mielke1995}, shadowing properties and global existence of solutions \cite{Schneider1994b,Schneider1999}. In addition to these efforts, existence, stability and bifurcations of special solutions to the SH equation including spatially periodic and modulating fronts have been studied extensively by many authors; see \cite{Collet1990b,Eckmann2002,Eckmann1997,Mielke1997b,Schneider1996b,Schneider2017} for early works in this direction. 
	
	\
	
	In this work we are interested in the dynamic or \textit{fast-slow} extension of the SH equation \eqref{eq:sh_intro}. Specifically, we consider the SH equation with a slow parameter drift and a spatially inhomogeneous perturbation:
	\begin{equation}
		\label{eq:sh_dynamic}
		\begin{split}
			\partial_t u &= - (1 + \partial_x^2)^2 u + v u - u^3 + \eps \mu(x) , \\
			\partial_t v &= \eps , \\ 
		\end{split}
	\end{equation}
	where $u = u(x,t) \in \mathbb R$, $v = v(t) \in \mathbb R$, $x \in \mathbb R$, $t \geq 0$, $0 < \eps \ll 1$ is a (singular) perturbation parameter, and $\mu(x) \in \mathbb R$ is a source term which we assume to be $2\pi$-periodic of the form
	\begin{equation}
		\label{eq:mu}
		\mu(x) := \sum_{m \in I_N} \nu_m \me^{imx} .
	\end{equation}
	The coefficients $\nu_m \in \mathbb C$ are assumed to satisfy a reality condition $\nu_{-m} = \overline{\nu_m}$, and the index set is $I_N := \{-N, \ldots, N\} \subset \mathbb N$. 
	For $\eps = 0$, equation \eqref{eq:sh_dynamic} reduces to the classical SH equation \eqref{eq:sh_intro}, which is invariant under the symmetries $u \mapsto -u$ and $x \mapsto x + \alpha$, $\alpha \in \R$. The source term $\mu(x)$ is included in order to better understand the dynamical implications of these symmetries. There are three distinct cases to consider when $\eps > 0$: (i) the symmetric case $\mu(x) \equiv 0$, (ii) the case $\mu(x) = \nu_0$, which breaks the reflection symmetry $u \mapsto -u$ but not the translation symmetry $x \mapsto x + \alpha$, and (iii) the case that $\mu(x)$ is given by \eqref{eq:mu} with $\nu_m \neq 0$ for some $m \in \{1, \ldots, N\}$ and $\alpha \notin 2 \pi \mathbb Z$, in which case both the reflection and translation symmetries are broken.
	
	Our primary aim is to understand the evolution of solutions with initial value $v(0) < 0$ as they evolve forward in time through a neighbourhood of $v = 0$, where the (static) Turing instability occurs. This scenario may also be referred to as the \textit{slow passage through a Turing bifurcation}. A recent numerical analysis of an advection-reaction-diffusion equation in \cite{Chen2015} indicated that the slow passage through a Turing bifurcation may be associated with a uniquely dynamical phenomenon known as \textit{delayed loss of stability}, in which solutions remain close to a homogeneous and linearly unstable steady state of the limiting system for a substantial time before rapidly transitioning to a patterned state. There exists a large body of research on delayed stability loss phenomena in finite-dimensional fast-slow systems characterised by either (i) oscillatory instabilities (e.g.~Andronov-Hopf bifurcations) in the limiting system, or (ii) the presence of so-called \textit{canards}, i.e.~solutions contained within the intersection of stable and unstable Fenichel slow manifolds; for a sample of seminal works and overviews we refer to \cite{Baesens1991,Benoit1991,Dumortier1996,Fruchard2009,Hayes2016,Krupa2001a,Krupa2001b,Kuehn2015,Neishtadt1987,Neishtadt1988,Szmolyan2001,Wechselberger2012,Wechselberger2019}. The first rigorous results on the slow passage through a Turing instability were recently derived in \cite{Avitabile2020}. In this work the authors showed that under suitable assumptions, most notably the existence of a spectral gap, the local mechanism for delayed stability loss in the slow passage through an $O(2)$ symmetric Turing bifurcation is related to the existence of canard solutions in a finite-dimensional fast-slow ODE obtained via a rigorous local center manifold reduction using the center manifold theory of \cite{Haragus2010,Vanderbauwhede1992}. Following the center manifold reduction, the existence of canards implies a delay mechanism which is described by known results for finite-dimensional fast-slow systems from \cite{Krupa2001c}. These results were applied and numerically validated for a Schnakenberg model appearing in \cite{Brena2014}. 
	
	The results in \cite{Avitabile2020} provide rigorous results on the slow passage through a Turing bifurcation. Different mechanisms for delayed stability loss in infinite-dimensional fast-slow systems have been studied by numerous authors. A delayed Hopf-type instability in a FitzHugh-Nagumo type neural model with diffusion has been studied in \cite{Su1994}. In \cite{Nefedov2003} the authors proved rigorously that delayed stability loss occurs in a class of reaction-diffusion equations. Further results for reaction-diffusion equations later appeared in \cite{Maesschalck2009}, including exact estimates for the delay time associated with the delayed stability loss for the systems considered in \cite{Nefedov2003}. 
	More recently, delayed Hopf-type instabilities similar to those observed in \cite{Su1994} have been analysed using the WKB formalism in \cite{Bilinsky2018}, and formal approximations for the so-called ``space-time buffer curve" along which the delayed transition occurs have been derived and numerically validated in \cite{Goh2022,Kaper2018,Kaper2021}. See \cite{Goh2022} in particular for detailed formal and numerical study of a the slow passage through a Hopf bifurcation in a fast-slow extension of the complex GL equation which is similar in form to the fast-slow extension of the SH equation \eqref{eq:sh_intro} in \eqref{eq:sh_dynamic} (i.e.~the GL equation in \cite{Goh2022} is obtained from \eqref{eq:GL_eqn_intro} by considering slow evolution of a system parameter and including a source term to break certain symmetries). Instabilities of delayed Hopf-type were also analysed in the reference \cite{Avitabile2020} (along with numerous other types of instabilities) using the same center manifold based approach described above, and ``spatio-temporal canards" have been studied in \cite{Avitabile2017,Avitabile2017b,Vo2020}.
	
	\
	
	In this article we develop a fast-slow extension of classical modulation theory, and use it to rigorously analyse the slow passage through the Turing bifurcation in the dynamic SH equation \eqref{eq:sh_dynamic}. Given that we consider the system on an unbounded spatial domain, as is natural for pattern forming systems, the spectral gap condition in \cite{Avitabile2020} is violated and a center manifold reduction is not possible. Indeed, the SH equation \eqref{eq:sh_dynamic}$|_{\eps = 0}$ does not possess a center manifold in rich enough solution spaces. From a methodological point of view, our main contribution is to show that modulation theory can be extended to study dynamic bifurcations in infinite-dimensional fast-slow systems on unbounded spatial domains, after introducing a dynamic generalisation of the multiple-scales ansatz via a novel extension of the so-called \textit{blow-up method}. This approach is motivated by an established notion in finite-dimensional fast-slow systems that the blow-up method can be viewed as a way of formulating dynamic generalisations of singular rescalings, as shown by the seminal works \cite{Dumortier1996,Krupa2001a,Krupa2001c,Krupa2001b} and in many subsequent applications; we refer to the recent review \cite{Jardon2019b} and the many references therein.
	
	In the context of system \eqref{eq:sh_dynamic}, the idea is to resolve degeneracy at the Turing instability at $u_{ss} \equiv 0$, $v = \eps = 0$, using a blow-up transformation which involves a generalisation of the multiple-scales ansatz \eqref{eq:modulation_ansatz} to
	\[
	u(x,t) \approx r(\bar t) A(\bar t, \bar x) \me^{i k_c x} + c.c. + \ldots,
	\]
	where the small parameter $\delta$ has been replaced by a dynamic variable $r(\bar t) \geq 0$ which measures the distance from the static bifurcation point, and $\bar x$ and $\bar t$ are defined by positive, time-dependent transformations of the form $\partial_x = r(\bar t) \partial_{\bar x}$ and $\partial_t = r(\bar t)^2 \partial_{\bar t}$ instead of the simple rescalings \SJ{$\bar x = \delta x$ and $\bar t = \delta^2 t$}. After proposing a natural ansatz in the form of a blow-up transformation, we derive a set of modulation equations in the blown-up space via a formal procedure which is based on the established procedure for deriving modulation equations for the static SH equation in \eqref{eq:sh_intro}. The formal derivation of modulation equations is complicated by comparison to the static theory, due to the non-trivial geometry of the blown-up space and the additional time-dependence of the `small parameter' $r(\bar t)$, which is now a dynamic variable. The modulation equations obtained take the form of non-autonomous GL equations. These equations are posed in the blown-up space in global coordinates, however we also provide local coordinate representations of the equations in three distinguished coordinate charts better suited to applications.
	
	The blow-up method has also been applied to study a finite reduced set of ODE systems describing patterned steady-states of the SH equation in two and three spatial dimensions \cite{McCalla2013}, where the authors prove the existence of stationary spatially localised spots; see \cite{Avitabile2010,Burke2006,Lloyd2009,Lloyd2008,McCalla2010} for more on stationary spatially localised solutions to the SH equation. In the infinite-dimensional setting, a novel extension of the blow-up method for the analysis of singularities in PDEs on bounded domains was recently developed in \cite{Engel2020}, see also \cite{Engel2021}. In this work, the authors apply a spectral Galerkin discretization in space and define a suitable blow-up transformation on a finite-dimensional ODE system obtained after truncating the discretization. The blow-up method developed in the present work is related, but distinct from this approach. Given that we work on an unbounded spatial domain, we require an alternative approach which is more `direct' in the sense that it does not rely upon a preliminary discretization of space. It is also worthy to note that our approach does not depend on the existence of a spectral gap, nor does it involve reduction to a finite-dimensional system or require the existence of a lower dimensional invariant manifold.
	
	\
	
	Rigorous (as opposed to formal) results on the dynamics of system \eqref{eq:sh_dynamic} can only be obtained via modulation theory if the error associated to the approximation can be controlled. We derive rigorous estimates for the error within the blow-up by combining a-priori bounds on weak solutions to a weighted equation for the error obtained in all three coordinate charts. The error estimates are given for a large space of initial conditions in uniformly local Sobolev spaces, which are rich enough to include most of the `usual' solutions of interest in general pattern forming systems \cite{Schneider2017} (see also Section \ref{sec:Background} below). For arbitrary source functions \eqref{eq:mu}, our results are partial in the sense that we are only able to control the error up to a distance which is $O(\eps^{1/2})$ from the static bifurcation value. In the symmetric case $\mu(x) \equiv 0$, however, we prove that the error stays exponentially small up to an $O(1)$ distance from the static bifurcation point.
	
	Using the error estimates obtained, we are able to describe the dynamics of the dynamic SH equation \eqref{eq:sh_dynamic} via the an analysis of the dynamically simpler modulation equations. For general source functions $\mu(x)$ given by \eqref{eq:mu}, we show that the formal leading order approximation at `$O(r)$' is exponentially small. This is because certain symmetries of the GL-type equation governing the `leading order' approximation lead to a delayed stability loss phenomenon. 
	If the source term $\mu(x)$ is non-zero with $\nu_1 \neq 0$, say, then the relevant symmetries are broken in the first correction at $O(r^2)$. As a consequence, the first asymptotically detectable (i.e.~not exponentially small) patterns of are amplitude $O(r^2)$, which is shown to translate to $O(\eps^{1/2})$, with an envelope determined by a higher order linearized GL equation. This situation should be contrasted with the predictions of the static theory, where the leading order dynamics are generally best-approximated by solutions to a non-linear GL equation \eqref{eq:GL_eqn_intro} of amplitude $O(\delta)$ and thus in our setting $O(\eps^{1/4})$. Although we do not prove it here, the results obtained suggest that there is \textit{no} delayed stability loss if the reflection symmetry $u \mapsto -u$ is broken. We conjecture that in this case, the presence of a source term leads an exchange of stability similar to that which has been described in a sequence of papers by Butuzov, Nefedov and Schneider \cite{Butuzov2002b,Butuzov1999,Butuzov2000,Butuzov2001,Butuzov2002}. In the symmetric case $\mu(x) \equiv 0$ we show that the delayed stability loss observed in the leading order GL approximation persists in the higher order corrections, leading to delayed stability loss in the approximating system as a whole. We derive a lower bound for the delay time which is close to the symmetric estimate that one expects based on properties of the linearized equation. Using the error estimates we are able to rigorously infer the existence of and derive a lower bound for a delayed loss of stability in the dynamic SH equation \eqref{eq:sh_dynamic} when $\mu(x) \equiv 0$. The bound obtained is also $O(1)$ with respect to $\eps \to 0$, 
	but likely to be sub-optimal due to a sub-optimal bound in the error estimates. It is conjectured that the estimates could be improved by considering an approximation which accounts for exponentially small asymptotic corrections `beyond all orders'.
	
	\
	
	The manuscript is structured as follows: In Section \ref{sec:Background} we present an overview of the modulation reduction in the static SH equation \eqref{eq:sh_dynamic}$_{\eps = 0}$, including an improved version of the well-known approximation theorem from \cite{Collet1990,Kirrmann1992}. In Section \ref{sec:Main_results} we present our main results in two parts. The blow-up transformations defining a dynamic generalisation of the GL ansatz and its improvement are presented in Section \ref{sub:results_modulation_eqns}. Here we also describe the geometry and present the formally derived modulation equations, in global coordinates in the blown-up space, and in local coordinates better suited to calculations. Rigorous results which describe the dynamics of the approximation and modulation equations, the validity of the approximation and, finally, the dynamics of system \eqref{eq:sh_dynamic} are given in Section \ref{sub:results_error_estimates_and_dynamics}. Section \ref{sec:Amplitude_reduction_via_geometric_blow-up} is devoted to the formal derivation of the modulation equations, and Section \ref{sec:Proof_of_thm_dynamics} is devoted to a detailed analysis of the approximation and the modulation equations in local coordinate charts, culminating in a proof of our main result on the approximation dynamics. 
	Our main result on the validity of the approximation is proved in Section \ref{sec:Proof_of_thm_error}. In Section \ref{sec:Summary_and_outlook} we summarise our findings and conclude the manuscript. 
	A number of technical results for the proofs in Sections \ref{sec:Proof_of_thm_dynamics} and \ref{sec:Proof_of_thm_error} are deferred to Appendix \ref{app:technical_estimates}.

	\section{Validity of the classical GL approximation}
	\label{sec:Background}
	
	In this section we provide a brief overview of the established modulation theory for the classical SH equation
	\begin{equation}
		\label{eq:sh}
		\partial_t u = - (1 + \partial_x^2)^2 u + v u - u^3 ,
	\end{equation}
	where $u = u(x,t) \in \R$, $v \in \R$ is a bifurcation parameter, $x \in \R$ and $t \geq 0$. We refer to equation \eqref{eq:sh} as the \textit{classical} SH equation in order to distinguish it from the dynamic counterpart \eqref{eq:sh_dynamic} considered in detail in later sections. Equation \eqref{eq:sh} was originally derived in \cite{Hohenberg1977} as a model for convective instability, but arises frequently in the literature as a model problem for understanding the basic mechanisms of pattern forming systems. The reader is referred to \cite{Cross1993} for an early but comprehensive review with an emphasis on applications and the formal derivation of modulation equations of GL-type. 
	
	\
	
	Equation \eqref{eq:sh} has a homogeneous steady-state $u(x,t) = u_{ss}(t) \equiv 0$ which destabilises due to a Turing instability at \textit{critical modes} $k_c = \pm 1$ when the $v=0$. This can be seen by considering the linearisation $\partial w = - (1 + \partial_x^2)^2 w + v w$, which has solutions of the form $\me^{\lambda(k,v)t + ikx}$, where 
	\begin{equation}
		\label{eq:lambda}
		\lambda(k,v) = - (1 - k^2)^2 + v , \qquad k \in \mathbb R .
	\end{equation}
	If $v < 0$, then solutions are exponentially damped for all wavenumbers $k \in \R$. If $v = \delta^2 > 0$, where $\delta \ll 1$ is a small perturbation parameter, then \eqref{eq:lambda} is positive for $k$ values within two bands of width $O(\delta)$ about the critical modes $k_c^{\pm}$; see Figure \ref{fig:spectrum} and the caption. In the latter case with parameter values $v = \delta^2 \ll 1$ close to the onset of instability, solutions to the SH equation \eqref{eq:sh} can be approximated using a modulation equation obtained after substituting the multiple-scales solution ansatz 
	\begin{equation}
		\label{eq:GL_ansatz}
		\delta \psi_{GL}(x,t) = \delta A(\delta x, \delta^2 t) \me^{ix} + \delta \overline A(\delta x, \delta^2 t) \me^{-ix} ,
	\end{equation}
	where $A(\delta x, \delta^2 t) \in \mathbb C$ is a so-called \textit{modulation function}, and formally requiring that the leading order contribution to the residual
	\[
	\textup{Res}(\delta \psi_{GL}) = 
	- \partial_t (\delta \psi_{GL}) - (1 + \partial_x^2)^2 (\delta \psi_{GL}) + \delta^3 \psi_{GL} - (\delta \psi_{GL})^3 ,
	\]
	which is identically zero for exact solutions of \eqref{eq:sh}, vanishes at the critical modes (i.e.~one imposes the requirement that the $O(\delta)$ terms factoring $\me^{\pm i x}$ are zero). The ansatz \eqref{eq:GL_ansatz} is referred to as the \textit{GL approximation}, and the resulting modulation equation for $A$ is a real GL equation
	\begin{equation}
		\label{eq:GL_eqn}
		\partial_{\bar t} A = 4 \partial_{\bar x}^2 A + A - 3 A |A|^2 ,
	\end{equation}
	where $\bar x = \delta x$ and $\bar t = \delta^2 t$; c.f.~equation \eqref{eq:GL_eqn_intro} in Section \ref{sec:Introduction}.
	
	\begin{figure}[t!]
		\centering
		\includegraphics[width=.5\textwidth]{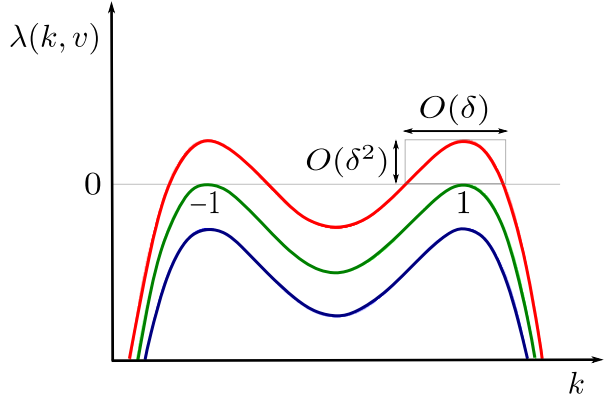}
		\caption{The stability curve \eqref{eq:lambda} for the linearized SH equation, for three different values of $v$. The blue curve has $v < 0$ and therefore $\lambda(k,v) < 0$ for all $k \in \R$, implying linear asymptotic stability for all $k \in \R$. The leading instabilities occur for $k_c^{\pm} = \pm1$ when $v = 0$, as can be seen in the green curve with $\lambda(\pm1,0) = 0$ and $\lambda(k,0) < 0$ for all $k \neq \pm 1$. The red curve illustrates the unstable case for $v = \delta^2 \ll 1$. In this case $\lambda(k,v) > 0$ within two bands of wavenumbers of height $O(\delta^2)$ and width $O(\delta)$ about 
			$k_c^{\pm}$.} 
		\label{fig:spectrum}
	\end{figure}
	
	\
	
	Rigorous justifications 
	for the (formally derived) GL approximation \eqref{eq:GL_ansatz} first appeared in \cite{Collet1990}, with simplified proofs following shortly thereafter in \cite{Kirrmann1992}. In \cite{Kirrmann1992} the authors showed that the accuracy of the approximation can be improved by adding higher order terms. 
	\SJ{Additional work by van Harten \cite{VanHarten1991} showed that for a rather general class of systems with continuous spectra similar to Figure \ref{fig:spectrum}(a), higher order corrections should chosen according to the magnitude and distribution of distinct peaks in the wave spectrum for solutions to the (spatially) Fourier transformed SH equation. The observation is that close to the onset of instability, these peaks are distributed according to the so-called \textit{clustered mode distribution} shown in Figure \ref{fig:mode_distribution}. 
	The peaks are centred around integer translations of two dominant peaks at the critical modes $k = k_c^{\pm} = \pm 1$ (where solutions of the the linearized SH equation become unstable). Higher order corrections to \eqref{eq:GL_ansatz} are attributed to the additional peaks that are centred around integer but non-critical wave numbers $k \neq \pm 1$. The magnitude of each peak scales with an integer power of $\delta$, and the magnitude of the correction term corresponding to each peak can be derived after inverse Fourier transform.} 
	
	\begin{figure}[t!]
		\centering
		\includegraphics[width=.65\textwidth]{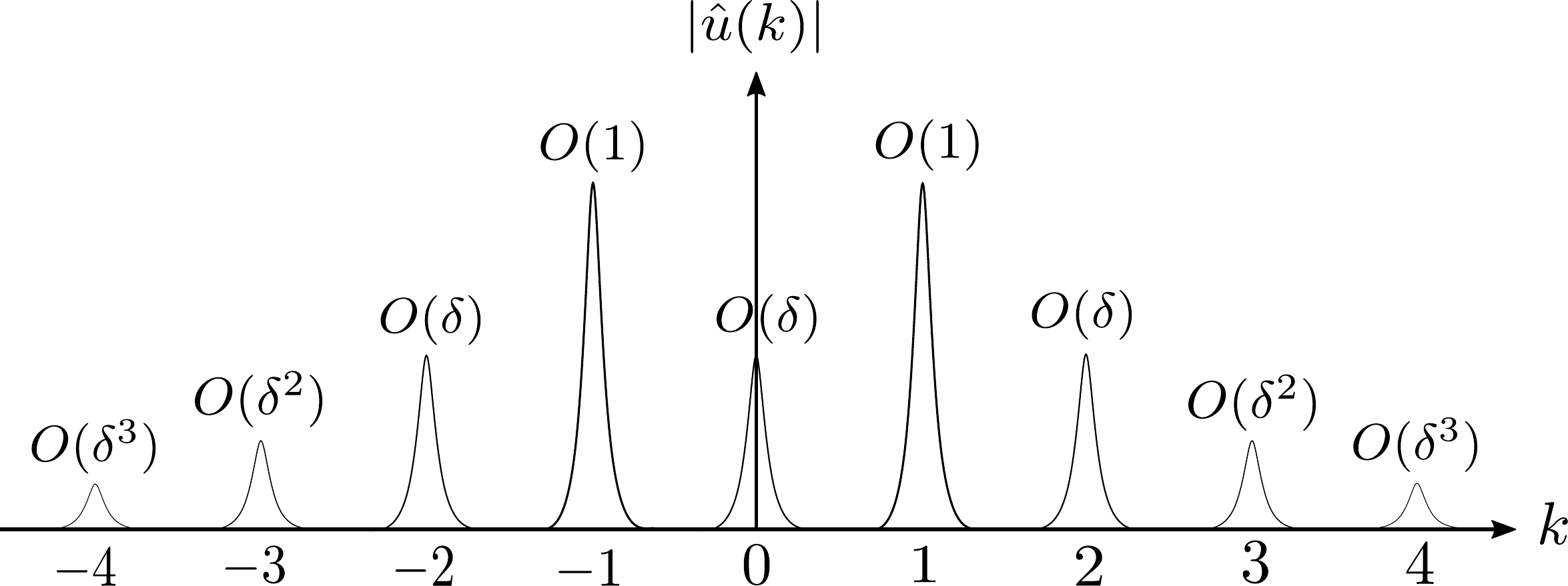}
		\caption{\SJ{The wave spectrum of solutions to the Fourier transformed SH equation \eqref{eq:sh} has distinctive peaks that are distributed according to the so-called clustered mode distribution. The form of the ansatz \eqref{eq:GL_ansatz_n} is derived based on the magnitude and distribution of the peaks, with each peak corresponding to a term in the series. Peaks are centred around integer translations of the dominant critical modes at $k_c^{\pm} = \pm 1$. Note 
		that the $L^\infty$ norm of $\widehat A(\delta^{-1}(k-1))$ is $O(1)$ in Fourier space, but $O(\delta)$ in $L^\infty$ in physical space. Thus a peak of magnitude $O(\delta^{l})$ for some $l \in \mathbb N$ corresponds to a correction of $O(\delta^{l+1})$ in \eqref{eq:GL_ansatz_n}.}}
		\label{fig:mode_distribution}
	\end{figure}
	
	\SJ{Thus,} the clustered mode distribution in Figure \ref{fig:mode_distribution} informs the choice of ansatz. In order to write down the improved ansatz for the SH equation \eqref{eq:sh}, let $N = n+1 \geq 3$ be an integer, $I_N := \{-N, \ldots , N\}$, $\alpha(m) := | |m| - 1|$ and $\tilde \alpha(m) := N - \alpha(m) - 2 \delta_{|m|,1}$. This leads to an ansatz of the form
	\begin{equation}
		\label{eq:GL_ansatz_n}
		\delta \psi_n(x,t) = \sum_{m \in I_N} \sum_{j=1}^{\tilde \alpha(m)} \delta^{\alpha(m) + j} A_{mj}(\bar x, \bar t) \me^{i m x} ,
	\end{equation}
	where the modulation functions $A_{mj}(\bar x, \bar t) \in \mathbb C$ satisfy the reality condition $A_{-mj} = \overline{A_{mj}}$. The ansatz \eqref{eq:GL_ansatz_n} has been `improved' in the sense that it can be shown to formally minimise the residual to up to order $\textup{Res}(\delta \psi_n) = O(\delta^n)$ if the critical modulation functions $A_{\pm11}$ satisfy the GL equation \eqref{eq:GL_eqn}, and the higher order modulation functions with $|m| \neq 1$ and/or $j \geq 1$ satisfy a suitable algebraic or linearized inhomogeneous GL equation; we refer again to \cite[Ch.~10]{Schneider2017} for details.
	
	\
	
	The derivation of modulation equations for the $A_{mj}$ via the method described above is purely formal. Thus, error estimates in the form of bounds in a suitable norm are necessary if one wishes to approximate solutions of the SH equation \eqref{eq:sh} in a rigorous setting. For this, one must choose a common 
	phase space for SH solutions $u$ and the approximations $\delta \psi_{GL}$ or $\delta \psi_n$. Since we would like to describe spatially inhomogeneous solutions which do not decay to zero as $|x| \to \infty$, a number of `natural choices' like the $L^2(\R,\R)$ spaces and their corresponding Sobolev spaces $H^\theta(\R,\R)$ are simply not rich enough. Such considerations have led numerous authors have chosen to work with bounded continuous spaces $C^\theta_b(\R, \R)$, however these come with significant analytical complications. Following e.g.~\cite{Mielke1995,Schneider1994b,Schneider1994,Schneider2017}, we work with the space of uniformly locally square-integral functions
	\begin{equation}
		\label{eq:L2_norm}
		L_{ul}^2 (\R,\R) := \left\{ u : \|u\|_{L_{ul}^2} = \sup_{y \in \R} \left( \int_{y-1/2}^{y+1/2} | u(x) |^2 dx \right)^{1/2} < \infty \right\} ,
	\end{equation}
	and the uniformly local Sobolev spaces $H_{ul}^\theta(\R,\R)$ consisting of all functions $u \in L_{ul}^2(\R,\R)$ such that $\partial^j_x u \in L_{ul}^2(\R,\R)$ for all $j \leq \theta$\SJ{, where $\theta \in \mathbb N$}. For notational simplicity in what follows, $L_{ul}^2$ and $H_{ul}^\theta$ are understood to denote $L_{ul}^2 (\R,\R)$ and $H_{ul}^\theta (\R,\R)$ respectively. The space of $L_{ul}^2$ functions is analytically advantageous because of the availability of Fourier methods, and sufficiently rich to contain a large class of physically interesting solutions which includes all $L^\infty$ solutions, such as spatially quasiperiodic functions and fronts. We refer to \cite{Mielke1995,Schneider1994b} and \cite[Ch.~8.3]{Schneider2017} for more details on the spaces $H_{ul}^\theta$ in the context of the GL equation and modulation theory.
	
	\
	
	The following result quantifies the error of the GL approximation \eqref{eq:GL_ansatz} and its improvement \eqref{eq:GL_ansatz_n} for solutions in $H_{ul}^\theta$, assuming sufficient regularity of the associated GL solutions. The version presented here is can be found in \cite[Thm.~10.2.9]{Schneider2017}. The original result was formulated for the GL approximation \eqref{eq:GL_ansatz} in \cite{Collet1990} in $C_b^\theta$ spaces and again via a separate method in \cite{Kirrmann1992}.
	
	\begin{thm}
		\label{thm:Error_static}
		\textup{\cite[Thm.~10.2.9]{Schneider2017}, see also \cite{Collet1990,Kirrmann1992}.} Fix $\theta \geq 1$ and let $A = A_{11} \in C([0,T_0], H_{ul}^{\theta_A})$ be a solution to the GL equation \eqref{eq:GL_eqn}, where $\theta_A = 3(n-3) + 1 + \theta$. There exists a $\delta_0 > 0$ and a constant $C > 0$ such that for all $\delta \in (0, \delta_0)$, there are SH solutions $u$ such that
		\[
		\sup_{t \in [0,T_0 / \delta^2]} \| u(\cdot,t) - \delta \psi_n(\cdot,t) \|_{H_{ul}^\theta} \leq C \delta^{n-2} .
		\]
		In particular,
		\[
		\sup_{t \in [0,T_0 / \delta^2]} \| u(\cdot,t) - \delta \psi_{GL}(\cdot,t) \|_{H_{ul}^\theta} \leq C \delta^{3/2} .
		\]
	\end{thm}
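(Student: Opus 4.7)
The plan is to introduce an error variable $R$ by writing $u = \delta\psi_n + \delta^{\beta} R$ for an exponent $\beta$ to be chosen (one expects $\beta = n - 3/2$ or $\beta = n-2$ depending on how the $L^2_{ul}$ vs.~$L^\infty$ normalisation is set up), and to derive a closed evolution equation for $R$ by substituting this decomposition into \eqref{eq:sh}. A direct expansion yields a forced, weakly nonlinear equation of the schematic form
\begin{equation*}
    \partial_t R = -(1+\partial_x^2)^2 R + v R + \delta^{\beta} N(\delta\psi_n, R) + \delta^{-\beta}\,\textup{Res}(\delta\psi_n),
\end{equation*}
where $N$ collects the polynomial contributions arising from $-u^3$ after subtracting the approximation, and $\textup{Res}(\delta\psi_n)$ is the residual of the improved ansatz plugged into the SH equation. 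The task reduces to controlling $R$ uniformly on the slow time interval $[0, T_0/\delta^2]$ in the $H_{ul}^\theta$ norm.

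The first key ingredient is the residual bound $\|\textup{Res}(\delta\psi_n)(\cdot,t)\|_{H_{ul}^\theta} = O(\delta^n)$, which follows directly from the construction of the improved ansatz \eqref{eq:GL_ansatz_n}: by design, the modulation functions $A_{mj}$ satisfy exactly those (algebraic or linearised GL) equations needed to cancel all terms up to order $\delta^{n-1}$ in the residual. This requires that the GL solution $A = A_{11}$ and its derived higher-order modulation functions $A_{mj}$ live in sufficiently regular uniformly local Sobolev spaces, which is guaranteed by the hypothesis $A \in C([0,T_0], H_{ul}^{\theta_A})$ with $\theta_A = 3(n-3) + 1 + \theta$; the loss of $3$ derivatives per step accounts for differentiating the cubic nonlinearity, and the factor $(n-3)$ for the number of higher corrections involved. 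After dividing by $\delta^{\beta}$, the residual contributes a forcing of size $O(\delta^{n-\beta})$, which is admissibly small provided $\beta < n$.

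The second ingredient is a sharp semigroup estimate for the linearisation $L_v = -(1+\partial_x^2)^2 + v$ on $H_{ul}^\theta$. Since the unstable band in Figure \ref{fig:spectrum} has width $O(\delta)$ and height $v = O(\delta^2)$, a standard Fourier-multiplier/Mielke-Schneider style estimate (cf.~\cite{Mielke1995,Schneider1994b}) gives $\|e^{L_v t}\|_{H_{ul}^\theta \to H_{ul}^\theta} \leq C e^{\delta^2 t}$, so that on the time interval $[0, T_0/\delta^2]$ the semigroup factor contributes only a bounded multiplicative constant $e^{C T_0}$. The nonlinear term $\delta^{\beta} N(\delta\psi_n, R)$ is handled by the algebra property of $H_{ul}^\theta$ for $\theta \geq 1$, noting that $\delta\psi_n$ is $O(\delta)$ uniformly in $H_{ul}^\theta$ and that the remaining contributions are at least linear in $R$ with coefficients of size $O(\delta)$ or smaller, again absorbing only an $O(1)$ constant after multiplication by $T_0/\delta^2$.

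Combining these estimates by passing to the mild formulation
\begin{equation*}
    R(t) = e^{L_v t} R(0) + \int_0^t e^{L_v(t-s)}\bigl[\delta^{\beta} N(\delta\psi_n, R) + \delta^{-\beta}\,\textup{Res}(\delta\psi_n)\bigr]\,ds,
\end{equation*}
and applying a Gronwall argument on $[0, T_0/\delta^2]$ yields $\|R(t)\|_{H_{ul}^\theta} \leq C$ uniformly in $\delta$, which on translating back gives the claimed $O(\delta^{n-2})$ bound, and the particular case of $\delta\psi_{GL}$ follows by specialising to $n=3$ (with the usual $\delta^{3/2}$ rather than $\delta^1$ scaling arising from the $L^2_{ul}$ normalisation of Fourier-localised modes of width $O(\delta)$, as illustrated in Figure \ref{fig:mode_distribution}). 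The main obstacle is the semigroup estimate on $H_{ul}^\theta$: unlike in $L^2$ or $H^\theta$, standard Plancherel-based arguments fail, and one must work with the Bloch-wave/Fourier decomposition adapted to uniformly local spaces developed in \cite{Mielke1995,Schneider1994b}. Once that estimate is in hand, the rest is a bookkeeping exercise on the residual order and a standard Gronwall loop.
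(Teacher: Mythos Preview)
The paper does not prove this theorem. Theorem~\ref{thm:Error_static} is stated in Section~\ref{sec:Background} as a background result, quoted from \cite[Thm.~10.2.9]{Schneider2017} (with earlier versions in \cite{Collet1990,Kirrmann1992}); no proof is supplied. There is therefore nothing in the paper to compare your proposal against directly.

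That said, your sketch is essentially the standard argument from those references: introduce a weighted error $u = \delta\psi_n + \delta^{\beta}R$, use the construction of $\psi_n$ to get $\textup{Res}(\delta\psi_n) = O(\delta^n)$ in $H_{ul}^\theta$ (with the regularity $\theta_A = 3(n-3)+1+\theta$ needed to control the recursive losses), bound the semigroup $\me^{tL_v}$ on $H_{ul}^\theta$ via multiplier estimates, and close with a Gr\"onwall loop over the long time scale $[0,T_0/\delta^2]$. The paper does carry out an analogous programme for its own dynamic result, Theorem~\ref{thm:Error}, in Section~\ref{sec:Proof_of_thm_error}: the error equation \eqref{eq:error_eqn}, the evolution family bounds (Lemma~\ref{lem:error_evolution_family}), the residual bounds (Lemma~\ref{lem:error_residual}), and the nonlinear estimates (Lemma~\ref{lem:error_nonlinear}) all mirror the static strategy you describe, with the additional complication that $r(\bar t)$ replaces $\delta$ and the estimates must be performed chartwise in the blown-up space. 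If you want to see the skeleton of the classical proof fleshed out in this paper's notation, Section~\ref{sec:Proof_of_thm_error} restricted to chart $\mathcal K_2$ (where $r_2=\eps^{1/4}=\delta$) is the closest match.
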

	
	Theorem \ref{thm:Error_static} applies to solutions of the SH equation \eqref{eq:sh} with initial conditions $u^\ast$ that are $O(\delta)$ in $H_{ul}^\theta$, and which are already distributed according the clustered mode distribution of the approximation \eqref{eq:GL_ansatz_n}. More precisely, it applies for initial conditions of the form
	\[
	u^\ast(x) = \delta \psi_n^\ast(x) = \sum_{m \in I_N} \sum_{j=1}^{\tilde \alpha(m)} \delta^{\alpha(m) + j} A_{mj}^\ast(\bar x) \me^{i m x} ,
	\]
	where $A^\ast_{mj}(x) = A_{mj}(x,0)$. Subsequently in \cite{Eckhaus1993,Schneider1995b} it was shown that the so-called \textit{GL manifold}, i.e.~the subset in $H_{ul}^\theta$ defined by the approximation \eqref{eq:GL_ansatz}, is an attractor (see also \cite{Mielke1997} for attractivity results on large domains). This allows Theorem \ref{thm:Error_static} to be improved so that it applies for the much larger space of solutions with initial data satisfying $\|u^\ast\|_{H_{ul}^\theta} \leq C\delta$. 
	A large number of results pertaining to e.g.~upper semi-continuity of the attractor \cite{Mielke1995}, shadowing properties and global existence have subsequently been derived for systems characterised by instabilities similar to that of the SH equation \eqref{eq:sh} (if not for the SH equation itself) \cite{Schneider1994b,Schneider1999}. We omit detailed statement of these results, since the approximation result in Theorem \ref{thm:Error_static} is the most relevant for our purposes.
	
	\
	
	A main result of this work is the formal derivation of a dynamical approximation which extends \eqref{eq:GL_ansatz_n} to the fast-slow setting, which can be applied in order to study the behaviour over an entire neighbourhood of $v = 0$. This is necessary in order to understand the corresponding dynamic bifurcation. We shall then state and prove an analogue of Theorem \ref{thm:Error_static}.

	\section{Main Results}
	\label{sec:Main_results}
	
	In this section we state and describe our main results. In Section \ref{sub:results_modulation_eqns} we present a dynamic generalisation for the multiple-scales GL ansatz \eqref{eq:GL_ansatz} and the higher order approximation \eqref{eq:GL_ansatz_n}. Both of these are given in form of blow-up transformations. We provide the corresponding modulation equations in global coordinates in the blown-up space, and in local projective coordinates well-suited to the calculations presented in later sections. Section \ref{sub:results_error_estimates_and_dynamics} is devoted to the presentation of rigorous results on the dynamics of the approximating modulation equations, as well as an approximation theorem 
	which may be considered as a dynamic generalisation of Theorem \ref{thm:Error_static}. This result is used to characterise the behaviour of a rather general space of solutions of the dynamic SH equation \eqref{eq:sh_dynamic}. 

	\subsection{Dynamic ansatz and modulation equations}
	\label{sub:results_modulation_eqns}
	
	In the following we a propose dynamic generalisation of the GL ansatz \eqref{eq:GL_ansatz} and the improved ansatz \eqref{eq:GL_ansatz_n}. Our approach is based upon an adaptation of the geometric blow-up method developed for finite-dimensional fast-slow systems in \cite{Dumortier1996} and further developed in \cite{Krupa2001a,Krupa2001c,Krupa2001b}; see \cite{Jardon2019b} for a recent survey. The idea is to use blow-up techniques to extend the static modulation theory described in Section \ref{sec:Background} to the fast-slow setting. The method is motivated by the success of blow-up based approaches to the generalisation of rescaling methods in singularly perturbed systems for applications to dynamic bifurcations in finite-dimensional fast-slow systems. 
	
	\begin{remark}
		Recently in \cite{Engel2020} the authors made first steps towards the development of a geometric blow-up method for application to infinite-dimensional fast-slow systems. In this approach, a blow-up transformation is applied to a finite-dimensional ODE system obtained after truncating a spectral Galerkin discretization at any arbitrary truncation level. The blow-up method proposed in this work provides an alternative and more direct approach, which does not rely on a particular discretization reduction to a finite-dimensional system of ODEs. This is necessary because we work with an unbounded domain in solution spaces that are too large to allow for such reductions.
	\end{remark}
	
	As is standard in blow-up approaches, we consider the perturbation parameter $\eps$ as a variable and work with the extended system
	\begin{equation}
		\label{eq:sh_extended}
		\begin{split}
			\partial_t u &= - (1 + \partial_x^2)^2 u + v u - u^3 + \eps \mu(x) , \\ 
			\partial_t v &= \eps , \\ 
			\partial_t \eps &= 0 .
		\end{split}
	\end{equation}
	Note that the subspaces defined by $\eps = const.$ are invariant, and that the classical (static) SH equation \eqref{eq:sh} is contained within the invariant subspace $\{\eps = 0\}$. 
	In parallel to the established approach for the classical SH equation \eqref{eq:sh} described in Section \ref{sec:Background}, we look for an approximation $\Psi$ which formally minimises the residual
	\begin{equation}
		\label{eq:residual_def}
		\textrm{Res}(\Psi) = - \partial_t \Psi - (1 + \partial_x^2)^2 \Psi + v \Psi - \Psi^3 + \eps \mu(x) .
	\end{equation}
	Exact solutions of the first equation in \eqref{eq:sh_extended} satisfy $\textrm{Res}(\Psi) = 0$. Note that the formal derivation of modulation equations is independent of the choice of phase space for $u$ and $\Psi$. Thus, there is no need to specify it at this point. 

	\subsubsection{Leading order approximation}
	
	Based on the fact that systems \eqref{eq:sh} and \eqref{eq:sh_extended} agree if the latter is restricted to $\{\eps = 0\}$, we propose the following blow-up transformation as a dynamic generalisation of the GL ansatz \eqref{eq:GL_ansatz}:
	\begin{equation}
		\label{eq:blowup_Psi_GL}
		r \geq 0, \ \psi_{GL} \in X, \ (\bar v, \bar \eps) \in S^1 \mapsto 
		\begin{cases}
			\Psi_{GL}(x, t) = r(\bar t) \psi_{GL}(x, \bar t) = r(\bar t) A(\bar x,\bar t) \me^{ix} + r(\bar t) \overline A(\bar x,\bar t) \me^{-ix} , \\
			v(t) = r(\bar t)^2 \bar v(\bar t) , \\
			\eps = r(\bar t)^4 \bar \eps(\bar t) .
		\end{cases}
	\end{equation}
	Here $X$ denotes a suitable Banach space for $\psi_{GL}$, \SJ{$S^1$ denotes the unit circle $\{(\bar v, \bar \eps) : \bar \eps^2 + \bar v^2 = 1 \}$,} $A_{mj}(\bar x, \bar t) \in \mathbb C$ are modulation functions, and $\bar x$, $\bar t$ denote the \textit{desingularized space and time} defined via the positive, time-dependent transformations 
	\begin{equation}
		\label{eq:desingularization}
		\partial_t = r(\bar t)^2 \partial_{\bar t} , \qquad 
		\partial_x = r(\bar t) \partial_{\bar x} ,
	\end{equation}
	referred to as \textit{desingularizations}. In the following we collect a number of observations and remarks on the blow-up map defined by equations \eqref{eq:blowup_Psi_GL}-\eqref{eq:desingularization}.
	
	\begin{figure}[t!]
		\centering
		\includegraphics[scale=0.3]{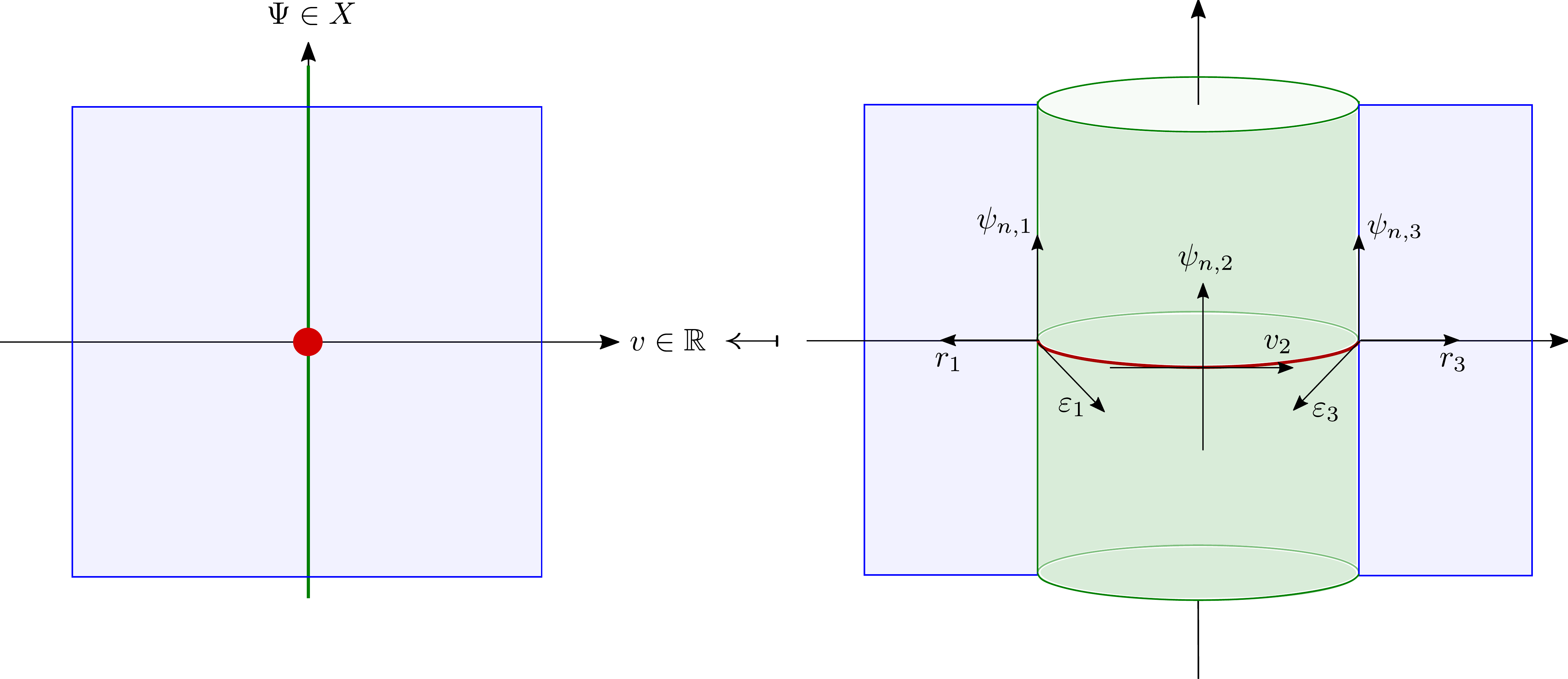}
		\caption{Action of the blow-up transformations defined in \eqref{eq:blowup_Psi_GL} and \eqref{eq:blowup_Psi}. The set $\{v = \eps = 0\}$ shown in green along the $\Psi$ axis, which contains the Turing singularity (indicated as a red disk), is blown-up to a cylinder as described in Remark \ref{rem:blow_up_v_eps}. Local coordinates in charts $\mathcal K_l$ as defined in \eqref{eq:charts} are also shown.}
		\label{fig:blowup}
	\end{figure}
	
	\begin{remark}
		\label{rem:dynamic_generalisation}
		The ansatz $\Psi_{GL} = r \psi_{GL}$ takes the same form as the GL ansatz in \eqref{eq:GL_ansatz}, except that the perturbation parameter $\delta$ has been `replaced' by the variable $r(\bar t)$, which depends on time. This is one sense in which \eqref{eq:blowup_Psi_GL} may be considered as a dynamic generalisation of the well-known GL ansatz \eqref{eq:GL_ansatz}.
	\end{remark}
	
	\begin{remark}
		\label{rem:blow_up_v_eps}
		The second two components of the map \eqref{eq:blowup_Psi_GL} decouple, i.e.~the blow-up map defined by
		\begin{equation}
			\label{eq:cyl_blow-up}
			r \geq 0, \ (\bar v, \bar \eps ) \in S^1 \mapsto
			\begin{cases}
				v = r^2 \bar v , \\
				\eps = r^4 \bar \eps ,
			\end{cases}
		\end{equation}	
		and the time desingularization in \eqref{eq:desingularization} can be considered separately. From this perspective, it is clear that the set $\{v = \eps = 0\}$ corresponding to the Turing instability is blown-up to a half-cylinder with angular coordinates $(\bar v, \bar \eps) \in S^1 \cap \{ \bar \eps \geq 0\}$; see Figure \ref{fig:blowup}. The variable $r$ is a radial coordinate which measures the Euclidean distance from the blow-up cylinder.
	\end{remark}
	
	\begin{remark}
		The maps \eqref{eq:blowup_Psi_GL} and \eqref{eq:cyl_blow-up} are defined for both $v(t) \geq 0$ and $v(t) \leq 0$, as opposed to the GL ansatz \eqref{eq:GL_ansatz} which has $v = \delta^2 > 0$. This is necessary in the dynamic setting since we are typically interested in a slow passage through the instability, i.e.~with the evolution of solutions which approach $v=0$ from an initial condition with $v(0) < 0$.
	\end{remark}
	
	\begin{remark}
		\label{rem:desingularization}
		The desingularizations in \eqref{eq:desingularization} generalise the simple rescalings $\bar x = \delta x$ and $\bar t = \delta^2 t$ in equation \eqref{eq:GL_ansatz} to the time-dependent setting. This is necessary because the `perturbation parameter' $r(\bar t) \geq 0$ now depends on time. As an alternative to the spatial desingularization defined by \eqref{eq:desingularization}, one could include space in the blow-up more directly via time-dependent transformation $x = r(\bar t)^{-1} \bar x$. In fact, $\partial_x = r(\bar t) \partial_{\bar x} \iff x = r(t)^{-1} \bar x$ since $r(\bar t)$ does not depend on $x$. Similar time-dependent transformations of the spatial domain appear frequently in the context of \textit{dynamic renormalization}; see e.g.~\cite{Bricmont1995,Chapman2021,Kevrekidis2003,Siettos2003}\SJ{, where they are typically stated in the more explicit form
		\begin{equation}
			\label{eq:renormalization}
			t = \int_0^{\bar t} \frac{1}{r(s)^2} ds , \qquad 
			x = \frac{\bar x}{r(\bar t)} .
		\end{equation}
		Since the dynamic SH equation \eqref{eq:sh_extended} only depends explicitly on $\partial_t$ (and not on $t$), it suffices to work with the formulation in \eqref{eq:desingularization} (which is implied by \eqref{eq:renormalization}). Integral transformations for $t$ are necessary for the application of blow-up to non-autonomous problems, however; see \cite{Arcidiacono2020} for an example in the context of a non-autonomous ODE system. Finally we note that for PDE} systems on bounded spatial domains, a similar effect can be achieved by applying such a transformation to the boundary in instead of the spacial variable itself; see \cite{Engel2020}.
	\end{remark}
	
	\begin{remark}
		Since \eqref{eq:sh_extended}$|_{\eps = 0}$ has a Turing singularity at $u = u_{ss} \equiv 0$, $v=0$, it is locally linearly stable with respect to time-dependent but spatially independent perturbations. More precisely, spatially homogeneous solutions $u(x,t) = u(t)$ of \eqref{eq:sh_extended}$|_{\eps = 0}$ are governed by an ODE
		\[
		\partial_t u = f_{ode}(u,v) = - u + v u - u^3 ,
		\]
		which is linearly stable with Jacobian $\partial_u f_{ode}(0,0) = -1$. This property, which is a defining property of Turing singularities, implies that there is no blow-up transformation of the standard form
		\[
		r \geq 0, \ (\bar v, \bar \eps) \in S^1, \ \bar u \in X \mapsto 
		\begin{cases}
			u = r^{a_1} \bar u , \\
			v = r^{a_2} \bar v , \\
			\eps = r^{a_3} \bar \eps ,
		\end{cases}
		\]
		where $a_1, a_2, a_3 \geq 0$, 
		such that the resulting blown-up problem admits a desingularization of the form $\partial_t = r^\alpha \partial_{\bar t}$. The non-zero linear part for $\bar u$ leads to a term of the form $\partial_{\bar t} \bar u = r^{-\alpha} \bar u$ after desingularization which cannot be avoided by a suitable choice of scaling.
	\end{remark}
	
	A formal derivation using the blow-up ansatz \eqref{eq:blowup_Psi} leads to the following non-autonomous GL equation for the modulation function $A$:
	\begin{equation}
		\label{eq:GL_eqn_global}
		\partial_{\bar t} A = 4 \partial_{\bar x}^2 A + \left( \bar v(\bar t) - r(\bar t)^{-1} \partial_{\bar t} r(\bar t) \right) A - 3 A |A|^2 .
	\end{equation}
	The detailed derivation is given in Section \ref{sec:Amplitude_reduction_via_geometric_blow-up}. \SJ{Equation \eqref{eq:GL_eqn_global}} is distinguished from the classical GL equation \eqref{eq:GL_eqn} 
	by the time-dependent coefficient $\bar v(\bar t) - r(\bar t)^{-1} \partial_{\bar t} r(\bar t)$. Note that the two equations are also posed on different spaces, since \eqref{eq:GL_eqn_global} is posed on the blown-up space with $A \in X$, $(\bar v, \bar \eps) \in S^1$ and $r \geq 0$.
	
	Although the global form of equation \eqref{eq:GL_eqn_global} is helpful for interpretive and conceptual purposes, concrete representations in local coordinates are typically preferred for calculations. Since $(\bar v, \bar \eps) \in S^1$, local coordinate charts can be defined in affine projective coordinates
	\begin{equation}
		\label{eq:charts}
		\begin{split}
			\mathcal K_1 : (\Psi_{GL}, v, \eps) &= (r_1 \psi_{GL,1}, -r_1^2, r_1^4 \eps_1) = (r_1 A_1 \me^{ix} + r_1 \overline{A_1} \me^{-ix}, -r_1^2, r_1^4 \eps_1) , \\
			\mathcal K_2 : (\Psi_{GL}, v, \eps) &= (r_2 \psi_{GL,2}, r_2^2 v_2, r_2^4 ) = (r_2 A_2 \me^{ix} + r_2 \overline{A_2} \me^{-ix}, r_2^2 v_2, r_2^4 ) , \\
			\mathcal K_3 : (\Psi_{GL}, v, \eps) &= (r_3 \psi_{GL,3}, r_3^2, r_3^4 \eps_3) = (r_3 A_3 \me^{ix} + r_3 \overline{A_3} \me^{-ix}, r_3^2, r_3^4 \eps_3) ,
		\end{split}
	\end{equation}
	obtained by setting $\bar v = -1$, $\bar \eps = 1$ and $\bar v = 1$ respectively. The local coordinate axes are sketched in Figure \ref{fig:blowup}. We also denote by $x_l$ and $t_l$ the local coordinate representation of $\bar x$ and $\bar t$ via \eqref{eq:desingularization} in charts $\mathcal K_l$, $l = 1,2,3$.
	
	\begin{remark}
		\label{rem:K2_ansatz}
		A close relationship to classical modulation theory is observed in chart $\mathcal K_2$. Due to the simple scaling $r_2 = \eps^{1/4}$, the local representation of the dynamic GL ansatz for $\Psi_{GL}$ in chart $\mathcal K_2$ coincides with the classical GL ansatz \eqref{eq:GL_ansatz} after setting $r_2 = \eps^{1/4} = \delta$. 
	\end{remark}
	
	The following result provides the modulation equations obtained in local coordinate charts $\mathcal K_l$ after imposing the formal requirement that the leading order terms of $O(r)$ vanish at modes corresponding critical wavenumbers $k_c^{\pm} = \pm1$.
	
	\begin{proposition}
		\label{prop:GL_eqns_leading_order}
		The reduced modulation equations in $\mathcal K_1$ are given by
		\begin{equation}
			\label{eq:K1_eqns_GL}
			\begin{split}
				\partial_{t_1} A_1 &= 4 \partial_{x_1}^2 A_1 + \left( - 1 + \frac{\eps_1}{2} \right) A_1 - 3 A_1 |A_1|^2 , \\
				\partial_{t_1} r_1 &= - \frac{1}{2} r_1 \eps_1 , \\
				\partial_{t_1} \eps_1 &= 2 \eps_1^2 .
			\end{split}
		\end{equation}
		The reduced modulation equations in $\mathcal K_2$ are given by
		\begin{equation}
			\label{eq:K2_eqns_GL}
			\begin{split}
				\partial_{t_2} A_2 &= 4 \partial_{x_2}^2 A_2 + v_2 A_2 - 3 A_2 | A_2 |^2 , \\
				\partial_{t_2} v_2 &= 1 .
			\end{split}
		\end{equation}
		The reduced modulation equations in $\mathcal K_3$ are given by
		\begin{equation}
			\label{eq:K3_eqns_GL}
			\begin{split}
				\partial_{t_3} A_3 &= 4 \partial_{x_3}^2 A_3 + \left( 1 + \frac{\eps_3}{2} \right) A_3 - 3 A_3 |A_3|^2 , \\
				\partial_{t_3} r_3 &= \frac{1}{2} r_3 \eps_3 , \\
				\partial_{t_3} \eps_3 &= - 2 \eps_3^2 .
			\end{split}
		\end{equation}
	\end{proposition}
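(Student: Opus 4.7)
The plan is to carry out the formal matching procedure separately in each chart $\mathcal{K}_l$, $l = 1,2,3$. Since all three charts follow the same template, I would describe the argument in detail for $\mathcal{K}_1$ and indicate what changes in $\mathcal{K}_2, \mathcal{K}_3$. The three steps are: (i) derive the evolution equations for the slow blow-up coordinates $(r_l, \eps_l)$ (respectively $(r_2, v_2)$); (ii) expand the linear operator $(1 + \partial_x^2)^2$ applied to the leading ansatz $\Psi_{GL,l} = r_l A_l \me^{ix} + \text{c.c.}$ in the desingularized coordinates; and (iii) collect the residual contributions at the critical mode $\me^{ix}$ at leading order in $r_l$.

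For step (i), I would differentiate the chart-defining identities in \eqref{eq:charts} with respect to $t$ and compare to $\partial_t v = \eps$ and $\partial_t \eps = 0$. Applying the time desingularization $\partial_t = r_l^2 \partial_{t_l}$ and dividing through by the appropriate power of $r_l$ recovers the stated ODEs. For instance, $\partial_t \eps = 0$ applied to $\eps = r_2^4$ forces $r_2$ to be constant in $\mathcal{K}_2$ (so no evolution equation for $r_2$ appears), while $\partial_t v = \eps$ applied to $v = \mp r_l^2$ combined with $\eps = r_l^4 \eps_l$ determines $\partial_{t_l} r_l$, and then $\partial_t \eps = 0$ applied to $\eps = r_l^4 \eps_l$ determines $\partial_{t_l} \eps_l$ in charts $\mathcal{K}_1$ and $\mathcal{K}_3$.

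For steps (ii) and (iii), using $\partial_x \me^{\pm ix} = \pm i \me^{\pm ix}$ together with $\partial_x A_l = r_l \partial_{x_l} A_l$, a direct computation yields
\begin{equation*}
(1 + \partial_x^2)^2 \bigl( r_l A_l \me^{ix} \bigr) = r_l^5 \partial_{x_l}^4 A_l \me^{ix} + 4 i r_l^4 \partial_{x_l}^3 A_l \me^{ix} - 4 r_l^3 \partial_{x_l}^2 A_l \me^{ix}.
\end{equation*}
The cancellation of the naive $O(r_l)$ contribution is precisely the Turing condition $(1 - k_c^2)^2 = 0$ at $k_c = 1$ coming from \eqref{eq:lambda}, and it is this cancellation that promotes the diffusive term $-4 r_l^3 \partial_{x_l}^2 A_l$ to leading order. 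The cubic $\Psi_{GL,l}^3$ contributes $3 r_l^3 A_l |A_l|^2$ at the mode $\me^{ix}$, and the source term $\eps \mu(x)$ is of order $r_l^4$ at every harmonic present in $\mu$, hence negligible at the scale we retain. The time derivative $\partial_t \Psi_{GL,l} = r_l^2 \partial_{t_l}(r_l A_l) \me^{ix} + \text{c.c.}$ splits into $r_l^3 \partial_{t_l} A_l \me^{ix}$ plus $r_l^2 (\partial_{t_l} r_l) A_l \me^{ix}$; inserting the ODE from step (i) reduces the latter to an $O(r_l^3)$ contribution that supplies the chart-dependent shift in the linear coefficient of $A_l$. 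Collecting the $O(r_l^3)$ contributions at the $\me^{ix}$ mode, dividing by $r_l^3$, and substituting the chart-specific value of $v$ produces the stated equations \eqref{eq:K1_eqns_GL}--\eqref{eq:K3_eqns_GL}.

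The principal delicacy is purely in bookkeeping the powers of $r_l$ through the nested desingularizations of space and time, and in confirming that no $O(r_l^k)$ contributions appear for $k < 3$ at the critical mode (the $k=1$ cancellation being the Turing condition, and the absence of an $O(r_l^2)$ term at $\me^{ix}$ following because only odd harmonics arise from the cubic and only $O(r_l^4)$ or higher can appear from the source). Once these cancellations are recorded, the GL structure emerges uniformly across the three charts, with diffusion coefficient $4$ and cubic coefficient $-3$ in common, and only the linear coefficient in front of $A_l$ varying according to the chart-specific expressions for $v$ and $\partial_{t_l} r_l$.
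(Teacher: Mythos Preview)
Your proposal is correct and follows essentially the same approach as the paper's own proof, which simply states that the $(r_l,\eps_l,v_2)$-equations come from differentiating the chart identities in \eqref{eq:charts} and that the $A_l$-equations follow by substituting $\Psi_{GL,l}$ into the residual \eqref{eq:residual_def} and matching at leading order in $r_l$ at the critical modes, deferring the detailed bookkeeping to Section~\ref{sec:Amplitude_reduction_via_geometric_blow-up}. Your write-up supplies exactly those details---the expansion of $(1+\partial_x^2)^2(r_l A_l\me^{ix})$, the Turing cancellation at $O(r_l)$, the cubic contribution $3r_l^3 A_l|A_l|^2$ at the $\me^{ix}$ mode, and the chart-specific linear shift coming from $\partial_{t_l}r_l$---so the two arguments are the same in structure and substance.

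One small caveat worth recording: if you carry your computation through in chart $\mathcal K_3$ you will find the linear coefficient $1 - \eps_3/2$ rather than the $1 + \eps_3/2$ printed in \eqref{eq:K3_eqns_GL}. This follows from the global form \eqref{eq:GL_eqn_global} with $\bar v = 1$ and $r_3^{-1}\partial_{t_3}r_3 = \tfrac{1}{2}\eps_3$, and it is also the sign used consistently in the later analysis (e.g.\ equation \eqref{eq:A1_decoupled_K3}). So your method is sound; the discrepancy is a typo in the stated proposition, not a gap in your argument.
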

	
	\begin{proof}
		The equations for $r_l$, $v_l$ and $\eps_l$ are derived in each chart directly via their local coordinate representations in \eqref{eq:charts}. The modulation equations for $A_l$ are derived by substituting the local coordinate expressions for $\Psi$ into the residual \eqref{eq:residual_def} and imposing the requirement that the leading order $O(r)$ term at the critical modes corresponding to $\me^{\pm ix}$ vanishes; we refer again to Section \ref{sec:Amplitude_reduction_via_geometric_blow-up} for details.
	\end{proof}
	
	Since $\eps_1(t_1)$, $v_2(t_2)$ and $\eps_3(t_3)$ can be solved by direct integration, the ansatz defined by the blow-up transformation \eqref{eq:blowup_Psi_GL} leads to a non-autonomous GL equation in each chart $\mathcal K_l$. If $\Psi$ can be shown to serve as a good approximation for SH solutions $u$ over a sufficiently long time interval, then Proposition \ref{prop:GL_eqns_leading_order} shows that the study of the dynamic SH equation \eqref{eq:sh_dynamic} reduces to the study of a non-autonomous GL equation
	\[
	\partial_{t_1} A_1 = 4 \partial_{x_1}^2 A_1 + \left( - 1 + \frac{\eps_1(t_1)}{2} \right) A_1 - 3 A_1 |A_1|^2
	\]
	over a time interval $t_1 \in [0,T_1]$ in chart $\mathcal K_1$, a non-autonomous GL equation
	\[
	\partial_{t_2} A_2 = 4 \partial_{x_2}^2 A_2 + v_2(t_2) A_2 - 3 A_2 |A_2|^2
	\]
	over a time interval $t_2 \in [0,T_2]$ in chart $\mathcal K_2$, and a non-autonomous GL equation
	\[
	\partial_{t_3} A_3 = 4 \partial_{x_3}^2 A_3 + \left( 1 - \frac{\eps_3(t_3)}{2} \right) A_3 - 3 A_3 |A_3|^2
	\]
	over a time interval $t_3 \in [0,T_3]$ in chart $\mathcal K_3$.
	
	\begin{remark}
		In charts $\mathcal K_1$ and $\mathcal K_3$ the subspaces $\{\eps_l = 0\}$ and $\{r_l = 0\}$ are invariant and solutions are constrained to surfaces defined by $\eps = r_l^4 \eps_l$, which defines a constant of the motion. The classical GL equation \eqref{eq:GL_eqn} arises within $\{\eps_3 = 0\}$ in chart $\mathcal K_3$, since system \eqref{eq:K3_eqns_GL} restricts to
		\[
		\begin{split}
			\partial_{t_3} A_3 &= 4 \partial_{x_3}^2 A_3 + A_3 - 3 A_3 |A_3|^2 , \\
			\partial_{t_3} r_3 &= 0.
		\end{split}
		\]
		Thus, one expects solutions to be governed by GL-type dynamics as they leave a neighbourhood of $v=0$.
	\end{remark}
	
	Of course, the validity of the formal approximation defined by the blow-up transformation \eqref{eq:blowup_Psi_GL} requires rigorous justification, i.e.~a statement similar to \SJ{Theorem \ref{thm:Error_static}} is required in order to provide rigour to the formal derivations. The statement of such a result is deferred to Section \ref{sub:results_error_estimates_and_dynamics} below.

	\subsubsection{Higher order approximation}
	
	Better estimates are obtained by including higher order corrections in the expression for $\Psi$, which serve to minimise the residual \eqref{eq:residual_def}. Given that the restricted system \eqref{eq:sh_extended}$|_{\eps = 0}$ and the classical SH equation \eqref{eq:sh} coincide, it is natural to suppose that the improved ansatz \eqref{eq:GL_ansatz_n}, whose form was based on the the clustered mode distribution in Figure \ref{fig:mode_distribution}, can also be generalised for the dynamic setting using a suitable blow-up transformation. 
	We therefore propose the following improvement of the blow-up approximation in \eqref{eq:blowup_Psi_GL}, which is obtained by replacing the expression $\Psi = r \psi_{GL}$ by $\Psi_n = r \psi_n$ where
	\begin{equation}
		\label{eq:Psi_n_global}
		\Psi_n(x, t) = r(\bar t) \psi_n(x, \bar t) = \sum_{m \in I_N} \sum_{j=1}^{\tilde \alpha(m)} r(\bar t)^{\alpha(m) + j} A_{mj}(\bar x, \bar t) \me^{imx} ,
	\end{equation}
	$n = N+1 \geq 4$ is the order of the approximation, $A_{mj}(\bar x, \bar t) \in \mathbb C$ satisfy the reality condition $A_{-mj} = \overline{A_{mj}}$, and $I_N$, $\alpha(m)$ and $\tilde \alpha(m)$ are defined as in Section \ref{sec:Background}. In full, the improved blow-up ansatz is given by the map
	\begin{equation}
		\label{eq:blowup_Psi}
		r \geq 0, \ \psi_n \in X, \ (\bar v, \bar \eps) \in S^1 \mapsto 
		\begin{cases}
			\Psi_n(x, t) = r(\bar t) \psi_n(x, \bar t) = \sum_{m \in I_N} \sum_{j=1}^{\tilde \alpha(m)} r(\bar t)^{\alpha(m) + j} A_{mj}(\bar x, \bar t) \me^{imx} , \\
			v(t) = r(\bar t)^2 \bar v(\bar t) , \\
			\eps = r(\bar t)^4 \bar \eps(\bar t) , 
		\end{cases}
	\end{equation}
	together with the desingularizations in \eqref{eq:desingularization}. Remarks \ref{rem:dynamic_generalisation}-\ref{rem:K2_ansatz} on the blow-up map \eqref{eq:blowup_Psi_GL} extend naturally to the blow-up map \eqref{eq:blowup_Psi}.
	
	\
	
	Modulation equations for the functions $A_{mj}$ can be formally derived using the blow-up transformation \eqref{eq:blowup_Psi} and a recursive procedure based on the requirement that the residual to vanishes up to $\textup{Res}(r \psi_n) = O(r^n)$. 
	Here we present the equations, referring the reader once again to Section \ref{sec:Amplitude_reduction_via_geometric_blow-up} for the derivation.
	
	\
	
	The modulation equations for $A_{\pm1j}$ at the critical modes $m=\pm1$ 
	take the form of non-autonomous (generalised) GL equations:
	\begin{equation}
		\label{eq:GL_eqns_global}
		\begin{split}
			\partial_{\bar t} A_{\pm11} &= 4 \partial_{\bar x}^2 A_{\pm11} + \left( \bar v(\bar t) - r(\bar t)^{-1} \partial_{\bar t} r(\bar t) \right) A_{\pm11} - 3 A_{\pm 11} |A_{\pm11}|^2 , \\
			\partial_{\bar t} A_{\pm12} &= 4 \partial_{\bar x}^2 A_{\pm12} + \left( \bar v(\bar t) - r(\bar t)^{-1} \partial_{\bar t} r(\bar t) \right) A_{\pm12} - 
			a_{\pm12} \mp 4i \partial_{\bar x}^3 A_{\pm11} + \bar \eps(\bar t) \nu_{\pm1} 
			, \\
			\partial_{\bar t} A_{\pm1j} &=  4 \partial_{\bar x}^2 A_{\pm1j} + \left( \bar v(\bar t) - r(\bar t)^{-1} \partial_{\bar t} r(\bar t) \right) A_{\pm1j} - a_{\pm1j} \mp 4i \partial_{\bar x}^3 A_{\pm1(j-1)} 
			- \partial_{\bar x}^4 A_{\pm1(j-2)} ,
		\end{split}
	\end{equation}
	where $j = 3, \ldots, N-2$ and each $a_{\pm1j}$ is a finite sum of cubic monomials of the form $l A_{m_1j_1} A_{m_2j_2} A_{m_3j_3}$ with $l \in \mathbb N_+$, $m = m_1 + m_2 + m_3 \in \{-1,1\}$ for $m_i \in I_N$, and $j = j_1 + j_2 + j_3 \in \{1,\ldots,N-2\}$. The non-autonomous GL equations for $A_{\pm11}$, which describe the leading order approximation at the critical modes, has the same form as equation \eqref{eq:GL_eqn_global}.
	
	\
	
	The equations for $A_{mj}$ at non-critical modes $m \neq \pm 1$ are given by
	\begin{equation}
		\label{eq:Amj_algebraic_eqns}
		\begin{split}
			A_{mj} = - (\mathcal L_m^{(0)})^{-1} \bigg( \mathcal L_m^{(1)} A_{mj-1} +  \tilde{\mathcal L_m^{(2)}} A_{mj-2} + \mathcal L_m^{(3)} A_{mj-3} + \mathcal L_m^{(4)} A_{mj-4} - a_{mj} + \delta_{\alpha(m)+j,4} \bar \eps \nu_m \bigg) ,
		\end{split}
	\end{equation}
	where $\delta_{\alpha(m)+j,4} = 1$ if $\alpha(m) + j = 4$ and $0$ otherwise,
	\[
	\begin{split}
		\mathcal L_m^{(0)} := - (1-m^2)^2 ,  \qquad &
		\mathcal L_m^{(1)} := - 4i m(1-m^2) \partial_{\bar x} ,  \qquad
		\tilde{\mathcal L_m^{(2)}} :=  - \partial_{\bar t} - r^{-1} \partial_{\bar t} r - 2 (1-3m^2) \partial_{\bar x}^2 + \bar v , \\
		& \qquad \ \mathcal L_m^{(3)} := - 4i m \partial_{\bar x}^3 ,  \qquad
		\mathcal L_m^{(4)} := - \partial_{\bar x}^4 ,
	\end{split}
	\]
	are linear operators, we set $A_{mj-k} \equiv 0$ if $j-k \leq 0$, and where the functions $a_{mj}$ are similar to $a_{\pm1j}$ above except that they satisfy the more general matching conditions
	\begin{itemize}
		\item $m = m_1 + m_2 + m_3$;
		\item $\alpha(m) + j + 2 = \alpha(m_1) + \alpha(m_2) + \alpha(m_3) + j_1 + j_2 + j_3$.
	\end{itemize}
	The recursive structure of the approximation is such that if solutions to the modulation equations for $A_{\pm11}$ exist, then the equations \eqref{eq:Amj_algebraic_eqns} defining the modulation functions $A_{mj}$ at non-critical modes $m \neq \pm 1$ are \textit{algebraic equations} depending only on $\bar v$, $\bar \eps$, and lower order (critical) modulation functions $A_{\pm1j'}$ with $j' < \alpha(m) + j$. In this case, equation \eqref{eq:Amj_algebraic_eqns} defines the so-called \textit{GL manifold}, which we shall denote by
	\[
	\mathcal M^{gl} := \left\{ (\psi_n, (\bar v, \bar \eps), r) \in X \times S^1 \times \mathbb R_+ : A_{mj} = g^{gl}_{mj}(\textbf{A}_{mj}, (\bar v, \bar \eps)) , |m| \neq 1, j \in \{ 1,\ldots,\alpha(m) \} \right\},
	\]
	where $\textbf A_{mj}$ denotes the set of lower order critical modulation functions $A_{\pm1j'}$ with $j' \in \{1,\ldots,\alpha(m) + j - 1\}$ and $g^{gl}_{mj}(\textbf{A}_{mj}, (\bar v, \bar \eps))$ is defined by the right-hand side of \eqref{eq:Amj_algebraic_eqns}.
	
	\
	
	Finally, we present the equations in local $\mathcal K_l$ coordinates, together with the local coordinate representations of the GL manifold $\mathcal M^{gl}$, denoted by $\mathcal M_l^{gl}$. In order to simplify the notation for the critical modulation equations at $m =\pm 1$, we drop the first subscript and simply write
	\[
	A_j := A_{1j} , \qquad j = 1, \ldots , N-2 ,
	\]
	globally, and
	\[
	A_{1j,l} := A_{j,l}, \qquad j = 1, \ldots , N-2 ,
	\]
	in each chart $\mathcal K_l$, $l = 1,2,3$. Moreover, let $\textbf{A}_{mj,l}$ denote the representation of the set modulation functions $\textbf A_{mj}$ in chart $\mathcal K_l$.
	
	Propositions \ref{prop:K1_eqns}-\ref{prop:K3_eqns} below follow directly from the formal derivations in Section \ref{sec:Amplitude_reduction_via_geometric_blow-up}. We present only the equations for $m \geq 0$, since those with $m < 0$ are obtained via the reality condition $A_{-mj} = \overline{A_{mj}}$.
	
	\begin{proposition}
		\label{prop:K1_eqns}
		\textup{(Modulation equations in chart $\mathcal K_1$).}
		If $A_{j,1}$, $r_1$ and $\eps_1$ satisfy
		\begin{equation}
			\label{eq:K1_eqns}
			\begin{split}
				\partial_{t_1} A_{1,1} &= 4 \partial_{x_1}^2 A_{1,1} + \left( - 1 + \frac{\eps_1}{2} \right) A_{1,1} - 3 A_{1,1} |A_{1,1}|^2 , \\
				\partial_{t_1} r_1 &= - \frac{1}{2} r_1 \eps_1 , \\
				\partial_{t_1} \eps_1 &= 2 \eps_1^2 , \\
				\partial_{t_1} A_{2,1} &= 4 \partial_{x_1}^2 A_{2,1} + \left( -1 + \frac{\eps_1}{2} \right) A_{2,1} - a_{2,1} - 4i \partial_{x_1}^3 A_{1,1} + \eps_1 \nu_{1} 
				, \\
				\partial_{t_1} A_{j,1} &=  4 \partial_{x_1}^2 A_{j,1} + \left( -1 + \frac{\eps_1}{2} \right) A_{j,1} - a_{j,1} - 4i \partial_{x_1}^3 A_{j-1,1} 
				- \partial_{x_1}^4 A_{j-2,1} ,
			\end{split}
		\end{equation}
		where $j = 3, \ldots, N-2$, and $A_{mj,1}$ with $|m| \neq 1$ satisfy $A_{mj,1} = g_{mj,1}^{gl} \left(\textbf{A}_{mj,1}, \eps_1 \right)$, where
		\begin{equation}
			\label{eq:GL_manifold_K1}
			\begin{split}
				g_{mj,1}^{gl} \left(\textbf{A}_{mj,1}, \eps_1 \right) &:= 
				- (\mathcal L_{m,1}^{(0)})^{-1} \bigg( \mathcal L_{m,1}^{(1)} A_{mj-1,1} + \tilde{\mathcal L_{m,1}^{(2)}} A_{mj-2,1} \\
				&+ \mathcal L_{m,1}^{(3)} A_{mj-3,1} + \mathcal L_{m,1}^{(4)} A_{mj-4,1} - a_{mj,1} + \delta_{\alpha(m)+j,4} \eps_1 \nu_m \bigg) ,
			\end{split}
		\end{equation}
		then the formal requirement $\textup{Res} (r_1 \psi_{n,1}) = O(r_1^n)$ is satisfied.
	\end{proposition}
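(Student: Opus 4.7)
The proof plan is to substitute the chart $\mathcal{K}_1$ representation of the ansatz into the residual \eqref{eq:residual_def}, collect terms by Fourier mode $\me^{imx}$ and by power $r_1^K$, and verify that the stipulated equations are exactly those needed to annihilate all residual contributions of order $K \leq n-1$. Since the coordinate-independent derivations are carried out in detail in Section \ref{sec:Amplitude_reduction_via_geometric_blow-up}, Proposition \ref{prop:K1_eqns} essentially amounts to confirming that those derivations take the stated form when transported to chart $\mathcal{K}_1$ via the affine substitution $\bar v = -1$, $\bar\eps = \eps_1$, $r = r_1$.

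First I would dispose of the scaling variables. From the chart identifications $v = -r_1^2$ and $\eps = r_1^4 \eps_1$ combined with $\partial_t v = \eps$ and $\partial_t \eps = 0$ in \eqref{eq:sh_extended}, direct differentiation yields $\partial_t r_1 = -\tfrac{1}{2} r_1^3 \eps_1$ and $\partial_t \eps_1 = 2 r_1^2 \eps_1^2$; rewriting in desingularized time via $\partial_t = r_1^2 \partial_{t_1}$ produces the second and third lines of \eqref{eq:K1_eqns}. These also give the key identity $r_1^{-1}\partial_{t_1} r_1 = -\tfrac{1}{2}\eps_1$, which converts the global diagonal coefficient $\bar v - r^{-1}\partial_{\bar t} r$ appearing in \eqref{eq:GL_eqns_global} into the chart-specific $-1 + \tfrac{1}{2}\eps_1$ that appears uniformly throughout \eqref{eq:K1_eqns}.

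For the modulation equations themselves I would substitute $\Psi_n = \sum_{m,j} r_1^{\alpha(m)+j} A_{mj,1}(x_1, t_1) \me^{imx}$ into $\textup{Res}(\Psi_n)$ and expand every operator in local coordinates. A direct computation shows that $(1+\partial_x^2)^2 (f(x_1)\me^{imx})$ is a polynomial of degree four in $r_1$ whose successive coefficients are precisely the operators $\mathcal L_{m,1}^{(k)}$ for $k = 0,\ldots,4$, with $\mathcal L_{m,1}^{(0)} = -(1-m^2)^2$ sitting at order $r_1^0$. The cubic $-\Psi_n^3$ contributes the multinomial sums $a_{mj,1}$ (with the leading critical piece $-3A_{1,1}|A_{1,1}|^2$ coming from the three orderings of a self-interaction of $A_{1,1}$), while the source $\eps \mu(x) = r_1^4 \eps_1 \sum_m \nu_m \me^{imx}$ enters only at order $r_1^4$, accounting for the $\eps_1 \nu_1$ term in the $A_{2,1}$ equation and the Kronecker delta term in \eqref{eq:GL_manifold_K1}. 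Setting the coefficient of $r_1^K \me^{imx}$ to zero then proceeds mode by mode: for non-critical modes $|m| \neq 1$ the operator $\mathcal L_{m,1}^{(0)}$ is invertible, so the vanishing condition algebraically determines $A_{mj,1}$ from lower-order data and yields \eqref{eq:GL_manifold_K1}; for critical modes $m = \pm 1$ both $\mathcal L_{\pm 1, 1}^{(0)}$ and $\mathcal L_{\pm 1, 1}^{(1)}$ vanish, so the equation that first nontrivially constrains $A_{j,1}$ sits two $r_1$-orders higher, where the second-order operator supplies the parabolic $\partial_{t_1} A_{j,1} - 4\partial_{x_1}^2 A_{j,1} + (1 - \tfrac12\eps_1) A_{j,1}$ structure of \eqref{eq:K1_eqns}. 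Every order $K \leq n-1$ is annihilated in this way, so $\textup{Res}(r_1 \psi_{n,1}) = O(r_1^n)$. The main obstacle is not analytical but combinatorial: tracking the index shift between the residual order and the modulation-function index, pinning down the symmetry factors in $a_{mj,1}$ via the matching conditions $m_1+m_2+m_3 = m$ and $\sum_i(\alpha(m_i)+j_i) = \alpha(m)+j+2$ inherited from \eqref{eq:Amj_algebraic_eqns}, and verifying that the source term falls at exactly the predicted order.
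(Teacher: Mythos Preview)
Your proposal is correct and follows essentially the same approach as the paper: the paper simply states that Propositions \ref{prop:K1_eqns}--\ref{prop:K3_eqns} ``follow directly from the formal derivations in Section \ref{sec:Amplitude_reduction_via_geometric_blow-up}'', and your sketch is precisely a summary of those derivations specialized to chart $\mathcal K_1$ via $\bar v = -1$, $\bar\eps = \eps_1$, $r = r_1$. One minor imprecision: the operators $\mathcal L_{m,1}^{(k)}$ are not obtained solely from expanding $-(1+\partial_x^2)^2$; the second-order piece $\tilde{\mathcal L}_{m,1}^{(2)}$ also absorbs the $-\partial_{t_1}$, $-r_1^{-1}\partial_{t_1}r_1$ and $\bar v = -1$ contributions from the residual (cf.~\eqref{eq:L_m^i}), which is in fact where your parabolic structure $\partial_{t_1} A_{j,1} - 4\partial_{x_1}^2 A_{j,1} + (1-\tfrac12\eps_1)A_{j,1}$ comes from --- but you clearly account for this in the subsequent discussion, so this does not affect the argument.
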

	
	\begin{proposition}
		\label{prop:K2_eqns}
		\textup{(Modulation equations in chart $\mathcal K_2$).}
		If $A_{j,2}$ and $v_2$ satisfy
		\begin{equation}
			\label{eq:K2_eqns}
			\begin{split}
				\partial_{t_2} A_{1,2} &= 4 \partial_{x_2}^2 A_{1,2} + v_2 A_{1,2} - 3 A_{1,2} | A_{1,2} |^2 , \\
				\partial_{t_2} v_2 &= 1 , \\
				\partial_{t_2} A_{2,2} &= 4 \partial_{x_2}^2 A_{2,2} + v_2 A_{2,2} - a_{2, 2} - 4i \partial_{x_2}^3 A_{1,2} + \nu_{1} , \\
				\partial_{t_2} A_{j,2} &=  4 \partial_{x_2}^2 A_{j,2} + v_2 A_{j,2} - a_{j,2} - 4i \partial_{x_2}^3 A_{j-1,2} - \partial_{x_2}^4 A_{j-2,2} ,
			\end{split}
		\end{equation}
		where $j = 3, \ldots, N-2$, and $A_{mj,2}$ with $|m| \neq 1$ satisfy $A_{mj,2} = g_{mj,2}^{gl} \left(\textbf{A}_{mj,2}, v_2 \right)$, where
		\begin{equation}
			\label{eq:GL_manifold_K2}
			\begin{split}
				g_{mj,2}^{gl} \left(\textbf{A}_{mj,2}, v_2 \right) &:= 
				- (\mathcal L_{m,2}^{(0)})^{-1} \bigg( \mathcal L_{m,2}^{(1)} A_{mj-1,2} + \tilde{\mathcal L_{m,2}^{(2)}} A_{mj-2,2} \\
				&+ \mathcal L_{m,2}^{(3)} A_{mj-3,2} + \mathcal L_{m,2}^{(4)} A_{mj-4,2} - a_{mj,2} + \delta_{\alpha(m)+j,4} \nu_m \bigg) ,
			\end{split}
		\end{equation}
		then the formal requirement $\textup{Res} (r_2 \psi_{n,2}) = O(r_2^n)$ is satisfied.
	\end{proposition}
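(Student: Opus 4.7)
The plan is to verify this proposition by direct substitution of the blow-up ansatz into the residual and carefully tracking the resulting expansion in powers of $r_2$ and Fourier modes $e^{imx}$. The crucial simplification in chart $\mathcal{K}_2$ is that $\eps = r_2^4$ is a constant of motion, so $r_2$ is constant along trajectories and the desingularizations reduce to $\partial_t = r_2^2 \partial_{t_2}$ and $\partial_x = r_2 \partial_{x_2}$ with no extra terms from time-dependence of the blow-up radius. Substituting $\Psi_n = r_2 \psi_{n,2} = \sum_{m,j} r_2^{\alpha(m)+j} A_{mj,2}(x_2,t_2) e^{imx}$ into the residual \eqref{eq:residual_def}, using $v = r_2^2 v_2$ and $\eps \mu(x) = r_2^4 \sum_m \nu_m e^{imx}$, and expanding $(1+\partial_x^2)^2$ acting on $A_{mj,2}(x_2) e^{imx}$ produces the five-term operator pencil $\mathcal{L}_m^{(0)} + r_2 \mathcal{L}_m^{(1)} + r_2^2 \widetilde{\mathcal{L}_m^{(2)}}|_{\text{chart } 2} + r_2^3 \mathcal{L}_m^{(3)} + r_2^4 \mathcal{L}_m^{(4)}$, where in $\mathcal{K}_2$ the operator $\widetilde{\mathcal{L}_{m,2}^{(2)}} = -\partial_{t_2} - 2(1-3m^2)\partial_{x_2}^2 + v_2$ has no $r^{-1}\partial_{\bar t} r$ contribution precisely because $r_2$ is constant.

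Next, I would collect the coefficient of $r_2^K e^{imx}$ in $\text{Res}(r_2 \psi_{n,2})$ for each fixed mode $m \in I_N$ and each total power $K$. The contributions come from the five operator pieces applied to $A_{m,K-\alpha(m)-l,2}$ with $l \in \{0,1,2,3,4\}$, the cubic nonlinearity $\Psi_n^3$ which contributes products $A_{m_1 j_1,2} A_{m_2 j_2,2} A_{m_3 j_3,2}$ whenever $m_1+m_2+m_3 = m$ and $\sum_i (\alpha(m_i)+j_i) = K$, and the source contribution $\delta_{K,4} \nu_m$. The bifurcation between critical and non-critical modes is controlled by the zeroes of $\mathcal{L}_m^{(0)} = -(1-m^2)^2$. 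For $m \neq \pm 1$, this operator is multiplication by a nonzero constant, so solving the order-$r_2^{\alpha(m)+j}$ equation for the highest-index unknown $A_{mj,2}$ yields exactly the algebraic expression $g_{mj,2}^{gl}$ in \eqref{eq:GL_manifold_K2}. For $m = \pm 1$ one has $\mathcal{L}_{\pm 1}^{(0)} = \mathcal{L}_{\pm 1}^{(1)} = 0$, and the leading nonzero contribution at order $r_2^{j+2}$ comes from $\widetilde{\mathcal{L}_{\pm 1,2}^{(2)}} A_{j,2}$ together with cubic and source terms, giving an \emph{evolution} equation rather than an algebraic relation; the cubic term $3 A_{1,1,2}|A_{1,1,2}|^2$ at $K=3$ reproduces the nonlinear Ginzburg--Landau equation, and the terms $-4i \partial_{x_2}^3 A_{j-1,2}$ and $-\partial_{x_2}^4 A_{j-2,2}$ at higher orders come from $\mathcal{L}_{\pm 1}^{(3)} A_{j-1,2}$ and $\mathcal{L}_{\pm 1}^{(4)} A_{j-2,2}$ respectively. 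The source appears only in the equation for $A_{2,2}$ because $\alpha(\pm 1)+j = 4$ forces $j=2$.

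Finally, I would verify that the recursion closes and truncates cleanly. The matching condition on the cubic term ($m = m_1+m_2+m_3$ and $\alpha(m)+j+2 = \alpha(m_1)+\alpha(m_2)+\alpha(m_3)+j_1+j_2+j_3$) guarantees that $a_{mj,2}$ depends only on modulation functions of strictly lower combined order, so all equations can be solved in a well-defined recursive order: at each step the critical equations advance $A_{j,2}$ forward in $t_2$, and the algebraic equations then determine all non-critical $A_{mj,2}$ from lower-order data. The choice $\tilde\alpha(m) = N - \alpha(m) - 2\delta_{|m|,1}$ is exactly tuned so that no $A_{mj,2}$ with $\alpha(m)+j < n$ appears in the ansatz without being defined by one of the equations in the proposition, while the terms of order $r_2^n$ and higher involve either the uncontrolled $\mathcal{L}^{(3)}, \mathcal{L}^{(4)}$ actions on the highest-index modulation functions or cubic combinations whose orders exceed $n$, so $\text{Res}(r_2 \psi_{n,2}) = O(r_2^n)$ follows. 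The main obstacle I anticipate is the combinatorial bookkeeping of the cubic term and the verification that the index set $I_N$ together with the shape function $\tilde\alpha$ is exactly right so that the clustered-mode structure self-reproduces under the substitution without producing uncancelled lower-order residual terms; this would be established by induction on $K$, using that the cubic sum at order $K$ at mode $m$ only involves products of modulation functions at strictly smaller index $\alpha(m_i)+j_i < K$, hence already determined.
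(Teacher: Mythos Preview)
Your proposal is correct and follows essentially the same approach as the paper. The paper defers the proof of this proposition to the formal derivation in Section~\ref{sec:Amplitude_reduction_via_geometric_blow-up}, where the residual is expanded in powers of $r$ and Fourier modes, the operator $\mathcal L_m$ is split into $\mathcal L_m^{(0)},\ldots,\mathcal L_m^{(4)}$, and matching at each order yields evolution equations at the critical modes and algebraic equations at non-critical modes; your observation that $r_2$ is constant in $\mathcal K_2$ so that the $r^{-1}\partial_{\bar t} r$ term drops from $\tilde{\mathcal L_{m,2}^{(2)}}$ is exactly the chart-specific simplification the paper records.
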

	
	
	\begin{proposition}
		\label{prop:K3_eqns}
		\textup{(Modulation equations in chart $\mathcal K_3$).}
		If $A_{j,3}$, $r_3$ and $\eps_3$ satisfy
		\begin{equation}
			\label{eq:K3_eqns}
			\begin{split}
				\partial_{t_3} A_{1,3} &= 4 \partial_{x_3}^2 A_{1,3} + \left( 1 + \frac{\eps_3}{2} \right) A_{1,3} - 3 A_{1,3} |A_{1,3}|^2 , \\
				\partial_{t_3} r_3 &= \frac{1}{2} r_3 \eps_3 , \\
				\partial_{t_3} \eps_3 &= - 2 \eps_3^2 , \\
				\partial_{t_3} A_{2,3} &= 4 \partial_{x_3}^2 A_{2,3} + \left( 1 - \frac{\eps_3}{2} \right) A_{2,3} - a_{2,3} - 4i \partial_{x_3}^3 A_{1,3} + \eps_3 \nu_{1} 
				, \\
				\partial_{t_3} A_{j,3} &=  4 \partial_{x_3}^2 A_{j,3} + \left( 1 - \frac{\eps_3}{2} \right) A_{j,3} - a_{j,3} - 4i \partial_{x_3}^3 A_{j-1,3} - \partial_{x_3}^4 A_{j-2,3} ,
			\end{split}
		\end{equation}
		where $j = 3, \ldots, N-2$, and $A_{mj,3}$ with $|m| \neq 1$ satisfy $A_{mj,3} = g_{mj,3}^{gl} \left(\textbf{A}_{mj,3}, \eps_3 \right)$, where
		\begin{equation}
			\label{eq:GL_manifold_K3}
			\begin{split}
				g_{mj,3}^{gl} \left(\textbf{A}_{mj,3}, \eps_3 \right) &:= 
				- (\mathcal L_{m,3}^{(0)})^{-1} \bigg( \mathcal L_{m,3}^{(1)} A_{mj-1,3} + \tilde{\mathcal L_{m,3}^{(2)}} A_{mj-2,3} \\
				&+ \mathcal L_{m,3}^{(3)} A_{mj-3,3} + \mathcal L_{m,3}^{(4)} A_{mj-4,3} - a_{mj,3} + \delta_{\alpha(m)+j,4} \eps_3 \nu_m \bigg) ,
			\end{split}
		\end{equation}
		then the formal requirement $\textup{Res} (r_3 \psi_{n,3}) = O(r_3^n)$ is satisfied.
	\end{proposition}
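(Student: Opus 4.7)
The plan is to obtain the chart $\mathcal{K}_3$ equations by translating the already-derived global modulation equations \eqref{eq:GL_eqns_global} and \eqref{eq:Amj_algebraic_eqns} from Section \ref{sec:Amplitude_reduction_via_geometric_blow-up} into the affine projective coordinates defined by \eqref{eq:charts}. Chart $\mathcal{K}_3$ corresponds to fixing $\bar v = 1$ and reading $\bar \eps$ as the free variable $\eps_3$ while absorbing the overall scale into $r_3$, so the proof naturally splits into (i) a purely algebraic derivation of the ODEs for $r_3$ and $\eps_3$, and (ii) a substitution of the chart data into the global equations to obtain the PDE parts.

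For (i), I would differentiate the defining chart relation $v = r_3^2$ and use $\partial_t v = \eps = r_3^4 \eps_3$ to obtain $2 r_3 \partial_t r_3 = r_3^4 \eps_3$, i.e.~$\partial_t r_3 = r_3^3 \eps_3/2$. The chart-local time desingularisation $\partial_t = r_3^2 \partial_{t_3}$, inherited from \eqref{eq:desingularization} via the identification $r = r_3$, then yields $\partial_{t_3} r_3 = r_3 \eps_3/2$. Similarly, differentiating $\eps = r_3^4 \eps_3$ with $\partial_t \eps = 0$ and dividing through by the appropriate power of $r_3$ gives $\partial_{t_3} \eps_3 = -2 \eps_3^2$.

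For (ii), I would substitute $\bar v = 1$, $\bar \eps = \eps_3$, and $r_3^{-1} \partial_{t_3} r_3 = \eps_3/2$ (from step (i)) directly into \eqref{eq:GL_eqns_global} and \eqref{eq:Amj_algebraic_eqns}. The global coefficient $\bar v - r^{-1} \partial_{\bar t} r$ simplifies to $1 - \eps_3/2$ in chart $\mathcal{K}_3$, producing the critical-mode equation for $A_{1,3}$ and, via analogous substitution at higher orders, the equations for $A_{j,3}$ with $j \geq 2$. For non-critical modes $|m| \neq 1$, the same substitution in \eqref{eq:Amj_algebraic_eqns} produces the formula for $g_{mj,3}^{gl}$; the only operator affected is $\tilde{\mathcal{L}_{m}^{(2)}}$, which picks up the chart coefficient in place of $\bar v - r^{-1}\partial_{\bar t}r$, while the invertibility of $\mathcal{L}_m^{(0)} = -(1-m^2)^2$ at $|m| \neq 1$ is preserved under the chart transformation (and justifies solving algebraically for $A_{mj,3}$).

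The main obstacle I anticipate is the bookkeeping involved in the translation between global and chart formulations: I must verify that after applying the spatial desingularisation $\partial_x = r_3 \partial_{x_3}$ together with the time desingularisation above, each monomial in the chart-restricted residual $\textup{Res}(r_3 \psi_{n,3})$ is assigned the correct power of $r_3$ and sits at the correct Fourier mode $\me^{imx}$. Once consistency with the global formal expansion is established, the requirement $\textup{Res}(r_3 \psi_{n,3}) = O(r_3^n)$ in the chart is equivalent to $\textup{Res}(r \psi_n) = O(r^n)$ in global coordinates on the domain of chart $\mathcal{K}_3$, since the blow-up map is a local diffeomorphism away from $\{r_3 = 0\}$, and the chart equations then follow directly from the global formal derivation already performed in Section \ref{sec:Amplitude_reduction_via_geometric_blow-up}.
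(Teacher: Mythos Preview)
Your proposal is correct and follows essentially the same approach as the paper: the paper states that Propositions \ref{prop:K1_eqns}--\ref{prop:K3_eqns} ``follow directly from the formal derivations in Section \ref{sec:Amplitude_reduction_via_geometric_blow-up}'', and your plan --- derive the $(r_3,\eps_3)$ ODEs from the chart relations in \eqref{eq:charts} and then specialise the global equations \eqref{eq:GL_eqns_global}, \eqref{eq:Amj_algebraic_eqns} using $\bar v = 1$, $\bar\eps = \eps_3$, $r^{-1}\partial_{\bar t} r = \eps_3/2$ --- is exactly that specialisation. As a side note, your computation correctly yields the linear coefficient $1 - \eps_3/2$ for $A_{1,3}$, which matches the version of the equations used in Section \ref{sub:K3_dynamics} (see \eqref{eq:K3_eqns2}); the $1 + \eps_3/2$ appearing in the first line of \eqref{eq:K3_eqns} is a typo in the statement, not an error in your argument.
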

	
	
	Equations \eqref{eq:GL_manifold_K1}, \eqref{eq:GL_manifold_K2} and \eqref{eq:GL_manifold_K3} define local coordinate representations of the GL manifold $\mathcal M^{gl}$ charts $\mathcal K_1$, $\mathcal K_2$ and $\mathcal K_3$ respectively. We have
	\begin{equation}
		\begin{split}
			\label{eq:Mgl_l}
			\mathcal M_1^{gl} &= \left\{ (\psi_{n,1}, r_1, \eps_1) \in X \times \mathbb R_+ \times \mathbb R_+ : A_{mj,1} = g_{mj,1}^{gl} \left(\textbf{A}_{mj,1}, \eps_1 \right) , |m| \neq 1, j \in \{ 1,\ldots,\alpha(m) \} \right\} , \\
			\mathcal M_2^{gl} &= \left\{ (\psi_{n,2}, v_2, r_2) \in X \times \mathbb R \times \mathbb R_+ : A_{mj,2} = g_{mj,2}^{gl} \left(\textbf{A}_{mj,2}, v_2 \right) , |m| \neq 1, j \in \{ 1,\ldots,\alpha(m) \} \right\} , \\
			\mathcal M_3^{gl} &= \left\{ (\psi_{n,3}, r_3, \eps_3) \in X \times \mathbb R_+ \times \mathbb R_+ : A_{mj,3} = g_{mj,3}^{gl} \left(\textbf{A}_{mj,3}, \eps_3 \right) , |m| \neq 1, j \in \{ 1,\ldots,\alpha(m) \} \right\} .
		\end{split}
	\end{equation}
	
	\begin{remark}
		\label{rem:invariance}
		Within the subspace of spatially periodic solutions the GL manifold is a center manifold; see \cite[Ch.~2.4]{Haragus2010} and \cite{Avitabile2020} for a proofs in the static and fast-slow settings respectively. For most solution spaces of interest, however, the GL manifold derived in classical modulation theory is not a center manifold due to the continuous spectrum and absence of a spectral gap. Nevertheless, it retains the attractivity properties which make it a useful object for approximating solutions to the SH equation \cite{Eckhaus1993,Schneider1995b}. 
	\end{remark}

	\subsection{Error estimates and dynamics}
	\label{sub:results_error_estimates_and_dynamics}
	
	Having presented the dynamic ansatz $\Psi_n$ and the corresponding modulation equations in Section \ref{sub:results_modulation_eqns}, we turn to our main results on the dynamics of modulation equations, the validity of the approximation $\Psi_n$, and, finally, on the dynamics of solutions to the dynamic SH equation \eqref{eq:sh_dynamic}. Since we consider only the improved approximation $\Psi_n$, we drop the subscript $n$ for notational simplicity.
	
	\subsubsection{Approximation dynamics}
	
	Our first result describes the behaviour of the approximation $\Psi$, which is determined via \eqref{eq:Psi_n_global} by the evolution of the modulation functions $A_{mj}$.
	We are interested in initial conditions $(\Psi(x,0), v(0)) = (\Psi^\ast(x), v(0))$ with $v(0) = - \rho_{in}$ for some fixed $\rho_{in} > 0$ and
	\begin{equation}
		\label{eq:Psi_ast}
		\Psi^\ast(x) = 
		\sum_{m \in I_N} \sum_{j=1}^{\tilde \alpha(m)} \rho_{in}^{(\alpha(m) + j) / 2} A_{mj}^\ast \left( \rho_{in}^{1/2} x \right) \me^{imx} , 
	\end{equation}
	where $A^\ast_{mj} = g^{gl}(\textbf{A}_{mj}^\ast, (\bar v^\ast, \bar \eps^\ast))$ for each $m \neq 1$ and $(\bar v^\ast, \bar \eps^\ast) := (\bar v(0), \bar \eps(0))$. For the rigorous results derived in this work, we work in the uniformly local Sobolev spaces $X = H_{ul}^\theta$ defined in Section \ref{sec:Background}. 
	In particular, it will be helpful to consider the initial conditions as a subset of the section
	\begin{equation}
		\label{eq:Delta_in}
		\Delta^{in} := \left\{ (\Psi^\ast, - \rho_{in}) : \Psi^\ast \in H_{ul}^\theta , \| \Psi \|_{H_{ul}^\theta} \leq K  \right\} ,
	\end{equation}
	where $K > 0$ is small but fixed. \SJ{Here and in the following, for each fixed $\eps \in [0,\eps_0]$ we identify an initial condition $(\Psi^\ast, - \rho_{in}) \in \Delta^{in}$ with a corresponding initial condition $(\Psi^\ast, -\rho_{in}, \eps) \in \Delta_\eps^{in} := \Delta^{in} \times [0,\eps_0]$ in the extended $(\Psi, v, \eps)$-space (the identification is achieved via the projection $(\Psi^\ast, -\rho_{in}, \eps) \mapsto (\Psi^\ast, -\rho_{in})$, which is a bijection if $\eps$ is fixed)}.
	
	In order to state our results, we also define the sections
	\begin{equation}
		\label{eq:Delta_mid}
		\Delta^{mid} := \left\{ (\Psi^\ast, \rho_{mid} \eps^{1/2} ) : \Psi^\ast \in H_{ul}^\theta , \| \Psi \|_{H_{ul}^\theta} \leq K  \right\}
	\end{equation}
	and
	\begin{equation}
		\label{eq:Delta_out}
		\Delta^{out} := \left\{ (\Psi, \rho_{out}) : \Psi^\ast \in H_{ul}^\theta , \| \Psi \|_{H_{ul}^\theta} \leq K  \right\}
	\end{equation}
	for positive constants $\rho_{mid}$ and $\rho_{out} > 0$. \SJ{Similarly, for each fixed $\eps \in [0,\eps_0]$, points in $\Delta^{mid}$ and $\Delta^{out}$ have natural identifications with points in the extended sections $\Delta_\eps^{mid} := \Delta^{mid} \times [0,\eps_0]$ and $\Delta_\eps^{out} := \Delta^{out} \times [0,\eps_0]$ respectively}.
	
	The section $\Delta^{mid}$ is located at a distance of $O(\eps^{1/2})$ in $v$ away from the Turing instability in the static SH equation \eqref{eq:sh}, which occurs for $v = 0$. This is precisely the scaling regime in which classical asymptotic theory for the onset of patterns after Turing instability occurs in the static SH equation, since in chart $\mathcal K_2$ we have $\eps = r_2^4 = \delta^2$, where $\delta \ll 1$ is the small perturbation parameter used to define the classical approximations \eqref{eq:GL_ansatz} and \eqref{eq:GL_ansatz_n}. The time taken for solutions to reach $\Delta^{mid}$ is given by
	\[
	T_{mid} := \eps^{-1} (\rho_{in} + \eps^{1/2} \rho_{mid}) ,
	\]
	which is obtained by directly integration of the equation for $v$ in \eqref{eq:sh_dynamic}. 
	The section $\Delta^{out}$ is located at an $O(1)$ distance from the static Turing instability at $v=0$. The time taken for solutions to reach $\Delta^{out}$ is given by
	\[
	T := \eps^{-1} (\rho_{in} + \rho_{out}) .
	\]
	In the following we present results on the maps $\pi^{mid} : \Delta^{in} \ni (\Psi^\ast, -\rho_{in}) \mapsto (\Psi(\cdot,T_{mid}), \rho_{mid} \eps^{1/2}) \in \Delta^{mid}$ and $\pi : \Delta^{in} \ni (\Psi^\ast, -\rho_{in}) \mapsto (\Psi(\cdot,T), \rho_{out})$ induced by forward evolution of the modulation equations in Section \ref{sub:results_modulation_eqns}.
	
	\
	
	Our first result concerns the map $\pi^{mid} : \Delta^{in} \to \Delta^{mid}$. 
	The constant $\nu_1$ is the same constant appearing in the definition of the source term in \eqref{eq:mu}.
	
	\begin{lemma}
		\label{lem:Ansatz_dynamics}
		\textup{(Approximation dynamics).}
		Assume that $\mu(x)$ is given by \eqref{eq:mu} and that the initial condition $\Psi^\ast$ is such that $A_{11}^\ast \in H_{ul}^{\theta_A}$, where $\theta_A = 1 + 3 (n-4) + \theta$ and $\theta > 1/2$. Then there exists an $\eps_0 > 0$ such that for all $\eps \in (0,\eps_0)$, $\Psi(\cdot,t) \in H_{ul}^\theta$ for all $t \in [0,T_{mid}]$. In particular, the map $\pi^{mid} : \Delta^{in} \to \Delta^{mid}$ is well-defined and given by
		\begin{equation}
			\label{eq:pi_mid_asymptotics}
			\pi^{mid} : 
			\begin{pmatrix}
				\Psi^\ast(x) \\
				- \rho_{in}
			\end{pmatrix}
			\mapsto 
			\begin{pmatrix}
				\eps^{1/2} \left( A_{12,2}(\eps^{1/4} x, \eps^{1/2}T_{mid}) \me^{ix} + c.c. \right) + \eps^{3/4} R(x,\eps) \\
				\eps^{1/2} \rho_{mid}
			\end{pmatrix} ,
		\end{equation}
		where $R(\cdot,\eps) \in H_{ul}^\theta$ and the function $A_{12,2}$ satisfies
		\[
		\left\| A_{12,2}(\eps^{1/4} \cdot,\eps^{1/2} T_{mid}) \me^{i\cdot} + c.c. \right\|_{H_{ul}^\theta} \leq 
		C \left( |\nu_1| + \me^{- \kappa \rho_{in}^2 / 2 \eps} \right) ,
		\]
		for a constant $\kappa \in (0,1)$.
	\end{lemma}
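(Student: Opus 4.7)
The plan is to analyse the modulation equations chart-by-chart, exploiting the overlap of charts $\mathcal K_1$ and $\mathcal K_2$ to cover the evolution from $\Delta^{in}$ (where $v = -\rho_{in}$) to $\Delta^{mid}$ (where $v = \rho_{mid}\eps^{1/2}$, equivalently $v_2 = \rho_{mid}$ since $r_2 = \eps^{1/4}$). Once uniform bounds on the modulation functions $A_{mj,l}$ are in hand, the stated formula for $\pi^{mid}$ follows immediately by evaluating the ansatz \eqref{eq:Psi_n_global} at $t = T_{mid}$ and reading off the terms of each order in $\eps$.

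In chart $\mathcal K_1$ one first integrates the decoupled pair $\partial_{t_1}\eps_1 = 2\eps_1^2$, $\partial_{t_1}r_1 = -r_1\eps_1/2$ explicitly, with $\eps = r_1^4\eps_1$ as a constant of motion. The critical amplitude $A_{1,1}$ satisfies a GL equation whose linear coefficient $-1 + \eps_1(t_1)/2$ stays bounded away from zero and negative up to the chart-transition value $\eps_1 = 1$. Energy estimates in $H_{ul}^{\theta_A}$ for the equation \eqref{eq:K1_eqns}, combined with the cubic damping and the semigroup bound for $4\partial_{x_1}^2$ in uniformly local Sobolev spaces, yield an exponential-in-$t_1$ decay estimate. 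Explicit evaluation of $\int_0^{t_1^\ast}\bigl(-1 + \eps_1(s)/2\bigr)\,ds$ over the chart-$\mathcal K_1$ lifetime $t_1^\ast \sim \rho_{in}^2/(2\eps)$ gives an upper bound of the form $-\rho_{in}^2/(2\eps) + O(\log(1/\eps))$, so $A_{1,1}$ exits $\mathcal K_1$ already exponentially small. The higher-order critical amplitudes $A_{j,1}$, $j \geq 2$, satisfy linear driven equations with the same stable linear operator and lower-order forcing; induction on $j$ together with the algebraic relation \eqref{eq:GL_manifold_K1} for the non-critical modes gives uniform bounds, with all contributions either exponentially small or controlled by the source coefficients $|\nu_m|$.

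Matching to chart $\mathcal K_2$ at $v_2 = -1$ via the explicit coordinate change in \eqref{eq:charts}, the amplitudes $A_{j,2}$ start exponentially small. The chart variable satisfies $v_2(t_2) = -1 + t_2$, so the linearization of the $A_{1,2}$ equation has integrating factor $\exp\!\int_0^{t_2}v_2(s)\,ds$, which at $t_2 = 1 + \rho_{mid}$ equals $\exp\bigl((\rho_{mid}^2 - 1)/2\bigr)$. Combining this $O(1)$ growth with the $\me^{-\rho_{in}^2/(2\eps)}$ inherited from $\mathcal K_1$ and absorbing the logarithmic correction into $\kappa$, a Gronwall argument that treats the cubic $3A_{1,2}|A_{1,2}|^2$ perturbatively (using that $\theta > 1/2$ makes $H_{ul}^\theta$ an algebra) delivers $\|A_{1,2}(\cdot,\eps^{1/2}T_{mid})\|_{H_{ul}^{\theta_A}} \leq C\me^{-\kappa\rho_{in}^2/(2\eps)}$ for some $\kappa \in (0,1)$. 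The first correction $A_{2,2}$ (which is the $A_{12,2}$ of the lemma) satisfies the linear driven equation
\[
\partial_{t_2}A_{2,2} = 4\partial_{x_2}^2 A_{2,2} + v_2 A_{2,2} - a_{2,2} - 4\i\partial_{x_2}^3 A_{1,2} + \nu_1,
\]
in which both $a_{2,2}$ and $\partial_{x_2}^3 A_{1,2}$ are exponentially small, leaving $\nu_1$ as the effective forcing. A Duhamel estimate against the semigroup of $4\partial_{x_2}^2 + v_2(t_2)$ over $t_2 \in [0, 1+\rho_{mid}]$ then yields the advertised bound $\|A_{2,2}(\cdot,\eps^{1/2}T_{mid})\|_{H_{ul}^\theta} \leq C(|\nu_1| + \me^{-\kappa\rho_{in}^2/(2\eps)})$. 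The remaining critical $A_{j,2}$ with $j \geq 3$ and the algebraically determined non-critical $A_{mj,2}$ are bounded similarly and contribute to $\Psi$ at amplitudes $O(r_2^3) = O(\eps^{3/4})$ or smaller, collecting into the remainder $R$. The regularity budget $\theta_A = 1 + 3(n-4) + \theta$ reflects the loss of three derivatives per recursion step from the $\partial_{x_2}^3 A_{j-1,2}$ coupling, chained over $n-4$ higher-order critical levels, plus one derivative for classical solvability.

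The main obstacle I anticipate is making the delayed-loss-of-stability estimate for $A_{1,2}$ rigorous in the uniformly local setting: one must show that the nonlinear cubic self-interaction does not spoil the exponential smallness of the linear multiplier over a time interval of length $O(\eps^{-1/2})$ in the original time variable, uniformly in $\eps$. The difficulty is compounded by the absence of Plancherel in $H_{ul}^\theta$, so the standard Fourier-based semigroup estimates must be replaced by partition-of-unity convolution arguments to control the semigroup of $4\partial_{x}^2 + v_2(t_2)$. A secondary but technical hurdle is ensuring that the exponentially small bound transfers cleanly under the $\mathcal K_1 \to \mathcal K_2$ coordinate change without degradation, which will follow from continuity of the blow-up map on the chart overlap but requires some bookkeeping to track derivatives across the transition.
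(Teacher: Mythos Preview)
Your proposal is essentially the paper's own approach: a chart-by-chart analysis ($\mathcal K_1$ then $\mathcal K_2$), explicit integration of the $(r_1,\eps_1)$ pair, exponential decay of $A_{1,1}$ via the negative linear coefficient $-1+\eps_1/2$ combined with an integrating factor and a Gr\"onwall step, transfer to $\mathcal K_2$ via the explicit change of coordinates \eqref{eq:kappa_maps}, and a Duhamel estimate for $A_{2,2}$ in which the $\nu_1$ source dominates the exponentially small $A_{1,2}$-driven terms. The recursive handling of higher $A_{j,l}$ and the regularity count are also as in the paper.

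Two small points of comparison. First, the paper does not transition at $\eps_1=1$ but at $\eps_1=\zeta$ with $\zeta>0$ a small fixed parameter (so $v_2^\ast=-\zeta^{-1/2}$ is large negative); this is what allows the decay constant $\kappa$ to be pushed arbitrarily close to $1$ in the final bound. Your choice $\eps_1=1$ would still work but yields a cruder $\kappa$. Second, your anticipated obstacle about semigroup estimates in $H_{ul}^\theta$ without Plancherel is handled in the paper not by partition-of-unity arguments but by the Fourier multiplier lemma for uniformly local spaces (Lemma~\ref{lem:US_8.3.7}, from \cite{Schneider1994}), which bounds $\|\me^{t\Lambda}\|_{H_{ul}^\theta\to H_{ul}^\theta}$ in terms of the $C_b^2$ norm of the symbol; this cleanly resolves both concerns you raise at the end.
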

	
	Lemma \ref{lem:Ansatz_dynamics} is proved in Section \ref{sec:Proof_of_thm_dynamics}, and the behaviour of the norm is sketched in Figure \ref{fig:approximation_dynamics}(a). The main idea for the proof is to derive direct estimates in charts $\mathcal K_l$ after blowing-up, using multiplier theory to control the semigroup and different techniques like integrating factors and Gr\"onwall inequalities to bound the additional terms.
	
	\begin{figure}[t!]
		\centering
		\subfigure[General $\mu(x)$.]{\includegraphics[width=.7\textwidth]{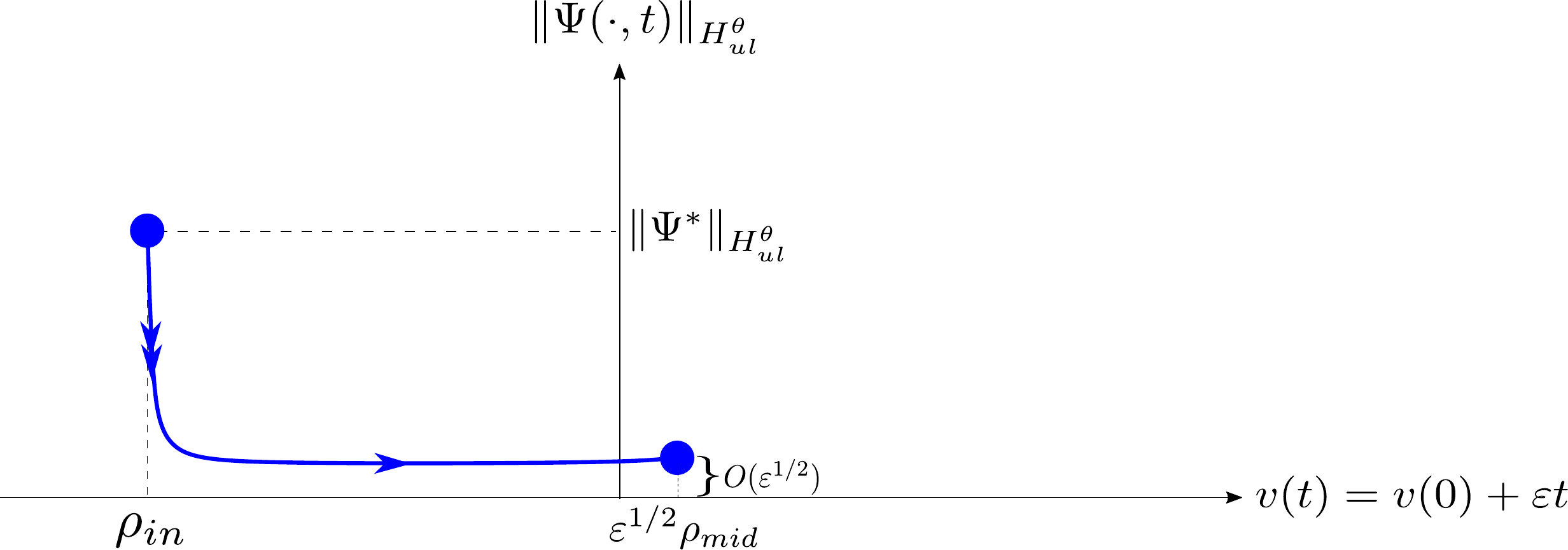}}
		\\
		\subfigure[$\mu(x) \equiv 0$]{\includegraphics[width=.7\textwidth]{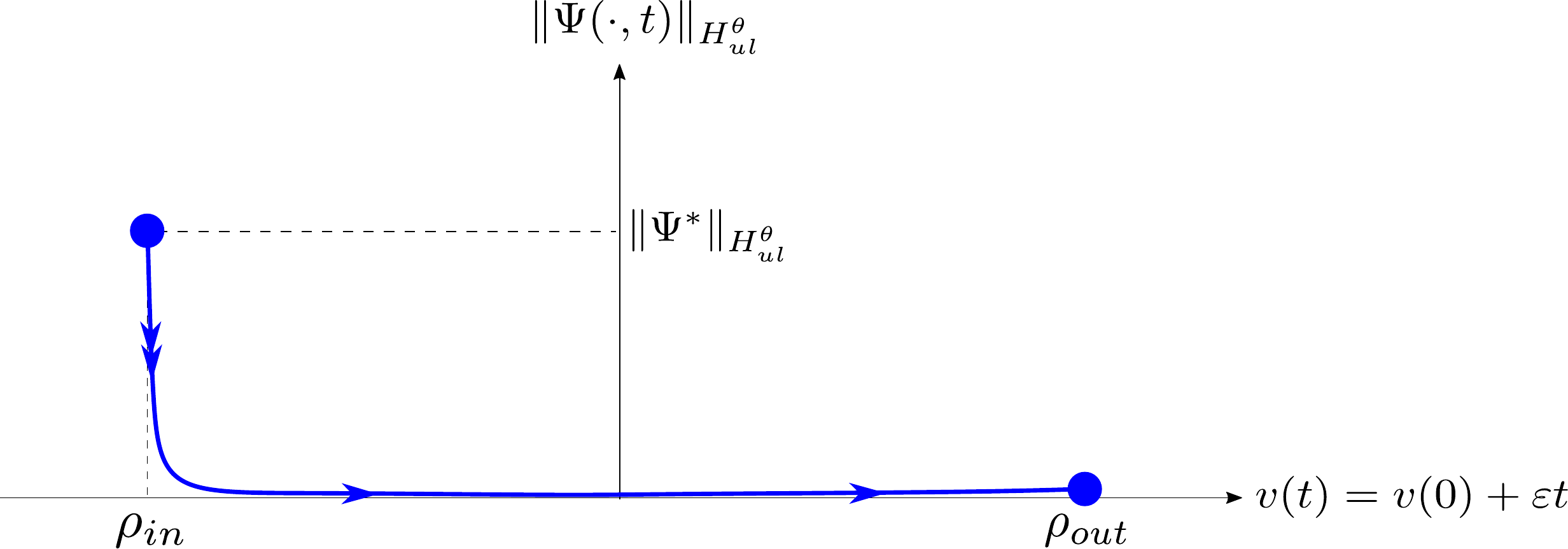}}
		\caption{$H_{ul}^\theta$ norm of $\Psi(\cdot,t)$ under forward evolution in time, starting from an initial condition in $\Delta^{in}$ with $v(0) = - \rho_{in} < 0$. Distinct behaviours are observed for general source functions $\mu(x) \neq 0$ in (a) vs.~the case $\mu(x) \equiv 0$ in (b). Figure (a) shows the situation described in Lemma \ref{lem:Ansatz_dynamics}, for which the solutions at $\Delta^{mid}$ are algebraically small but non-trivial if $\mu(x) \neq 0$. The delay phenomenon described by Lemma \ref{lem:Ansatz_dynamics_mu_0} in the case that $\mu(x) \equiv 0$ is sketched in Figure (b). According to \eqref{eq:delay_cond}, $\| \Psi(\cdot,T) \|_{H_{ul}^\theta}$ is exponentially small at $\Delta^{out}$ as long as the ratio $\rho_{out} / \rho_{in}$ is bounded below $1$.}
		\label{fig:approximation_dynamics}
	\end{figure}
	
	In contrast to the established asymptotic theory for the classical SH equation \eqref{eq:sh} with parameter values $v = O(\eps^{1/2}) = O(\delta^2)$, the leading order contribution in \eqref{eq:pi_mid_asymptotics} only arises at a higher order due to the effect of the source term $\eps \mu(x)$. In fact, it follows from the proof of Lemma \ref{lem:Ansatz_dynamics} in Section \ref{sec:Proof_of_thm_dynamics} and Lemma \ref{lem:Ansatz_dynamics_mu_0} below that the leading order approximation $\Psi_{GL} = r \psi_{GL}$, which is formally of order $O(\eps^{1/4}) = O(\delta)$ in this regime, is actually exponentially small with
	\[
	\| \Psi_{GL}(\cdot,T_{mid}) \|_{H_{ul}^\theta} \leq C \me^{- \kappa \rho_{in}^2 / 2 \eps} 
	\]
	for constants $C > 0$ and $\kappa \in (0,1)$. This is because the leading order GL-type approximation leads to a modulation equation of the form \eqref{eq:GL_eqn_global} (see also Proposition \ref{prop:GL_eqns_leading_order}), which features a \textit{delayed loss of stability} due in part to the presence of a left-right symmetry $A \leftrightarrow - A$. If $\mu(x) \neq 0$ with $\nu_1 \neq 0$, this symmetry is broken in the equations for $A_{\pm12}$, which explains why the leading order contribution to the asymptotics in \eqref{eq:pi_mid_asymptotics} is $\eps^{1/2} (A_{12,2}(\eps^{1/4} x, \eps^{1/2}T_{mid}) \me^{ix} + c.c.)$. If $\nu_1 = 0$ but $\nu_m \neq 0$ for some $m \neq \pm 1$, the symmetry is broken at a higher order and the leading order contribution is expected to be algebraic of size $O(\eps^{l/4})$ for a positive integer $l \geq 3$ which is increasing with $|m|$.
	
	\begin{remark}
		\label{rem:rolls}
		The modulation functions $A_{\pm12,2}$ determining the leading order asymptotics in \eqref{eq:pi_mid_asymptotics} are determined by the evolution equation for $A_{2,2}$ in system \eqref{eq:K2_eqns} (recall that we write $A_{j,l} = A_{\pm1j,l}$ in Propositions \ref{prop:K1_eqns}-\ref{prop:K3_eqns}). Since the formal leading order modulation functions $A_{\pm12,1}$ are exponentially small, $a_{2,1}$ and $-4i\partial_{x_2}^3 A_{1,1}$ are also exponentially small. One therefore expects that solutions for $A_{\pm12,2}$ are closely approximated by solutions of the linear inhomogeneous equations
		\[
		\partial_{t_2} A_{\pm12,2} = 4 \partial_{x_2}^2 A_{\pm12,2} + v(t_2) A_{\pm12,2} + \nu_{\pm1} , \qquad A_{\pm12,2}(x_2,0) = A_{\pm12,2}^\ast(x_2) .
		\]
		Based on the calculations in Section \ref{sec:Proof_of_thm_dynamics}, the leading order approximation for the initial condition is $A_{\pm12,2}(x_2) \sim \nu_1 \zeta^{-1/2} f_{2,1}(0)$, where $\zeta$ and $f_{2,1}(0)$ are small but fixed positive constants (see Lemma \ref{lem:K1_A2}). Applying the solution formula for the heat equation (see e.g.~\cite[Ch.~2.3]{Evans2010}) leads to the following prediction for the leading order correction
		\begin{equation}
			\label{eq:A12_guess}
			A_{\pm12,2}(x_2,t_2) \approx 
			\nu_1 \zeta^{-1/2} \left( f_{2,1}(0) + t_2 \right) \exp\left(- \frac{t_2}{\sqrt{\zeta}} + \frac{t_2^2}{2} \right) ,
		\end{equation}
		which is constant and bounded when evaluated at the time $T_2$ corresponding to the intersection of solutions with $\Delta^{mid}$. This suggests that all solutions described by Lemma \ref{lem:Ansatz_dynamics} are closely approximated by spatially periodic ``rolls" of size $O(\eps^{1/2})$ at $\Delta^{mid}$. We leave the rigorous justification of this claim for future work.
	\end{remark}
	
	Our second result concerns the map $\pi : \Delta^{in} \to \Delta^{out}$. It applies specifically to the symmetric case $\mu(x) \equiv 0$.
	
	\begin{lemma}
		\label{lem:Ansatz_dynamics_mu_0}
		\textup{(Approximation dynamics, $\mu(x) \equiv 0$).}
		Assume that $\mu(x) \equiv 0$ and that the initial condition $\Psi^\ast$ is such that $A_{11}^\ast \in H_{ul}^{\theta_A}$, where $\theta_A = 1 + 3 (n-4) + \theta$ and $\theta > 1/2$. Assume that
		\begin{equation}
			\label{eq:delay_cond}
			\frac{\rho_{out}}{\rho_{in}} \leq 1 - \omega ,
		\end{equation}
		where the constant $\omega = \omega(K) > 0$ \SJ{is fixed but} arbitrarily close to zero if $K > 0$ is sufficiently small, where $K$ is the constant which bounds size of initial conditions $\Psi^\ast$ in $\Delta^{in}$. Then there exists an $\eps_0 > 0$ such that for all $\eps \in (0,\eps_0)$, $\Psi(\cdot,t) \in H_{ul}^\theta$ for all $t \in [0,T]$. In particular, the map $\pi : \Delta^{in} \to \Delta^{out}$ is well-defined and given by
		\[
		\pi : 
		\begin{pmatrix}
			\Psi^\ast(x) \\
			- \rho_{in}
		\end{pmatrix}
		\mapsto 
		\begin{pmatrix}
			\Psi(x,T) \\
			\rho_{out}
		\end{pmatrix} ,
		\]
		where
		\begin{equation}
			\label{eq:Psi_bound}
			\| \Psi(\cdot, T) \|_{H_{ul}^\theta} \leq
			C \exp\left(- \frac{\kappa_-}{2\eps} \left(\rho_{in}^2 - \kappa_+ \rho_{out}^2 \right) \right) , 
		\end{equation}
		where $\kappa_- \in (0,1)$ and $\kappa_+ > 1$ are arbitrarily close to $1$ for sufficiently small $K$.
	\end{lemma}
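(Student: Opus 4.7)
The plan is to estimate $\Psi$ separately in each of the three charts $\mathcal{K}_1$, $\mathcal{K}_2$, $\mathcal{K}_3$ from \eqref{eq:charts} via the modulation systems \eqref{eq:K1_eqns}, \eqref{eq:K2_eqns}, \eqref{eq:K3_eqns}, and then chain the bounds through the matching sections implicit in the blow-up. The symmetric assumption $\mu \equiv 0$ forces $\nu_m = 0$ for every $m$, so every source term $\eps_l \nu_{\pm 1}$ and $\delta_{\alpha(m)+j,4}\eps_l\nu_m$ drops out of the modulation hierarchy. The non-critical modes $A_{mj,l}$, $|m|\neq 1$, are then determined algebraically by the critical functions $A_{j,l}$ via \eqref{eq:GL_manifold_K1}--\eqref{eq:GL_manifold_K3}, so it suffices to bound the critical hierarchy $\{A_{j,l}\}_{j=1}^{N-2}$ in each chart and reconstruct $\Psi$ from \eqref{eq:Psi_n_global}.

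The main ingredient in each chart is an $H_{ul}^\theta$ semigroup estimate for the linear part $4\partial_{x_l}^2 + c_l(t_l)$ (with $c_l$ the explicit linear coefficient read off from \eqref{eq:K1_eqns}--\eqref{eq:K3_eqns}) combined with a Duhamel/Gr\"onwall bootstrap absorbing the cubic nonlinearity and the linear forcings $-4i\partial_{x_l}^3 A_{j-1,l}$, $-\partial_{x_l}^4 A_{j-2,l}$, $a_{j,l}$. These are essentially the same multiplier bounds used to prove Theorem \ref{thm:Error_static} but with the explicit integrating factor $\exp\!\big(\int c_l(s)\,ds\big)$ recorded. The hypothesis $\|\Psi^*\|_{H_{ul}^\theta} \leq K$ with $K$ small allows the cubic to be handled perturbatively; smallness of $K$ also controls the constant $\omega$ in the delay hypothesis \eqref{eq:delay_cond}.

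Translating each chart's integrating factor back into original $(v,\eps)$ variables via \eqref{eq:charts}, the cumulative linear multiplier across the three charts becomes $\exp\!\big(\int_0^T v(s)\,ds\big) = \exp\!\big((\rho_{out}^2 - \rho_{in}^2)/(2\eps)\big)$, up to the higher-order corrections $\eps_1/2$ and $\eps_3/2$ appearing in \eqref{eq:K1_eqns} and \eqref{eq:K3_eqns}. These corrections are absorbed into the constants $\kappa_- < 1$ (for the decaying contribution on the $\mathcal{K}_1$ side) and $\kappa_+ > 1$ (for the amplifying contribution on the $\mathcal{K}_3$ side), both of which can be made arbitrarily close to $1$ by taking $K$ small. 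The central chart $\mathcal{K}_2$ contributes only an $O(1)$ multiplier because $v_2$ traverses an $O(1)$-interval there (corresponding to an $O(\eps^{1/2})$ $v$-window). Summing the finitely many mode contributions in \eqref{eq:Psi_n_global} then yields the estimate \eqref{eq:Psi_bound}.

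The hard part will be closing the Gr\"onwall bootstrap in chart $\mathcal{K}_3$: the linear amplification accumulated there nearly exhausts the exponential budget $\rho_{in}^2 - \kappa_+\rho_{out}^2 > 0$ permitted by \eqref{eq:delay_cond}, so the cubic nonlinearity must remain strictly subdominant to the linear growth throughout the chart. This is precisely where smallness of $K$, and hence positivity of $\omega$ in \eqref{eq:delay_cond}, is used. A secondary technical point is the propagation of regularity along the driven hierarchy: each higher critical mode $A_{j,l}$ is forced by derivatives up to order four of lower modes, and each step in the induction loses up to three spatial derivatives, which explains and enforces the hypothesis $A_{11}^* \in H_{ul}^{\theta_A}$ with $\theta_A = 1 + 3(n-4) + \theta$.
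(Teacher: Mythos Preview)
Your proposal is correct and follows essentially the same route as the paper: decompose the passage into the three charts $\mathcal{K}_1$, $\mathcal{K}_2$, $\mathcal{K}_3$, use integrating factors plus semigroup/Duhamel/Gr\"onwall estimates on the critical hierarchy $\{A_{j,l}\}$ in each chart (with the non-critical $A_{mj,l}$ read off algebraically from the GL manifold), and chain the resulting bounds through the transition maps $\kappa_{12}$, $\kappa_{23}$. One minor correction worth flagging: the constants $\kappa_-\in(0,1)$ and $\kappa_+>1$ do not come from the $\eps_l/2$ terms in the linear coefficient---those produce only algebraic factors $(1\mp 2\eps_l^\ast t_l)^{-1/4}$ via the integrating factor---but rather from the Gr\"onwall step, where the cubic nonlinearity shifts the effective exponential rate from $\mp 1$ to $\mp(1 - C\tilde K^2)$ in $\mathcal{K}_1$ and $\mathcal{K}_3$ respectively; this is precisely why $K$ small forces $\kappa_\pm\to 1$, consistent with what you wrote elsewhere in the proposal.
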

	
	The situation is sketched in Figure \ref{fig:approximation_dynamics}(b). As for the proof of Lemma \ref{lem:Ansatz_dynamics}, the proof of Lemma \ref{lem:Ansatz_dynamics_mu_0} relies on direct estimates in charts $\mathcal K_l$, and is deferred to Section \ref{sec:Proof_of_thm_dynamics}. Lemma \ref{lem:Ansatz_dynamics} asserts that solutions with initial conditions $\Psi^\ast \in H_{ul}^{\theta}$ undergo a delayed loss of stability. Equation \eqref{eq:Psi_bound} provides a lower bound for the delay in terms of $\rho_{in}$ and $\rho_{out}$. The fact that the constants $\kappa_\pm$ can be chosen arbitrarily close to $1$ if $K>0$ is sufficiently small suggests that the delay is symmetric about $v = 0$, i.e.~that solutions which enter a sufficiently small (but $\eps$-independent) neighbourhood about the trivial solution $\Psi(x,t) = \Psi(t) \equiv 0$ at $v = - c < 0$ can be expected to leave a neighbourhood of $\Psi(x,t) = \Psi(t) \equiv 0$ at $v = c > 0$. We emphasise however that Lemma \ref{lem:Ansatz_dynamics_mu_0} only provides a lower bound for the delay time. In order to show rigorously that solutions depart from a neighbourhood of $\Psi(x,t) = \Psi(t) \equiv 0$ near a particular value for $v$, we would also require control on the lower bound for the norm $\| \Psi(\cdot, T) \|_{H_{ul}^\theta}$, similarly to the identification of delay times in \cite{Kaper2018} using the method of upper and lower solutions. This task is left for future work.
	
	\begin{remark}
		One reason to expect that a sharp transition occurs at the symmetric value $v = - \rho_{in}$ comes from considering the linearized problem, which is expected to serve as a good approximation when $\| \Psi(\cdot,t) \|_{H_{ul}^\theta}$ is small, as it is in Lemma \ref{lem:Ansatz_dynamics_mu_0}. If we consider only the leading order GL approximation, assume a periodic initial conditions $\Psi^\ast = \alpha \me^{ix} + c.c.$ with constant such that $|\alpha| \leq K$ and ignore the cubic nonlinear term in equation \eqref{eq:GL_eqn_global}, then the following explicit expression for the modulation functions $A_{\pm11,3}(x_3,T_3)$ in chart $\mathcal K_3$ at the exit section $\Delta^{out}$ can be identified using on the calculations in Section \ref{sec:Proof_of_thm_dynamics}:
		\[
		A_{1,3}(x_3,T_3) = A_{1,3}(T_3) = 
		\alpha \sqrt{\frac{\rho_{in}}{\rho_{out}}} \exp \left( - \frac{1}{2 \eps} (\rho_{in}^2 - \rho_{out}^2) + \frac{1}{2\zeta} (\rho_{in}^2 - \rho_{out}^2) \right) ,
		\]
		where $\zeta > 0$ is the same small but fixed constant as in Remark \ref{rem:rolls}. Explicit estimates for ``space-time buffer curves" corresponding to the hard transition to patterned states in the fast-slow complex GL equation have been derived formally based on comparisons to the linearized problem in \cite{Goh2022,Kaper2018}.
	\end{remark}
	
	\begin{remark}
		In contrast to the symmetric case $\mu(x) \equiv 0$, our results suggest that no delayed stability loss occurs if $\mu(x) \neq 0$. In particular, the conjectured approximation in \eqref{eq:A12_guess} grows exponentially in chart $\mathcal K_3$, where the time interval is $O(\eps^{-1})$. As opposed to a delayed bifurcation, these observations indicate an ``exchange of stability" similar to that described in the sequence of papers \cite{Butuzov2002b,Butuzov1999,Butuzov2000,Butuzov2001,Butuzov2002} and also in \cite{Engel2021}.
	\end{remark}
	
	\begin{remark}
		\SJJ{The results in Lemmas \ref{lem:Ansatz_dynamics} and \ref{lem:Ansatz_dynamics_mu_0} are stated for $\theta > 1/2$, although the spaces $H_{ul}^\theta$ are only defined in Section \ref{sec:Background} for integer values $\theta \in \mathbb N$. The requirement that $\theta > 1/2$ stems from the application of a Sobolev-type inequality that holds for a generalised definition of $H_{ul}^\theta$ which applies for all $\theta \geq 0$ and coincides with the simpler definition given in Section \ref{sec:Background} when $\theta \in \mathbb N$. We omit this more general definition for brevity, and refer to \cite[Sec.~2]{Schneider1994b} for details. See also Lemma \ref{lem:US_8.3.11} in Appendix \ref{app:technical_estimates} for the Sobolev inequality in question.}
	\end{remark}

	\subsubsection{Error estimates}
	
	Having described the dynamics of the approximation $\Psi$ in Lemmas \ref{lem:Ansatz_dynamics} and \ref{lem:Ansatz_dynamics_mu_0}, we turn the question of when and to what extent the approximation is valid. As we have seen, approximations like the improved GL approximation in \eqref{eq:GL_ansatz_n} and its dynamic generalisation via blow-up in \eqref{eq:blowup_Psi} are constructed by formally minimising a residual. However, formal smallness of the residual is only a necessary condition for accurate approximation by a set of GL-type modulation equations, and there are numerous counterexamples in the literature which show that modulation equations derived in a formally correct way can provide poor approximations due to errors which accumulate in time \cite{Schneider1995a,Schneider2005,Schneider2015}. A rigorous justification which parallels the classical result in Theorem \ref{thm:Error_static} is needed in order to prove statements about the dynamics of the dynamic SH equation \eqref{eq:sh_dynamic} based on the dynamics of the approximation $\Psi$.
	
	\
	
	The following result provides a rigorous quantification of the error $u - \Psi$ over a specified time interval, and should be compared with Theorem \ref{thm:Error_static}.
	
	\begin{thm}
		\label{thm:Error}
		\textup{(Validity of the approximation $\Psi$).}
		Let $\theta \geq 1$ and $A_{11} \in C([0,\widehat T], H_{ul}^{\theta_A})$ be a solution to the GL equation \eqref{eq:GL_eqn_global}, where $\theta_A = 1 + 3 (n-3) + \theta$, $\widehat T = T_{mid}$ if $\mu(x) \neq 0$, and $\widehat T = T$ if $\mu(x) \equiv 0$. Then there exists an $\eps_0 > 0$ such that for all $\eps \in (0,\eps_0)$, the following assertions hold for all SH solutions $u$ with initial conditions $u(x,0) = u^\ast(x)$ such that $\| u^\ast \|_{H_{ul}^\theta} \leq K$:
		\begin{enumerate}
			\item[(i)] The error satisfies $u(\cdot,t) - \Psi(\cdot,t) \in H_{ul}^\theta$ for all $t \in [0,T_{mid}]$, and 
			there exists a constant $C> 0$ such that
			\[
			\| u(\cdot,T_{mid}) - \Psi(\cdot,T_{mid}) \|_{H_{ul}^\theta} \leq C \eps^{(n-2)/4} .
			\]
			\item[(ii)] Let $\mu(x) \equiv 0$ and assume that 
			\begin{equation}
				\label{eq:error_cond}
				\frac{\rho_{in}}{\rho_{out}} \geq \sqrt {2 \kappa_+} ,
			\end{equation}
			where $\kappa_+ > 1$ is the same constant as in Lemma \ref{lem:Ansatz_dynamics_mu_0}, which can be chosen arbitrarily close to $1$ if $K > 0$ is sufficiently small. Then the error satisfies $u(\cdot,t) - \Psi(\cdot,t) \in H_{ul}^\theta$ for all $t \in [0,T]$, and 
			there exists a constant $C> 0$ such that
			\[
			\| u(\cdot,T) - \Psi(\cdot,T) \|_{H_{ul}^\theta}
			\leq C \exp\left( - \frac{\kappa_-}{2 \eps} \left( \rho_{in}^2 - 2 \kappa_+ \rho_{out}^2 \right) \right) ,
			\]
			where $\kappa_-$ is the same constant as in Lemma \ref{lem:Ansatz_dynamics_mu_0}, which can also be chosen arbitrarily close to $1$ if $K > 0$ is sufficiently small.
		\end{enumerate}
	\end{thm}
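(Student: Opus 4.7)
The plan is to work in the blown-up space and derive a-priori estimates on a suitably rescaled error in each of the three coordinate charts, then patch the bounds together. Concretely, I would write $u = \Psi + r^{n-1} R$ (with the exponent chosen so that the residual, which is formally $O(r^n)$ by construction of the improved ansatz, appears at order $O(r)$ in the equation for $R$ after dividing through). Substituting this decomposition into the dynamic SH equation \eqref{eq:sh_dynamic} and subtracting the identity $\partial_t \Psi = -(1+\partial_x^2)^2 \Psi + v \Psi - \Psi^3 + \eps \mu + \textup{Res}(\Psi)$ yields an evolution equation for $R$ of the schematic form
\begin{equation*}
\partial_t R = -(1+\partial_x^2)^2 R + v R - 3 \Psi^2 R - 3 r^{n-1} \Psi R^2 - r^{2(n-1)} R^3 - r^{-(n-1)} \textup{Res}(\Psi).
\end{equation*}
The crucial point is that by Propositions~\ref{prop:K1_eqns}--\ref{prop:K3_eqns} and the recursive structure of the ansatz, $r^{-(n-1)} \textup{Res}(\Psi)$ is bounded in $H_{ul}^\theta$ uniformly as long as $A_{11} \in H_{ul}^{\theta_A}$ with $\theta_A$ large enough to absorb the loss of derivatives coming from the higher-order corrections $A_{mj}$.

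Next I would transform this error equation into each of the local charts $\mathcal K_l$ via the blow-up map \eqref{eq:blowup_Psi} and the desingularizations \eqref{eq:desingularization}, obtaining weighted evolution equations for local errors $R_l$. In each chart I would proceed by an energy-type estimate in $H_{ul}^\theta$: the linear semigroup generated by $-(1+\partial_x^2)^2$ is controlled using the Mielke--Schneider multiplier theory for uniformly local Sobolev spaces (cf.~Lemma~\ref{lem:US_8.3.11} and the Sobolev embedding $H_{ul}^\theta \hookrightarrow L_{ul}^\infty$ for $\theta>1/2$), the cubic and quadratic terms in $R$ are absorbed by a standard bootstrap since $r^{n-1}$ is small, and the linear term $-3\Psi^2 R$ together with $v R$ is treated via a Gr\"onwall argument with an integrating factor that captures the time-dependent part $v(t)$ of the linear coefficient. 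This produces a bound of the form $\|R_l(\cdot,t_l)\|_{H_{ul}^\theta} \le C \exp\bigl(\int_0^{t_l} (v_l(s) + \|\Psi\|_{L_{ul}^\infty}^2)\, ds\bigr)$ on the relevant time interval in each chart.

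For part (i), the bounds obtained in $\mathcal K_1$ up to the chart transition to $\mathcal K_2$ remain $O(1)$ because $v \le 0$ there; within $\mathcal K_2$, the time interval is $O(1)$ after desingularization and the integrating factor contributes only a bounded multiplicative constant. Translating $R = r^{n-1}R$ back to the original variables and noting that $r = \eps^{1/4}$ on $\Delta^{mid}$ yields the stated $O(\eps^{(n-2)/4})$ bound. For part (ii), the key additional observation is that when $\mu \equiv 0$ the leading order linearised GL equation \eqref{eq:GL_eqn_global} inherits the $A \leftrightarrow -A$ symmetry that drives the delayed stability loss established in Lemma~\ref{lem:Ansatz_dynamics_mu_0}; in particular, $\|\Psi\|_{L_{ul}^\infty}^2$ is exponentially small through chart $\mathcal K_3$ as long as \eqref{eq:delay_cond} holds. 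The exponent $\sqrt{2\kappa_+}$ in the hypothesis \eqref{eq:error_cond} is dictated by the need to compensate a factor of $2$ that appears in the energy estimate for $R$ (coming from squaring $\Psi$), so that the overall Gr\"onwall exponent $\int (v + 3\|\Psi\|^2) ds$ remains dominated by the exponentially small residual contribution.

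The hard part will be the estimate in chart $\mathcal K_3$ for part (ii): one must quantitatively track how the tiny (exponentially small) amplitude of $\Psi$ interacts with the destabilising linear coefficient $v>0$ in the error equation, while simultaneously ensuring the nonlinear terms do not blow up the error beyond the exponentially small target. This is precisely where the assumption \eqref{eq:error_cond} enters, and a careful bootstrap argument, iterated on subintervals of $\mathcal K_3$, will be required to close the estimate; the rescaled residual bound must also be re-examined in this chart because it now depends on the sharp exponential smallness of the higher-order modulation functions $A_{mj,3}$, not just on their algebraic smallness.
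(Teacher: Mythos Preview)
Your overall architecture matches the paper: derive an evolution equation for a weighted error, transform to each chart, bound the (non-autonomous) evolution family via Mielke--Schneider multiplier theory, bound the residual using the recursive structure of the ansatz, absorb the nonlinear terms via a bootstrap/Gr\"onwall, and concatenate charts. Two points are worth correcting.

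First, a minor one: the paper takes the weight $r^{\beta}$ with $\beta = n-2$, not $n-1$. The reason is that after desingularizing time ($\partial_t = r^2 \partial_{\bar t}$) the residual contribution in the $\tilde R$-equation is $r^{-\beta-2}\,\textup{Res}(r\psi) = O(r^{n-\beta-2})$, and one wants this to be $O(1)$ in the blown-up variables so that $\tilde R$ stays uniformly bounded through the chart transitions. Your choice $\beta = n-1$ makes this $O(r^{-1})$, which forces $R$ (though not $\tilde R$) to be $O(\eps^{-1/4})$ at $\Sigma_1^{out}$; the final answer is the same after multiplying by $r^\beta$, but the bookkeeping is less clean.

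Second, and more substantively: your explanation of the factor $2$ in the hypothesis $\rho_{in}/\rho_{out} \ge \sqrt{2\kappa_+}$ is not correct. The term $-3\Psi^2 R$ is \emph{not} the source; since $\Psi$ is exponentially small in chart $\mathcal K_3$ when $\mu\equiv 0$, the quantity $\|\Psi\|^2$ contributes only a negligible perturbation to the Gr\"onwall constant. The factor $2$ arises instead from \emph{two independent} growth rates in chart $\mathcal K_3$, each close to $1$: (a) the evolution family generated by $-r^{-2}(1+\partial_x^2)^2 + 1$ satisfies $\|e^{t_3\Lambda_3}\| \le C e^{c_3 t_3}$ with $c_3 > 1$; and (b) the residual bound \eqref{eq:residual_error_mu_0} is proportional to $\sup_k \|A_{1k,3}(\cdot,t_3)\|$, which itself grows like $e^{\tilde\sigma t_3}$ (starting from an exponentially small value) because the modulation functions are subject to the same destabilizing linear part. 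Integrating the residual term against the evolution family therefore produces $e^{(c_3 + \tilde\sigma) t_3} \approx e^{2 t_3}$, and this is what forces the strengthened condition \eqref{eq:error_cond}. Your final paragraph correctly anticipates that the sharp residual bound in $\mathcal K_3$ is the delicate step, but the mechanism producing the doubled exponent is the interaction of the residual's own exponential growth with that of the evolution family, not the coupling term $\Psi^2 R$.
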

	
	Similarly to the proof of Lemmas \ref{lem:Ansatz_dynamics}-\ref{lem:Ansatz_dynamics_mu_0}, the proof of Theorem \ref{thm:Error}, which is deferred until Section \ref{sec:Proof_of_thm_error}, relies on direct estimates in charts $\mathcal K_l$ after blow-up. Similarly to the established method of proof in the static setting -- recall Theorem \ref{thm:Error_static}, for which we refer again to \cite[Ch.~10]{Schneider2017} and the references therein -- the idea is to use a-priori estimates on the norm of solutions to an evolution equation for the error $u - \Psi$. Since the equation for the error is posed in the blown-up space, however, we need to derive bounds in each coordinate chart $\mathcal K_l$. Moreover, we need to account for additional time-dependence induced by the forward evolution in $v$ in the dynamic SH equation \eqref{eq:sh_dynamic}.
	
	It is worthy to note that Theorem \ref{thm:Error} applies for a very general space of initial conditions $\{ u^\ast \in H_{ul}^{\theta} : \| u^\ast \|_{H_{ul}^\theta} \leq K \}$, where the constant $K > 0$ must be sufficiently small, but remains fixed and $O(1)$ with respect to $\eps \to 0$. In particular, we \textit{do not} require that $u^\ast = \Psi^\ast$. A similar result has been shown to hold for the static SH equation in \cite{Eckhaus1993,Schneider1995b}, except that in this case, initial conditions have to satisfy $\| u^\ast \|_{H_{ul}^\theta} \leq C \delta$; 
	recall the discussion following the statement of Theorem \ref{thm:Error_static} in Section \ref{sec:Background}. In the dynamic setting, the diameter of the space of initial conditions can be improved to $O(1)$ because we consider initial conditions with $v(0) = \rho_{in} < 0$ which undergo strong contraction for an initial time period during which $v < - c < 0$ for some small but fixed $c$. With regards to the error estimates themselves, observe that that the estimate in Assertion (i) agrees with the classical estimate in Theorem \ref{thm:Error_static} after recalling that $\eps = \delta^2$ in chart $\mathcal K_2$. The estimate in Assertion (ii) is valid only under the assumption that \eqref{eq:error_cond}, which is stronger than the delay criterion in \eqref{eq:delay_cond}, is satisfied. This is because in the proof of Theorem \ref{thm:Error} Assertion (ii) we are forced to adopt the stronger condition in \eqref{eq:error_cond} in order to control unwanted growth of the residual \eqref{eq:residual_def} in time.
	
	\begin{remark}
		\label{rem:improved_residual_estimates}
		Calculations in the proof of Assertion (ii) in Section \ref{sec:Proof_of_thm_error} suggest that the bound \eqref{eq:error_cond} is likely to be sub-optimal because the residual \eqref{eq:residual_def} can only be made algebraically (as opposed to exponentially) small as $\eps \to 0$. It is possible that the bound could be improved by considering the infinite series ansatz $\Psi_{\infty}$ obtained by setting $n=\infty$ in \eqref{eq:Psi_n_global} and, if necessary, considering exponentially small remainders `beyond all orders'. Exponentially small estimates have already been obtained for a variant of the (static) SH equation with a quintic nonlinearity in \cite{Dean2011} using beyond all orders asymptotics. Better estimates have also been obtained for the (static) Kuramoto-Sivashinsky equation in \cite{Schneider1996} via an alternate approach based on comparisons to solutions of a different approximating system which possesses an infinite dimensional center manifold. Thus the order of the approximation $n$, which does not appear in Assertion (ii), may still be important for the derivation of improved error estimates in the symmetric case $\mu(x) \equiv 0$. 
	\end{remark}

	\subsubsection{Swift-Hohenberg dynamics}
	
	By combining results on the approximation dynamics in Lemmas \ref{lem:Ansatz_dynamics}-\ref{lem:Ansatz_dynamics_mu_0} and the error estimates in Theorem \ref{thm:Error}, we obtain the following result, which characterises the dynamics of the dynamic SH equation \eqref{eq:sh_dynamic}.
	
	\begin{thm}
		\label{thm:Dynamics}
		\textup{(Dynamic Swift-Hohenberg dynamics).}
		Let $u$ be a solution to the dynamic SH equation \eqref{eq:sh_dynamic} with initial condition $(u^\ast(x),-\rho_{in})$ such that $\| u^\ast \|_{H_{ul}^\theta} \leq K$, and let $A_{11}$ be a solution to the GL equation \eqref{eq:GL_eqn_global} satisfying the assumptions of Theorem \ref{thm:Error}. There exists an $\eps_0 > 0$ such that for all $\eps \in (0,\eps_0)$ the following assertions are true:
		\begin{enumerate}
			\item[(i)] $u(\cdot,t) \in H_{ul}^\theta$ for all $t \in [0,T_{mid}]$, and
			\[
			u(x,T_{mid}) = \eps^{1/2} \left( A_{12,2}(\eps^{1/4} x, \eps^{1/2}T_{mid}) \me^{ix} + c.c. \right) + \eps^\alpha \tilde R(x,\eps) ,
			\]
			where $\alpha = \max \{3/4, (n-2)/4\}$ and $\tilde R(\cdot,\eps) \in H_{ul}^\theta$.
			\item[(ii)] Let $\mu(x) \equiv 0$ and assume that \eqref{eq:error_cond} is satisfied. Then $u(\cdot,t) \in H_{ul}^\theta$ for all $t \in [0,T]$, and there exists a constant $C > 0$ such that
			\[
			\| u(\cdot, T) \|_{H_{ul}^\theta} \leq
			C \exp\left(- \frac{\kappa_-}{2\eps} \left(\rho_{in}^2 - 2 \kappa_+ \rho_{out}^2 \right) \right) .
			\]
		\end{enumerate}
	\end{thm}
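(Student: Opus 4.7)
The plan is to combine the approximation-dynamics lemmas with the error estimates of Theorem \ref{thm:Error} by a simple triangle-inequality decomposition. Concretely, for any solution $u$ of \eqref{eq:sh_dynamic} with $\|u^\ast\|_{H_{ul}^\theta} \leq K$ I write $u = \Psi + (u - \Psi)$, where $\Psi = \Psi_n$ is the blow-up approximation generated by the GL solution $A_{11}$ whose regularity is postulated in Theorem \ref{thm:Error} and in the relevant approximation-dynamics lemma. Since Lemmas \ref{lem:Ansatz_dynamics}--\ref{lem:Ansatz_dynamics_mu_0} place $\Psi(\cdot,t) \in H_{ul}^\theta$ on the relevant time interval and Theorem \ref{thm:Error} places $u(\cdot,t)-\Psi(\cdot,t) \in H_{ul}^\theta$ on the same interval, linearity of $H_{ul}^\theta$ yields the regularity assertions of both (i) and (ii).

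For assertion (i), Lemma \ref{lem:Ansatz_dynamics} gives
\[
\Psi(x,T_{mid}) = \eps^{1/2}\left( A_{12,2}(\eps^{1/4}x,\eps^{1/2}T_{mid})\me^{ix} + \mathrm{c.c.}\right) + \eps^{3/4} R(x,\eps),
\]
with $R(\cdot,\eps)\in H_{ul}^\theta$, while Theorem \ref{thm:Error}(i) bounds $\|u(\cdot,T_{mid}) - \Psi(\cdot,T_{mid})\|_{H_{ul}^\theta}$ by $C\eps^{(n-2)/4}$. Adding these contributions, subtracting the explicit $\eps^{1/2}$ leading term, and factoring out $\eps^\alpha$ with $\alpha = \max\{3/4,(n-2)/4\}$ produces the remainder in the claimed form $\eps^\alpha \tilde R(x,\eps)$ with $\tilde R(\cdot,\eps)\in H_{ul}^\theta$.

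For assertion (ii), I first note that the hypothesis \eqref{eq:error_cond}, $\rho_{in}/\rho_{out} \geq \sqrt{2\kappa_+}$, implies $\rho_{out}/\rho_{in} \leq 1/\sqrt{2\kappa_+} < 1$, so the delay condition \eqref{eq:delay_cond} is satisfied and Lemma \ref{lem:Ansatz_dynamics_mu_0} applies. It yields
\[
\|\Psi(\cdot,T)\|_{H_{ul}^\theta} \leq C \exp\!\left( - \frac{\kappa_-}{2\eps}\bigl(\rho_{in}^2 - \kappa_+\rho_{out}^2\bigr)\right),
\]
and Theorem \ref{thm:Error}(ii) gives, with the same $\kappa_\pm$,
\[
\|u(\cdot,T)-\Psi(\cdot,T)\|_{H_{ul}^\theta} \leq C \exp\!\left( - \frac{\kappa_-}{2\eps}\bigl(\rho_{in}^2 - 2\kappa_+\rho_{out}^2\bigr)\right).
\]
Because $\rho_{in}^2 - 2\kappa_+\rho_{out}^2 \leq \rho_{in}^2 - \kappa_+\rho_{out}^2$, the error bound decays more slowly than the approximation bound, so it dominates, and the triangle inequality delivers precisely the stated estimate.

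The hard part, such as it is, is bookkeeping rather than analysis: one must consistently pair the same $A_{11}$ with both ingredients, and verify that the regularity index $\theta_A = 1+3(n-3)+\theta$ required by Theorem \ref{thm:Error} is at least the $\theta_A = 1+3(n-4)+\theta$ demanded by Lemmas \ref{lem:Ansatz_dynamics}--\ref{lem:Ansatz_dynamics_mu_0}, which it is. Since Theorem \ref{thm:Error} permits arbitrary $u^\ast$ with $\|u^\ast\|_{H_{ul}^\theta}\leq K$ and does not require $u^\ast = \Psi^\ast$, no compatibility constraint is imposed on $u$ beyond the smallness of its initial data, so the combination described above is immediate.
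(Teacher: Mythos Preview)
Your proof is correct and follows essentially the same approach as the paper: write $u = \Psi + (u-\Psi)$, invoke Lemma~\ref{lem:Ansatz_dynamics} (resp.\ Lemma~\ref{lem:Ansatz_dynamics_mu_0}) for the first summand and Theorem~\ref{thm:Error}(i) (resp.\ (ii)) for the second, and combine via the triangle inequality. Your explicit verification that \eqref{eq:error_cond} implies \eqref{eq:delay_cond} and your remarks on the compatibility of the regularity indices are slightly more detailed than the paper's own treatment, but the argument is the same.
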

	
	\begin{proof}
		This follows directly from Lemmas \ref{lem:Ansatz_dynamics}-\ref{lem:Ansatz_dynamics_mu_0} and Theorem \ref{thm:Error}. Let $E := u - \Psi$ denote the error. We have
		\begin{equation}
			\label{eq:u_approx}
			u(x,t) = \Psi(x,t) + E(x,t) ,
		\end{equation}
		where $\| E(\cdot, \widehat T) \|_{H_{ul}^\theta}$ satisfies the bound in Theorem \ref{thm:Error} Assertion (i) if $\widehat T = T_{mid}$ and Assertion (ii) if $\widehat T = T$.
		
		Substituting $\widehat T = T_{mid}$ into \eqref{eq:u_approx} and applying Lemma \ref{lem:Ansatz_dynamics} leads to
		\[
		u(x,T_{mid}) = \eps^{1/2} \left( A_{12,2}(\eps^{1/4} x, \eps^{1/2}T_{mid}) \me^{ix} + c.c. \right) + \eps^{3/4} R(x,\eps) + E(x,T_{mid}) .
		\]
		Writing $\eps^\alpha \tilde R(x,\eps) := \eps^{3/4} R(x,\eps) + E(x,T_{mid})$, it follows by Theorem \ref{thm:Error} Assertion (i) that
		\[
		\eps^\alpha \| \tilde R(\cdot,\eps) \|_{H_{ul}^\theta} \leq  
		\eps^{3/4} \| R(\cdot,\eps) \|_{H_{ul}^\theta} + \| E(\cdot,T_{mid}) \|_{H_{ul}^\theta} \leq 
		C \max\{ \eps^{3/4}, \eps^{(n-2) / 4}\} ,
		\]
		for a constant $C > 0$. This proves Assertion (i).
		
		\
		
		Now let $\mu(x) \equiv 0$ and assume that \eqref{eq:error_cond} is satisfied. In this case Lemma \ref{lem:Ansatz_dynamics_mu_0} and Theorem \ref{thm:Error} Assertion (ii) apply, and we obtain
		\[
		\begin{split}
			\| u(\cdot,T) \|_{H_{ul}^\theta} &\leq 
			\| \Psi(\cdot,T) \|_{H_{ul}^\theta} + \| E(\cdot,T) \|_{H_{ul}^\theta} \\
			&\leq C \left( \exp\left(- \frac{\kappa_-}{2\eps} \left(\rho_{in}^2 - \kappa_+ \rho_{out}^2 \right) \right) + \exp\left(- \frac{\kappa_-}{2\eps} \left(\rho_{in}^2 - 2 \kappa_+ \rho_{out}^2 \right) \right) \right) \\
			&\leq C \exp\left(- \frac{\kappa_-}{2\eps} \left(\rho_{in}^2 - 2 \kappa_+ \rho_{out}^2 \right) \right) ,
		\end{split}
		\]
		for a constant $C > 0$ and for all $\eps \in (0,\eps_0)$ with $\eps_0 > 0$ sufficiently small. This proves Assertion (ii).
	\end{proof}
	
	Despite the fact that the approximation results in Lemmas \ref{lem:Ansatz_dynamics}-\ref{lem:Ansatz_dynamics_mu_0} only apply for initial conditions in $\mathcal M^{gl}$, Theorem \ref{thm:Dynamics} describes the forward evolution the much larger space of initial conditions $\{ u^\ast \in H_{ul}^{\theta} : \| u^\ast \|_{H_{ul}^\theta} \leq K \}$. This is possible because the error estimates in Theorem \ref{thm:Error} apply for this larger space of solutions. By Assertion (i), an approximation of order $n \geq 5$ is needed in the general case with $\mu(x) \neq 0$ in order to ensure that the remainder term is higher order with $\tilde R(x,\eps) = O(\eps^{3/4})$. Assertion (ii) shows that the delayed stability loss phenomenon identified for the approximating system in Lemma \ref{lem:Ansatz_dynamics_mu_0} is also present in the dynamic SH equation. In contrast to Lemma \ref{lem:Ansatz_dynamics_mu_0}, we have to assume the stronger and likely sub-optimal condition \eqref{eq:error_cond}. This is due to the unwanted growth in the error over time, recall the discussion following Theorem \ref{thm:Error} and Remark \ref{rem:improved_residual_estimates} above.
	
	\begin{remark}
		Time-dependent estimates can be obtained in the results of this section by rewriting the constants $\rho_{mid}$ and $\rho_{out}$ defining the exist sections $\Delta^{mid}$ and $\Delta^{out}$ in terms of the corresponding transition times $T_{mid}$ and $T$ respectively. Specifically, time-dependent estimates in Lemma \ref{lem:Ansatz_dynamics}, Theorem \ref{thm:Error} Assertion (i) and Theorem \ref{thm:Dynamics} Assertion (i) are obtained by substituting $\eps^{1/2} \rho_{mid} = - \rho_{in} + \eps T_{mid}$, where $T_{mid} = \eps^{-1} \rho_{in} + O(\eps^{-1/2})$, while time-dependent estimates in Lemma \ref{lem:Ansatz_dynamics_mu_0}, Theorem \ref{thm:Error} Assertion (ii) and Theorem \ref{thm:Dynamics} Assertion (ii) are obtained by substituting $\rho_{out} = - \rho_{in} + \eps T$, where $T = \eps^{-1} \rho_{in} + O(\eps^{-1})$. In particular, this yields lower bounds on the delay time $t_{delay}$ as measured from $v = 0$. From Lemma \ref{lem:Ansatz_dynamics_mu_0} it follows that the delay time for the approximation $\Psi$ satisfies $t_{delay} \geq \eps^{-1} \rho_{in} / \sqrt{\kappa_+}$, and from Theorem \ref{thm:Dynamics} Assertion (ii) it follows that the delay time for a solution to the SH equation $u$ has $t_{delay} \geq \eps^{-1} \rho_{in} / \sqrt{2 \kappa_+}$.
	\end{remark}

	\section{Formal derivation of the modulation equations via geometric blow-up}
	\label{sec:Amplitude_reduction_via_geometric_blow-up}
	
	In this section we derive the modulation equations presented in Section \ref{sub:results_modulation_eqns}. We start with an approximation $\Psi$ in the form proposed in \eqref{eq:Psi_global}, and derive the modulation equations for the functions $A_{mj}$ by imposing the formal requirement that $\textup{Res}(\Psi) = O(r^n)$. This procedure can be viewed as a direct extension of the established method for deriving modulation equations for the static SH equation in e.g.~\cite[Ch.~10]{Schneider2017}. The main differences here are the following:
	\begin{itemize}
		\item The `small parameter' $r(\bar t) \geq 0$ is now a variable which depends on time; 
		\item The geometry is more complicated (we work in a blown-up space);
		\item The time and space variables $\bar x$ and $\bar t$ depend non-trivially on the state-space via their defining equations in \eqref{eq:desingularization}.
	\end{itemize}
	Note that the modulation functions $A_{mj}$ are also constrained with respect to $v$ and $\eps$ via the blow-up map \eqref{eq:blowup_Psi}. 
	
	\begin{remark}
		The formal procedure for deriving modulation equations in both this work and classical modulation theory is similar to the established formal method for approximating center manifolds up to arbitrary but algebraic order; see e.g.~\cite{Haragus2010,Vanderbauwhede1992}. This is true despite the fact that the GL-manifold for system \eqref{eq:sh_dynamic} is not generally a center manifold; recall Remark \ref{rem:invariance}. This close relationship to center manifold theory has also been explored in detail in the context of the Kuramoto-Sivashinsky equation in \cite{Schneider1996}.
	\end{remark}
	
	We write
	\begin{equation}
		\label{eq:Psi_global}
		\Psi(x,t) = r(\bar t) \psi(x, \bar t) = \sum_{m \in I_N} c_m(\bar x, \bar t) \me^{i m x} , \qquad 
		c_m(\bar x, \bar t) = \sum_{j=1}^{\tilde \alpha(m)} r(\bar t)^{\alpha(m) + j} A_{mj}(\bar x, \bar t) ,
	\end{equation}
	and recall that $I_N = \{-N, \ldots , N\}$, $N := n-1 \geq 3$, $\alpha(m) := | |m| - 1|$ and $\tilde \alpha(m) := N - \alpha(m) - 2 \delta_{|m|,1}$. We also impose the reality condition $c_m = \overline{c_{-m}}$ for all $m \in I_N$ in order to ensure that $\Psi(x,t) \in \mathbb R$, and introduce local coordinate representations in charts $K_l$ via
	\begin{equation}
		\label{eq:psi_definition}
		\Psi_l(x,t) = r_l(t_l) \psi_l(x, t_l) = \sum_{m \in I_N} c_{m,l}(x_l, t_l) \me^{i m x} , \qquad 
		c_{m,l}(x_l, t_l) = \sum_{j=1}^{\tilde \alpha(m)} r_l(t_l)^{\alpha(m) + j} A_{mj,l}(x_l, t_l) ,
	\end{equation}
	for $l = 1,2,3$, where $x_l$ and $t_l$ are defined via $\partial_t = r_l(t_l)^2 \partial_{t_l}$ and $\partial_x = r_l(t_l) \partial_{x_l}$ (the local coordinate representation of \eqref{eq:desingularization}).
	The change of coordinates maps between charts $\mathcal K_l$ are denoted by $\kappa_{12} : \mathcal K_1 \to \mathcal K_2$ and $\kappa_{23} : \mathcal K_2 \to \mathcal K_3$, and given by
	\begin{equation}
		\label{eq:kappa_maps}
		\begin{aligned}
			\kappa_{12} :\ & \psi_1 = (-v_2)^{-1/2} \psi_2 , && r_1 = (-v_2)^{1/2} r_2 , && \eps_1 = v_2^{-2} , && v_2 < 0 , \\
			\kappa_{23} :\ & \psi_2 = \eps_3^{-1/4} \psi_3 , && v_2 = \eps_3^{-1/2} , && r_2 = \eps_3^{1/4} r_3 , && \eps_3 > 0 .
		\end{aligned}
	\end{equation}
	The change of coordinates formulae for $\psi_l$ imply the following formulae for the modulation functions $A_{mj,l}$.
	
	\begin{lemma}
		\label{lem:kappa_Amj}
		Modulation functions $A_{mj,l}$ in different charts $\mathcal K_l$ are related via 
		\begin{equation}
			\label{eq:kappa_Amj}
			A_{mj,1} = (-v_2)^{-(\alpha(m) + j) / 2} A_{mj,2} , \ v_2 < 0, \qquad
			A_{mj,2} = \eps_3^{- (\alpha(m) + j) / 4} A_{mj,3} , \ \eps_3 > 0.
		\end{equation}
		The formulae \eqref{eq:kappa_Amj} hold for each $m \in I_N$ and corresponding $j \in \{ 1, \ldots, \tilde \alpha(m)\}$.
	\end{lemma}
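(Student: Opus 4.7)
The plan is to exploit the fact that $\Psi$ is an intrinsic physical quantity whose value at a given $(x,t)$ is chart-independent. Equating its two chart representations and using uniqueness of the Fourier expansion in the modes $\me^{imx}$ reduces the problem to a mode-by-mode identification of the coefficients $c_{m,1}$ and $c_{m,2}$ (respectively $c_{m,2}$ and $c_{m,3}$). After substituting the chart-change relations from \eqref{eq:kappa_maps}, the desired identities for $A_{mj,l}$ then arise by matching powers of $r_l$ in the resulting formal polynomial expansion at each mode.

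First I would combine the equality $\Psi_1 = \Psi_2$, which holds at matching physical points in the chart overlap $\mathcal K_1 \cap \mathcal K_2$, with the definition \eqref{eq:psi_definition} to obtain
\[
\sum_{m \in I_N} c_{m,1}(x_1,t_1) \me^{imx} = \sum_{m \in I_N} c_{m,2}(x_2,t_2) \me^{imx} .
\]
Since the functions $\me^{imx}$ are linearly independent for distinct integers $m \in I_N$, this reduces to the mode-by-mode identity $c_{m,1}(x_1,t_1) = c_{m,2}(x_2,t_2)$ for each $m \in I_N$. (A short consistency check: the analogous identity $r_1 \psi_1 = r_2 \psi_2$ combined with $r_1 = (-v_2)^{1/2} r_2$ immediately recovers $\psi_1 = (-v_2)^{-1/2} \psi_2$, so the mode-wise equality is compatible with the scaling already encoded in \eqref{eq:kappa_maps}.)

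Next I would substitute the definition of $c_{m,l}$ from \eqref{eq:psi_definition} into this identity and apply $r_1 = (-v_2)^{1/2} r_2$, obtaining at each mode $m$ an identity of the form
\[
\sum_{j=1}^{\tilde\alpha(m)} (-v_2)^{(\alpha(m)+j)/2} r_2^{\alpha(m)+j} A_{mj,1} = \sum_{j=1}^{\tilde\alpha(m)} r_2^{\alpha(m)+j} A_{mj,2} .
\]
Viewing this as an identity of polynomials in $r_2$ with the $A_{mj,l}$ playing the role of (chart-dependent but $r_l$-independent) coefficient functions, I would match the coefficients of $r_2^{\alpha(m)+j}$ for each admissible $j$ to obtain the first identity in \eqref{eq:kappa_Amj}. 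The second identity follows by precisely the same argument applied to $\kappa_{23}$ using $r_2 = \eps_3^{1/4} r_3$.

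The main obstacle is conceptual rather than computational: one must justify the coefficient-matching step by adopting the view that each $A_{mj,l}$ is an independent unknown attached to a distinct order $r_l^{\alpha(m)+j}$ in the formal ansatz \eqref{eq:psi_definition}, so that the polynomial identity above forces term-by-term equality. This is the same viewpoint underlying the residual-minimisation procedure used in Section \ref{sec:Amplitude_reduction_via_geometric_blow-up} to derive the modulation equations, and once accepted the proof reduces to straightforward bookkeeping over the index set $\{1,\ldots,\tilde\alpha(m)\}$ for each $m \in I_N$.
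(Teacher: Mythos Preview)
Your proof is correct and follows essentially the same approach as the paper: both equate the chart representations of $\Psi$, reduce to the mode-wise identity $c_{m,1}=c_{m,2}$, substitute $r_1=(-v_2)^{1/2}r_2$, and then match powers of $r_2$ term-by-term (the paper phrases the last step as the identity holding ``for all $r_2\in[0,\vartheta]$''). Your additional remarks on Fourier-mode independence and the conceptual justification for coefficient matching are sound elaborations but do not alter the underlying argument.
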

	
	\begin{proof}
		The definition \eqref{eq:Psi_global} imposes the requirement that $c_{m,l_1} = c_{m,l_2}$ where charts $\mathcal K_{l_1}$ and $\mathcal K_{l_2}$ overlap, for $(l_1, l_2) = (1,2)$ or $(l_1, l_2) = (2,3)$. This yields the requirement
		\[
		\sum_{j=1}^{\tilde \alpha(m)} r_{l_1}^{\alpha(m) + j} A_{mj,l_1} = 
		\sum_{j=1}^{\tilde \alpha(m)} r_{l_2}^{\alpha(m) + j} A_{mj,l_2} .
		\]
		If $(l_1, l_2) = (1,2)$ then using $r_1 = (-v_2)^{1/2} r_2$ (which is valid for $v_2 < 0$) we obtain
		\[
		\sum_{j=1}^{\tilde \alpha(m)} ((-v_2)^{1/2} r_2)^{\alpha(m) + j} A_{mj,1} = 
		\sum_{j=1}^{\tilde \alpha(m)} r_{2}^{\alpha(m) + j} A_{mj,2} ,
		\qquad v_2 < 0 ,
		\]
		which is only satisfied for all $r_2 \in [0,\vartheta]$ if $A_{mj,1} = (-v_2)^{-(\alpha(m) + j) / 2} A_{mj,2}$ for all $j \in \{1, \ldots, \tilde \alpha(m) \}$, as required. The case $(l_1, l_2) = (2,3)$ is similar.
	\end{proof}
	
	After applying the blow-up map \eqref{eq:blowup_Psi}, the residual \eqref{eq:residual_def} is given by
	\begin{equation}
		\label{eq:residual_def_blowup}
		\textrm{Res}(r \psi) = - \partial_t (r \psi) - (1 + \partial_x^2)^2 (r \psi) + r^3 \bar v \psi - r^3 \psi^3 + r^4 \bar \eps \mu(x) . 
	\end{equation}
	The aim is to formally minimise \eqref{eq:residual_def_blowup} in powers of $r$. First we expand the cubic nonlinearity via
	\begin{equation}
		\label{eq:cubic}
		\Psi^3 = (r \psi)^3 = \left( \sum_{m \in I_N} c_m \me^{imx} \right)^3 =
		\sum_{m = -3N}^{3N}
		b_m \me^{imx} ,
	\end{equation}
	where the coefficients $b_m$, which are defined via the right-most equality, depend the original coefficients $(c_m)_{m \in I_N}$ and therefore on $r$ via \eqref{eq:Psi_global}. 
	Note also that the $b_m$ are formally $O(r^3)$ or smaller since $\Psi^3 = O(r^3)$. Substituting \eqref{eq:cubic} and $\mu(x) = \sum_{m \in I_N} \nu_m \me^{imx}$ (recall the definition in \eqref{eq:mu}) into \eqref{eq:residual_def} leads to
	\begin{equation}
		\label{eq:residual_2}
		\textrm{Res} (r \psi) = \sum_{m \in I_N} \left( \mathcal L_m c_m -
		b_m + r^4 \bar \eps \nu_m \right) \me^{imx} - \sum_{m \in I_{3N} \setminus I_N} b_m \me^{imx} ,
	\end{equation}
	where $\mathcal L_m$ is the linear operator defined by
	\begin{equation}
		\label{eq:L_m}
		\mathcal L_m := - r^2 \partial_{\bar t} - (1-m^2)^2 - 4ir m(1-m^2) \partial_{\bar x} + \bar v r^2 - 2 r^2 (1-3m^2) \partial_{\bar x}^2 - 4ir^3 m \partial_{\bar x}^3 - r^4 \partial_{\bar x}^4 .
	\end{equation}
	Note that we expanded $-(1+\partial_x^2)^2 c_m \me^{imx}$ and applied the desingularization \eqref{eq:desingularization} in order to obtain the expression for $\mathcal L_m$, which is formulated in terms of the transformed $\bar t$ and $\bar x$. The second sum in \eqref{eq:residual_2} is $O(r^n)$. 
	
	Now we substitute the power series expressions for $c_m$ in \eqref{eq:Psi_global} and match in powers of $r$. The aim is to show that $(c_m)_{m \in I_N}$ can be chosen such that the first sum in \eqref{eq:residual_2} is $O(r^n)$. We first consider the linear part $\mathcal L_m c_m$. In order to distinguish terms at different orders in $r$ we define
	\[
	\mathcal L_m := \mathcal L_m^{(0)} + r \mathcal L_m^{(1)} + r^2 \mathcal L_m^{(2)} + r^3 \mathcal L_m^{(3)} + r^4 \mathcal L_m^{(4)} ,
	\]
	where
	\begin{equation}
		\label{eq:L_m^i}
		\begin{split}
			\mathcal L_m^{(0)} := - (1-m^2)^2 ,  \qquad &
			\mathcal L_m^{(1)} := - 4i m(1-m^2) \partial_{\bar x} ,  \qquad
			\mathcal L_m^{(2)} := - \partial_{\bar t} - 2 (1-3m^2) \partial_{\bar x}^2 + \bar v , \\
			& \ \ \ \mathcal L_m^{(3)} := - 4i m \partial_{\bar x}^3 ,  \qquad
			\mathcal L_m^{(4)} := - \partial_{\bar x}^4 ,
		\end{split}
	\end{equation}
	for each $m \in I_N$ (c.f.~the operators defined after equation \eqref{eq:Amj_algebraic_eqns} in Section \ref{sub:results_modulation_eqns}). Since $\mathcal L_m^{(2)} (r^{\alpha(m) + j} A_{mj}) \neq r^{\alpha(m) + j} \mathcal L_m^{(2)} A_{mj}$ due to the $\partial_{\bar t}$ derivative, we further define $\mathcal L_m^{(2)} (r^{\alpha(m) + j} A_{mj}) = r^{\alpha(m) + j} \tilde{\mathcal L_m^{(2)}} A_{mj}$, where
	\[
	\tilde{\mathcal L_m^{(2)}} :=  - \partial_{\bar t} - r^{-1} \partial_{\bar t} r - 2 (1-3m^2) \partial_{\bar x}^2 + \bar v ,
	\]
	in order to simplify the matching procedures below. We note that $r^{-1} \partial_{\bar t} r = O(1)$ as $r \to 0$, since
	\[
	\partial_{t_1} r_1 = \frac{1}{2} r_1 \eps_1 , \qquad \partial_{t_2} r_2 = 0 , \qquad \partial_{t_3} r_3 = - \frac{1}{2} r_3 \eps_3 ,
	\]
	in charts $\mathcal K_1$, $\mathcal K_2$ and $\mathcal K_3$ respectively. With the preceding definitions, the linear term $\mathcal L_m c_m$ can be rewritten as
	\begin{equation}
		\label{eq:Lm_operator}
		\begin{split}
			\mathcal L_m c_m 
			= & 
			\left( \mathcal L_m^{(0)} + r \mathcal L_m^{(1)} + r^2 \tilde{\mathcal L_m^{(2)}} + r^3 \mathcal L_m^{(3)} + r^4 \mathcal L_m^{(4)} \right) \sum_{j=1}^{\tilde \alpha(m)} r^{\alpha(m) + j} A_{mj} \\
			= & \sum_{j=1}^{\tilde \alpha(m)} r^{\alpha(m) + j} \left( \mathcal L_m^{(0)} + r \mathcal L_m^{(1)} + r^2 \tilde{\mathcal L_m^{(2)}} + r^3 \mathcal L_m^{(3)} + r^4 \mathcal L_m^{(4)} \right) A_{mj} ,
		\end{split}
	\end{equation}
	which is a power series in $r$.
	
	The role of the nonlinearity at each order in $r$ can be extracted by writing $b_m$
	as power series in $r$, i.e.~
	\begin{equation}
		\label{eq:bm_series}
		b_m = \sum_{j=1}^{\tilde \alpha(m)} a_{mj} r^{\alpha(m)+j} , \qquad m \in I_N .
	\end{equation}
	The $a_{mj}$ have an explicit formula which can be derived using Cauchy products if necessary, and do not depend on $r$. For our present purposes, it suffices to note that each $a_{mj}$ is a finite sum of cubic monomials of the form $l A_{m_1j_1} A_{m_2j_2} A_{m_3j_3}$, where $l \in \mathbb N$, and the $m_i \in I_N$ and $j_i \in \{0,\ldots,\tilde \alpha(m_i)\}$ satisfy the matching conditions
	\begin{itemize}
		\item $m = m_1 + m_2 + m_3$;
		\item $\alpha(m) + j + 2 = \alpha(m_1) + \alpha(m_2) + \alpha(m_3) + j_1 + j_2 + j_3$;
	\end{itemize}
	which follow directly from the definitions of the coefficients $(c_k)_{k \in I_N}$, $(b_k)_{k \in I_{3N}}$ and $(a_k)_{k \in I_{3N}}$ in \eqref{eq:Psi_global}, \eqref{eq:cubic} and \eqref{eq:bm_series} respectively. Specifically, the former condition comes from matching $\me^{imx}$ with $\me^{i(m_1 + m_2 + m_3) x}$, while the latter comes from matching $r^{\alpha(m_1) + \alpha(m_2) + \alpha(m_3) + j_1 + j_2 + j_3}$ with $r^{\alpha(m) + j + 2}$ (the `$+2$' appears because $b_m = O(r^3)$). In what follows we shall often appeal to the fact that $a_{mj}$ only depends on the critical modulation functions $A_{\pm1j}$ and the lower order modulation functions $A_{m'j'}$ for which $\alpha(m') + j' < \alpha(m) + j$. The following property in the case of critical modes with $|m| = 1$ will also be helpful.
	
	\begin{lemma}
		\label{lem:a_mj}
		If $j \geq 2$ then $a_{\pm1j}$ depends linearly on $A_{\pm1j}$, i.e.~the maps $A_{\pm1j} \mapsto a_{\pm1 j}$ and $A_{\mp1j} \mapsto a_{\pm1 j}$ are linear.
	\end{lemma}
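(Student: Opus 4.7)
The plan is to prove Lemma \ref{lem:a_mj} by a direct combinatorial analysis of the matching conditions on the cubic monomials constituting $a_{\pm 1, j}$. Recall that any such monomial has the form $l A_{m_1 j_1} A_{m_2 j_2} A_{m_3 j_3}$ with $m_1 + m_2 + m_3 = \pm 1$ and $\alpha(m_1) + \alpha(m_2) + \alpha(m_3) + j_1 + j_2 + j_3 = j + 2$ (using $\alpha(\pm 1) = 0$), and that $j_i \geq 1$ since the inner sum in the definition \eqref{eq:Psi_global} starts at $j = 1$. To obtain linearity in $A_{\pm 1, j}$, the goal is to rule out the possibility that $A_{\pm 1, j}$ occurs with multiplicity $\geq 2$ among the three factors, and similarly for $A_{\mp 1, j}$. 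Once this is established, each monomial becomes constant or linear in either variable, and the stated linearity of $a_{\pm 1, j}$ follows directly by fixing all other $A_{m', j'}$.

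Step one is the bookkeeping for $A_{\pm 1, j}$. Fix a monomial and let $k \in \{0,1,2,3\}$ denote the number of indices $i$ with $(m_i, j_i) = (\pm 1, j)$. If $k = 3$, the mode-matching condition forces $\pm 3 = \pm 1$, a contradiction. If $k = 2$, say $(m_1,j_1) = (m_2,j_2) = (\pm 1, j)$, then $m_3 = \pm 1 - 2(\pm 1) = \mp 1$ with $\alpha(m_3) = 0$, and the order-matching condition reduces to
\[
j + 2 = 2j + j_3,
\]
i.e.\ $j_3 = 2 - j$. Since $j \geq 2$ and $j_3 \geq 1$, this is impossible. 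Hence $k \leq 1$.

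Step two is the analogous analysis for $A_{\mp 1, j}$. If all three factors equal $A_{\mp 1, j}$, the mode condition gives $\mp 3 = \pm 1$, a contradiction. If exactly two equal $A_{\mp 1, j}$, then the remaining factor has mode $m_3 = \pm 1 - 2(\mp 1) = \pm 3$ (so $\alpha(m_3) = 2$), and the order condition becomes
\[
j + 2 = 0 + 0 + 2 + 2j + j_3,
\]
i.e.\ $j_3 = -j$, which is impossible since $j_3 \geq 1$. So $A_{\mp 1, j}$ also appears at most once.

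Combining both cases, any cubic monomial in $a_{\pm 1, j}$ contains the factors $A_{\pm 1, j}$ and $A_{\mp 1, j}$ with multiplicity at most one, which yields the two asserted linear dependencies. There is no real obstacle here — the lemma reduces to a small case analysis on the two explicit matching conditions, with the only subtlety being to remember that every $j_i \geq 1$, which is precisely what makes the $j \geq 2$ hypothesis bite.
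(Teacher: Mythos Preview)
Your proof is correct and follows essentially the same approach as the paper: both argue directly from the matching conditions $m_1+m_2+m_3=\pm1$ and $\alpha(m_1)+\alpha(m_2)+\alpha(m_3)+j_1+j_2+j_3=j+2$ together with $j_i\ge1$. The paper organizes the argument slightly differently---rather than ruling out multiplicities $2$ and $3$ case by case, it observes that if any factor has $j_i=j$ then the remaining budget forces $\alpha(m_1)=\alpha(m_2)=\alpha(m_3)=0$ and $j_2=j_3=1$, which immediately gives the explicit form $A_{m_1 j}A_{m_2 1}A_{m_3 1}$ with $m_i\in\{-1,1\}$ and hence linearity.
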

	
	\begin{proof}
		Let $m \in \{-1,1\}$. Then $\alpha(m) = 0$ and therefore
		\[
		\alpha(m_1) + \alpha(m_2) + \alpha(m_3) + j_1 + j_2 + j_3 = j + 2.
		\]
		%
		Assume that $j_1 = j \geq 2$, i.e.~that $a_{\pm1j}$ depends on $A_{1j}$ or $A_{-1j}$. Then
		\[
		\alpha(m_1) + \alpha(m_2) + \alpha(m_3) + j_2 + j_3 = 2,
		\]
		implying that $\alpha(m_1) = \alpha(m_2) = \alpha(m_3) = 0$ and $j_2 = j_3 = 1$. Thus $a_{\pm1j}$ is a linear combination of cubic monomials $A_{m_1j} A_{m_21} A_{m_31}$ where $m_i \in \{-1,1\}$ for each $i = 1,2,3$. Hence $A_{\pm1j} \mapsto a_{\pm1 j}$ and $A_{\mp1j} \mapsto a_{\pm1 j}$ are linear maps.
	\end{proof}
	
	In particular, the proof of Lemma \ref{lem:a_mj} implies that if $j \geq 2$ the $a_{\pm1j}$ has the form $a_{\pm1j} =: a_{\pm1j}^L + a_{\pm1j}^N$, where
	\begin{equation}
		\label{eq:a_j_L}
		a_{\pm1j}^L(\tilde A_{1j}, \overline{\tilde A_{1j}}) = 
		\left( \alpha_1 A_{11}^2 + \alpha_2 |A_{11}|^2 + \alpha_3 \overline{A_{11}}^2 \right) A_{1j} + \left( \beta_1 A_{11}^2 + \beta_2 |A_{11}|^2 + \beta_3 \overline{A_{11}}^2 \right) \overline{A_{1j}} ,
	\end{equation}
	where $\alpha_1, \alpha_2, \alpha_3, \beta_1, \beta_2$ and $\beta_3$ are non-negative integers, and $a_{j}^N$ depends only modulation functions $A_{m'j'}$ at lower orders with $\alpha(m') + j' < j$.
	
	\
	
	Using \eqref{eq:Lm_operator} and \eqref{eq:bm_series}, the coefficients in the first sum in \eqref{eq:residual_2} can be rewritten as
	\begin{equation}
		\label{eq:matching_eqn}
		\begin{split}
			\mathcal L_m c_m - b_m &+ r^4 \bar \eps \nu_m = \\
			& \sum_{j=1}^{\tilde \alpha(m)} r^{\alpha(m) + j} \left( \big( \mathcal L_m^{(0)} + r \mathcal L_m^{(1)} + r^2 \tilde{\mathcal L_m^{(2)}} + r^3 \mathcal L_m^{(3)} + r^4 \mathcal L_m^{(4)} \big) A_{mj} - a_{mj} \right) + r^4 \bar \eps \nu_m ,
		\end{split}
	\end{equation}
	for each $m \in I_N$. In the following, modulation equations are derived by choosing $A_{mj}$ so that the coefficients of \eqref{eq:matching_eqn} vanish at each order in $r$, thereby (formally) minimising the residual \eqref{eq:residual_2}. The critical case $|m|=1$ is distinguished by the fact that $\mathcal L_{\pm 1}^{0} \equiv 0$ and $\mathcal L_{\pm 1}^{1} \equiv 0$. We consider this case first.

	\subsubsection*{Critical modulation equations for $m = \pm1$}
	
	For $|m| = 1$ we have $m \in \{-1,1\}$, $\alpha(m) = 0$ and $\tilde \alpha(m) = N-2$. Equation \eqref{eq:matching_eqn} becomes
	\begin{equation}
		\label{eq:zero_eqn_1}
		\mathcal L_m c_m - b_m + r^4 \bar \eps \nu_m = 
		\sum_{j=1}^{N-2} r^j \left( \big(r^2 \tilde{\mathcal L_m^{(2)}} + r^3 \mathcal L_m^{(3)} + r^4 \mathcal L_m^{(4)} \big) A_{mj} - a_{mj} \right) + r^4 \bar \eps \nu_m .
	\end{equation}
	Matching in powers of $r$ and requiring \eqref{eq:zero_eqn_1} to vanish at each order leads to the following $N-2$ equations:
	\begin{equation}
		\label{eq:A1j_matched}
		\begin{split}
			O(r^3) : \ & \tilde{\mathcal L_{\pm 1}^{(2)}} A_{\pm1 1} - 
			a_{\pm11} = 0, \\
			O(r^4) : \ & \tilde{\mathcal L_{\pm 1}^{(2)}} A_{\pm1 2} + \mathcal L_{\pm 1}^{(3)} A_{\pm 1 1} - 
			a_{\pm12} + \bar \eps \nu_{\pm1} = 0 , \\
			O(r^5) : \ & \tilde{\mathcal L_{\pm 1}^{(2)}} A_{\pm1 3} + \mathcal L_{\pm 1}^{(3)} A_{\pm 1 2} + \mathcal L_{\pm 1}^{(4)} A_{\pm 1 1} - 
			a_{\pm13} = 0 , \\
			& \qquad \qquad \qquad \qquad \quad \vdots \\
			O(r^{j}) : \ & \tilde{\mathcal L_{\pm 1}^{(2)}} A_{\pm1 j-2} + \mathcal L_{\pm 1}^{(3)} A_{\pm 1 j-3} + \mathcal L_{\pm 1}^{(4)} A_{\pm 1 j-4} -
			a_{\pm1j-2} = 0, \\
			& \qquad \qquad \qquad \qquad \quad \vdots \\
			O(r^{N}) : \ & \tilde{\mathcal L_{\pm 1}^{(2)}} A_{\pm1 N-2} + \mathcal L_{\pm 1}^{(3)} A_{\pm 1 N-3} + \mathcal L_{\pm 1}^{(4)} A_{\pm 1 N-4} - 
			a_{\pm1N-2} = 0 .
		\end{split}
	\end{equation}
	For our purposes it will suffice to write the first two equations explicitly. Direct calculations show that $a_{\pm11} = 3 A_{\pm11} |A_{\pm11}|^2$ and $a_{\pm12} = 3 A_{\pm11}^2 A_{\mp12} + 6 |A_{\pm11}|^2 A_{\pm12}$. Hence at $O(r^3)$ we obtain a GL equation
	\begin{equation}
		\label{eq:GL_eqn_1}
		\partial_{\bar t} A_{\pm11} = 4 \partial_{\bar x}^2 A_{\pm11} + \left( \bar v - r^{-1} \partial_{\bar t} r \right) A_{\pm11} - 3 A_{\pm11} |A_{\pm11}|^2 ,
	\end{equation}
	where $\bar v - r^{-1} \partial_{\bar t} r$ can be considered as a time-dependent coefficient (assuming that the equations for $\bar v(\bar t)$, $r(\bar t)$ and $\bar \eps(\bar t)$ can be solved explicitly in each chart $\mathcal K_l$, $l = 1, 2, 3$). Assuming the existence of solutions $A_{\pm11}(\bar x, \bar t)$, the equation at $O(r^4)$ is
	\begin{equation}
		\label{eq:GL_eqn_2}
		\partial_{\bar t} A_{\pm12} = \ 4 \partial_{\bar x}^2 A_{\pm12} + \left( \bar v - r^{-1} \partial_{\bar t} r \right) A_{\pm12} - 3 A_{\pm11}^2 A_{\mp12} - 6 |A_{\pm11}|^2 A_{\pm12} \mp 4i \partial_{\bar x}^3 A_{\pm11} + \bar \eps \nu_{\pm1} ,
	\end{equation}
	which takes the form of a linearized GL-type equation with a source term and coefficients depending on both time and space. 
	
	Assuming the existence of solutions $A_{m'j'}(\bar x,\bar t)$ at lower orders with $\alpha(m') + j_1 \leq \alpha(m) + j = j$ (recall that $\alpha(m) = 0$ if $m = \pm 1$), the remaining equations are given by
	\begin{equation}
		\label{eq:GL_eqn_j}
		\partial_{\bar t} A_{\pm1j} = \ 4 \partial_{\bar x}^2 A_{\pm1j} + \left( \bar v - r^{-1} \partial_{\bar t} r \right) A_{\pm1j} - a_{\pm1j} \mp 4i \partial_{\bar x}^3 A_{\pm1j-1} - \partial_{\bar x}^4 A_{\pm1j-2} ,
	\end{equation}
	for $j \in \{3, \ldots, N-2\}$, where we recall by Lemma \ref{lem:a_mj} that $a_{\pm1j}$ is linear in $A_{\pm1j}$ and otherwise dependent only on lower order solutions $A_{m'j'}(\bar x,\bar t)$. Since we assume the existence of solutions at lower orders, equation \eqref{eq:GL_eqn_j} can also be seen as a linearized GL-type equation with a source term and coefficients depending on both time and space.  Equations \eqref{eq:GL_eqn_1}, \eqref{eq:GL_eqn_2} and \eqref{eq:GL_eqn_j} are precisely those presented in system \eqref{eq:GL_eqns_global}, Section \ref{sub:results_modulation_eqns}.
	
	\begin{remark}
		Due to the recursive structure of the derivation of equations \eqref{eq:GL_eqn_1}, \eqref{eq:GL_eqn_2} and \eqref{eq:GL_eqn_j}, the existence of solutions the entire system of equations in \eqref{eq:GL_eqn_global} can be established on suitable time intervals by a recursive argument relying only on the existence of solutions to the lowest order GL-type equation \eqref{eq:GL_eqn_1}. This follows from the results of Section \ref{sec:Proof_of_thm_dynamics}. A similar result applies for the static GL reduction in \cite[Ch.~10]{Schneider2017}.
	\end{remark}
	
	\begin{remark}
		\label{rem:regularity}
		Due to the presence of derivatives in the higher order equations with $j \geq 2$, the required regularity increases in proportion to the order of the approximation $n$. As in the static theory, see e.g.~\cite[Rem.~10.2.3]{Schneider2017}, the most regularity is lost as a result of the $\partial_{\bar x}^3 A_{\pm1j-1}$ term. Specifically, we require $A_{\pm12}(\cdot, \bar t) \in H_{ul}^4$ to estimate $\partial_{\bar x}^4 A_{\pm12}(\cdot, \bar t)$, which imposes the requirement that $A_{\pm11}(\cdot, \bar t) \in H_{ul}^7$ because we do not take the smoothing properties of the semigroup into account. %
		For higher order approximations we have to assume that solutions to \eqref{eq:GL_eqn_1} satisfy $A_{\pm11}(\cdot, \bar t) \in H_{ul}^{\theta_A}$ on the relevant time interval, where $\theta_A := 3(n-3) + 1$. This suffices to compensate the loss of three derivatives at each step due to the $\partial_{\bar x}^3 A_{\pm1j-1}$ term. 
	\end{remark}

	\subsubsection*{Non-critical modulation equations for $m \neq \pm1$}
	
	Now assume that $|m| \neq 1$. In this case $\mathcal L_m^{(0)}$ and $\mathcal L_m^{(1)}$ are non-trivial. In particular, $\mathcal L_m^{(0)}$ is invertible with
	\[
	(\mathcal L_m^{(0)})^{-1} = - (1 - m^2)^{-2} \neq 0.
	\]
	We obtain the following equations after matching:
	\[
	\begin{split}
		O(r^{\alpha(m) + 1}) : \ & \mathcal L_m^{(0)} A_{m1} - a_{m1}
		+ \delta_{|m|,4} \bar \eps \nu_m = 0 , \\
		O(r^{\alpha(m) + 2}) : \ & \mathcal L_m^{(0)} A_{m2} + \mathcal L_m^{(1)} A_{m1} - a_{m2} + \delta_{|m|,3} \bar \eps \nu_m = 0 , \\
		O(r^{\alpha(m) + 3}) : \ & \mathcal L_m^{(0)} A_{m3} + \mathcal L_m^{(1)} A_{m2} + \tilde{\mathcal L_m^{(2)}} A_{m1} - 
		a_{m3} + (\delta_{|m|,0} + \delta_{|m|,2}) \bar \eps \nu_m = 0 , \\
		O(r^{\alpha(m) + 4}) : \ & \mathcal L_m^{(0)} A_{m4} + \mathcal L_m^{(1)} A_{m3} + \tilde{\mathcal L_m^{(2)}} A_{m2} + \mathcal L_m^{(3)} A_{m1} - 
		a_{m4} = 0 , \\
		O(r^{\alpha(m) + 5}) : \ & \mathcal L_m^{(0)} A_{m5} + \mathcal L_m^{(1)} A_{m4} + \tilde{\mathcal L_m^{(2)}} A_{m3} + \mathcal L_m^{(3)} A_{m2} + \mathcal L_m^{(4)} A_{m1} - 
		a_{m5} = 0 , \\
		& \qquad \qquad \qquad \qquad \quad \vdots \\
		O(r^{\alpha(m) + j}) : \ & \mathcal L_m^{(0)} A_{mj} + \mathcal L_m^{(1)} A_{mj-1} + \tilde{\mathcal L_m^{(2)}} A_{mj-2} + \mathcal L_m^{(3)} A_{mj-3} + \mathcal L_m^{(4)} A_{mj-4} - 
		a_{mj} = 0 , \\ 
		& \qquad \qquad \qquad \qquad \quad \vdots \\
		O(r^{N}) : \ & \mathcal L_m^{(0)} A_{m \tilde \alpha(m)} + \mathcal L_m^{(1)} A_{m \tilde \alpha(m) -1} + \tilde{\mathcal L_m^{(2)}} A_{m \tilde \alpha(m) -2} + \mathcal L_m^{(3)} A_{m \tilde \alpha(m) -3} \\
		& \qquad \qquad \qquad \qquad \qquad \qquad \qquad \qquad \qquad \quad + \mathcal L_m^{(4)} A_{m \tilde \alpha(m) -4} - a_{m \tilde \alpha(m)} = 0 .
	\end{split}
	\]
	Since $\mathcal L_m^{(0)}$ is invertible for each $m \neq \pm 1$, the above may written more concisely as
	\[
	\begin{split}
		A_{mj} = - (\mathcal L_m^{(0)})^{-1} \bigg( \mathcal L_m^{(1)} A_{mj-1} +  \tilde{\mathcal L_m^{(2)}} A_{mj-2} + \mathcal L_m^{(3)} A_{mj-3} + \mathcal L_m^{(4)} A_{mj-4} - a_{mj} + \delta_{\alpha(m)+j,4} \bar \eps \nu_m \bigg) ,
	\end{split}
	\]
	where we define $A_{mj-k} \equiv 0$ if $j-k \leq 0$, which is precisely equation \eqref{eq:Amj_algebraic_eqns} from Section \ref{sub:results_modulation_eqns}. Assuming the existence of solutions at lower orders $A_{m'j'}$ with $\alpha(m') + j' < \alpha(m) + j$ (including $A_{mj-1}$, $A_{mj-2}$, $A_{mj-3}$ and $A_{mj-4}$), the existence of a solution $A_{mj}$ with $|m| \neq 1$ is given by the \textit{algebraic equation} \eqref{eq:Amj_algebraic_eqns}, which is used to define the GL-manifold $\mathcal M^{gl}$. For clarity we provide the explicit form of the lowest order approximation when $n=4$. 
	We have
	\begin{equation}
		\label{eq:A01_etc}
		A_{01} = A_{02} = A_{\pm21} = A_{\pm22} \equiv 0 , \qquad A_{\pm31} = - \frac{A_{\pm11}^3}{64} ,
	\end{equation}
	such that the fourth order approximation is
	\[
	\Psi_4(x,t) =
	r(\bar t) \left(A_{11}(\bar x, \bar t) \me^{ix} + c.c.\right) - \frac{r(\bar t)^3}{64} \left( A_{11}(\bar x, \bar t)^3 \me^{3ix} + c.c.\right) .
	\]
	The approximation at order $n \geq 5$ is therefore formally 
	\[
	\Psi_n(x,t) =
	r(\bar t) \left(A_{11}(\bar x, \bar t) \me^{ix} + c.c.\right) + r(\bar t)^2 \left(A_{12}(\bar x, \bar t) \me^{ix} + c.c.\right) - \frac{r(\bar t)^3}{64} \left( A_{11}(\bar x, \bar t)^3 \me^{3ix} + c.c.\right) + O(r^4) .
	\]
	
	\begin{remark}
		\label{rem:center_manifold_similarity}
		The recursive structure of the approximation is such that the algebraic equations for non-critical modulation functions $A_{mj}$ with $m \neq \pm 1$ can be written as a graph over lower order critical modulation functions $A_{\pm1j'}$ with $j' < \alpha(m) + j$, similarly to the representation of hyperbolic variables as a graph over the center variables in a center manifold reduction. In particular, the nonlinear term $a_{mj}$ is linear in $A_{\pm1j}$ (recall Lemma \ref{lem:a_mj}) and otherwise dependent only on lower order critical modulation functions $A_{\pm1j'}$ with $j' < j$.
	\end{remark}
	
	For the remainder of this work we shall assume that the approximation $\Psi$ is given by \eqref{eq:Psi_global}, with $A_{\pm1 j}$ defined by equations \eqref{eq:GL_eqns_global} and $A_{mj}$ with $|m| \neq 1$ defined by equations \eqref{eq:Amj_algebraic_eqns}. For this choice of ansatz we have formally that
	\[
	\textrm{Res}(r\psi) = 
	\sum_{m \in I_N} \left( \mathcal L_m c_m -
	b_m + r^4 \bar \eps \nu_m \right) \me^{imx} - \sum_{m \in I_{3N} \setminus I_N} b_m \me^{imx} = O(r^n) ,
	\]
	since by construction the first term vanishes up to and including $O(r^N)$ and the second term is $O(r^n)$. The corresponding systems of modulation equations in local coordinate charts $\mathcal K_1$, $\mathcal K_2$ and $\mathcal K_3$ are described by Propositions \ref{prop:K1_eqns}, \ref{prop:K2_eqns} and \ref{prop:K3_eqns} respectively. We turn now to the dynamics of the modulation equations in charts.

	\section{Dynamics in charts and proofs for Lemmas \ref{lem:Ansatz_dynamics}-\ref{lem:Ansatz_dynamics_mu_0}}
	\label{sec:Proof_of_thm_dynamics}
	
	In this section we prove Lemmas \ref{lem:Ansatz_dynamics} and \ref{lem:Ansatz_dynamics_mu_0}, which are necessary for the proofs of both Theorems \ref{thm:Error} and \ref{thm:Dynamics}. In order to do so, we analyse the dynamics of the approximation $\Psi = r \psi$ in charts $K_l$, $l=1,2,3$. 
	
	Our aim is to describe the dynamics of $\Psi(x,t)$, in particular the maps $\pi^{mid} : \Delta^{in} \to \Delta^{mid}$ and $\pi : \Delta^{in} \to \Delta^{out}$ induced by forward evolution under system \eqref{eq:sh_dynamic} up to time $T_{mid} = \eps^{-1}(\rho_{in} + \eps^{1/2} \rho_{mid} )$ and $T = \eps^{-1} (\rho_{in} + \rho_{out})$ respectively; recall Section \ref{sub:results_error_estimates_and_dynamics}. We further restrict to initial conditions $\Psi(x,0) = \Psi^\ast(x)$ contained within the blown-down image of the GL manifold $\mathcal M^{gl}$, i.e.~initial conditions of the form
	\[
	\Psi^\ast(x) = 
	\sum_{m \in I_N} \sum_{j=1}^{\tilde \alpha(m)} |v(0)|^{(\alpha(m) + j) / 2} A_{mj}^\ast(|v(0)|^{1/2} x) \me^{imx} ,
	\]
	where $x \in \R$, subject to the requirement $\|\Psi^\ast\|_{H_{ul}^\theta} \leq \rho_{in}^{1/2} K$ for a small but fixed (with respect to $\eps$) constant $K > 0$. We analyse the maps $\pi^{mid}$ and $\pi$ via their pre-image after blow-up as a composition of three component maps $\pi_l$ (one for each chart $\mathcal K_l$).

	\subsection{Chart $\mathcal K_1$}
	\label{sub:K1_dynamics}
	
	We recall the equations in chart $\mathcal K_1$, i.e.
	\begin{equation}
		\label{eq:K1_eqns2}
		\begin{split}
			\partial_{t_1} A_{1,1} &= 4 \partial_{x_1}^2 A_{1,1} + \left( - 1 + \frac{\eps_1}{2} \right) A_{1,1} - 3 A_{1,1} |A_{1,1}|^2 , \\
			\partial_{t_1} r_1 &= - \frac{1}{2} r_1 \eps_1 , \\
			\partial_{t_1} \eps_1 &= 2 \eps_1^2 , \\
			\partial_{t_1} A_{2,1} &= 4 \partial_{x_1}^2 A_{2,1} + \left( -1 + \frac{\eps_1}{2} \right) A_{2,1} - a_{2,1} - 4i \partial_{x_1}^3 A_{1,1} + \eps_1 \nu_{1} , \\
			\partial_{t_1} A_{j,1} &=  4 \partial_{x_1}^2 A_{j,1} + \left( -1 + \frac{\eps_1}{2} \right) A_{j,1} - a_{j,1} - 4i \partial_{x_1}^3 A_{j-1,1} - \partial_{x_1}^4 A_{j-2,1} ,
		\end{split}
	\end{equation}
	where $j = 3, \ldots, N-2$. Our aim is to describe the map
	\begin{equation}
		\label{eq:pi_1_def}
		\pi_1 : \Sigma^{in}_1 \ni \left( \psi_1^\ast, \rho_{in}^{1/2}, \eps_1^\ast \right) \mapsto \left(\psi_1(\cdot,T_1), r_1^{out}(\eps_1^\ast), \zeta \right) \in \Sigma_1^{out} 
	\end{equation}
	induced by forward evolution of initial conditions in
	\[
	\Sigma_1^{in} = \left\{ ( \psi_1^\ast , \rho_{in}^{1/2}, \eps_1 ) : \| \psi_1^\ast \|_{H_{ul}^\theta} \leq K, \eps_1 \in [0,\zeta]  \right\} ,
	\]
	which \SJ{may be viewed as the blown-up preimage of the entry section $\Delta^{in}$ (or more precisely $\Delta_\eps^{in}$; recall the definition in \eqref{eq:Delta_in} and the discussion which follows it)}, up to the exit section
	\[
	\Sigma_1^{out} = \left\{ ( \psi_1 , r_1, \zeta ) : \| \psi_1 \|_{H_{ul}^\theta} \leq K, r_1 \in [0,\rho_{in}^{1/2}]  \right\} .
	\]
	Note that $\Sigma^1_{in}$ contains the relevant initial conditions since $\|\Psi^\ast\|_{H_{ul}^\theta} = \rho_{in}^{1/2} \|\psi_1^\ast\|_{H_{ul}^\theta} \leq \rho_{in}^{1/2} K$. The analysis here and in charts $\mathcal K_2$ and $\mathcal K_3$ proceeds in a number of steps. Since the structure of the argument remains the same in each chart, it is instructive to summarise the argument for $\mathcal K_1$ before continuing with the details. The steps are as follows:
	\begin{enumerate}
		\item[I] Solve the equations for $r_1(t_1)$, $\eps_1(t_1)$ and the transition time $T_1$ taken for solutions to reach $\Sigma_1^{out}$.
		\item[II] Substitute the solution for $\eps_1(t_1)$ into the equation for $A_{1,1}(x_1,t_1)$ in order to obtain a (decoupled) GL-type equation with time-dependent linear decay. Use this equation to estimate $A_{1,1}(x_1,t_1)$ for all $t_1 \in [0,T_1]$.
		\item[III] Given a solution for $\eps_1(t_1)$ and an estimate for $A_{1,1}(x_1,t_1)$, the equation for $A_{2,1}$ can be viewed as a (decoupled) linear, inhomogeneous GL-type equation with $(x_1,t_1)$-dependent decay and a source term. Estimate $A_{2,1}(x_1,t_1)$ for all $t_1 \in [0,T_1]$ using linear theory.
		\item[IV] Proceed iteratively with the equations for $A_{j,1}$, $j \geq 3$, which may be viewed as (decoupled) linear inhomogeneous GL-type equations depending on $\eps_1(t_1)$ and (previously solved for) $A_{m'j',1}(x_1,t_1)$ with $\alpha(m') + j' < j$. Estimate each $A_{j,1}(x_1,t_1)$ for all $t_1 \in [0,T_1]$ using linear theory.
		\item[V] Use the solutions and estimates obtained in Steps I-IV to estimate $\psi_1(x,t_1)$ for all $t_1 \in [0,T_1]$.
	\end{enumerate}
	
	\begin{remark}
		Throughout this section and the remainder of this manuscript we shall frequently denote different positive constants by $C$ in cases where the exact expression is unimportant or there is no need to distinguish it from other constants. Constants denoted by $C$ are understood to be independent of small parameters such as $\eps_1^\ast$.
	\end{remark}

	\subsubsection*{Step I: Solve for $r_1(t_1)$, $\eps_1(t_1)$ and $T_1$}
	
	Recall that $\eps_1^\ast := \eps_1(0)$. We have the following result.
	
	\begin{lemma}
		\label{lem:K1_r1eps1}
		The transition time is given by
		\[
		T_1 = \frac{1}{2 \eps_1^\ast} \left(1 - \frac{\eps_1^\ast}{\zeta} \right) ,
		\]
		and for all $t_1 \in [0,T_1]$ we have
		\[
		r_1(t_1) = \rho_{in}^{1/2} (1 - 2 \eps_1^\ast t_1)^{1/4} , \qquad
		\eps_1(t_1) = \frac{\eps_1^\ast}{1 - 2 \eps_1^\ast t_1} .
		\]
		In particular, $r_1^{out}(\eps_1^\ast) = \rho_{in}^{1/2} (\eps_1^\ast / \zeta )^{1/4}$.
	\end{lemma}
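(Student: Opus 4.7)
The proof is essentially a direct integration: the equations for $r_1$ and $\eps_1$ in \eqref{eq:K1_eqns2} are decoupled from the PDE components and from each other (in the sense that the $\eps_1$-equation stands alone, and the $r_1$-equation becomes an integrable linear equation once $\eps_1(t_1)$ is known). My plan is to first integrate $\partial_{t_1}\eps_1 = 2\eps_1^2$, then substitute into $\partial_{t_1}r_1 = -\tfrac{1}{2}r_1\eps_1$ and integrate again, and finally read off the transition time $T_1$ from the exit condition $\eps_1(T_1)=\zeta$.

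For $\eps_1$, separation of variables gives $-d(\eps_1^{-1}) = 2\,dt_1$, so that with $\eps_1(0) = \eps_1^\ast$ one obtains $\eps_1(t_1)^{-1} = (\eps_1^\ast)^{-1} - 2t_1$, i.e.\ $\eps_1(t_1) = \eps_1^\ast / (1 - 2\eps_1^\ast t_1)$. This is valid on any interval where the denominator remains positive. Substituting into the $r_1$-equation yields
\[
\partial_{t_1} \log r_1 = -\tfrac{1}{2}\,\frac{\eps_1^\ast}{1 - 2\eps_1^\ast t_1},
\]
which integrates (using $\int \eps_1^\ast (1-2\eps_1^\ast s)^{-1}\, ds = -\tfrac{1}{2}\log(1-2\eps_1^\ast s)$) to $\log(r_1/\rho_{in}^{1/2}) = \tfrac{1}{4}\log(1 - 2\eps_1^\ast t_1)$, giving the claimed formula $r_1(t_1) = \rho_{in}^{1/2}(1-2\eps_1^\ast t_1)^{1/4}$.

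The transition time is determined by the exit condition defining $\Sigma_1^{out}$, namely $\eps_1(T_1) = \zeta$. Inverting the explicit formula for $\eps_1$ gives $1 - 2\eps_1^\ast T_1 = \eps_1^\ast/\zeta$, i.e.\ $T_1 = (2\eps_1^\ast)^{-1}(1 - \eps_1^\ast/\zeta)$, which is positive provided $\eps_1^\ast < \zeta$ (true since $\Sigma_1^{in}$ is parametrised by $\eps_1^\ast \in [0,\zeta]$) and in particular ensures the denominator $1 - 2\eps_1^\ast t_1$ stays positive on $[0,T_1]$, so the explicit formulas are valid throughout this interval. Substituting $T_1$ into the formula for $r_1$ yields $r_1^{out}(\eps_1^\ast) = r_1(T_1) = \rho_{in}^{1/2}(\eps_1^\ast/\zeta)^{1/4}$.

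There is no real obstacle here: the statement is a bookkeeping lemma solving two elementary scalar ODEs and identifying where the solution meets the exit section. The only mild care needed is to verify that the explicit solutions remain regular on $[0,T_1]$, which follows immediately from $\eps_1^\ast \le \zeta$. No estimates on the PDE components $A_{j,1}$ enter at this stage; those are the subject of Steps II--V in the outline above.
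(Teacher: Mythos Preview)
Your proof is correct and follows exactly the same approach as the paper, which simply states that the expressions follow by direct integration of the $r_1$ and $\eps_1$ equations in \eqref{eq:K1_eqns2}, with $T_1$ determined by the exit condition $\eps_1(T_1)=\zeta$. You have just supplied the elementary details that the paper omits.
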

	
	\begin{proof}
		The expressions for $r_1(t_1)$ and $\eps_1(t_1)$ follow by direct integration of the corresponding equations in \eqref{eq:K1_eqns2}, and $T_1$ is obtained from the requirement that $\eps_1(T_1) = \zeta$. The expression for $r_1^{out}(\eps_1^\ast)$ follows from $r_1^{out}(\eps_1) = r_1(T_1)$.
	\end{proof}

	\subsubsection*{Step II: Estimate $A_{1,1}(x_1,t_1)$}
	
	We now show that solutions $A_{1,1}(x_1,t_1)$ are exponentially attracted to the trivial homogeneous solution in $H_{ul}^\theta$ for all $t_1 \in (0,T_1]$.
	
	\begin{remark}
		In the following we assume that $A_{1,1}^\ast \in H_{ul}^{\theta_A}$ and specify the degree of regularity $\theta_A$ as we go. By Remark \ref{rem:regularity}, we expect that a higher degree regularity is necessary in each successive step of the approximation.
	\end{remark}
	
	\begin{lemma}
		\label{lem:K1_A1}
		Fix $\theta_A > 1/2$ and assume that $A_{1,1}^\ast \in H_{ul}^{\theta_A}$. Then $A_{1,1}(\cdot,t_1) \in H_{ul}^{\theta_A}$ for all $t_1 \in [0,T_1]$, and
		\[
		\| A_{1,1}(\cdot,t_1) \|_{H_{ul}^{\theta_A}} \leq 
		C \frac{\me^{- \tilde c t_1}} {\sqrt[4]{1 - 2 \eps_1^\ast t_1}}  \|A_{1,1}^\ast\|_{H_{ul}^{\theta_A}} , 
		\qquad t_1 \in [0,T_1] ,
		\]
		where $\tilde c \in (0,1)$ can be chosen arbitrarily close to $1$ by choosing $K>0$ sufficiently small. In particular, for each fixed $\kappa \in (0,\tilde c)$, if $\zeta > 0$ is sufficiently small then
		\begin{equation}
			\label{eq:A11_T1_bound}
			\| A_{1,1}(\cdot, T_1) \|_{H_{ul}^{\theta_A}} \leq
			C \me^{- \kappa  / 2 \eps_1^\ast} \|A_{1,1}^\ast\|_{H_{ul}^{\theta_A}} .
		\end{equation}
	\end{lemma}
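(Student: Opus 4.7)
The plan is to convert the equation for $A_{1,1}$ into a heat equation with a time-dependent cubic nonlinearity via an integrating factor, and then combine Duhamel's formula, boundedness of the heat semigroup on $H_{ul}^{\theta_A}$, the Banach algebra property of $H_{ul}^{\theta_A}$, and a standard bootstrap.

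First, I would use the explicit formula $\eps_1(t_1) = \eps_1^\ast / (1 - 2\eps_1^\ast t_1)$ from Lemma \ref{lem:K1_r1eps1} to compute the integrating factor for the linear part and substitute $A_{1,1}(x_1,t_1) = B(t_1)\,\tilde A(x_1,t_1)$, where
\[
B(t_1) := \me^{-t_1}(1 - 2\eps_1^\ast t_1)^{-1/4}.
\]
A direct computation, using $B'/B = -1 + \eps_1(t_1)/2$, reduces the first equation in \eqref{eq:K1_eqns2} to the (linearly free) nonlinear heat equation
\[
\partial_{t_1}\tilde A = 4\partial_{x_1}^2 \tilde A - 3 B(t_1)^2 \tilde A\,|\tilde A|^2,
\]
whose nonlinearity carries the damping factor $B(t_1)^2 = \me^{-2t_1}(1-2\eps_1^\ast t_1)^{-1/2}$. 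Applying Duhamel's formula, invoking the boundedness of the heat semigroup $\me^{4t\partial_{x_1}^2}$ on $H_{ul}^{\theta_A}$, and using the Banach algebra property of $H_{ul}^{\theta_A}$ for $\theta_A > 1/2$ (cf.~Lemma~\ref{lem:US_8.3.11} in the appendix) yields
\[
\|\tilde A(t_1)\|_{H_{ul}^{\theta_A}} \leq C\|A_{1,1}^\ast\|_{H_{ul}^{\theta_A}} + 3C \int_0^{t_1} B(s)^2 \|\tilde A(s)\|_{H_{ul}^{\theta_A}}^3\,ds.
\]

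The key analytic step is a uniform (in $\eps_1^\ast \in (0,\zeta]$) bound $\int_0^{T_1} B(s)^2\,ds \leq C$. The factor $(1-2\eps_1^\ast s)^{-1/2}$ has a singularity at $s = 1/(2\eps_1^\ast)$, but this point lies strictly beyond $T_1 = 1/(2\eps_1^\ast) - 1/(2\zeta)$, and the exponential $\me^{-2s}$ suppresses the integrand well before the singularity is approached; splitting the integral at, say, $s = 1/(4\eps_1^\ast)$ and estimating each piece separately yields the uniform bound. With this in hand, a standard continuity/bootstrap argument shows that if $\|A_{1,1}^\ast\|_{H_{ul}^{\theta_A}} \leq K$ for $K$ sufficiently small, then
\[
\|\tilde A(t_1)\|_{H_{ul}^{\theta_A}} \leq M(K)\,\|A_{1,1}^\ast\|_{H_{ul}^{\theta_A}}, \qquad t_1 \in [0,T_1],
\]
for a constant $M(K) \geq 1$ with $M(K) \to 1$ as $K \to 0$. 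Unwinding the substitution gives $\|A_{1,1}(t_1)\|_{H_{ul}^{\theta_A}} \leq M(K)\,B(t_1)\,\|A_{1,1}^\ast\|_{H_{ul}^{\theta_A}}$, and for any prescribed $\tilde c \in (0,1)$ the constant $M(K)$ may be absorbed into $\me^{-\tilde c t_1}$ by enlarging $C$, with $\tilde c$ approaching $1$ as $K$ shrinks.

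For the final assertion, I would evaluate at $t_1 = T_1$, using $1-2\eps_1^\ast T_1 = \eps_1^\ast / \zeta$, to obtain a bound proportional to $\me^{-\tilde c T_1}(\eps_1^\ast/\zeta)^{-1/4}$. The polynomial factor $(\eps_1^\ast)^{-1/4}$ is dominated by the exponential $\me^{(\tilde c - \kappa)/(2\eps_1^\ast)}$ for any $\kappa < \tilde c$, once $\zeta$ (hence $\eps_1^\ast$) is chosen small enough, which yields \eqref{eq:A11_T1_bound}. \textbf{Main obstacle.} The most delicate step is the uniform control of the nonlinear memory $\int_0^{T_1} B(s)^2\,ds$: the growing factor $(1-2\eps_1^\ast s)^{-1/2}$ comes from the unfavourable time-dependent coefficient $\eps_1(t_1)/2$ in the linearization and would cause this integral to blow up as $\eps_1^\ast \to 0$ if it were not damped by the exponential coming from the $-1$ term in $\mathcal L_{\pm 1,1}^{(2)}$. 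Beyond that, the argument is standard, provided one has $\theta_A > 1/2$ so that $H_{ul}^{\theta_A}$ is an algebra — this is exactly the role of the regularity hypothesis in the lemma.
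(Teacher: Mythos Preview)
Your proposal is correct and follows essentially the same strategy as the paper: an integrating factor to remove the time-dependent linear coefficient, Duhamel's formula, a semigroup bound on $H_{ul}^{\theta_A}$, the algebra property (Lemma~\ref{lem:US_8.3.11}) for the cubic term, and a Gr\"onwall/bootstrap step. The only cosmetic difference is that you absorb the full $-1$ into the integrating factor $B(t_1)=\me^{-t_1}(1-2\eps_1^\ast t_1)^{-1/4}$ and work with the pure heat semigroup, whereas the paper keeps a term $-(1-\eta)$ in the generator so that the semigroup itself carries the exponential decay and then applies the Gr\"onwall inequality of Lemma~\ref{lem:HK_2.8}; the two arrangements are equivalent, and your uniform estimate on $\int_0^{T_1}B(s)^2\,ds$ plays the same role as the paper's $\me^{2I(t_1)}\leq C$ bound.
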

	
	\begin{proof}	
		Substituting the expression for $\eps_1(t_1)$ in Lemma \ref{lem:K1_r1eps1} into the equation for $A_{1,1}$ in \eqref{eq:K1_eqns2} yields a GL-type equation with non-constant linear damping:
		\begin{equation}
			\label{eq:A1_decoupled_K1}
			\partial_{t_1} A_{1,1} = 4 \partial_{x_1}^2 A_{1,1} + \left( -1 + \frac{\eps_1(t_1)}{2} \right) A_{1,1} - 3 A_{1,1} |A_{1,1}|^2 , \qquad
			A_{1,1}(x_1,0) = A_{1,1}^\ast(x_1) .
		\end{equation}
		In order to control the time-dependent coefficient $\eps_1(t_1) / 2$ we introduce an integrating factor $\me^{I(t_1)}$, where 
		\[
		I(t_1) = \int_0^{t_1} \left( - \eta + \frac{\eps_1^*}{2(1-2\eps_1^*s_1)} \right) ds_1 = 
		- \eta t_1 - \frac{1}{4} \ln (1-2\eps_1^*t_1), \qquad 
		\me^{I(t_1)} = \frac{\me^{- \eta t_1}}{\sqrt[4]{1 - 2 \eps_1^\ast t_1}} ,
		\]
		where $\eta > 0$ is arbitrarily small but fixed, and define the transformation $A_{1,1}(x_1,t_1) =: \me^{I(t_1)} \tilde A_{1,1}(x_1,t_1)$. This leads to an equation with constant linear decay, namely
		\begin{align}
			\partial_{t_1} \tilde A_{1,1} = 4 \partial_{x_1}^2 \tilde A_{1,1} - (1 - \eta) \tilde{A}_{1,1} - 3 \me^{2 I(t_1)} \tilde A_{1,1} | \tilde A_{1,1} |^2 , \qquad
			\tilde A_{1,1}(x_1,0) = A_{1,1}^\ast(x_1) .
		\end{align}
		Variation of constants yields
		\begin{equation}
			\label{Eq:Tilde_A_1}
			\tilde A_{1,1}(\cdot,t_1) = \me^{t_1\Lambda_1} A_{1,1}^\ast  - 3 \int_0^{t_1} \me^{(t_1 - s_1) \Lambda_{1,1}}  \me^{2 I(s_1)} \tilde A_{1,1}(\cdot,s_1) | \tilde A_{1,1}(\cdot,s_1) |^2 ds_1 ,
		\end{equation}
		where $(\me^{t_1 \Lambda_1})_{t_1 \geq 0}$ is the semigroup generated by $\Lambda_{1,1} := 4 \partial_{x_1}^2-(1-\eta)$. In order to estimate the norm of $\tilde A_{1,1}(\cdot,t_1)$ on $t_1 \in [0,T_1]$, we need estimates for the semigroup and the cubic nonlinearity $\tilde A_{1,1}(\cdot,s_1) | \tilde A_{1,1}(\cdot,s_1) |^2$.
		
		\
		
		The semigroup can be bounded using Fourier multiplier techniques. Specifically, we use \cite[Lemma 8.3.7]{Schneider2017} (see also \cite{Schneider1994}); see Lemma \ref{lem:US_8.3.7} in Appendix \ref{app:technical_estimates} for a reformulation in the present notation. Since for each $c\in(0,1)$ there is a constant $C>0$ such that
		\[
		\| \xi_1 \mapsto  \exp(-(4 \xi_1^2 + 1) t_1) \|_{C^2_b} \leq C\me^{-(c - \eta) t_1} ,
		\]
		Lemma \ref{lem:US_8.3.7} implies that for each $c\in(0,1)$, there is a constant $C>0$ depending on ${\theta_A}$ such that
		\[
		\|\me^{t_1 \Lambda_{1,1}}\|_{H^{\theta_A}_{ul} \to H_{ul}^{\theta_A}} \leq  C\me^{- (c - \eta) t_1}.
		\]
		
		If we use the fact that $\me^{2 I(t_1)} = \me^{- 2 \eta t_1}(1 - 2 \eps_1^\ast t_1)^{-1/2} \leq C$ for all $t_1 \in [0,T_1]$, it remains to control the nonlinear term $\tilde A_{1,1}(\cdot,s_1) | \tilde A_{1,1}(\cdot,s_1) |^2$. For $\tilde K > K > 0$, define
		\begin{equation}
			\label{eq:tilde_T1}
			\tilde T_1 := \min \left\{ T_1, \sup \{t_1 > 0 : \|\tilde A_{1,1}(\cdot,t_1)\|_{H_{ul}^{\theta_A}} \leq \tilde K \} \right\}
		\end{equation}
		such that $A_{1,1}(\cdot,t_1) \in H_{ul}^{\theta_A}$ for all $t_1 \in [0,\tilde T_1]$. We use the fact that $H_{ul}^{\theta_A}$ is an algebra for ${\theta_A} > 1/2$. Using \cite[Lemma 8.3.11]{Schneider2017} (see Lemma \ref{lem:US_8.3.11} in Appendix \ref{app:technical_estimates} for a formulation in the present notation) we obtain
		\[
		\| \tilde A_{1,1}(\cdot,s_1) | \tilde A_{1,1}(\cdot,s_1) |^2 \|_{H^{\theta_A}_{ul}} \leq 
		C \| \tilde A_{1,1}(\cdot,s_1) \|_{H^{\theta_A}_{ul}}^3 \leq 
		C \tilde K^2 \|\tilde A_{1,1}(\cdot,s_1) \|_{H^{\theta_A}_{ul}}, 
		\]
		for all $s_1 \in [0,\tilde T_1]$.
		
		\
		
		Using the bounds obtained above and equation \eqref{Eq:Tilde_A_1} we obtain
		\[
		\| \tilde{A}_{1,1}(\cdot,t_1) \|_{H^{\theta_A}_{ul}} \leq C \me^{- (c - \eta) t_1} \|A_{1,1}^\ast \|_{H^{\theta_A}_{ul}} + C \tilde K^2 \int_0^{t_1} \me^{-(c - \eta) (t_1 - s_1)} \| \tilde{A}_{1,1}(\cdot,s_1) \|_{H^{\theta_A}_{ul}} ds_1 ,
		\]
		for $t_1 \in [0,\tilde T_1]$. Applying the Gr\"onwall inequality in \cite[Lemma 2.8]{Hummel2022} (see also Lemma \ref{lem:HK_2.8} in Appendix \ref{app:technical_estimates}) yields
		\begin{equation}
			\label{eq:A11_tilde_bound}
			\| \tilde{A}_{1,1}(\cdot,t_1) \|_{H^{\theta_A}_{ul}} \leq C \me^{- (\tilde c - \eta) t_1} \|A_{1,1}^\ast \|_{H^{\theta_A}_{ul}} , \qquad t_1 \in [0, \tilde T_1] ,
		\end{equation}
		where $\tilde c := c - C \tilde K^2$ can be chosen arbitrarily close to $c$ if $K$ is chosen sufficiently small ($\tilde K > K$ can be chosen arbitrarily close to $K$). Choosing $K$ and $\eta$ sufficiently small so that $\tilde c - \eta > 0$ ensures that $\tilde T_1 = T_1$, i.e.~that \eqref{eq:A11_tilde_bound} holds for all $t_1 \in [0,T_1]$. Thus we obtain the desired bounds via
		\[
		\| A_{1,1}(\cdot,t_1) \|_{H^{\theta_A}_{ul}} = \me^{I(t_1)} \| \tilde{A}_{1,1}(\cdot,t_1) \|_{H^{\theta_A}_{ul}} 
		\leq C \frac{\me^{- \tilde c t_1}}{\sqrt[4]{1-2\eps_1^*t_1}}  \|A_{1,1}^\ast \|_{H^{\theta_A}_{ul}} ,
		\]
		for all $t_1 \in [0,T_1]$, and
		\begin{equation}
			\label{eq:A11_bound}
			\| A_{1,1}(\cdot,T_1) \|_{H^{\theta_A}_{ul}}  \leq 
			C \me^{- \kappa / 2\eps_1^\ast} \|A_{1,1}^\ast \|_{H_{ul}^{\theta_A}} ,
		\end{equation}
		for any fixed $\kappa \in (0, \tilde c)$, as long as $\zeta > 0$ is fixed and sufficiently small.
	\end{proof}
	
	\begin{remark}
		\label{rem:T_def}
		In many of the proofs below we make use of definitions similar to that for $\tilde T_1$ in equation \eqref{eq:tilde_T1}. Specifically, we shall denote by $\tilde T_{j,l}$ the time
		\[
		\tilde T_{j,l} := \min \left\{ T_l, \sup \{t_l > 0 : \|\tilde A_{j,l}(\cdot,t_l)\|_{H_{ul}^{\theta}} \leq \tilde K \} \right\} 
		\]
		in chart $\mathcal K_l$, where the constant $\tilde K > K > 0$ is fixed but arbitrarily close to $K$. In cases where $\tilde K$ must be larger (i.e.~not close to $K$), we use $\tilde M$ instead of $\tilde K$ to denote an arbitrarily large but fixed positive constant. In both cases, $\tilde A_{j,l}(\cdot,t_l) \in H_{ul}^\theta$ for all $t_l \in [0,\tilde T_l]$, by definition.
	\end{remark}

	\subsubsection*{Step III: Estimate $A_{2,1}(x_1,t_1)$}
	
	We prove the following result.
	
	\begin{lemma}
		\label{lem:K1_A2}
		Let $\theta_A = 3 + \theta$ where $\theta > 1/2$. Then $A_{2,1}(\cdot,t_1) \in H_{ul}^\theta$ for all $t_1 \in [0,T_1]$. Moreover, we have that
		\begin{equation}
			\label{eq:A21_asymptotics}
			A_{2,1}(x_1,T_1) = \nu_1 f_{2,1}(\eps_1^\ast) + A_{2,1,h}(x_1,T_1) ,
		\end{equation}
		where
		\[
		f_{2,1}(\eps_1^\ast) = \me^{1/2\zeta} \sqrt[4]{\frac{\zeta}{8}} \Gamma\left( \frac{1}{4}, \frac{1}{2 \zeta} \right) + O({\eps_1^\ast}^{3/4} \me^{-1/2\eps_1^\ast}) 
		\approx \zeta + O({\eps_1^\ast}^{3/4} \me^{-1/2\eps_1^\ast})
		\]
		as $\eps_1^\ast \to 0$, and
		\begin{equation}
			\label{eq:A21h_bound}
			\| A_{2,1,h}(\cdot, T_1) \|_{H_{ul}^\theta} \leq C \frac{\me^{- \tilde c / 2 \eps_1^\ast}}{{\eps_1^\ast}^{3/4}} \max_{k = 1, 2} \| A_{k,1}^\ast  \|_{H_{ul}^{\theta_k}} ,
		\end{equation}
		where $\tilde c \in (0,1)$ is the same constant as in Lemma \ref{lem:K1_A1} and 
		\begin{equation}
			\label{eq:theta_k}
			\theta_k :=
			\begin{cases}
				\theta_A, & k=1 , \\
				\theta, & k=2.
			\end{cases}
		\end{equation}
	\end{lemma}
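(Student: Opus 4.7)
The plan is to treat the equation for $A_{2,1}$ in \eqref{eq:K1_eqns2} as a linear inhomogeneous GL-type equation in $A_{2,1}$ and handle the time-dependent linear coefficient exactly as in the proof of Lemma \ref{lem:K1_A1}. Writing $A_{2,1}(x_1,t_1) = \me^{I(t_1)} \tilde A_{2,1}(x_1,t_1)$ with the same integrating factor $I(t_1) = -\eta t_1 - (1/4)\ln(1-2\eps_1^\ast t_1)$ removes the singular decay coefficient and replaces the linear part by $\Lambda = 4\partial_{x_1}^2 - (1-\eta)$. By Lemma \ref{lem:a_mj} the term $a_{2,1}$ splits as $a_{2,1}^L+a_{2,1}^N$, where $a_{2,1}^L$ is linear in $A_{2,1}$ with coefficients quadratic in $A_{1,1}$ and $a_{2,1}^N$ depends only on $A_{1,1}$. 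Variation of constants then yields a solution formula for $\tilde A_{2,1}$, and I split the result at $t_1 = T_1$ into a particular contribution coming from the source $\eps_1(s_1)\nu_1$ (to be identified with $\nu_1 f_{2,1}(\eps_1^\ast)$) and a homogeneous remainder $A_{2,1,h}(x_1,T_1)$ absorbing the initial datum, the term $-4i\partial_{x_1}^3 A_{1,1}$, the contribution of $a_{2,1}^N$, and the self-referential linear term $a_{2,1}^L$.

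For the explicit form of $f_{2,1}$, since $\nu_1$ is spatially constant the semigroup acts as $\me^{(T_1-s_1)\Lambda}\nu_1 = \me^{-(1-\eta)(T_1-s_1)}\nu_1$, and inserting $\me^{-I(s_1)}\eps_1(s_1) = \me^{\eta s_1}\eps_1^\ast(1-2\eps_1^\ast s_1)^{-3/4}$ reduces the particular contribution to
\[
\tfrac{\nu_1}{2}(2\eps_1^\ast)^{1/4}\me^{-(1-\eta)T_1+1/(2\eps_1^\ast)}\int_{1/(2\zeta)}^{1/(2\eps_1^\ast)}\me^{-w}w^{-3/4}\,dw,
\]
after the substitution $w = (1-2\eps_1^\ast s_1)/(2\eps_1^\ast)$ and using $1-2\eps_1^\ast T_1 = \eps_1^\ast/\zeta$. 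Multiplying by $\me^{I(T_1)} = \me^{-\eta T_1}(\zeta/\eps_1^\ast)^{1/4}$ and noting that $-T_1 + 1/(2\eps_1^\ast) = 1/(2\zeta)$ produces $\nu_1\me^{1/(2\zeta)}(\zeta/8)^{1/4}[\Gamma(1/4,1/(2\zeta))-\Gamma(1/4,1/(2\eps_1^\ast))]$. The standard asymptotic $\Gamma(1/4,x)\sim x^{-3/4}\me^{-x}$ applied at $x = 1/(2\eps_1^\ast)$ gives the claimed $O((\eps_1^\ast)^{3/4}\me^{-1/(2\eps_1^\ast)})$ remainder, and the approximation $\me^{1/(2\zeta)}(\zeta/8)^{1/4}\Gamma(1/4,1/(2\zeta)) \approx \zeta$ for small $\zeta$ follows from the same asymptotic.

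For $A_{2,1,h}$, I apply the Fourier-multiplier semigroup bound of Lemma \ref{lem:US_8.3.7} in $H_{ul}^\theta$ to each source. The initial datum contributes $\me^{-(c-\eta)T_1}\|A_{2,1}^\ast\|_{H_{ul}^\theta}$, which after multiplying by $\me^{I(T_1)}$ is absorbed into the right-hand side of \eqref{eq:A21h_bound}. The term $-4i\partial_{x_1}^3 A_{1,1}$ costs three derivatives on $A_{1,1}$, which motivates the choice $\theta_A = 3+\theta$; Lemma \ref{lem:K1_A1} applied at this higher regularity controls $\|\partial_{x_1}^3 A_{1,1}(\cdot,s_1)\|_{H_{ul}^\theta}$ by $C\me^{-\tilde c s_1}(1-2\eps_1^\ast s_1)^{-1/4}\|A_{1,1}^\ast\|_{H_{ul}^{\theta_A}}$, and the ensuing $s_1$-integral, after incorporating $\me^{-I(s_1)}$ with its $(1-2\eps_1^\ast s_1)^{1/4}$ factor and the final $\me^{I(T_1)} = (\zeta/\eps_1^\ast)^{1/4}\me^{-\eta T_1}$, produces the $(\eps_1^\ast)^{-3/4}\me^{-\tilde c/(2\eps_1^\ast)}$ scaling in \eqref{eq:A21h_bound}. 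The term $a_{2,1}^N$ is cubic in $A_{1,1}$ and is handled using the algebra property of $H_{ul}^\theta$ (Lemma \ref{lem:US_8.3.11}), yielding an even better bound. The self-referential $a_{2,1}^L \tilde A_{2,1}$ has coefficients bounded by $C\|A_{1,1}(\cdot,s_1)\|_{H_{ul}^\theta}^2$, which by Lemma \ref{lem:K1_A1} is exponentially small; it is absorbed via the Gr\"onwall inequality of Lemma \ref{lem:HK_2.8} while keeping $\tilde c$ arbitrarily close to $1$ for $K$ small. Defining $\tilde T_{2,1}$ as in Remark \ref{rem:T_def} and running the standard continuation argument then extends the estimate to $\tilde T_{2,1} = T_1$.

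The main obstacle is the bookkeeping needed to obtain a sharp expression for $f_{2,1}$: one must keep the integrating factor, the explicit solution for $\eps_1(t_1)$ and the terminal relation $T_1 = (1/(2\eps_1^\ast))(1-\eps_1^\ast/\zeta)$ in simultaneous alignment so that the exponentials $\me^{-T_1}$ and $\me^{1/(2\eps_1^\ast)}$ combine to yield a finite, $\zeta$-controlled constant rather than an exponentially large or small artifact, and the substitution to the incomplete Gamma representation must be executed cleanly. A secondary obstacle is ensuring that the algebraically singular prefactor $(\eps_1^\ast)^{-3/4}$ in \eqref{eq:A21h_bound}, arising from both $\me^{I(T_1)}$ and the $\partial_{x_1}^3 A_{1,1}$ source, is genuinely beaten by the exponentially small factor $\me^{-\tilde c/(2\eps_1^\ast)}$ inherited from Lemma \ref{lem:K1_A1}; this requires $\kappa < \tilde c$ with some slack, which is exactly why $\kappa$ is constrained as in \eqref{eq:A11_T1_bound} rather than equal to $\tilde c$.
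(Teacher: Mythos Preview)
Your proposal is correct and follows essentially the same route as the paper: integrating factor, variation of constants, splitting into a source part (yielding the incomplete Gamma representation of $f_{2,1}$) and a homogeneous remainder controlled via Lemma~\ref{lem:K1_A1}. The only cosmetic differences are that the paper uses the simpler integrating factor $I_1(t_1)=-\tfrac14\ln(1-2\eps_1^\ast t_1)$ (no $-\eta t_1$ term, paired with $\Lambda_1=4\partial_{x_1}^2-1$), computes $a_{\pm12}=3A_{\pm11}^2A_{\mp12}+6|A_{\pm11}|^2A_{\pm12}$ directly (so in fact $a_{2,1}^N\equiv 0$), and handles the linear-in-$\tilde A_{2,1}$ coupling by inserting the a~priori bound $\|\tilde A_{2,1}\|\leq\tilde K$ together with the exponential smallness of $\|A_{1,1}\|^2$ rather than by Gr\"onwall.
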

	
	\begin{proof}
		Given solutions for $\eps_1(t_1)$ and $A_{1,1}(x_1,t_1)$ as described in Lemmas \ref{lem:K1_r1eps1} and \ref{lem:K1_A1} respectively, the equation for $A_{2,1}$ decouples from system \eqref{eq:K1_eqns2}. We obtain a linear inhomogeneous GL-type equation with a non-constant coefficient and a time-varying source, namely
		\[
		\begin{split}
			\partial_{t_1} A_{2,1} = 4 \partial_{x_1}^2 A_{2,1} &+ \left( -1 + \frac{\eps_1(t_1)}{2} \right) A_{2,1} \\
			&-  3 A_{1,1}(x_1,t_1)^2 \overline{A_{2,1}} - 6 |A_{1,1}(x_1,t_1)|^2 A_{2,1} - 4i \partial_{x_1}^3 A_{1,1}(x_1,t_1) + \nu_1 \eps_1(t_1) ,
		\end{split}
		\]
		with $A_{2,1}(x_1,0) = A_{2,1}^\ast(x_1)$ for $t_1 \in [0,T_1]$ and $x_1 \in \R$. Similarly to the analysis in Step II, we define an integrating factor 
		\[
		I_1(t_1) = \int_0^{t_1} \frac{\eps_1^*}{2(1-2\eps_1^*s_1)} ds_1 = 
		- \frac{1}{4} \ln (1-2\eps_1^*t_1), \qquad 
		\me^{I(t_1)} = \frac{1}{\sqrt[4]{1 - 2 \eps_1^\ast t_1}} ,
		\]
		and apply a transformation $A_{2,1}(x_1,t_1) =: \me^{I_1(t_1)} \tilde A_{2,1}(x_1,t_1)$ to obtain
		\[
		\begin{split}
			\partial_{t_1} \tilde A_{2,1} = 4 \partial_{x_1}^2 \tilde A_{2,1} - \tilde A_{2,1} -  3 A_{1,1}(x_1,t_1)^2 \overline{\tilde A_{2,1}} &- 6 |A_{1,1}(x_1,t_1)|^2 \tilde A_{2,1} \\
			&- 4i \me^{-I_1(t_1)} \partial_{x_1}^3 A_{1,1}(x_1,t_1) + \nu_1 \me^{-I_1(t_1)} \eps_1(t_1) ,
		\end{split}
		\]
		with $\tilde A_{2,1}(x_1,0) = A_{2,1}^\ast(x_1)$. After applying the variation of constants formula, the solution may be split into two parts by setting $\tilde A_{2,1} := \tilde A_{2,1,h} + \tilde A_{2,1,s}$, where
		\begin{equation}
			\begin{split}
				\label{eq:A21_voc}
				\tilde A_{2,1,h}(\cdot,t_1) &= 
				\me^{t_1 \Lambda_1} A_{2,1}^\ast  - \\
				3 \int_0^{t_1} & \me^{(t_1 - s_1) \Lambda_1} \left( A_{1,1}(\cdot,s_1)^2 \overline{\tilde A_{2,1}}(\cdot, s_1) + 2 |A_{1,1}(\cdot,s_1)|^2 \tilde A_{2,1}(\cdot, s_1) - 4i \me^{-I_1(s_1)} \partial_{x_1}^3 A_{1,1}(\cdot,s_1) \right) ds_1 , \\
				\tilde A_{2,1,s}(\cdot,t_1) &= 
				\nu_1 \int_0^{t_1} \me^{(t_1 - s_1) \Lambda_1} \me^{- I_1(s_1)} \eps_1(s_1) ds_1 ,
			\end{split}
		\end{equation}
		and $\Lambda_1 := 4 \partial_{x_1}^2 - 1$. In the following we derive a bound in $H_{ul}^\theta$ by defining $A_{2,1,h} := \me^{I_1} \tilde A_{2,1,h}$, $A_{2,1,s} := \me^{I_1} \tilde A_{2,1,s}$, and considering each part separately.
		
		We first consider $\tilde A_{2,1,h}$.
		A bound for the semigroup can be derived using Lemma \ref{lem:US_8.3.7}, as in Step II. In this case we find that for all $c \in (0,1)$ there is a constant $C > 0$ depending on $\theta$ such that $\| \me^{t_1 \Lambda_1} \|_{H_{ul}^\theta \to H_{ul}^\theta} \leq C \me^{- c t_1}$. It follows that 
		\[
		\begin{split}
			\| \tilde A_{2,1,h}(\cdot,t_1) \|_{H_{ul}^\theta} \leq 
			C \me^{- c t_1} \| A_{2,1}^\ast  \|_{H_{ul}^\theta} &+ 
			C \tilde K \int_0^{t_1} \me^{- c (t_1 - s_1)} \| A_{1,1}(\cdot, s_1) \|_{H_{ul}^\theta}^2 ds_1 \\
			&+ C \int_0^{t_1} \me^{- c (t_1 - s_1)} \me^{-I_1(s_1)} \| A_{1,1}(\cdot, s_1) \|_{H_{ul}^{\theta_A}} ds_1 ,
		\end{split}
		\]
		for all $t_1 \in [0,\tilde T_{2,1}]$, where we used Lemma \ref{lem:US_8.3.11} and the fact that $\| \tilde A_{1,2}(\cdot, s_1) \|_{H_{ul}^\theta} \leq \tilde K$ in order to derive the first integral, and the fact that
		\[
		A_{1,1}(\cdot,t_1) \in H^{\theta_A}_{ul} \ \implies \ 
		\| \partial_{x_1}^3 A_{1,1}(\cdot,t_1) \|_{H_{ul}^\theta} \leq
		C \| A_{1,1}(\cdot,t_1) \|_{H_{ul}^{\theta+3}} = C \| A_{1,1}(\cdot,t_1) \|_{H_{ul}^{\theta_A}} ,
		\]
		in order to derive the second (this step explains the requirement $\theta_A = 3 + \theta$). Both integrals can be bounded using Lemma \ref{lem:K1_A1}. We have
		\begin{equation}
			\label{eq:A21h_bound_2}
			\begin{split}
				\int_0^{t_1} \me^{- c (t_1 - s_1)} \| A_{1,1}(\cdot, s_1) \|_{H_{ul}^\theta}^2 ds_1 &=
				\me^{- c t_1} \int_0^{t_1} \me^{c s_1} \| A_{1,1}(\cdot, s_1) \|_{H_{ul}^\theta}^2 ds_1 \\
				&\leq C \me^{-c t_1} \|A_{1,1}^\ast \|_{H_{ul}^\theta}^2 \int_0^{t_1} \frac{\me^{- (2 \tilde c - c) s_1}} {\sqrt{1 - 2 \eps_1^\ast s_1}} ds_1 ,
			\end{split}
		\end{equation}
		and
		\begin{equation}
			\label{eq:A21h_bound_3}
			\begin{split}
				\int_0^{t_1} \me^{- c (t_1 - s_1)} \me^{-I_1(s_1)} \| A_{1,1}(\cdot, s_1) \|_{H_{ul}^{\theta_A}} ds_1 &=
				\me^{- c t_1} \int_0^{t_1} \me^{c s_1} \sqrt{1 - 2 \eps_1^\ast s_1}^{1/4} \| A_{1,1}(\cdot, s_1) \|_{H_{ul}^{\theta_A}} ds_1 \\
				&\leq C \me^{-c t_1} \|A_{1,1}^\ast \|_{H_{ul}^{\theta_A}} \int_0^{t_1} \me^{(c - \tilde c) s_1} ds_1 \\
				&\leq C \me^{-\tilde c t_1} \|A_{1,1}^\ast \|_{H_{ul}^{\theta_A}} ,
			\end{split}
		\end{equation}
		for $t_1 \in [0, \tilde T_{2,1}]$. Note that $c - \tilde c \in (0,1)$ and $2 \tilde c - c = 2 - 2 C \tilde K^2 \in (0,1)$ as long as $\tilde K>K>0$ is chosen sufficiently small. Using Lemma \ref{lem:Gamma_integrals} and the gamma function asymptotics in Appendix \ref{app:technical_estimates} (equation \eqref{eq:Gamma_asymptotics}) to evaluate the integral in \eqref{eq:A21h_bound_2}, we obtain 
		\begin{equation}
			\label{eq:A21h_tilde_bound}
			\begin{split}
				\| \tilde A_{2,1,h}(\cdot,t_1) \|_{H_{ul}^\theta} &\leq 
				C \me^{- c t_1} \| A_{2,1}^\ast  \|_{H_{ul}^\theta} + C \me^{-\tilde c t_1} \|A_{1,1}^\ast \|_{H_{ul}^{\theta_A}} \\
				&+ 
				C i \me^{-c t_1} \me^{-(2 \tilde c - c) / 2 \eps_1^\ast} \frac{ \|A_{1,1}^\ast \|_{H_{ul}^{\theta}}^2}{\sqrt{\eps_1^\ast}} \Gamma \left( \frac{1}{2}, - (2 \tilde c - c) \left(\frac{1}{2 \eps_1^\ast} - s_1 \right) \right) \bigg|_0^{t_1} , \\
				&\leq C \me^{-\tilde c t_1} \max_{k = 1, 2} \| A_{k,1}^\ast  \|_{H_{ul}^{\theta_k}} ,
			\end{split}
		\end{equation}
		for all $t_1 \in [0, \tilde T_{2,1}]$, where $\theta_k$ is given by \eqref{eq:theta_k}. We therefore have that 
		\[
		\| \tilde A_{2,1,h}(\cdot,T_1) \|_{H_{ul}^\theta} \leq 
		C \frac{\me^{- \tilde c / 2 \eps_1^\ast}}{\sqrt{\eps_1^\ast}} \max_{k = 1, 2} \| A_{k,1}^\ast  \|_{H_{ul}^{\theta_k}} .
		\]
		Since $c$ and $\tilde K$ are chosen so that $\tilde c = c - C\tilde K^2 \in (0,1)$ we can set $\tilde T_{2,1} = T_1$. It follows that
		\[
		\| A_{2,1,h}(\cdot,T_1) \|_{H_{ul}^\theta} =
		\me^{I_1(T_1)} \| \tilde A_{2,1,h}(\cdot,T_1) \|_{H_{ul}^\theta} \leq 
		C \frac{\me^{- \tilde c / 2 \eps_1^\ast}}{{\eps_1^\ast}^{3/4}} \max_{k = 1, 2} \| A_{k,1}^\ast  \|_{H_{ul}^{\theta_k}} ,
		\]
		thereby proving the desired bound \eqref{eq:A21h_bound}.
		
		\
		
		The equation for $\tilde A_{2,1,s}(\cdot,t_1)$ can be solved explicitly, since $\tilde A_{2,1,s}$ solves a heat equation with linear damping:
		\begin{equation}
			\label{eq:A2_s_K1}
			\partial_{t_1} \tilde A_{2,1,s} = 4 \partial_{x_1}^2 \tilde A_{2,1,s} - \tilde A_{2,1,s} + \me^{- I_1(t_1)} \eps_1(t_1) \nu_1 , \qquad  
			\tilde A_{2,1,s}(x_1,0) \equiv 0 .
		\end{equation}
		Using the explicit solution formula for the heat equation, see e.g.~\cite[Ch.~2.3]{Evans2010}, we obtain
		\begin{equation}
			\label{eq:A21f_integral}
			\begin{split}
				\tilde A_{2,1,s}(x_1,t_1) &= \frac{\eps_1^\ast \nu_1}{4 \sqrt \pi} \me^{-t_1} \int_0^{t_1} \frac{\me^{s_1}}{(t_1 - s_1)^{1/2} (1 - 2 \eps_1^\ast s_1)^{3/4}}
				\int_{-\infty}^\infty \me^{-(x_1 - \xi_1)^2/ 16 (t_1 - s_1)} d\xi_1 ds_1 \\
				&= \eps_1^\ast \nu_1 \me^{-t_1} \int_0^{t_1} \frac{\me^{s_1}}{(1 - 2 \eps_1^\ast s_1)^{3/4}} ds_1 \\
				& = \sqrt[4]{\eps_1^\ast} \nu_1 \me^{-t_1} \frac{\me^{1/2\eps_1^\ast}}{2^{3/4}} \Gamma\left( \frac{1}{4} , \frac{1}{2\eps_1^\ast} - s_1 \right) \bigg|_0^{t_1} ,
			\end{split}
		\end{equation}
		for all $x_1 \in \R$ and $t_1 \in [0,T_1]$, where we applied Lemma \ref{lem:Gamma_integrals} again in order to obtain the final expression. Hence
		\[
		\begin{split}
			A_{2,1}(x_1, t_1) &= \me^{I_1(t_1)} \left( \tilde A_{2,1,s}(x_1,t_1) + \tilde A_{2,1,h}(x_1,t_1) \right) \\
			&= \nu_1 \sqrt[4]{\eps_1^\ast} \frac{\me^{-t_1 + 1/2\eps_1^\ast}}{2^{3/4} \sqrt[4]{1 - 2 \eps_1^\ast t_1}} \Gamma\left( \frac{1}{4} , \frac{1}{2\eps_1^\ast} - s_1 \right) \bigg|_0^{t_1} + \frac{1}{\sqrt[4]{1 - 2 \eps_1^\ast t_1}} \tilde A_{2,1,h}(x_1,t_1) ,
		\end{split}
		\]
		for all $x_1 \in \R$ and $t_1 \in [0,T_1]$. Substituting the expression for $t_1 = T_1$ in Lemma \ref{lem:K1_r1eps1} and applying the known asymptotic formula for the incomplete gamma function in Appendix \ref{app:technical_estimates} (equation \eqref{eq:Gamma_asymptotics}) 
		yields the expression in \eqref{eq:A21_asymptotics}.
	\end{proof}

	\subsubsection*{Step IV: Estimate $A_{j,1}(x_1,t_1)$, $j = 3, \ldots, N-2$}
	
	We now consider modulation functions $A_{j,1}$ with $j \geq 3$. We recall from Section \ref{sec:Amplitude_reduction_via_geometric_blow-up}, in particular Lemma \ref{lem:a_mj}, that the linear $A_{j,1}$-dependent part of the $a_{j,1}$ term in the equation for $A_{j,1}$ in \eqref{eq:K1_eqns2} can be extracted by setting $a_{j,1} =: a_{j,1}^L + a_{j,1}^N$, where
	\begin{equation}
		\label{eq:a_j1_L}
		a_{j,1}^L(\tilde A_{j,1}, \overline{\tilde A_{j,1}}) = 
		\left( \alpha_1 A_{1,1}^2 + \alpha_2 |A_{1,1}|^2 + \alpha_3 \overline{A_{1,1}}^2 \right) \tilde A_{j,1} + \left( \beta_1 A_{1,1}^2 + \beta_2 |A_{1,1}|^2 + \beta_3 \overline{A_{1,1}}^2 \right) \overline{\tilde A_{j,1}} ,
	\end{equation}
	where $\alpha_1, \alpha_2, \alpha_3, \beta_1, \beta_2$ and $\beta_3$ are non-negative integers, and $a_{j,1}^N$ depends only modulation functions $A_{m'j',1}$ at lower orders with $\alpha(m') + j' < j$.
	
	\
	
	We prove the following result.
	
	\begin{lemma}
		\label{lem:K1_Aj}
		Let $\theta_A = \theta_A(j) = 3(j-1) + \theta$ where $j \in \{ 3, \ldots, N-2 \}$ and $\theta > 1/2$, and assume that $a_{j,1}^N(\cdot,t_1) \in H_{ul}^\theta$ for all $t_1 \in [0,T_1]$. Then $A_{j,1}(\cdot,t_1) \in H_{ul}^\theta$ for all $t_1 \in [0,T_1]$. If additionally $\mu(x) \equiv 0$ and
		\begin{equation}
			\label{eq:a_jN_cond}
			\| a_{j,1}^N(\cdot,t_1) \|_{H_{ul}^\theta} \leq 
			C \max_{k = 1,\ldots,j-1} \| A_{k,1}(\cdot,t_1) \|_{H_{ul}^{\theta_{k,j}}}^3 ,
		\end{equation}
		then for $K > 0$ sufficiently small it holds that
		\begin{equation}
			\label{eq:Aj1_nu10_bound}
			\| A_{j,1}(\cdot,T_1) \|_{H_{ul}^\theta} \leq 
			C \me^{- \kappa / 2\eps_1^\ast} \max_{k = 1, \ldots, j} \| A_{k,1}^\ast  \|_{H_{ul}^{\theta_{k,j}}} 
		\end{equation}
		for each fixed $\kappa \in (0,\tilde c)$, where
		\begin{equation}
			\label{eq:theta_k_2}
			\theta_{k,j} := 3(j-k) + \theta =
			\begin{cases}
				\theta_A(j), & k=1 , \\
				3 (j-k) + \theta, & k=2, \ldots, j-1, \\
				\theta , & k = j ,
			\end{cases}
		\end{equation}
		extends the definition of $\theta_k$ in \eqref{eq:theta_k}.
	\end{lemma}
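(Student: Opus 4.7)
The plan is to proceed by strong induction on $j \in \{3, \ldots, N-2\}$, using Lemmas \ref{lem:K1_A1} and \ref{lem:K1_A2} as the base cases at indices $1$ and $2$. Fixing $j$ and assuming the analogous conclusion already holds for all $k < j$, the equation for $A_{j,1}$ in \eqref{eq:K1_eqns2} decouples into a linear inhomogeneous GL-type equation
\[
\partial_{t_1} A_{j,1} = 4\partial_{x_1}^2 A_{j,1} + \left(-1 + \tfrac{\eps_1(t_1)}{2}\right) A_{j,1} - a_{j,1}^L(A_{j,1},\overline{A_{j,1}}) - F_{j,1}(x_1,t_1),
\]
where $F_{j,1} := a_{j,1}^N + 4i\partial_{x_1}^3 A_{j-1,1} + \partial_{x_1}^4 A_{j-2,1}$ depends only on the previously estimated lower-order modulation functions, and $a_{j,1}^L$ is linear in $A_{j,1}$ with coefficients built from $A_{1,1}$ and $\overline{A_{1,1}}$ via \eqref{eq:a_j1_L}. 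First I would absorb the time-dependent linear coefficient with the integrating factor $\me^{I_1(t_1)}$ from the proof of Lemma \ref{lem:K1_A2}, setting $\tilde A_{j,1} := \me^{-I_1(t_1)} A_{j,1}$, to obtain an equation with constant linear damping $-\tilde A_{j,1}$ and a modified forcing $\me^{-I_1(t_1)} F_{j,1}$.

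Next I would apply variation of constants with the semigroup $(\me^{t_1\Lambda_1})_{t_1 \geq 0}$ generated by $\Lambda_1 := 4\partial_{x_1}^2 - 1$, and invoke Lemma \ref{lem:US_8.3.7} to obtain $\|\me^{t_1\Lambda_1}\|_{H_{ul}^\theta \to H_{ul}^\theta} \leq C\me^{-c t_1}$ for any $c \in (0,1)$. The linear contribution $a_{j,1}^L(\tilde A_{j,1},\overline{\tilde A_{j,1}})$ is controlled in $H_{ul}^\theta$ using Lemma \ref{lem:US_8.3.11} (the algebra property for $\theta > 1/2$) together with the exponential bound on $\|A_{1,1}(\cdot,s_1)\|_{H_{ul}^\theta}$ from Lemma \ref{lem:K1_A1}; note that $|A_{1,1}|^2$ contributes an $\me^{-2\tilde c s_1}$ factor, so this term is benign. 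The nonlinear forcing $a_{j,1}^N$ is controlled via the assumption \eqref{eq:a_jN_cond} and the inductive hypothesis, while the derivative terms are bounded by
\[
\|\partial_{x_1}^k A_{j-k+1,1}(\cdot,s_1)\|_{H_{ul}^\theta} \leq C\,\|A_{j-k+1,1}(\cdot,s_1)\|_{H_{ul}^{\theta+k}} = C\,\|A_{j-k+1,1}(\cdot,s_1)\|_{H_{ul}^{\theta_{j-k+1,j}}}
\]
for $k = 3, 4$, which is exactly why the regularity requirement $\theta_A(j) = 3(j-1) + \theta$ arises: three derivatives are consumed at each inductive step.

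Defining $\tilde T_{j,1}$ as in Remark \ref{rem:T_def} and assembling the above bounds yields an inequality of the form
\[
\|\tilde A_{j,1}(\cdot,t_1)\|_{H_{ul}^\theta} \leq C\me^{-ct_1}\|A_{j,1}^\ast\|_{H_{ul}^\theta} + C\int_0^{t_1} \me^{-c(t_1-s_1)} \|A_{1,1}(\cdot,s_1)\|_{H_{ul}^\theta}^2 \|\tilde A_{j,1}(\cdot,s_1)\|_{H_{ul}^\theta}\, ds_1 + G(t_1),
\]
where $G(t_1)$ collects the forcing contributions and satisfies $G(t_1) \leq C\me^{-\tilde c t_1}\max_{k<j}\|A_{k,1}^\ast\|_{H_{ul}^{\theta_{k,j}}}$ by the inductive hypothesis (in the case $\mu \equiv 0$), using integrals structurally identical to \eqref{eq:A21h_bound_3}. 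Applying the Grönwall-type Lemma \ref{lem:HK_2.8} (the $\|A_{1,1}\|^2$ factor in the kernel only improves the rate) produces $\|\tilde A_{j,1}(\cdot,t_1)\|_{H_{ul}^\theta} \leq C\me^{-\tilde c t_1}\max_{k\leq j}\|A_{k,1}^\ast\|_{H_{ul}^{\theta_{k,j}}}$ on $[0,\tilde T_{j,1}]$, which lets us take $\tilde T_{j,1} = T_1$. Undoing the integrating factor and evaluating at $t_1 = T_1$ via Lemma \ref{lem:K1_r1eps1} gives a prefactor $(\eps_1^\ast)^{-1/4}$ multiplying $\me^{-\tilde c/2\eps_1^\ast}$; choosing $\zeta$ small so that $\kappa < \tilde c$ is achievable absorbs this algebraic prefactor, yielding \eqref{eq:Aj1_nu10_bound}.

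The main obstacle is the bookkeeping of regularities and exponential rates through the induction. Concretely, one must check that $\theta_A(j-1) = \theta_A(j) - 3$ and $\theta_A(j-2) = \theta_A(j) - 6$ make $\partial_{x_1}^3 A_{j-1,1}$ and $\partial_{x_1}^4 A_{j-2,1}$ simultaneously estimable in $H_{ul}^\theta$, and that the exponential rate $\tilde c$ from Lemma \ref{lem:K1_A1} is not degraded at any step: the delicate point is that $F_{j,1}$ involves cubic-in-lower-order combinations (via $a_{j,1}^N$) whose exponential smallness under $\mu \equiv 0$ is inherited from the inductive hypothesis, but one must verify that summing such contributions over the convolution with $(\me^{t_1\Lambda_1})$ and against the non-exponential piece $\me^{-I_1(s_1)}$ still produces a bound of the form $C\me^{-\tilde c t_1}$ rather than only a bound of the form $C t_1 \me^{-\tilde c t_1}$, which would eventually degrade to $C\eps_1^{\ast -1} \me^{-\tilde c / 2\eps_1^\ast}$ and need to be re-absorbed into an infinitesimally smaller rate $\kappa$.
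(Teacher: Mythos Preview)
Your proposal is correct and follows essentially the same route as the paper: integrating factor $\me^{I_1}$, variation of constants with $\Lambda_1 = 4\partial_{x_1}^2 - 1$, control of $a_{j,1}^L$ via the exponential decay of $A_{1,1}$, control of $a_{j,1}^N$ and the derivative terms via the inductive hypothesis, and induction on $j$. The paper organizes the estimate slightly differently by splitting $\tilde A_{j,1} = \tilde A_{j,1,h} + \tilde A_{j,1,s}$ and bounding the $a_{j,1}^L$ contribution directly via the a~priori bound $\|\tilde A_{j,1}\|\leq \tilde K$ rather than Gr\"onwall, but this is cosmetic. One small slip: your derivative terms should read $\partial_{x_1}^3 A_{j-1,1}$ and $\partial_{x_1}^4 A_{j-2,1}$ (indices $j-1$ and $j-2$, not $j-k+1$ with $k=3,4$), and for the fourth derivative one has $\|\partial_{x_1}^4 A_{j-2,1}\|_{H_{ul}^\theta}\leq C\|A_{j-2,1}\|_{H_{ul}^{\theta+4}}\leq C\|A_{j-2,1}\|_{H_{ul}^{\theta_{j-2,j}}}$ with $\theta_{j-2,j}=\theta+6$, so the last step is an inequality rather than an equality.
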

	
	\begin{proof}
		Since the structure of the equation for $A_{j,1}$ in \eqref{eq:K1_eqns2} is the same for each $j \geq 3$, we can show 
		that $A_{j,1}(\cdot,t_1) \in H_{ul}^\theta$ for all $t_1 \in [0,T_1]$ by an inductive argument over $k = 3, \ldots, j-1 \leq N - 3$. We omit an explicit treatment of the base case $k=3$, which is simpler and can be treated with minor adaptations of the arguments applied in the induction step below. The induction hypothesis is that $A_{k,1}(\cdot,t_1) \in H_{ul}^{\theta_{k,j}}$ for each $k = 3, \ldots, j-1$, for all $t_1 \in [0,T_1]$. 
		
		\
		
		The equation for $A_{j,1}$ is given by
		\[
		\partial_{t_1} A_{j,1} =  4 \partial_{x_1}^2 A_{j,1} + \left( -1 + \frac{\eps_1(t_1)}{2} \right) A_{j,1} - a_{j,1} - 4i \partial_{x_1}^3 A_{j-1,1} - \partial_{x_1}^4 A_{j-2,1} ,
		\quad A_{j,1}(x_1,0) = A_{j,1}^\ast(x_1) ,
		\]
		where $x_1 \in \R$, $t_1 \in [0,T_1]$ and $a_{j,1}$ is given by $a_{j,1} = a_{j,1}^L + a_{j,1}^N$ with $a_{j,1}^L$ as in \eqref{eq:a_j1_L}. This equation also decouples from system \eqref{eq:K1_eqns2}, and many of the estimates in $H_{ul}^\theta$ can be obtained via arguments similar to those used in Steps II-III. We therefore present fewer details. Both $- 4i \partial_{x_1}^3 A_{j-1,1}$ and $\partial_{x_1}^4 A_{j-2,1}$ may be considered as source terms, since we given solutions $A_{j-1,1}(x_1,t_1)$ and $A_{j-2,1}(x_1,t_1)$ for all $x_1 \in \mathbb R$ and $t_1 \in [0,T_1]$ either via either Lemma \ref{lem:K1_A1}, Lemma \ref{lem:K1_A2}, or the induction hypothesis (depending on $j$).
		
		Using the same integrating factor as in Step III leads to
		\[
		\partial_{t_1} \tilde A_{j,1} = 4 \partial_{x_1}^2 \tilde A_{j,1} - \tilde A_{j,1} - a_{j,1}^L(\tilde A_{j,1}, \overline{\tilde A_{j,1}}) - \me^{-I_1(t_1)} \left( a^N_{j,1} + 4 i \partial_{x_1}^3 A_{j-1,1} + \partial_{x_1}^4 A_{j-2,1} \right) ,
		\]
		together with $\tilde A_{j,1}(x_1,0) = A_{j,1}^\ast(x_1)$, after the transformation $A_{j,1}(x_1,t_1) =: \me^{I_1(t_1)} \tilde A_{j,1}(x_1,t_1)$. As in Step III, we apply the variation of constants formula and split the solution into two parts $\tilde A_{j,1} := \tilde A_{j,1,h} + \tilde A_{j,1,s}$ given by
		\begin{equation}
			\label{eq:Aj1_voc}
			\begin{split}
				\tilde A_{j,1,h}(\cdot,t_1) &= 
				\me^{t_1 \Lambda_1} A_{j,1}^\ast  - \int_0^{t_1} \me^{(t_1 - s_1) \Lambda_1} a_{j,1}^L(\tilde A_{j,1}(\cdot,s_1), \overline{\tilde A_{j,1}}(\cdot,s_1)) ds_1 , \\
				\tilde A_{j,1,s}(\cdot,t_1) &= 
				- \int_0^{t_1} \me^{(t_1 - s_1) \Lambda_1} \me^{-I_1(s_1)} \left( a^N_{j,1}(\cdot,s_1) + 4 i \partial_{x_1}^3 A_{j-1,1}(\cdot,s_1) + \partial_{x_1}^4 A_{j-2,1}(\cdot,s_1) \right) ds_1 ,
			\end{split}
		\end{equation}
		for $t_1 \in [0,T_1]$. Note that the $\partial_{x_1}^3 A_{j-1,1}$ contribution has been included in the source terms, in contrast to Step III. Arguments analogous to those which led to the bound for $\|A_{2,1,h}(\cdot,T_1) \|_{H_{ul}^\theta}$ in Lemma \ref{lem:K1_A2} lead to a similar bound for $A_{j,1,h}(\cdot,T_1) =: \me^{I_1(T_1)} \tilde A_{j,1,h}(\cdot,T_1)$. Specifically, 
		we obtain 
		\begin{equation}
			\label{eq:Aj1h_bound_t}
			\begin{split}
				\| \tilde A_{j,1,h}(\cdot,t_1) \|_{H_{ul}^\theta} &\leq 
				C \me^{- c t_1} \| A_{j,1}^\ast \|_{H_{ul}^\theta} +
				C i \me^{-c t_1} \me^{-(2 \tilde c - c) / 2 \eps_1^\ast} \frac{\|A_{1,1}^\ast\|_{H_{ul}^{\theta}}^2}{\sqrt{\eps_1^\ast}} \Gamma \left( \frac{1}{2}, -(2 \tilde c - c) \left(\frac{1}{2 \eps_1^\ast} - s_1 \right) \right) \bigg|_0^{t_1} \\
				&\leq C \me^{-c t_1} \max_{k = 1,\ldots,j} \| A_{k,1}^\ast  \|_{H_{ul}^\theta} ,
			\end{split}
		\end{equation}
		for all $t_1 \in [0, \tilde T_{j,1}]$, such that
		\[
		\| A_{j,1,h}(\cdot,T_1) \| = 
		\me^{I_1(T_1)} \| \tilde A_{j,1,h}(\cdot,T_1) \| \leq 
		C \frac{\me^{- c / 2 \eps_1^\ast}}{{\eps_1^\ast}^{3/4}} \max_{k = 1,\ldots,j} \| A_{k,1}^\ast  \|_{H_{ul}^\theta} .
		\]
		
		We now consider the second term $\tilde A_{j,1,s}$. Using the same bound for the semigroup as in Step III we obtain the estimate
		\begin{equation}
			\label{eq:Aj1s_voc_bound}
			\begin{split}
				\| \tilde A_{j,1,s}(\cdot,t_1) \|_{H_{ul}^\theta} & \leq \\
				C \me^{- c t_1} & \int_0^{t_1} \me^{c s_1} \sqrt[4]{1 - 2\eps_1^\ast s_1} \left\| a^N_{j,1}(\cdot,s_1) + 4 i \partial_{x_1}^3 A_{j-1,1}(\cdot,s_1) + \partial_{x_1}^4 A_{j-2,1}(\cdot,s_1) \right\|_{H_{ul}^\theta} ds_1 ,
			\end{split}
		\end{equation}
		for all $t_1 \in [0, \tilde T_{j,1}]$. The $a^N_{j,1}$ term is controlled by the assumption that $a^N_{j,1}(\cdot,s_1) \in H_{ul}^\theta$ for all $s_1 \in [0,T_1]$, and the derivative terms $\partial_{x_1}^3 A_{j-1,1}(\cdot,s_1)$ and $\partial_{x_1}^4 A_{j-2,1}(\cdot,s_1)$ are easily controlled because we assume sufficient regularity. Specifically,
		\[
		\begin{split}
			\| \partial_{x_1}^3 A_{j-1,1}(\cdot,s_1) \|_{H_{ul}^\theta} &\leq 
			C \| A_{j-1,1}(\cdot,s_1) \|_{H_{ul}^{\theta+3}} =
			C \| A_{j-1,1}(\cdot,s_1) \|_{H_{ul}^{\theta_{j-1,j}}} , \\
			\| \partial_{x_1}^4 A_{j-2,1}(\cdot,s_1) \|_{H_{ul}^\theta} &\leq 
			C \| A_{j-2,1}(\cdot,s_1) \|_{H_{ul}^{\theta+4}} \leq
			C \| A_{j-2,1}(\cdot,s_1) \|_{H_{ul}^{\theta_{j-2,j}}} ,
		\end{split}
		\]
		for all $s_1 \in [0, \tilde T_{j,1}]$. Since $A_{j-1,1}(\cdot,t_1) \in H_{ul}^{\theta_{j-1,j}}$ and $A_{j-2,1}(\cdot,t_1) \in H_{ul}^{\theta_{j-2,j}}$ for all $t_1 \in [0,T_1]$ by the induction hypothesis, it follows that the $H_{ul}^\theta$ norm in the integrand of \eqref{eq:Aj1s_voc_bound} is bounded above by a constant for all $s_1 \in [0,T_1]$. It follows that
		\[
			\| \tilde A_{j,1,s}(\cdot,t_1) \|_{H_{ul}^\theta} \leq 
			C \me^{- c t_1} \int_0^{t_1} \me^{c s_1} \sqrt[4]{1 - 2\eps_1^\ast s_1} ds_1
			= C \me^{- c t_1} \me^{c / 2\eps_1^\ast} \sqrt[4]{\eps_1^\ast} \Gamma \left( \frac{5}{4} , c \left( \frac{1}{2\eps_1^\ast} - s_1 \right) \right) \bigg|_0^{t_1} ,
		\]
		for all $t_1 \in [0,\tilde T_{j,1}]$, where we used Lemma \ref{lem:Gamma_integrals} again. After multiplying through by the integrating factor 
		we obtain
		\begin{equation}
			\label{eq:Aj1_nu_bound}
			\| A_{j,1}(\cdot,t_1) \|_{H_{ul}^\theta} \leq 
			C \frac{\me^{-c t_1}}{\sqrt[4]{1 - 2\eps_1^\ast t_1}} \max_{k = 1,\ldots,j} \| A_{k,1}^\ast  \|_{H_{ul}^\theta}
			+ C \frac{\me^{-c (t_1 - 1/2\eps_1^\ast)}}{\sqrt[4]{1 - 2\eps_1^\ast t_1}}  \sqrt[4]{\eps_1^\ast} \Gamma \left( \frac{5}{4} , c \left( \frac{1}{2 \eps_1^\ast} - s_1 \right) \right) \bigg|_0^{t_1} ,
		\end{equation}
		for $t_1 \in [0, \tilde T_{j,1}]$. Since \eqref{eq:Aj1_nu_bound} is bounded for all $t_1 \in [0,T_1]$ (this follows directly from the asymptotics of the incomplete gamma function and the monotonicity properties of the integral described in Appendix \ref{app:technical_estimates}), it follows that $\tilde T_{j,1} = T_1$, such that $A_{j,1}(\cdot,t_1) \in H_{ul}^\theta$ for all $t_1 \in [0,T_1]$.
		
		\
		
		We now set $\mu(x) \equiv 0$ and assume that the condition \eqref{eq:a_jN_cond} holds. In order to verify \eqref{eq:Aj1_nu10_bound} we need to improve the bound on the right-most term in \eqref{eq:Aj1_nu_bound}, which is currently $O(1)$ at $t_1 = T_1$ as $\eps_1^\ast \to 0$. This can be achieved by obtaining better bounds for $\| a^N_{j,1}(\cdot,s_1) \|_{H_{ul}^\theta}$, $\| \partial_{x_3}^4 A_{j-1,1}(\cdot,s_1) \|_{H_{ul}^\theta}$ and $\| \partial_{x_1}^4 A_{j-2,1}(\cdot,s_1) \|_{H_{ul}^\theta}$. It suffices to show that
		\begin{equation}
			\label{eq:Aki_bounds}
			\| A_{k,1} (\cdot, t_1) \|_{H_{ul}^{\theta_{k,j}}} \leq 
			C \frac{\me^{- \tilde c t_1}}{\sqrt[4]{1 - 2 \eps_1^\ast t_1}} \max_{i = 1, \ldots, k} \| A_{i,1}^\ast  \|_{H_{ul}^{\theta_{i,j}}} ,
		\end{equation}
		for all $t_1 \in [0,T_1]$ and $k = 1, \ldots , j$, where $\tilde c = c - C \tilde K^2 \in (0,1)$ as before. This is 
		because \eqref{eq:Aki_bounds} implies
		\[
		\| A_{k,1} (\cdot, T_1) \|_{H_{ul}^{\theta_{k,j}}} \leq 
		C \me^{- \kappa / 2\eps_1^\ast} \max_{i = 1, \ldots, k} \| A_{i,1}^\ast  \|_{H_{ul}^{\theta_{i,j}}} 
		\]
		for any fixed $\kappa \in (0,\tilde c)$, for all $k = 1, \ldots , j$, assuming that $\zeta > 0$ is fixed sufficiently small. Thus \eqref{eq:Aki_bounds} implies \eqref{eq:Aj1_nu10_bound}.
		
		For $k=1$, \eqref{eq:Aki_bounds} follows immediately from Lemma \ref{lem:K1_A1}. For $k=2$ we have that $A_{2,1} = A_{2,1,h}$ when $\mu(x) \equiv 0$. Therefore, equation \eqref{eq:A21h_tilde_bound} in the proof of Lemma \ref{lem:K1_A2} implies that
		\begin{equation}
			\label{eq:A21h_bound_4}
			\| A_{2,1}(\cdot,t_1) \|_{H_{ul}^{\theta_{2,j}}} \leq 
			C \frac{\me^{- \tilde c t_1}}{\sqrt[4]{1 - 2 \eps_1^\ast t_1}} \max_{i = 1, 2} \| A_{i,1}^\ast \|_{H_{ul}^{\theta_{i,j}}} ,
		\end{equation}
		for all $t_1 \in [0,T_1]$, as required. 
		
		We show \eqref{eq:Aki_bounds} for $k=3,\ldots,j$ by induction. For the base case $k=3$, equation \eqref{eq:a_jN_cond} and the bounds in \eqref{eq:Aki_bounds} with $k=1,2$ imply that
		\[
		\| a^N_{3,1}(\cdot,s_1) \|_{H_{ul}^{\theta_{3,j}}} \leq 
		C \frac{\me^{- 3 \tilde c s_1}}{(1 - 2 \eps_1^\ast s_1)^{3/4}} \max_{i = 1,2} \| A_{i,1}^\ast \|_{H_{ul}^{\theta_{i,j}}}^3 \leq 
		C \frac{\me^{- \tilde c s_1}}{(1 - 2 \eps_1^\ast s_1)^{1/4}} \max_{i = 1,2} \| A_{i,1}^\ast \|_{H_{ul}^{\theta_{i,j}}} ,
		\]
		for $s_1 \in [0,T_1]$ (note that $\me^{- 3 c s_1} / (1 - 2 \eps_1^\ast s_1)^{3/4} \leq \me^{- c s_1} / \sqrt[4]{1 - 2 \eps_1^\ast s_1}$ on $s_1 \in [0,T_1]$). Since we assume sufficient regularity, bounds for $\partial_{x_1}^3 A_{1,2}(\cdot,s_1)$ and $\partial_{x_1}^4 A_{1,1}(\cdot,s_1)$ are derived directly via the corresponding bounds for $A_{1,2}(\cdot,s_1)$ and $A_{1,1}(\cdot,s_1)$ respectively. Specifically, we have
		\[
		\begin{split}
			\| \partial_{x_1}^3 A_{1,2}(\cdot,s_1) \|_{H_{ul}^{\theta_{3,j}}} &\leq 
			C \| A_{1,2}(\cdot,s_1) \|_{H_{ul}^{\theta_{2,j}}} \leq 
			C \frac{\me^{- \tilde c s_1}}{\sqrt[4]{1 - 2 \eps_1^\ast s_1}} \max_{i = 1, 2} \| A_{i,1}^\ast  \|_{H_{ul}^{\theta_{2,j}}} , \\
			\| \partial_{x_1}^4 A_{1,1}(\cdot,s_1) \|_{H_{ul}^{\theta_{3,j}}} &\leq 
			C \| A_{1,1}(\cdot,s_1) \|_{H_{ul}^{\theta_{3,j} + 4}} \leq 
			C \frac{\me^{- \tilde c s_1}}{\sqrt[4]{1 - 2 \eps_1^\ast s_1}} \| A_{1,1}^\ast \|_{H_{ul}^{\theta_A(j)}} ,
		\end{split}
		\]
		for $s_1 \in [0,T_1]$. We therefore obtain
		\[
			\big\| a^N_{3,1}(\cdot,s_1) + 4i \partial_{x_1}^3 A_{1,2}(\cdot,s_1) + \partial_{x_1}^4 A_{1,1}(\cdot,s_1) \big\|_{H_{ul}^{\theta_{3,j}}} \leq  
			C \frac{\me^{- \tilde c s_1}}{\sqrt[4]{1 - 2 \eps_1^\ast s_1}} 
			\max_{i = 1, 2} \| A_{i,1}^\ast \|_{H_{ul}^{\theta_{i,j}}} ,
		\]
		for $s_1 \in [0,T_1]$. Combining this with \eqref{eq:Aj1s_voc_bound} leads to
		\[
		\| \tilde A_{3,1,s}(\cdot,t_1) \|_{H_{ul}^{\theta_{3,j}}} \leq 
		C \me^{- c t_1} \max_{i = 1, 2} \| A_{i,1}^\ast \|_{H_{ul}^{\theta_{i,j}}} \int_0^{t_1} \me^{(c-\tilde c) s_1} ds_1 \\
		\leq C \me^{- \tilde c t_1} \max_{i = 1, 2} \| A_{i,1}^\ast  \|_{H_{ul}^{\theta_{i,j}}} ,
		\]
		for all $t_1 \in [0,T_1]$. After multiplying through by the integrating factor we therefore obtain the estimate
		\[
		\| A_{3,1}(\cdot,t_1) \|_{H_{ul}^{\theta_{3,j}}} \leq 
		C \frac{1}{\sqrt[4]{1 - 2\eps_1^\ast t_1}} \| \tilde A_{3,1,h}(\cdot,t_1) \|_{H_{ul}^{\theta_{3,j}}} + C \frac{\me^{- \tilde c t_1}}{\sqrt[4]{1 - 2\eps_1^\ast t_1}} \max_{i = 1, 2} \| A_{i,1}^\ast  \|_{H_{ul}^{\theta_{i,j}}} ,
		\]
		for $t_1 \in [0,T_1]$. The desired bound \eqref{eq:Aki_bounds} follows after using the bound for $\| \tilde A_{3,1,h}(\cdot,t_1) \|_{H_{ul}^{\theta_{3,j}}}$ in \eqref{eq:Aj1h_bound_t}, thereby proving the desired result for the base case $k = 3$.
		
		It remains to carry out the induction step, for which we assume that \eqref{eq:Aki_bounds} holds for $k=3,\ldots, j-1 \leq N-3$. We present fewer details since the structure of the argument is the same. Using \eqref{eq:a_jN_cond} it follows that 
		\[
		\| a^N_{j,1}(\cdot,s_1) \|_{H_{ul}^{\theta}} \leq 
		C \frac{\me^{- 3 \tilde c s_1}}{(1 - 2 \eps_1^\ast s_1)^{3/4}}  \max_{i = 1,\ldots,j-1} \| A_{i,1}^\ast \|_{H_{ul}^{\theta_{i,j}}}^3 \leq
		C \frac{\me^{- \tilde c s_1}}{\sqrt[4]{1 - 2 \eps_1^\ast s_1}}  \max_{i = 1,\ldots,j-1} \| A_{i,1}^\ast \|_{H_{ul}^{\theta_{i,j}}} ,
		\]
		for $s_1 \in [0,T_1]$. Due to our regularity assumptions and induction hypothesis we have
		\[
		\begin{split}
			\| \partial_{x_1}^3 A_{j-1,1}(\cdot,s_1) \|_{H_{ul}^{\theta}} &\leq 
			C \| A_{j-1,1}(\cdot,s_1) \|_{H_{ul}^{\theta_{j-1,j}}} 
			\leq C \frac{\me^{- \tilde c s_1}}{\sqrt[4]{1 - 2 \eps_1^\ast s_1}} \max_{i = 1, \ldots, j-1} \| A_{i,1}^\ast \|_{H_{ul}^{\theta_{i,j}}} , \\
			\| \partial_{x_1}^4 A_{j-2,1}(\cdot,s_1) \|_{H_{ul}^{\theta}} &\leq 
			C \| A_{j-2,1}(\cdot,s_1) \|_{H_{ul}^{\theta_{j-2,j}}} 
			\leq C \frac{\me^{- \tilde c s_1}}{\sqrt[4]{1 - 2 \eps_1^\ast s_1}} \max_{i = 1, \ldots, j-2} \| A_{i,1}^\ast \|_{H_{ul}^{\theta_{i,j}}} ,
		\end{split}
		\]
		for $s_1 \in [0,T_1]$. The bound for $\| A_{j,1}(\cdot,t_1) \|_{H_{ul}^{\theta}}$ on $t_1 \in [0,T_1]$ then follows analogously to the base case $k=3$. By induction, it follows that \eqref{eq:Aki_bounds} holds for all $k = 3, \ldots, j \leq N-2$. 
	\end{proof}
	
	\begin{remark}
		\label{rem:a_N}
		In order to verify the assumptions on $a_{j,1}^N$ in Lemma \ref{lem:K1_Aj} we need to control the norm of non-critical modulation functions $A_{mj,1}$ with $|m| \neq 1$. To see that the conditions on $a_{j,1}^N$ are `natural', recall from Remark \ref{rem:center_manifold_similarity} that $a_{j,1}^N$ can be written in terms of lower order critical modulation functions $A_{\pm1j'}$, where $j' < j$. It will follow from Lemma \ref{lem:K1_gl_coefficients} and its proof in Section \ref{subsub:psi_1} below that the relevant conditions are always satisfied.
	\end{remark}

	\subsubsection{Step V: Estimate $\psi_1$}
	\label{subsub:psi_1}
	
	Lemmas \ref{lem:K1_r1eps1}, \ref{lem:K1_A1}, \ref{lem:K1_A2} and \ref{lem:K1_Aj} provide sufficient information to characterise the dynamics of the modulation equations \eqref{eq:K1_eqns2}. In order to properly characterise the dynamics of the `full approximation' $\psi_1$ and the map $\pi_1$ defined in \eqref{eq:pi_1_def}, however, we must also control the non-critical modulation functions $A_{mj,1}$ with $|m| \neq 1$. Moreover, we need to verify the assumptions on $a_{j,1}^N$ 
	(recall Remark \ref{rem:a_N}). In the following we reinstate the full notation $A_{\pm1j,1}$ instead of $A_{j,1}$ in order to distinguish $A_{mj,1}$ with $|m|=1$ and $|m|\neq1$. \SJ{We also recall that $\textbf A_{mj,1}$ denotes the set of lower order critical modulation functions $A_{\pm1j',1}$ with $j' \in \{1,\ldots,\alpha(m) + j - 1\}$.}
	
	\begin{lemma}
		\label{lem:K1_gl_coefficients}
		Let $\theta_A = 3(N-3) + \theta$ where $\theta > 1/2$. We have that $A_{\pm1j,1}(\cdot,t_1) \in H_{ul}^{\theta_{j,N-2}}$ for all $j = 1, \ldots, N-2$ and $t_1 \in [0,T_1]$, and
		\begin{equation}
			\label{eq:gl_coefficients_smooth}
			A_{mj,1}(\cdot, t_1) = g_{mj,1}^{gl} \left( \textbf{\textup{A}}_{mj,1}(\cdot,t_1) , \eps_1(t_1) \right) \in H_{ul}^{\theta_{\alpha(m)+j,N}} ,
		\end{equation}
		for all $t_1 \in [0,T_1]$, $m \in I_N \setminus \{-1,1\}$ and $j = 1, \ldots, \tilde \alpha(m)$. If additionally $\mu(x) \equiv 0$ then for each $m \in I_N \setminus \{-1,1\}$ we have
		\begin{equation}
			\label{eq:gl_coefficients_bounds}
				\big\| g_{mj,1}^{gl} \left( \textbf{\textup{A}}_{mj,1}(\cdot,T_1) , \eps_1(T_1) \right) \big\|_{H_{ul}^{\theta_{\alpha(m)+j,N}}}
				\leq C \me^{- \kappa / 2 \eps_1^\ast}
				\max_{k = 1, \ldots, \alpha(m)+j} \| A_{1k,1}^\ast  \|_{H_{ul}^{\theta_{k,N-2}}} ,
		\end{equation}
		for all $j = 1, \ldots, \tilde \alpha(m)$.
	\end{lemma}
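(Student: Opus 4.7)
The plan is to establish the two claims by a joint induction on the total matching order $\sigma := \alpha(m) + j$, which ranges over $\{1,2,\ldots,N\}$. At each order $\sigma$, one needs to control both the critical modulation functions $A_{\pm 1 j,1}$ with $j = \sigma$ (when $\sigma \leq N-2$) and all non-critical modulation functions $A_{mj,1}$ with $|m| \neq 1$ and $\alpha(m)+j = \sigma$. The base case $\sigma = 1$ involves only the critical mode $A_{\pm 1 1,1}$, since $\alpha(m) \geq 1$ forces $\sigma \geq 2$ for any non-critical $m$; the required regularity $A_{\pm 1 1,1}(\cdot,t_1) \in H_{ul}^{\theta_{1,N-2}}$ follows directly from Lemma \ref{lem:K1_A1} (which, together with the exponential estimate \eqref{eq:A11_T1_bound}, also handles the $\mu \equiv 0$ conclusion at this order).

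For the induction step, assume the assertions hold at all orders $\sigma' < \sigma$. I would first handle non-critical modes $A_{mj,1}$ with $\alpha(m)+j = \sigma$ via the algebraic formula \eqref{eq:GL_manifold_K1}. The operator $(\mathcal L_{m,1}^{(0)})^{-1} = -(1-m^2)^{-2}$ is a bounded scalar for $|m|\neq 1$, and the operators $\mathcal L_{m,1}^{(k)}$ for $k=1,2,3,4$ involve at most $k$ spatial derivatives. Each term $\mathcal L_{m,1}^{(k)} A_{m(j-k),1}$ therefore costs $k$ derivatives on a non-critical modulation function of total order $\sigma - k$, which by the induction hypothesis lies in $H_{ul}^{\theta_{\sigma-k, N}} = H_{ul}^{3(N-\sigma+k)+\theta}$; since $\theta_{\sigma,N} = 3(N-\sigma)+\theta$, these derivatives are absorbed with room to spare. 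The cubic term $a_{mj,1}$ is a finite sum of products of three modulation functions at lower total orders, so the algebra property of $H_{ul}^{\theta}$ for $\theta > 1/2$ (Lemma \ref{lem:US_8.3.11}) yields membership in $H_{ul}^{\theta_{\sigma,N}}$. In the symmetric case $\mu \equiv 0$, every term in the right-hand side is polynomial in modulation functions that are exponentially small by the induction hypothesis, which immediately propagates the $\me^{-\kappa/2\eps_1^\ast}$ bound to $g_{mj,1}^{gl}$.

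Next, for the critical modes at order $\sigma$ (when $\sigma \leq N-2$), I would invoke Lemma \ref{lem:K1_Aj} with $j = \sigma$. The hypothesis that $a_{\sigma,1}^N(\cdot,t_1) \in H_{ul}^{\theta}$ is immediate from the algebra property together with the induction hypothesis and the non-critical regularity just established; indeed, by Lemma \ref{lem:a_mj} and the matching conditions, $a_{\sigma,1}^N$ is a finite sum of cubic monomials in modulation functions $A_{m'j',1}$ with $\alpha(m')+j' < \sigma$, all of which are already known to live in the appropriate Sobolev space. Verifying the cubic bound \eqref{eq:a_jN_cond} in the case $\mu \equiv 0$ is the delicate point: here one must recursively substitute the algebraic graph representation of each non-critical factor into $a_{\sigma,1}^N$, expressing the latter entirely as a polynomial in the critical modulation functions $A_{\pm 1 k,1}$ with $k < \sigma$ (possibly after applying spatial derivatives that cost at most a fixed number of regularity indices, compatible with the definition $\theta_{k,\sigma} = 3(\sigma - k) + \theta$). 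The algebra property then yields the cubic bound, after which Lemma \ref{lem:K1_Aj} produces both the regularity of $A_{\pm 1 \sigma, 1}$ and the exponential estimate \eqref{eq:Aj1_nu10_bound}.

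The main obstacle is the bookkeeping just described, namely verifying that the recursive unfolding of the algebraic graph equations into the cubic term $a_{j,1}^N$ stays within the regularity budget $\theta_{k,j}$ and produces the correct cubic growth rate in \eqref{eq:a_jN_cond}. The matching conditions $\alpha(m)+j+2 = \sum_{i=1}^3 (\alpha(m_i)+j_i)$ ensure that each factor in $a_{j,1}^N$ has total order strictly less than $j$, so only finitely many substitution steps occur; the choice $\theta_A = 3(N-3)+\theta$ reflects precisely the worst-case accumulation of derivatives through these substitutions. Once this combinatorial accounting is carried out and the induction closes, the asserted regularity \eqref{eq:gl_coefficients_smooth} and the exponential bound \eqref{eq:gl_coefficients_bounds} both follow.
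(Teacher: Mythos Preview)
Your proposal is correct and follows essentially the same approach as the paper: an induction on the total order $\sigma = \alpha(m)+j$, handling non-critical modes algebraically via the graph equation \eqref{eq:GL_manifold_K1} and critical modes via the earlier Lemmas \ref{lem:K1_A1}, \ref{lem:K1_A2}, \ref{lem:K1_Aj}, with the $\mu\equiv 0$ bound propagated through the recursion using the matching condition. The paper differs only in presentation---it works out the low orders $\sigma=1,2,3$ explicitly (invoking Lemma \ref{lem:K1_A2} for $j=2$, which your uniform appeal to Lemma \ref{lem:K1_Aj} technically misses since that lemma is stated for $j\geq 3$) before describing the general recursive structure, whereas you package everything into a single induction; this is a minor organizational difference and not a substantive gap.
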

	
	\begin{proof}
		Similarly to the established modulation theory for the static SH equation in e.g.~\cite[Chapter 10]{Schneider2017}, this follows from the recursive construction of $\Psi$ in Section \ref{sec:Amplitude_reduction_via_geometric_blow-up} and the existence of solutions $A_{11,1} \in H_{ul}^{\theta_A}$ due to Lemma \ref{lem:K1_A1}. The idea is to proceed iteratively with terms of order $O(r_1^{\alpha(m) + j})$. For the first three orders, we find the following:
		\begin{itemize}
			\item $\alpha(m) + j = 1$: In this case $(m,j) \in \{ (\pm 1, 1) \}$, and $A_{\pm11,1}(\cdot,t_1) \in H_{ul}^{\theta_A}$ by Lemma \ref{lem:K1_A1}.
			\item $\alpha(m) + j = 2$: In this case $(m,j) \in \{ (\pm1,2), (0,1), (\pm 2,1) \}$. We have $A_{\pm21,1}(\cdot,t_1) \in H_{ul}^{\theta_{2,N-2}}$ by Lemma \ref{lem:K1_A2}, and $A_{01,1} \equiv A_{\pm21,1} \equiv 0$ after restriction to $\mathcal M_1^{gl}$; recall \eqref{eq:A01_etc}.
			\item $\alpha(m) + j = 3$: In this case $(m,j) \in \{ (\pm1,3), (0,2), (\pm2,2), (\pm3,1) \}$. Restriction to $\mathcal M_1^{gl}$ yields $A_{02,1} = 0$, $A_{\pm22,1} = 0$ and $A_{\pm31,1} = - A_{\pm11,1}^3 / 64$ (see again \eqref{eq:A01_etc}). Note that $A_{\pm31,1}(\cdot,t_1) \in H_{ul}^{\theta_A}$ since $A_{\pm11,1}(\cdot,t_1) \in H_{ul}^{\theta_A}$ and $H_{ul}^{\theta_A}$ is an algebra. This implies that $a_{3,1}^N(\cdot,t_1) \in H_{ul}^{\theta_A}$ for all $t_1 \in [0,T_1]$. Therefore $A_{\pm13,1}(\cdot,t_1) \in H_{ul}^{\theta_{3,N-2}}$ by Lemma \ref{lem:K1_Aj}.
		\end{itemize}
		
		The recursive structure for higher orders is as follows: At the level $\alpha(m) + j = l \leq N$ one has a finite set of equations for $A_{mj,1}$ with $(m,j) \in \mathcal I^l := \{(m,j) \in I_N \setminus \{-1,1\} \times \{1, \ldots, \tilde \alpha(m)\} : \alpha(m) + j = l \}$ determined via restriction to $\mathcal M_1^{gl}$. 
		The graph equation defining $\mathcal M_1^{gl}$ in \eqref{eq:GL_manifold_K1} only depends on $A_{m'j',1}$ with $\alpha(m') + j' < l$, which in turn depend only on critical modulation functions $A_{\pm1l'}$ of lower order with $l' < l$; recall Remark \ref{rem:center_manifold_similarity}. 
		In particular, $a_{j,1}^N$ is a polynomial in modulation functions $A_{\pm1 j',1}$ with $j' < j$. This allows one to show that $a_{j,1}^N(\cdot,t_1) \in H_{ul}^{\theta_{j,N-2}}$ for all $t_1 \in [0,T_1]$, since Lemmas \ref{lem:K1_A1} and \ref{lem:K1_A2} can be used to initiate an inductive proof which relies on the fact that $A_{\pm1j',1}(\cdot,t_1) \in H_{ul}^{j',N-2}$ for all lower orders $j' = 1, \ldots, j-1$. The details are standard and omitted for brevity. After showing that $a_{j,1}^N(\cdot,t_1) \in H_{ul}^{\theta_{j,N-2}}$ for all $t_1 \in [0,T_1]$, Lemma \ref{lem:K1_Aj} can be applied to establish that $A_{\pm j,1}(\cdot,t_1) \in H_{ul}^{\theta_{j,N-2}}$ for all $t_1 \in [0,T_1]$.
		
		\
		
		Assume now that $\mu(x) \equiv 0$, in which case $\nu_m = 0$ for all $m \in I_N$ and
		\begin{equation}
			\label{eq:gl_coefficients_mu_zero}
			g_{mj,1}^{gl} \left(\textbf{A}_{mj,1}, \eps_1 \right) = 
			- (\mathcal L_{m,1}^{(0)})^{-1} \bigg( \mathcal L_{m,1}^{(1)} A_{mj-1,1} + \tilde{\mathcal L_{m,1}^{(2)}} A_{mj-2,1} 
			+ \mathcal L_{m,1}^{(3)} A_{mj-3,1} + \mathcal L_{m,1}^{(4)} A_{mj-4,1} - 
			a_{mj,1} \bigg) .
		\end{equation}
		In order to prove the bound in \eqref{eq:gl_coefficients_bounds}, we note first that the matching condition $\alpha(m_1) + \alpha(m_2) + \alpha(m_3) + j_1 + j_2 + j_3 \leq \alpha(m) + j + 2$ implies that
		\begin{equation}
			\label{eq:a_bound}
			\begin{split}
				\| a_{mj,1}(\cdot,t_1) \|_{H_{ul}^{\theta_{\alpha(m)+j,N}}} \leq 
				C \sup_{\{(m',j') : \alpha(m')+j' \leq \alpha(m) + j - 2 \}} \| A_{m'j',1}(\cdot,t_1) \|_{H_{ul}^{\theta_{\alpha(m')+j',N}}} ,
			\end{split}
		\end{equation}
		for all $t_1 \in [0,T_1]$ and $m \neq 1$. Since assume sufficient regularity to control the derivatives in the linear operators $\mathcal L_{m,1}^{(l)}$ for $l = 1,2,3,4$, we obtain
		\[
		\begin{split}
			\| g_{mj,1}^{gl} & \left(\textbf{A}_{mj,1}(\cdot,t_1), \eps_1 \right)  \|_{H_{ul}^{\theta_{\alpha(m) + j,N}}} \leq 
			C \bigg( \| A_{mj-1,1}(\cdot,t_1) \|_{H_{ul}^{\theta_{\alpha(m) + j-1,N} + 1}} \\
			&+ \| A_{mj-2,1}(\cdot,t_1) \|_{H_{ul}^{\theta_{\alpha(m) + j - 2,N} + 2}} + 
			\| A_{mj-3,1}(\cdot,t_1) \|_{H_{ul}^{\theta_{\alpha(m) + j - 3,N} + 3}} \\
			&+ \| A_{mj-4,1}(\cdot,t_1) \|_{H_{ul}^{\theta_{\alpha(m) + j - 4,N} + 4}} +
			\sup_{\{(m',j') : \alpha(m')+j' \leq \alpha(m) + j - 2 \}} \| A_{m'j',1}(\cdot,t_1) \|_{H_{ul}^{\theta_{\alpha(m')+j',N}}} \bigg) ,
		\end{split}
		\]
		for all $t_1 \in [0,T_1]$. If we assume as an induction hypothesis that
		\[
		\| A_{mj,1}(\cdot,t_1) \|_{H_{ul}^{\theta_{\alpha(m)+j,N}}} = \| g_{mj,1}^{gl} \left(\textbf{A}_{mj,1}(\cdot,t_1), \eps_1 \right) \|_{H_{ul}^{\theta_{\alpha(m)+j,N}}} \leq 
		C \sup_{k = 1, \ldots, j-2} \| A_{\pm1 k,1}(\cdot,t_1) \|_{H_{ul}^{\theta_{k,N-2}}} 
		\]
		for all $t_1 \in [0,T_1]$, 
		(the base case follows by the arguments above in dot points), then we obtain
		\begin{equation}
			\label{eq:Amj1_bounds}
			\| g_{mj,1}^{gl} \left(\textbf{A}_{mj,1}(\cdot,t_1), \eps_1 \right) \|_{H_{ul}^{\theta_{\alpha(m) + j,N}}} \leq
			C \sup_{k = 1, \ldots, j-2} \| A_{\pm1 k,1}(\cdot,t_1) \|_{H_{ul}^{\theta_{k,N-2}}} ,
		\end{equation}
		for all $t_1 \in [0,T_1]$. The bound in \eqref{eq:gl_coefficients_mu_zero} follows after applying the bounds for $A_{\pm1,j}$ when $\mu(x) \equiv 0$ from Lemmas \ref{lem:K1_A1}, \ref{lem:K1_A2} and \eqref{lem:K1_Aj} when $\mu(x) \equiv 0$ (the condition \eqref{eq:a_jN_cond} is satisfied due to \eqref{eq:a_bound}).
	\end{proof}
	
	\begin{remark}
		The proof of Lemma \ref{lem:K1_gl_coefficients} also shows that the assumptions on $a_{j,1}^N$ for the applicability of Lemma \ref{lem:K1_Aj} are satisfied.
	\end{remark}
	
	Combining the results of Lemmas \ref{lem:K1_r1eps1}, \ref{lem:K1_A1}, \ref{lem:K1_A2}, \ref{lem:K1_Aj} and \ref{lem:K1_gl_coefficients} we obtain the following result, which summarises the dynamics in chart $\mathcal K_1$.
	
	\begin{proposition}
		\label{prop:K1_summary}
		Let $\theta_A = 1 + 3(N-3) + \theta$ where $\theta > 1/2$. Then for $\zeta, K > 0$ sufficiently small but fixed we have that $\psi_1(\cdot, t_1) \in H_{ul}^\theta$ for all $t_1 \in [0,T_1]$. In particular, the map $\pi_1 : \Sigma_1^{in} \to \Sigma_1^{out}$ is well-defined and given by
		\begin{equation}
			\label{eq:pi_1}
			\pi_1 : ( \psi_1^\ast, \rho_{in}^{1/2}, \eps_1^\ast ) \mapsto \left( \psi_1(\cdot,T_1), \rho_{in}^{1/2}\left(\frac{\eps_1^\ast}{\zeta}\right)^{1/4}, \zeta \right) ,
		\end{equation}
		where
		\begin{equation}
			\label{eq:psi_1_out}
			\psi_1(x,T_1) = 
			{\eps_1^\ast}^{1/4} \frac{\rho_{in}^{1/2}}{\zeta^{1/4}} \left( \nu_1 f_{2,1}(\eps_1^\ast) \me^{ix} + c.c. \right) + {\eps_1^\ast}^{1/2} h_{rem,1} (x) ,
		\end{equation}
		where $f_{2,1}(\eps_1^\ast)$ is the function described in Lemma \ref{lem:K1_A2} and $h_{rem,1} \in H_{ul}^{\theta}$. If additionally $\mu(x) \equiv 0$ then we have
		\[
		\|\psi_1(\cdot,T_1)\|_{H_{ul}^\theta} \leq 
		C \me^{- \kappa / 2 \eps_1^\ast} \max_{j = 1, \ldots, N-2} \| A_{j,1}^\ast \|_{H_{ul}^{\theta+1}} ,
		\]
		where $\kappa \in (0,\tilde c)$ can be fixed arbitrarily close to $\tilde c$ if $K, \zeta > 0$ are sufficiently small.
	\end{proposition}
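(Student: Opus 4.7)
The plan is to assemble Lemmas~\ref{lem:K1_r1eps1}--\ref{lem:K1_gl_coefficients} and extract the leading order asymptotics by sorting the terms in the series expansion of $\psi_1$ according to powers of $r_1(T_1) = \rho_{in}^{1/2}(\eps_1^\ast/\zeta)^{1/4}$. First I would recall from \eqref{eq:psi_definition} that
\[
\psi_1(x,t_1) = \sum_{m \in I_N} \sum_{j=1}^{\tilde \alpha(m)} r_1(t_1)^{\alpha(m)+j-1} A_{mj,1}(x_1,t_1) \me^{imx},
\]
with $x_1 = r_1(t_1) x$. Since $\theta_A = 1 + 3(N-3) + \theta$ exceeds the regularity thresholds required in Lemmas~\ref{lem:K1_Aj} and \ref{lem:K1_gl_coefficients}, all critical modulation functions $A_{\pm 1 j,1}$ lie in $H_{ul}^{\theta_{j,N-2}}$ and all non-critical $A_{mj,1}$ lie in $H_{ul}^{\theta_{\alpha(m)+j,N}}$, each with regularity at least $\theta$. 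Because the double sum is finite and $H_{ul}^\theta$ is an algebra, it follows immediately that $\psi_1(\cdot,t_1) \in H_{ul}^\theta$ for all $t_1 \in [0,T_1]$, and the map $\pi_1$ is well-defined with exit values $r_1(T_1)$ and $\eps_1(T_1) = \zeta$ from Lemma~\ref{lem:K1_r1eps1}.

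Next I would identify the leading contribution to $\psi_1(x,T_1)$. The critical $j=1$ term $A_{\pm 11,1}(\cdot,T_1)\me^{\pm ix}$ is exponentially small by \eqref{eq:A11_T1_bound} and is absorbed into the remainder. The critical $j=2$ contribution carries the prefactor $r_1(T_1) = \eps_1^{\ast 1/4} \rho_{in}^{1/2}\zeta^{-1/4}$; by the splitting $A_{2,1} = \nu_1 f_{2,1}(\eps_1^\ast) + A_{2,1,h}$ of Lemma~\ref{lem:K1_A2} and the bound \eqref{eq:A21h_bound}, this produces exactly the displayed leading term in \eqref{eq:psi_1_out} plus an exponentially small contribution. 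The first potentially dangerous non-critical terms, namely those with $\alpha(m)+j-1 = 1$, vanish identically by \eqref{eq:A01_etc}, so every remaining non-critical contribution scales like $r_1(T_1)^{\alpha(m)+j-1}$ with $\alpha(m)+j-1 \geq 2$, i.e.\ is $O(\eps_1^{\ast 1/2})$. The critical $j \geq 3$ terms scale like $r_1(T_1)^{j-1}$ with $j-1 \geq 2$, and the bounds from Lemma~\ref{lem:K1_Aj} together with Lemma~\ref{lem:K1_gl_coefficients} yield uniform $H_{ul}^\theta$ control on the corresponding amplitudes. Collecting everything and factoring out $\eps_1^{\ast 1/2}$ from the remainder gives \eqref{eq:psi_1_out} with $h_{rem,1} \in H_{ul}^\theta$ bounded uniformly in $\eps_1^\ast$.

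For the symmetric case $\mu(x) \equiv 0$, every source coefficient $\nu_m$ vanishes. In this case $A_{2,1,s} \equiv 0$ so $A_{2,1} = A_{2,1,h}$ is exponentially small by \eqref{eq:A21h_bound}; the critical modes with $j \geq 3$ are exponentially small by \eqref{eq:Aj1_nu10_bound}; and the non-critical modes are exponentially small by \eqref{eq:gl_coefficients_bounds}, with constants controlled by the $H_{ul}^{\theta_{k,N-2}}$ norms (hence the $H_{ul}^{\theta+1}$ norms, using $\theta_{k,N-2} \leq \theta_A$) of the initial critical modulation functions. Summing finitely many such bounds, each weighted by a power $r_1(T_1)^{\alpha(m)+j-1} \leq 1$, yields the claimed exponential bound on $\|\psi_1(\cdot,T_1)\|_{H_{ul}^\theta}$; the constant $\kappa$ inherits its freedom in $(0,\tilde c)$ from the underlying lemmas after choosing $\zeta$ small.

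The main obstacle I anticipate is the careful bookkeeping required to verify that no non-critical mode produces a spurious $O(\eps_1^{\ast 1/4})$ contribution. This reduces to checking that the lowest order non-critical algebraic equations \eqref{eq:GL_manifold_K1}, evaluated at $\alpha(m)+j \leq 2$, genuinely return zero; the matching conditions on the cubic monomials in $a_{mj,1}$ from Section~\ref{sec:Amplitude_reduction_via_geometric_blow-up} combined with $A_{0\cdot,1} \equiv A_{\pm 2 \cdot, 1} \equiv 0$ at those orders (from \eqref{eq:A01_etc}) close the recursion. Once this algebraic step is in place, the remainder of the proof is a straightforward assembly of the already-established estimates.
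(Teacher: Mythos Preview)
Your overall strategy matches the paper's proof closely: assemble Lemmas~\ref{lem:K1_r1eps1}--\ref{lem:K1_gl_coefficients}, isolate the $j=1,2$ critical contributions, use \eqref{eq:A01_etc} to kill the low-order non-critical modes, and push everything else into an $O(\eps_1^{\ast 1/2})$ remainder. The symmetric case is also handled the same way.

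There is one gap in your justification that $\psi_1(\cdot,t_1) \in H_{ul}^\theta$. You invoke the algebra property of $H_{ul}^\theta$, but that is not the relevant issue. The modulation functions $A_{mj,1}$ are functions of $x_1 = r_1(t_1)x$, not of $x$; to estimate $\|x \mapsto A_{mj,1}(r_1(t_1)x,t_1)\me^{imx}\|_{H_{ul}^\theta}$ you need the rescaling bound of Lemma~\ref{lem:norm_rescaling}, which costs one derivative and only yields control by $\|A_{mj,1}(\cdot,t_1)\|_{H_{ul}^{\theta+1}}$. This is precisely why the hypothesis here is $\theta_A = 1 + 3(N-3) + \theta$ rather than the $3(N-3)+\theta$ required by Lemma~\ref{lem:K1_gl_coefficients}: the extra ``$+1$'' buys you $A_{mj,1}(\cdot,t_1) \in H_{ul}^{\theta+1}$, after which Lemma~\ref{lem:norm_rescaling} closes the argument. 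The paper's proof makes this explicit; you should replace the appeal to the algebra property with this rescaling step.
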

	
	\begin{proof}
		The form of the map in \eqref{eq:pi_1} follows immediately from Lemma \ref{lem:K1_r1eps1}. The fact that $\psi_1(\cdot,t_1) \in H_{ul}^\theta$ for all $t_1 \in [0,T_1]$ follows from its definition in \eqref{eq:psi_definition} and the fact that $A_{mj,1}(\cdot,t_1) \in H_{ul}^{\theta+1}$ for all $m \in I_N$ with corresponding $j \in \{ 1, \ldots , \tilde \alpha(m)\}$ and for all $t_1 \in [0,T_1]$; recall 
		Lemma \ref{lem:K1_gl_coefficients} and note that we assume one additional degree of regularity. The extra regularity is needed since we use Lemma \ref{lem:norm_rescaling} in order to take the $H_{ul}^\theta$ with respect to $x$ instead of $x_1$.
		
		In order to obtain the asymptotic expression in \eqref{eq:psi_1_out} we evaluate the series expansion \eqref{eq:psi_definition} at $t_1 = T_1$. This yields
		\[
		\begin{split}
			r_1(T_1) \psi_1(x,T_1) &= \sum_{m \in I_N} \sum_{j=1}^{\tilde \alpha(m)} 
			r_1(T_1)^{\alpha(m) + j} A_{mj,1}(x_1,T_1) \me^{imx} \\
			&= r_1(T_1) \psi_{GL,1}(x,T_1) + r_1(T_1)^2 \left( A_{12,1}(x_1,T_1) \me^{ix} + c.c. \right) + r_1(T_1)^3 \tilde h_{rem,1}(x,T_1) ,
		\end{split}
		\]
		where $\psi_{GL,1}(x,T_1) = A_{11,1} (x_1,T_1) \me^{ix} + c.c.$ and we used the fact that $A_{01,1}(\cdot,T_1) = A_{21,1}(\cdot,T_1) \equiv 0$; recall the proof of Lemma \ref{lem:K1_gl_coefficients}. The remainder function satisfies $\tilde h_{rem,1}(\cdot,T_1) \in H_{ul}^{\theta}$, since it is a linear combination of terms of the form $r_1(T_1)^{\alpha(m) + j} A_{mj,1}(\cdot,T_1) \me^{imx}$ with $r_1(T_1) \leq \rho_{in}^{1/2}$ by Lemma \ref{lem:K1_r1eps1} and $A_{mj,1}(\cdot,T_1) \in H_{ul}^{\theta+1}$ by Lemma 
		\ref{lem:K1_gl_coefficients}. The formal leading order approximation $\psi_{GL,1}(x,T_1)$ is in fact higher order, since by Lemmas \ref{lem:K1_A1} and \ref{lem:norm_rescaling} we have
		\[
		\| \psi_{GL,1}(x,T_1) \|_{H_{ul}^\theta} \leq C \| A_{11,1} (\cdot,T_1) \|_{H_{ul}^{\theta+1}} \leq 
		C \me^{-\kappa / 2\eps_1^\ast} \|A_{1,1}^\ast\|_{H_{ul}^{\theta+1}} .
		\]
		The expression \eqref{eq:psi_1_out} follows after substituting the asymptotic expression for $A_{2,1} = A_{12,1}$ in Lemma \ref{lem:K1_A2}, which yields
		\begin{equation}
			\label{eq:psi_1_asymptotics}
			\begin{split}
				\psi_1(x,T_1) = r_1(T_1) \left( \nu_1 f_{2,1}(\eps_1^\ast) \me^{ix} + c.c. \right) + r_1(T_1)^2 h_{rem,1}(x) ,
			\end{split}
		\end{equation}
		where we defined a new remainder term
		\[
		h_{rem,1}(x) := 
		r(T_1)^{-1} (A_{2,1,h}(x_1,T_1) \me^{ix} + c.c.) + r_1(T_1)^{-2} \psi_{GL,1}(x,T_1) + \tilde h_{rem,1}(x,T_1) .
		\]
		A direct application of the triangle inequality and Lemma \ref{lem:norm_rescaling} shows that $h_{rem,1} \in H_{ul}^{\theta}$ (note that the norm of $A_{2,1,h}$ is exponentially small due to Lemma \ref{lem:K1_A2}). Substituting the solution $r_1(T_1) = \rho_{in}^{1/2} \sqrt[4]{\eps_1^\ast/\zeta}$ into \eqref{eq:psi_1_asymptotics} yields \eqref{eq:psi_1_out}.
		
		\
		
		Now assume that $\mu(x) \equiv 0$. It follows from Lemmas \ref{lem:K1_A1}, \ref{lem:K1_A2}, \ref{lem:K1_Aj} and \ref{lem:K1_gl_coefficients} that for each $m \in I_N$ and corresponding $j = 1,\ldots,N-2$ we have
		\[
		\| A_{mj,1}(\cdot,T_1) \|_{H_{ul}^{\theta+1}} \leq 
		C \me^{- \kappa / 2\eps_1^\ast} \max_{j = 1,\ldots,N-2} \| A_{j,1}^\ast \|_{H_{ul}^{\theta+1}} .
		\]
		Applying this bound to the series expansion for $\psi_1(\cdot,T_1)$ together with a triangle inequality and Lemma \ref{lem:norm_rescaling} yields the desired result.
	\end{proof}
	
	\begin{remark}
		Recall from Lemma \ref{lem:K1_A2} that $f_{21}(\eps_1^\ast) \approx \zeta$ for small $\zeta$. Thus a close numerical approximation is $\psi_1(x,T_1) \approx 2 \nu_1 \rho_{in}^{1/2} \zeta^{3/4} {\eps_1^\ast}^{1/4} \cos x$.
	\end{remark}

	\subsection{Chart $\mathcal K_2$}
	\label{sub:K2_dynamics}
	
	We recall the equations in chart $\mathcal K_2$:
	\begin{equation}
		\label{eq:K2_eqns2}
		\begin{split}
			\partial_{t_2} A_{1,2} &= 4 \partial_{x_2}^2 A_{1,2} + v_2 A_{1,2} - 3 A_{1,2} | A_{1,2} |^2 , \\
			\partial_{t_2} v_2 &= 1 , \\
			\partial_{t_2} A_{2,2} &= 4 \partial_{x_2}^2 A_{2,2} + v_2 A_{2,2} - a_{2, 2} - 4i \partial_{x_2}^3 A_{1,2} + \nu_{1} , \\
			\partial_{t_2} A_{j,2} &=  4 \partial_{x_2}^2 A_{j,2} + v_2 A_{j,2} - a_{j,2} - 4i \partial_{x_2}^3 A_{j-1,2} - \partial_{x_2}^4 A_{j-2,2} ,
		\end{split}
	\end{equation}
	for $j = 3, \ldots, N-2$. Our aim in chart $\mathcal K_2$ is to describe the map
	\[
	\pi_2 : \Sigma^{in}_2 \ni \left( \psi_2^\ast, - \zeta^{-1/2}, r_2 \right) \mapsto \left(\psi_2(\cdot,T_2), \zeta^{-1/2}, r_2 \right) \in \Sigma_2^{out} ,
	\]
	induced by forward evolution of initial conditions in
	\[
	\Sigma_2^{in} = \left\{ ( \psi_2^\ast , - \zeta^{-1/2}, r_2 ) : \| \psi_2^\ast \|_{H_{ul}^\theta} \leq \zeta^{1/4} K, r_2 \in [0,\vartheta]  \right\} ,
	\]
	which corresponds to the exit section $\Sigma_1^{out}$ in chart $\mathcal K_1$ coordinates ($\vartheta > 0$ can be chosen such that $\Sigma_2^{in} = \kappa_{12}(\Sigma^{out}_1)$), up to the exit section
	\[
	\Sigma_2^{out} = \left\{ ( \psi_2 , \rho_{mid}, r_2 ) : \| \psi_2 \|_{H_{ul}^\theta} \leq \zeta^{1/4} K, r_2 \in [0,\vartheta]  \right\} ,
	\]
	which corresponds to the section $\Delta^{mid}$ defined in \eqref{eq:Delta_mid} after blow-down \SJ{after fixing $\eps$ and using the natural identification with points in $\Delta^{mid}_\eps = \Delta^{mid} \times [0,\eps_0]$}. The analysis proceeds via steps similar to those of the $\mathcal K_1$ analysis.

	\subsubsection*{Step I: Solve for $v_2(t_2)$ and $T_2$}
	
	We have the following result.
	
	\begin{lemma}
		\label{lem:K2_T2}
		The transition time is $T_2 = \zeta^{-1/2} + \rho_{mid}$, and on the interval $t_2 \in [0,T_2]$ we have $v_2(t_2) = - \zeta^{-1/2} + t_2$.
	\end{lemma}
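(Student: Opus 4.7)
The statement is an immediate consequence of the explicit form of the $v_2$-equation in system \eqref{eq:K2_eqns2} together with the definitions of the entry and exit sections $\Sigma_2^{in}$ and $\Sigma_2^{out}$, so the proof will essentially be a one-line direct integration. The plan is the following.

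First, I would read off from \eqref{eq:K2_eqns2} that $\partial_{t_2} v_2 = 1$ is a scalar, autonomous, decoupled ODE for $v_2$ that does not involve any of the other unknowns $A_{j,2}$ or $r_2$. Consequently, the dynamics of $v_2$ can be solved in closed form, independently of the analysis of the remaining modulation functions. The initial value $v_2(0) = -\zeta^{-1/2}$ is fixed by the definition of the entry section $\Sigma_2^{in}$, which in turn is obtained as $\Sigma_2^{in} = \kappa_{12}(\Sigma_1^{out})$ using the change of coordinates formula $v_2 = \varepsilon_1^{-1/2}|_{\varepsilon_1 = \zeta}$ in \eqref{eq:kappa_maps}, evaluated on $\Sigma_1^{out} = \{\varepsilon_1 = \zeta\}$.

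Second, integrating $\partial_{t_2} v_2 = 1$ with $v_2(0) = -\zeta^{-1/2}$ gives $v_2(t_2) = -\zeta^{-1/2} + t_2$ for all $t_2$ in the interval of existence, which in particular covers $[0,T_2]$. The transition time $T_2$ is then determined by the requirement that the solution reaches $\Sigma_2^{out}$, on which $v_2 = \rho_{mid}$ by definition \eqref{eq:Delta_mid} together with the natural identification with points in $\Sigma_2^{out}$. Solving $-\zeta^{-1/2} + T_2 = \rho_{mid}$ yields $T_2 = \zeta^{-1/2} + \rho_{mid}$.

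There is no real obstacle here: the $v_2$-component decouples completely from the PDE part of the system in chart $\mathcal{K}_2$, and both the initial value and the exit condition are constants. The only subtlety worth flagging is making the identification of $v_2(0)$ with $-\zeta^{-1/2}$ explicit by tracking the chart transition $\kappa_{12}$, but this is immediate from \eqref{eq:kappa_maps} and the definition of $\Sigma_1^{out}$ from the preceding subsection. All nontrivial work in chart $\mathcal{K}_2$ is therefore postponed to the subsequent steps (analogous to Steps II--V in chart $\mathcal{K}_1$), where the estimates on $A_{j,2}$ will be derived using the explicit expression for $v_2(t_2)$ provided by this lemma.
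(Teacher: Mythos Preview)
Your proposal is correct and matches the paper's own proof, which is simply the one-line remark that the result follows by direct integration of $\partial_{t_2} v_2 = 1$ together with the boundary constraint $v_2(T_2) = \rho_{mid}$. Your additional discussion of how the initial value $v_2(0) = -\zeta^{-1/2}$ arises via $\kappa_{12}$ is fine context, though note a small sign slip in your formula: from \eqref{eq:kappa_maps} one has $\eps_1 = v_2^{-2}$ with $v_2 < 0$, so $v_2 = -\eps_1^{-1/2}$ rather than $+\eps_1^{-1/2}$.
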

	
	\begin{proof}
		This follows by direct integration and the boundary constraint $v_2(T_2) = \rho_{mid}$.
	\end{proof}

	\subsubsection*{Step II: Estimate $A_{1,2}(x_2,t_2)$}
	
	We prove the following result.
	
	\begin{lemma}
		\label{lem:K2_A1}
		Fix $\theta_A > 1/2$ and assume that $A_{1,2}^\ast \in H_{ul}^{\theta_A}$. Then $A_{1,2}(\cdot,t_2) \in H_{ul}^{\theta_A}$ on $t_2 \in [0,T_2]$. In particular,
		\begin{equation}
			\label{eq:A12_bound}
			\| A_{1,2}(\cdot,t_2) \|_{H_{ul}^{\theta_A}} \leq 
			C \|A_{1,2}^\ast\|_{H_{ul}^{\theta_A}} ,
		\end{equation}
		for all $t_2 \in [0,T_2]$.
	\end{lemma}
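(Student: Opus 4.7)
The strategy mirrors that of Lemma \ref{lem:K1_A1} in chart $\mathcal K_1$, adapted to the fact that in chart $\mathcal K_2$ the linear coefficient $v_2(t_2)$ is not sign-definite, while the time interval $[0,T_2]$ is $O(\zeta^{-1/2})$ (i.e.\ finite, $\eps$-independent).

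First I would eliminate the time-dependent linear coefficient by introducing an integrating factor. Using Lemma \ref{lem:K2_T2} to write $v_2(t_2) = -\zeta^{-1/2} + t_2$, I define
\begin{equation*}
I(t_2) := \int_0^{t_2} v_2(s_2)\, ds_2 = -\zeta^{-1/2} t_2 + \tfrac{1}{2} t_2^2,
\end{equation*}
and substitute $A_{1,2}(x_2,t_2) =: \me^{I(t_2)} \tilde A_{1,2}(x_2,t_2)$ to obtain the equation
\begin{equation*}
\partial_{t_2} \tilde A_{1,2} = 4 \partial_{x_2}^2 \tilde A_{1,2} - 3\, \me^{2 I(t_2)}\, \tilde A_{1,2} |\tilde A_{1,2}|^2, \qquad \tilde A_{1,2}(x_2,0) = A_{1,2}^\ast(x_2).
\end{equation*}
A short calculation shows that $I(t_2) = \tfrac{1}{2} t_2 (t_2 - 2\zeta^{-1/2})$, which is non-positive for $t_2 \in [0, 2\zeta^{-1/2}]$. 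Since $T_2 = \zeta^{-1/2} + \rho_{mid} \leq 2\zeta^{-1/2}$ provided $\zeta > 0$ is sufficiently small (with $\rho_{mid}$ fixed), I have $\me^{I(t_2)} \leq 1$ and hence $\me^{2 I(t_2)} \leq 1$ uniformly on $[0,T_2]$.

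Next I would apply the variation of constants formula with respect to the heat semigroup $(\me^{t_2 \Lambda_2})_{t_2 \geq 0}$ generated by $\Lambda_2 := 4 \partial_{x_2}^2$, giving
\begin{equation*}
\tilde A_{1,2}(\cdot,t_2) = \me^{t_2 \Lambda_2} A_{1,2}^\ast - 3 \int_0^{t_2} \me^{(t_2 - s_2) \Lambda_2}\, \me^{2 I(s_2)}\, \tilde A_{1,2}(\cdot,s_2) |\tilde A_{1,2}(\cdot,s_2)|^2 \, ds_2.
\end{equation*}
Lemma \ref{lem:US_8.3.7} (the Fourier multiplier estimate) yields a uniform bound $\|\me^{t_2 \Lambda_2}\|_{H_{ul}^{\theta_A} \to H_{ul}^{\theta_A}} \leq C$ for $t_2 \in [0,T_2]$, since the symbol $\xi_2 \mapsto \me^{-4\xi_2^2 t_2}$ is bounded in $C^2_b$ uniformly in $t_2 \geq 0$. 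Combining this with the algebra property of $H_{ul}^{\theta_A}$ for $\theta_A > 1/2$ via Lemma \ref{lem:US_8.3.11}, together with a standard continuation argument defining $\tilde T_{1,2}$ as in Remark \ref{rem:T_def}, reduces the estimate to a Gr\"onwall-type inequality of the form
\begin{equation*}
\| \tilde A_{1,2}(\cdot,t_2) \|_{H_{ul}^{\theta_A}} \leq C \|A_{1,2}^\ast\|_{H_{ul}^{\theta_A}} + C \tilde K^2 \int_0^{t_2} \|\tilde A_{1,2}(\cdot,s_2)\|_{H_{ul}^{\theta_A}} \, ds_2.
\end{equation*}
Applying a Gr\"onwall inequality (Lemma \ref{lem:HK_2.8}) and using that $T_2 \leq 2\zeta^{-1/2}$ is bounded independently of $\eps$ produces a uniform bound $\|\tilde A_{1,2}(\cdot,t_2)\|_{H_{ul}^{\theta_A}} \leq C \|A_{1,2}^\ast\|_{H_{ul}^{\theta_A}}$ on $[0,\tilde T_{1,2}]$, where the constant $C$ may depend on $\zeta$ and $\tilde K$ but not on $\eps$. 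Choosing $K$ sufficiently small relative to $\tilde K$, this bound also implies $\tilde T_{1,2} = T_2$ via the usual continuation argument. Finally, returning to the original variable using $\me^{I(t_2)} \leq 1$ yields the desired estimate \eqref{eq:A12_bound}.

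The main obstacle, in contrast to the $\mathcal K_1$ analysis, is the absence of exponential linear damping: the heat semigroup provides only a uniform (not exponentially decaying) bound, so the Gr\"onwall growth factor $\me^{C \tilde K^2 T_2}$ must be absorbed into the $\zeta$-dependent constant $C$. This is acceptable here since $\zeta$ is fixed, but it means the resulting estimate does not contract in the way the $\mathcal K_1$ bound does, which is consistent with the statement of the lemma (no smallness factor in front of $\|A_{1,2}^\ast\|_{H_{ul}^{\theta_A}}$).
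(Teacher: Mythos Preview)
Your proposal is correct and follows essentially the same approach as the paper: introduce the integrating factor $\me^{I_2(t_2)}$ with $I_2(t_2)=-\zeta^{-1/2}t_2+\tfrac12 t_2^2$, apply variation of constants with the heat semigroup $\Lambda_2=4\partial_{x_2}^2$, bound the semigroup via Lemma~\ref{lem:US_8.3.7} and the cubic term via Lemma~\ref{lem:US_8.3.11}, and close with Gr\"onwall on the finite interval $[0,T_2]$. One small remark: the $C_b^2$ norm of the symbol $\xi_2\mapsto \me^{-4\xi_2^2 t_2}$ is not uniformly bounded in $t_2\geq 0$ (the derivatives grow like powers of $t_2$), so the semigroup bound should be stated only on $[0,T_2]$, which is all you need and is what the paper does.
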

	
	\begin{proof}
		Substituting the expressions for $v_2(t_2)$ into the equation for $A_{1,2}$ yields a GL-type equations with time-dependent linear damping:
		\begin{equation}
			\label{eq:A1_decoupled_K2}
			\partial_{t_2} A_{1,2} = 4 \partial_{x_2}^2 A_{1,2} + v_2(t_2) A_{1,2} - 3 A_{1,2} |A_{1,2}|^2 ,
			\qquad A_{1,2}(x_2,0) = A_{1,2}^\ast(x_2) ,
		\end{equation}
		where $v_2(t_2) = -\zeta^{-1/2} + t_2$ by Lemma \ref{lem:K2_T2}. Equation \eqref{eq:A1_decoupled_K2} decouples from system \eqref{eq:K2_eqns2}, so we may consider it independently. The effect of the non-constant coefficient $v_2(t_2)$ can be mitigated via the introduction of an integrating factor $\me^{I_{2}(t_2)}$, where
		\begin{equation}
			\label{eq:integrating_factor_A11_K2}
			I_{2}(t_2) = \int_0^{t_2} v_2(s_2) ds_2 = - \zeta^{-1/2} t_2 + \frac{t_2^2}{2} .
		\end{equation}
		Defining $A_{1,2}(x_2,t_2) =: \me^{I_{2}(t_2)} \tilde A_{1,2}(x_2,t_2)$ leads to the equation
		\[
		\partial_{t_2} \tilde A_{1,2} = 4 \partial_{x_2}^2 \tilde A_{1,2} - 3 \me^{2 I_{2}(t_2)} \tilde A_{1,2} |A_{1,2}|^2 ,
		\qquad \tilde A_{1,2}(x_2,0) = A_{1,2}^\ast(x_2) ,
		\]
		for which the variation of constants formula yields
		\begin{equation}
			\label{eq:A12_voc}
			\tilde A_{1,2}(\cdot,t_2) = \me^{t_2\Lambda_2} A_{1,2}^\ast - 3 \int_0^{t_2} \me^{(t_2 - s_2) \Lambda_2}  \me^{2 I_{2}(s_2)} \tilde A_{1,2}(\cdot,s_2) |\tilde A_{1,2}(\cdot,s_2)|^2 ds_2 ,
		\end{equation}
		where $(\me^{t_2 \Lambda_2})_{t_2 \geq 0}$ is the (heat) semigroup generated by $\Lambda_2 := 4 \partial_{x_2}^2$. 
		Since the transition time $T_2$ is finite, there exists a constant $C > 0$ such that $\me^{2I_{2}(t_2)} \leq C$ for all $t_2 \in [0,T_2]$. Moreover, it follows from Lemma \ref{lem:US_8.3.7} that there is a constant $C > 0$ such that
		\[
		\| \me^{t_2 \Lambda_2} \|_{H_{ul}^{\theta_A} \to H_{ul}^{\theta_A}} \leq C ,
		\]
		for all $t_2 \in [0,T_2]$. Taking the $H^{\theta_A}_{ul}$ norm in \eqref{eq:A12_voc}, we obtain
		\[
		\| \tilde A_{1,2}(\cdot,t_2) \|_{H_{ul}^{\theta_A}} \leq 
		C \| A_{1,2}^\ast \|_{H_{ul}^{\theta_A}} + 3 C \int_0^{t_2} \| \tilde A_{1,2}(\cdot,s_2) |\tilde A_{1,2}(\cdot,s_2)|^2 \|_{H_{ul}^{\theta_A}} ds_2 .
		\]
		By Lemma \ref{lem:US_8.3.11} we have $\| \tilde A_{1,2}(\cdot,s_2) |\tilde A_{1,2}(\cdot,s_2)|^2 \|_{H_{ul}^{\theta_A}} \leq C \| \tilde A_{1,2}(\cdot,s_2) \|_{H_{ul}^{\theta_A}}^3 \leq \tilde C \tilde K^2 \| \tilde A_{1,2}(\cdot,s_2) \|_{H_{ul}^{\theta_A}}$ for all $s_2 \in [0,\tilde T_2]$. Therefore
		\[
		\| \tilde A_{1,2}(\cdot,t_2) \|_{H_{ul}^{\theta_A}} \leq 
		C \| A_{1,2}^\ast \|_{H_{ul}^{\theta_A}} + \tilde C \tilde K^2 \int_0^{t_2} \| \tilde A_{1,2}(\cdot,s_2) \|_{H_{ul}^{\theta_A}} ds_2 ,
		\]
		for $t_2 \in [0,\tilde T_2]$. Applying Gr\"onwall's inequality yields the estimate
		\begin{equation}
			\label{eq:A21_bound_t}
			\| \tilde A_{1,2}(\cdot,t_2) \|_{H_{ul}^{\theta_A}} \leq 
			C \| A_{1,2}^\ast \|_{{H_{ul}^{\theta_A}}} \me^{\tilde C \tilde K^2 t_2} \leq 
			C \| A_{1,2}^\ast \|_{{H_{ul}^{\theta_A}}} , 
		\end{equation}
		for $t_2 \in [0,\tilde T_2]$ ($\me^{\tilde C \tilde K^2 t_2}$ is bounded since $\tilde T_2$ is finite). Since \eqref{eq:A21_bound_t} is finite for all $t_2 \leq T_2$, we have that $A_{1,2}(\cdot,t_2) \in H_{ul}^{\theta_A}$ for all $t_2 \in [0,T_2]$. The bound in \eqref{eq:A12_bound} follows from \eqref{eq:A21_bound_t} and the fact that $\|A_{1,2}(\cdot,t_2)\|_{H_{ul}^{\theta_A}} \leq C \| \tilde A_{1,2}(\cdot,t_2)\|_{H_{ul}^{\theta_A}}$.
	\end{proof}

	\subsubsection*{Step III: Estimate $A_{2,2}(x_2,t_2)$}
	
	We prove the following result.
	
	\begin{lemma}
		\label{lem:K2_A2}
		Let $\theta_A = 3 + \theta$ where $\theta > 1/2$. Then $A_{2,2}(\cdot,t_2) \in H_{ul}^\theta$ for all $t_2 \in [0,T_2]$. Moreover, we have that
		\begin{equation}
			\label{eq:A22_asymptotics}
			A_{2,2}(x_2,T_2) = \nu_1 f_{2,2} + A_{2,2,h}(x_2,T_2) ,
		\end{equation}
		where $f_{2,2}$ is a constant
		\[
		f_{2,2} := 
		\sqrt{\frac{\pi}{2}} \me^{\rho_{mid}^2/2} \left( \textnormal{erf}\left(\frac{\rho_{mid}}{\sqrt{2}} \right) + \textnormal{erf}\left(\frac{1}{\sqrt{2 \zeta}} \right) \right) > 0,
		\]
		and
		\begin{equation}
			\label{eq:A22h_bound}
			\| A_{2,2,h}(\cdot,T_2) \|_{H_{ul}^\theta} \leq 
			C \max_{k = 1, 2} \| A_{k,2}^\ast \|_{H_{ul}^{\theta_k}} ,
		\end{equation}
		where $\theta_k$ is given by \eqref{eq:theta_k}.
	\end{lemma}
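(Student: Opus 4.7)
The proof proceeds along the same lines as Lemma \ref{lem:K1_A2}, i.e.~by decoupling the equation for $A_{2,2}$ from system \eqref{eq:K2_eqns2} (using Lemmas \ref{lem:K2_T2} and \ref{lem:K2_A1} to regard $v_2(t_2)$ and $A_{1,2}(x_2,t_2)$ as known), introducing the integrating factor $\me^{I_2(t_2)}$ with $I_2(t_2)$ as in \eqref{eq:integrating_factor_A11_K2} to remove the linear coefficient, and applying variation of constants to the transformed unknown $\tilde A_{2,2} := \me^{-I_2(t_2)} A_{2,2}$. The expression $a_{2,2} = 3 A_{1,2}^2 \overline{A_{2,2}} + 6|A_{1,2}|^2 A_{2,2}$ is linear in $A_{2,2}$, so we split $\tilde A_{2,2} = \tilde A_{2,2,h} + \tilde A_{2,2,s}$ where
\[
\begin{split}
\tilde A_{2,2,h}(\cdot,t_2) &= \me^{t_2 \Lambda_2} A_{2,2}^\ast - \int_0^{t_2} \me^{(t_2 - s_2) \Lambda_2} \left( 3 A_{1,2}^2 \overline{\tilde A_{2,2}} + 6|A_{1,2}|^2 \tilde A_{2,2} - 4i \me^{-I_2(s_2)} \partial_{x_2}^3 A_{1,2} \right) ds_2 , \\
\tilde A_{2,2,s}(\cdot, t_2) &= \nu_1 \int_0^{t_2} \me^{(t_2 - s_2) \Lambda_2} \me^{-I_2(s_2)} ds_2 ,
\end{split}
\]
with $\Lambda_2 = 4 \partial_{x_2}^2$.

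For the homogeneous part, Lemma \ref{lem:US_8.3.7} gives a bound $\| \me^{t_2 \Lambda_2} \|_{H_{ul}^\theta \to H_{ul}^\theta} \leq C$ on the finite interval $[0,T_2]$, and the regularity loss $\| \partial_{x_2}^3 A_{1,2}(\cdot,s_2) \|_{H_{ul}^\theta} \leq C \| A_{1,2}(\cdot,s_2) \|_{H_{ul}^{\theta_A}}$ requires exactly $\theta_A = \theta + 3$; I then invoke Lemma \ref{lem:K2_A1} and a Gr\"onwall argument (as in the proof of Lemma \ref{lem:K1_A2}, using that $\me^{\pm I_2(s_2)}$ is bounded on the finite interval $[0,T_2]$) to conclude the bound \eqref{eq:A22h_bound} for $A_{2,2,h} = \me^{I_2} \tilde A_{2,2,h}$, which in particular shows that $A_{2,2}(\cdot,t_2) \in H_{ul}^\theta$ on $[0,T_2]$.

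For the source part, the key observation is that the inhomogeneity $\nu_1$ is spatially constant, so $\tilde A_{2,2,s}(\cdot,t_2) \equiv \tilde A_{2,2,s}(t_2)$ is spatially constant as well and solves the ODE $\partial_{t_2} \tilde A_{2,2,s} = \nu_1 \me^{-I_2(t_2)}$ with zero initial condition. Direct integration together with completion of the square $-I_2(s_2) = -\tfrac{1}{2}(s_2 - \zeta^{-1/2})^2 + \tfrac{1}{2\zeta}$ and the substitution $u = (s_2 - \zeta^{-1/2})/\sqrt{2}$ (noting that $T_2 - \zeta^{-1/2} = \rho_{mid}$ by Lemma \ref{lem:K2_T2}) yield
\[
\tilde A_{2,2,s}(T_2) = \nu_1 \me^{1/2\zeta} \sqrt{\pi/2} \left( \textnormal{erf}\left(\tfrac{\rho_{mid}}{\sqrt{2}}\right) + \textnormal{erf}\left(\tfrac{1}{\sqrt{2\zeta}}\right) \right) .
\]
Multiplying by $\me^{I_2(T_2)} = \me^{-1/(2\zeta) + \rho_{mid}^2/2}$ produces exactly $\nu_1 f_{2,2}$, and combined with the estimate for $A_{2,2,h}$ this establishes the decomposition \eqref{eq:A22_asymptotics}. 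The main technical obstacle is the bookkeeping to ensure that $A_{2,2}(\cdot,t_2)$ stays in $H_{ul}^\theta$ globally on $[0,T_2]$ (as opposed to only up to the a priori cutoff time $\tilde T_{2,2}$ analogous to Remark \ref{rem:T_def}); this is resolved by the fact that \eqref{eq:A22h_bound} is finite and $\eps$-independent, so the cutoff can be lifted.
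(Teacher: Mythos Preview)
Your proof is correct and follows essentially the same approach as the paper: decouple via Lemmas \ref{lem:K2_T2}--\ref{lem:K2_A1}, apply the integrating factor $\me^{I_2}$, split $\tilde A_{2,2} = \tilde A_{2,2,h} + \tilde A_{2,2,s}$ via variation of constants, bound the homogeneous part using the semigroup estimate and the cutoff time $\tilde T_{2,2}$, and compute the source part explicitly (the paper phrases this last step as solving a heat equation with constant source, but since the data is spatially constant this is exactly your ODE computation with completion of the square). The only cosmetic difference is that the paper bounds $\tilde A_{2,2,h}$ directly using the cutoff $\|\tilde A_{2,2}\|\le \tilde M$ rather than invoking Gr\"onwall, but on the finite interval $[0,T_2]$ these are interchangeable.
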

	
	\begin{proof}	
		Given solutions for $v_2(t_2)$ and $A_{1,2}(x_2,t_2)$ as described in Lemmas \ref{lem:K2_T2} and \ref{lem:K2_A1} respectively, the equation for $A_{2,2}$ decouples. We obtain
		\[
		\partial_{t_2} A_{2,2} = 4 \partial_{x_2}^2 A_{2,2} + v_2(t_2) A_{2,2} - 3 A_{1,2}(x_2,t_2)^2 \overline{A_{2,2}} - 6 |A_{1,2}(x_2,t_2)|^2 A_{2,2} - 4i \partial_{x_2}^3 A_{1,2}(x_2,t_2) + \nu_1 , 
		\]
		together with $A_{2,2}(x_2,0) = A_{2,2}^\ast(x_2)$. After applying the transformation $A_{2,2}(x_2,t_2) =: \me^{I_2(t_2)} \tilde A_{2,2}(x_2,t_2)$, where $I_2(t_2)$ is the same integrating factor \eqref{eq:integrating_factor_A11_K2} as in Step II, we obtain
		\[
		\partial_{t_2} \tilde A_{2,2} = 4 \partial_{x_2}^2 \tilde A_{2,2} - 3 A_{1,2}(x_2,t_2)^2 \overline{\tilde A_{2,2}} - 6 |A_{1,2}(x_2,t_2)|^2 \tilde A_{2,2} - 4i \me^{-I_2(t_2)} \partial_{x_2}^3 A_{1,2}(x_2,t_2) + \me^{-I_2(t_2)} \nu_1 , 
		\]
		together with $A_{2,2}(x_2,0) = A_{2,2}^\ast(x_2)$. Again we apply the variation of constants formula and split the solution into two parts via $\tilde A_{2,2} := \tilde A_{2,2,h} + \tilde A_{2,2,s}$ defined by
		\begin{equation}
			\label{eq:A22_voc}
			\begin{split}
				\tilde A_{2,2,h}(\cdot,t_2) &= \me^{t_2 \Lambda_2} A_{2,2}^\ast  \ - \\ \int_{0}^{t_2} & \me^{(t_2 - s_2) \Lambda_2} \left( 3 A_{1,2}(\cdot,s_2)^2 \overline{\tilde A_{2,2}}(\cdot,s_2) + 6 |A_{1,2}(\cdot,s_2)|^2 \tilde A_{2,2}(\cdot,s_2) + \me^{-I_2(s_2)} 4i \partial_{x_2}^3 A_{1,2}(\cdot,s_2) \right) ds_2 , \\
				\tilde A_{2,2,s}(\cdot,t_2) &= \nu_1 \int_{0}^{t_2} \me^{(t_2 - s_2) \Lambda_2} \me^{- I_{2}(s_2)} ds_2 ,
			\end{split}
		\end{equation}
		where as in Step II, $(\me^{t_2 \Lambda_2})_{t_2 \geq 0}$ denotes the heat semigroup generated by $\Lambda_2 = 4 \partial_{x_2}^2$. In order to estimate the norm of $A_{2,2,h} := \me^{I_{2}} \tilde A_{2,2,h}$, first recall from Step II above that $\|\me^{t_2\Lambda_2}\|_{H_{ul}^\theta \to H_{ul}^\theta} \leq C$ for all $t_2 \in [0,T_2]$. Then for all $t_2 \in [0,\tilde T_{2,2}]$, where $\tilde T_{2,2} := \min\{ T_2, \sup \{t_2 > 0 : \|\tilde A_{2,2}(\cdot,t_2)\|_{H_{ul}^{{\theta}}} \leq \tilde M \} \}$ for an arbitrarily large but fixed constant $\tilde M > 0$ (c.f.~Remark \ref{rem:T_def}), we obtain
		\begin{equation}
			\label{eq:A22_bound_t}
			\| \tilde A_{2,2,h}(\cdot,t_2) \|_{H_{ul}^\theta} \leq 
			C \| A_{2,2}^\ast  \|_{H_{ul}^\theta} + 
			C \tilde M^2 T_2 \| A_{1,2}^\ast \|_{H_{ul}^{\theta_A}} 
			\leq C \max_{k=1,2} \| A_{k,2}^\ast \|_{H_{ul}^{\theta_k}} ,
		\end{equation}
		where we used Lemma \ref{lem:US_8.3.11}, the fact that $\|A_{1,2}(\cdot,t_2)\|_{H_{ul}^\theta} \leq C \tilde K \| A_{1,2}^\ast  \|_{H_{ul}^\theta}$ for all $t_2 \in [0, T_2]$ due to Lemma \ref{lem:K2_A1}, and $\me^{-I_2(s_2)} \| \partial_{x_2}^3 A_{1,2}(\cdot,s_2) \|_{H_{ul}^\theta} \leq C \| \partial_{x_2}^3 A_{1,2}(\cdot,s_2) \|_{H_{ul}^\theta} \leq C \| A_{1,2}(\cdot,s_2) \|_{H_{ul}^{\theta_A}}$ in order to derive the the right-hand side. 
		Since \eqref{eq:A22_bound_t} is bounded for all $t_2 \in [0,T_2]$ and the integrating factor is bounded, $A_{2,2,h}(\cdot,t_2) \in H_{ul}^\theta$ and satisfies \eqref{eq:A22h_bound} for all $t_2 \in [0,T_2]$.
		
		We can calculate $\tilde A_{2,2,s}$ explicitly by viewing it as the solution to a heat equation with a constant source term $\nu_1$ and initial data $\tilde A_{2,2,s}(x_2,0) \equiv 0$. A direct calculation using the explicit solution formula gives the spatially homogeneous expression
		\[
		\tilde A_{2,2,s}(x_2,t_2) = \tilde A_{2,2,s}(t_2) = \nu_1 \sqrt{\frac{\pi}{2}} \me^{1/2\zeta} \left( \textnormal{erf}\left(\frac{\sqrt{\zeta} t_2 - 1}{\sqrt{2 \zeta}} \right) + \textnormal{erf}\left(\frac{1}{\sqrt{2 \zeta}} \right) \right) ,
		\]
		for all $t_2 \in [0,T_2]$. In particular,
		\[
		\tilde A_{2,2,s}(T_2) = \nu_1 \sqrt{\frac{\pi}{2}} \me^{1/2\zeta} \left( \textnormal{erf}\left(\frac{\rho_{mid}}{\sqrt{2}} \right) + \textnormal{erf}\left(\frac{1}{\sqrt{2 \zeta}} \right) \right) .
		\]
		The expression in \eqref{eq:A22_asymptotics} follows after multiplying through by the integrating factor.
	\end{proof}

	\subsubsection*{Step IV: Estimate $A_{j,2}(x_2,t_2)$, $j = 3, \ldots, N-2$}
	
	In order to consider modulation functions $A_{j,2}$ with $j \geq 3$ we use Lemma \ref{lem:a_mj} again in order to separate $a_{j,2} =: a_{j,2}^L + a_{j,2}^N$ into two parts. Similarly to the analysis in chart $\mathcal K_1$ Step IV, we write
	\[
	a_{j,2}^L(\tilde A_{j,2}, \overline{\tilde A_{j,2}}) = 
	\left( \alpha_1 A_{1,2}^2 + \alpha_2 |A_{1,2}|^2 + \alpha_3 \overline{A_{1,2}}^2 \right) \tilde A_{j,2} + \left( \beta_1 A_{1,2}^2 + \beta_2 |A_{1,2}|^2 + \beta_3 \overline{A_{1,2}}^2 \right) \overline{\tilde A_{j,2}} ,
	\]
	and note that $a_{j,2}^N$ depends only modulation functions $A_{m'j',2}$ at lower orders with $\alpha(m') + j' < j$.
	
	\
	
	We prove the following result.
	
	\begin{lemma}
		\label{lem:K2_Aj}
		Let $\theta_A = \theta_A(j) = 3(j-1) + \theta$ where $j \in \{ 3, \ldots, N-2 \}$ and $\theta > 1/2$, and assume that $a_{j,2}^N(\cdot,t_2) \in H_{ul}^\theta$ for all $t_2 \in [0,T_2]$. Then $A_{j,2}(\cdot,t_2) \in H_{ul}^\theta$ for all $t_2 \in [0,T_2]$. If additionally $\mu(x) \equiv 0$ and
		\begin{equation}
			\label{eq:a_jN_cond_K2}
			\| a_{j,2}^N(\cdot,t_2) \|_{H_{ul}^\theta} \leq 
			C \max_{k = 1,\ldots,j-1} \| A_{k,2}(\cdot,t_2) \|_{H_{ul}^{\theta_{k,j}}}^3 ,
		\end{equation}
		then for $K > 0$ sufficiently small it holds that
		\begin{equation}
			\label{eq:Aj2_nu10_bound}
			\| A_{j,2}(\cdot,T_2) \|_{H_{ul}^\theta} \leq 
			C \max_{k = 1, \ldots, j} \| A_{k,2}^\ast  \|_{H_{ul}^{\theta_{k,j}}} ,
		\end{equation}
		where $\theta_{k,j}$ is given by \eqref{eq:theta_k_2}.
		%
		%
	\end{lemma}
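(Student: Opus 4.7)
The proof will closely parallel that of Lemma \ref{lem:K1_Aj}, but will be simpler in several respects because the transition time $T_2 = \zeta^{-1/2} + \rho_{mid}$ is $O(1)$ rather than $O(1/\eps_1^\ast)$, and the integrating factor $\me^{I_2(t_2)}$ with $I_2(t_2) = -\zeta^{-1/2} t_2 + t_2^2/2$ is uniformly bounded (above and below) on $[0,T_2]$. The plan is to proceed by induction on $k = 3,\ldots,j-1$ using the induction hypothesis $A_{k,2}(\cdot,t_2) \in H_{ul}^{\theta_{k,j}}$ for all $t_2 \in [0,T_2]$; the base case $k=3$ follows directly from Lemmas \ref{lem:K2_A1} and \ref{lem:K2_A2}, augmented to the required regularity $\theta_{k,j}$ by increasing $\theta_A$ appropriately in those lemmas.

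For the induction step, I would substitute the decoupled expression for $v_2(t_2)$ into the equation for $A_{j,2}$, absorb the linear term into the integrating factor by defining $A_{j,2} =: \me^{I_2(t_2)} \tilde A_{j,2}$, and apply variation of constants to split $\tilde A_{j,2} = \tilde A_{j,2,h} + \tilde A_{j,2,s}$ as in \eqref{eq:Aj1_voc}. The semigroup $(\me^{t_2\Lambda_2})_{t_2\geq 0}$ with $\Lambda_2 = 4\partial_{x_2}^2$ is bounded on $H_{ul}^\theta$ uniformly in $t_2 \in [0,T_2]$ by Lemma \ref{lem:US_8.3.7}. For $\tilde A_{j,2,h}$, the linear term $a_{j,2}^L$ is controlled via Lemma \ref{lem:US_8.3.11} together with the $H_{ul}^\theta$ bound on $A_{1,2}$ from Lemma \ref{lem:K2_A1}, so a Gr\"onwall estimate of the type \eqref{eq:HK_2.8} yields $\|\tilde A_{j,2,h}(\cdot,t_2)\|_{H_{ul}^\theta} \leq C \|A_{j,2}^\ast\|_{H_{ul}^\theta}$ on $[0,\tilde T_{j,2}]$. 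For $\tilde A_{j,2,s}$, the source terms $\me^{-I_2(s_2)} a_{j,2}^N(\cdot,s_2)$, $\me^{-I_2(s_2)} \partial_{x_2}^3 A_{j-1,2}(\cdot,s_2)$ and $\me^{-I_2(s_2)}\partial_{x_2}^4 A_{j-2,2}(\cdot,s_2)$ are controlled respectively by the assumption $a_{j,2}^N(\cdot,s_2) \in H_{ul}^\theta$, by the induction hypothesis with regularity $\theta_{j-1,j} = \theta + 3$, and by the induction hypothesis with regularity $\theta_{j-2,j} = \theta + 6 \geq \theta + 4$. Integrating these bounds over the bounded interval $[0,T_2]$ gives a finite bound, so $\tilde T_{j,2} = T_2$ and $A_{j,2}(\cdot,t_2) \in H_{ul}^\theta$ on $[0,T_2]$.

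For the case $\mu(x) \equiv 0$, following the structure of the proof of Lemma \ref{lem:K1_Aj} I would upgrade the argument by showing inductively that
\[
\| A_{k,2}(\cdot, t_2) \|_{H_{ul}^{\theta_{k,j}}} \leq C \max_{i = 1,\ldots,k} \| A_{i,2}^\ast \|_{H_{ul}^{\theta_{i,j}}}, \qquad t_2 \in [0,T_2],
\]
for $k = 1,\ldots,j$. The case $k=1$ follows from Lemma \ref{lem:K2_A1}; the case $k=2$ follows from the proof of Lemma \ref{lem:K2_A2} since $A_{2,2} = A_{2,2,h}$ when $\mu \equiv 0$. The induction step uses \eqref{eq:a_jN_cond_K2} together with the induction hypothesis to bound $\|a_{k,2}^N(\cdot,s_2)\|_{H_{ul}^{\theta_{k,j}}}$ cubically in the initial data, and uses the regularity shifts $\theta_{k-1,j} = \theta_{k,j} + 3$ and $\theta_{k-2,j} = \theta_{k,j}+6$ to control the derivative terms by the induction hypothesis at lower orders. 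Evaluating at $t_2 = T_2$ and applying the bounded integrating factor yields \eqref{eq:Aj2_nu10_bound}.

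The main obstacle is bookkeeping rather than analysis: one must carefully track how the required regularity $\theta_{k,j}$ shrinks at each induction step because of the loss of three derivatives from the $\partial_{x_2}^3 A_{j-1,2}$ term (recall Remark \ref{rem:regularity}), and ensure that the algebra property (Lemma \ref{lem:US_8.3.11}) is applicable at every regularity level used. Because the semigroup and integrating factor are uniformly bounded on the $O(1)$ interval $[0,T_2]$, no exponentially small factors or delicate incomplete-gamma asymptotics (as in the $\mathcal K_1$ analysis) are needed, and the smallness of $K$ is used only to close the Gr\"onwall estimate for $\tilde A_{j,2,h}$ in the $\mu \equiv 0$ case.
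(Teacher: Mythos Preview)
Your proposal is correct and follows essentially the same approach as the paper: induction on $k$, integrating factor $\me^{I_2}$, variation of constants with the split $\tilde A_{j,2} = \tilde A_{j,2,h} + \tilde A_{j,2,s}$, and the same control of the source terms via the induction hypothesis and regularity shifts. The only minor technical difference is that the paper bounds $\tilde A_{j,2,h}$ directly using the a~priori bound $\|\tilde A_{j,2}(\cdot,s_2)\|_{H_{ul}^\theta}\leq \tilde M$ on $[0,\tilde T_{j,2}]$ (with $\tilde M$ large, not small) rather than a Gr\"onwall argument; note that since $a_{j,2}^L$ involves the full $\tilde A_{j,2}$ rather than $\tilde A_{j,2,h}$ alone, a Gr\"onwall inequality would more naturally be applied to the full $\tilde A_{j,2}$ than to $\tilde A_{j,2,h}$ in isolation.
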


	\begin{proof}
		We proceed similarly to the proof of Lemma \ref{lem:K1_Aj} in Step IV of the chart $\mathcal K_1$ analysis, and therefore present fewer details. As an induction hypothesis, we assume that $A_{k,2}(\cdot,t_2) \in H_{ul}^{\theta_{k,j}}$ for all $k=3,\ldots,j-1 \leq N-3$. We need to show that $A_{j,2}(\cdot,t_2) \in H_{ul}^{\theta}$. We omit the explicit treatment of the base case again since it is similar to the induction step. 
		
		\
		
		The equation for $A_{j,2}$ is given by
		\[
		\partial_{t_2} A_{j,2} =  4 \partial_{x_2}^2 A_{j,2} + v_2(t_2) A_{j,2} - a_{j,2} - 4i \partial_{x_2}^3 A_{j-1,2} + \partial_{x_2}^4 A_{j-2,2} ,
		\qquad
		A_{j,2}(x_2,0) = A_{j,2}^\ast(x_2) ,
		\]
		and we obtain a new equation
		\[
		\partial_{t_2} \tilde A_{j,2} = 
		4 \partial_{x_2}^2 \tilde A_{j,2} - a_{j,2}^L\left(\tilde A_{j,2}, \overline{\tilde A_{j,2}} \right) - \me^{-I_{2}(t_2)} \left( a_{j,2}^N + 4i \partial_{x_2}^3 A_{j-1,2} + \partial_{x_2}^4 A_{j-2,2} \right) ,
		\]
		with $\tilde A_{j,2}(x_2,0) = A_{j,2}^\ast(x_2)$, after the transformation $A_{j,2}(x_2,t_2) =: \me^{I_{2}(t_2)} \tilde A_{j,2}(x_2,t_2)$. We now apply the variation of constants formula and split the solution into two parts $\tilde A_{j,2} = \tilde A_{j,2,h} + \tilde A_{j,2,s}$, where
		\begin{equation}
			\label{eq:Aj2_voc}
			\begin{split}
				\tilde A_{j,2,h}(\cdot,t_2) &= 
				\me^{t_2 \Lambda_2} A_{j,2}^\ast  - \int_0^{t_2} \me^{(t_2 - s_2) \Lambda_2} a_{j,2}^L\left(\tilde A_{j,2}(\cdot,s_2), \overline{\tilde A_{j,2}}(\cdot,s_2) \right) ds_2 , \\
				\tilde A_{j,2,s}(\cdot,t_2) &= 
				- \int_0^{t_2} \me^{(t_2 - s_2) \Lambda_2} \me^{-I_{2}(s_2)} \left( a^N_{j,2} (\cdot,s_2) + 4i \partial_{x_2}^3 A_{j-1,2}(\cdot,s_2) + \partial_{x_2}^4 A_{j-2,2}(\cdot,s_2) \right) ds_2 ,
			\end{split}
		\end{equation}
		for $t_2 \in [0,T_2]$.
		
		Using the same bound for the semigroup as in Steps II-III above, as well as the bound for $A_{1,2}$ in Lemma \ref{lem:K2_A1}, we obtain
		\begin{equation}
			\label{eq:A2h_bound}
			\| \tilde A_{j,2,h}(\cdot,t_2) \|_{H_{ul}^\theta} \leq 
			C \| A_{j,1}^\ast \|_{H_{ul}^\theta} + C \tilde T_{j,2} \| A_{1,2}^\ast \|_{H_{ul}^\theta}^2 
			\leq C \max_{k = 1,\ldots, j} \| A_{j,1}^\ast \|_{H_{ul}^\theta} ,
		\end{equation}
		for $t_2 \in [0, \tilde T_{j,2}]$, where $\tilde T_{j,2} := \min\{T_2, \sup\{ t_2 > 0 : \| \tilde A_{j,2}(\cdot,s_2) \|_{H_{ul}^\theta} \leq \tilde M \} \}$.
		
		For the source term $\tilde A_{j,2,s}$ we have
		\begin{equation}
			\label{eq:A2s_bound}
			\begin{split}
				\| \tilde A_{j,2,s}(\cdot,t_2) \|_{H_{ul}^\theta} &\leq
				C \int_0^{t_2} \big\| a^N_{j,2}(\cdot,s_2) + 4i \partial_{x_2}^3 A_{j-1,2}(\cdot,s_2) + \partial_{x_2}^4 A_{j-2,2}(\cdot,s_2) \big\|_{H_{ul}^\theta} ds_2 \\
				&\leq C \tilde T_{j,2} \left( \big\| a^N_{3,2}(\cdot,t_2) \big\|_{H_{ul}^\theta} + \| \partial_{x_2}^3 A_{j-1,2}(\cdot,t_2) \big\|_{H_{ul}^\theta} + \| \partial_{x_2}^4 A_{j-2,2}(\cdot,t_2) \big\|_{H_{ul}^\theta} \right) ,
			\end{split}
		\end{equation}
		for $t_2 \in [0,\tilde T_{j,2}]$. Since $a^N_{3,2}(\cdot,t_2) \in H_{ul}^\theta$ by assumption and
		\[
		\begin{split}
			\| \partial_{x_2}^3 A_{j-1,2}(\cdot,t_2) \big\|_{H_{ul}^\theta} &\leq
			C \| A_{j-1,2}(\cdot,t_2) \big\|_{H_{ul}^{\theta_{j-1,j}}} \leq C , \\
			\| \partial_{x_2}^4 A_{j-2,2}(\cdot,t_2) \big\|_{H_{ul}^\theta} &\leq 
			C \| A_{j-2,2}(\cdot,t_2) \big\|_{H_{ul}^{\theta_{j-2,j}}} \leq C ,
		\end{split}
		\]
		by our regularity assumptions and the induction hypothesis, $\tilde A_{j,2,s}(\cdot,t_2) \in H_{ul}^\theta$ for all $t_2 \in [0,\tilde T_{j,2}]$.
		
		Since the integrating factor $e^{I_2(t_2)}$ is also bounded for all $t_2 \in [0,\tilde T_{2,j}]$, it follows that $A_{j,2}(\cdot,t_2) = \me^{I_2(t_2)} (\tilde A_{j,2,h}(\cdot,t_2) + \tilde A_{j,2,s}(\cdot,t_2)) \in H_{ul}^\theta$ for all $t_2 \in [0,\tilde T_{j,2}]$. We may set $\tilde T_{j,2} = T_2$, because the bounds above are finite for all $t_2 \in [0,T_2]$. 
		
		\
		
		It remains to prove the bound in \eqref{eq:Aj2_nu10_bound}. Under the additional assumptions that \eqref{eq:a_jN_cond_K2} is satisfied and $\mu(x) \equiv 0$ we can obtain a better bound on the norm of $\tilde A_{j,2,s}$. This is achieved by using \eqref{eq:a_jN_cond_K2} and
		\[
		\begin{split}
			\| \partial_{x_2}^3 A_{j-1,2}(\cdot,t_2) \big\|_{H_{ul}^\theta} &\leq
			C \| A_{j-1,2}(\cdot,t_2) \big\|_{H_{ul}^{\theta_{j-1,j}}} \leq 
			C \max_{k=1,\ldots,j-1} \| A_{k,2}^\ast \|_{H_{ul}^{\theta_{k,j}}} , \\
			\| \partial_{x_2}^4 A_{j-2,2}(\cdot,t_2) \big\|_{H_{ul}^\theta} &\leq 
			C \| A_{j-2,2}(\cdot,t_2) \big\|_{H_{ul}^{\theta_{j-2,j}}} \leq 
			C \max_{k=1,\ldots,j-2} \| A_{k,2}^\ast \|_{H_{ul}^{\theta_{k,j}}} ,
		\end{split}
		\]
		where the right-most bounds can be derived using a recursive argument base on the improved bounds from Lemmas \ref{lem:K2_A2} and \ref{lem:K2_Aj} in case $\mu(x) \equiv 0$. Substituting these bounds into the right-hand side of \eqref{eq:A2s_bound}, combining the resulting bound with \eqref{eq:A2h_bound} and multiplying through by the (bounded) integrating factor yields the result in \eqref{eq:Aj2_nu10_bound}.
	\end{proof}

	\subsubsection{Step V: Estimate $\psi_2$}
	
	As for the $\mathcal K_1$ analysis in Section \ref{sub:K1_dynamics}, we need to control the non-critical modulation functions $A_{mj,2}$ with $|m| \neq 1$ in order to describe the dynamics of $\psi_2(\cdot,T_2)$ and the map $\pi_2 : \Sigma^{in}_2 \to \Sigma^{out}_2$. This is also necessary in order to verify the assumptions on $a_{j,2}^N$ in Lemma \ref{lem:K2_Aj}. As before, we reinstate the full notation $A_{\pm1j,2}$ instead of $A_{j,2}$ in order to distinguish modulation functions with $|m|=1$ and $|m|\neq1$, \SJ{and recall that $\textbf A_{mj,2}$ denotes the set of lower order critical modulation functions $A_{\pm1j',2}$ with $j' \in \{1,\ldots,\alpha(m) + j - 1\}$.}
	
	\begin{lemma}
		\label{lem:K2_gl_coefficients}
		Let $\theta_A = 1 + 3(N-3) + \theta$ where $\theta > 1/2$. Then the conditions on $a_{j,2}^N$ for the applicability of Lemma \ref{lem:K2_Aj} are satisfied. We have that $A_{\pm1j,2}(\cdot,t_2) \in H_{ul}^{\theta_{j,N-2}}$ for all $j = 1, \ldots, N-2$ and $t_2 \in [0,T_2]$, and
		\begin{equation}
			\label{eq:gl_coefficients_smooth_K2}
			A_{mj,2}(\cdot, t_1) = g_{mj,2}^{gl} \left( \textbf{\textup{A}}_{mj,2}(\cdot,t_2) , v_2(t_2) \right) \in H_{ul}^{\theta_{\alpha(m)+j,N}} ,
		\end{equation}
		for all $t_2 \in [0,T_2]$, $m \in I_N \setminus \{-1,1\}$ and $j = 1, \ldots, \tilde \alpha(m)$. If additionally $\mu(x) \equiv 0$ then for each $m \in I_N \setminus \{-1,1\}$ we have
		\begin{equation}
			\label{eq:gl_coefficients_bounds_K2}
			\big\| g_{mj,2}^{gl} \left( \textbf{\textup{A}}_{mj,2}(\cdot,T_2) , v_2(T_2) \right) \big\|_{H_{ul}^{\theta_{j,N-2}}}
			\leq C \max_{k = 1, \ldots, j-2} \| A_{1k,2}^\ast \|_{H_{ul}^{\theta_{k,N-2}}} ,
		\end{equation}
		for all $j = 1, \ldots, \tilde \alpha(m)$.
	\end{lemma}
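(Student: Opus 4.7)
The plan is to mirror the recursive argument used in the proof of Lemma \ref{lem:K1_gl_coefficients}, with Lemmas \ref{lem:K2_A1}, \ref{lem:K2_A2}, and \ref{lem:K2_Aj} taking the roles of Lemmas \ref{lem:K1_A1}, \ref{lem:K1_A2}, and \ref{lem:K1_Aj} respectively. I would work inductively with respect to the total degree $l := \alpha(m) + j$. At the base level $l = 1$, the assertion for the critical modulation function $A_{\pm 1 1, 2}$ is handled directly by Lemma \ref{lem:K2_A1}, with regularity $H_{ul}^{\theta_A}$ inherited from the assumption $\theta_A = 1 + 3(N-3) + \theta$. At level $l = 2$, the non-critical modes $A_{0 1, 2}$ and $A_{\pm 2 1, 2}$ vanish identically on $\mathcal{M}^{gl}_2$ (cf.~\eqref{eq:A01_etc}), while $A_{\pm 1 2, 2}$ is provided by Lemma \ref{lem:K2_A2}. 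At level $l = 3$ one has $A_{0 2, 2} = 0$, $A_{\pm 2 2, 2} = 0$, and $A_{\pm 3 1, 2} = - A_{\pm 1 1, 2}^3 / 64$; since $H_{ul}^{\theta_A}$ is an algebra for $\theta_A > 1/2$, this cubic expression inherits the regularity of $A_{\pm 1 1, 2}$, which in turn ensures $a^N_{3, 2}(\cdot, t_2) \in H_{ul}^\theta$, so that Lemma \ref{lem:K2_Aj} applies and yields $A_{\pm 1 3, 2} \in H_{ul}^{\theta_{3, N-2}}$.

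The inductive step at level $l$ proceeds in the same spirit: the algebraic equation \eqref{eq:GL_manifold_K2} defining $\mathcal{M}^{gl}_2$ expresses $A_{mj, 2}$ for $|m| \neq 1$ as a graph over lower-order modulation functions $A_{m' j', 2}$ with $\alpha(m') + j' < l$, and the matching condition $\alpha(m_1) + \alpha(m_2) + \alpha(m_3) + j_1 + j_2 + j_3 = \alpha(m) + j + 2$ on the cubic monomials in $a_{mj, 2}$ ensures that all constituents have already been produced at previous steps. Combining this with the assumed regularity $\theta_A = 1 + 3(N-3) + \theta$, which provides a margin of four derivatives at each step to compensate for the operators $\mathcal{L}^{(i)}_{m, 2}$ and the recursive loss through $\partial_{x_2}^3$, one deduces $a^N_{j, 2}(\cdot, t_2) \in H_{ul}^{\theta_{j, N-2}}$ on $[0, T_2]$, which in particular verifies the hypothesis of Lemma \ref{lem:K2_Aj} and closes the induction for the critical $A_{\pm 1 j, 2}$. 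The representation \eqref{eq:gl_coefficients_smooth_K2} for the non-critical modes follows from the explicit formula \eqref{eq:GL_manifold_K2}.

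For the bound \eqref{eq:gl_coefficients_bounds_K2} in the symmetric case $\mu(x) \equiv 0$, the argument transfers the bounds from Lemmas \ref{lem:K2_A1}, \ref{lem:K2_A2}, and \ref{lem:K2_Aj} inductively through the algebraic relation. An estimate analogous to \eqref{eq:a_bound} in the proof of Lemma \ref{lem:K1_gl_coefficients} gives
\[
\| a_{mj, 2}(\cdot, t_2) \|_{H_{ul}^{\theta_{\alpha(m) + j, N}}} \leq C \sup_{\{(m', j')\,:\,\alpha(m') + j' \leq \alpha(m) + j - 2\}} \| A_{m' j', 2}(\cdot, t_2) \|_{H_{ul}^{\theta_{\alpha(m') + j', N}}},
\]
and combining this with the norm estimates for the linear operators $\mathcal{L}^{(i)}_{m, 2}$ applied to lower-order terms yields a bound of $\| g^{gl}_{mj, 2}(\cdot, v_2(t_2)) \|_{H_{ul}^{\theta_{\alpha(m) + j, N}}}$ by $C \max_{k \leq \alpha(m) + j - 2} \| A_{\pm 1 k, 2}(\cdot, t_2) \|_{H_{ul}^{\theta_{k, N-2}}}$. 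Evaluating at $t_2 = T_2$ and applying the $\mu(x) \equiv 0$ bounds from the preceding lemmas produces \eqref{eq:gl_coefficients_bounds_K2}.

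The main obstacle, if any, is bookkeeping: keeping track of which modulation functions at which orders have already been constructed, ensuring sufficient regularity is preserved through successive applications of the linear operators $\mathcal{L}^{(i)}_{m, 2}$, and verifying the matching conditions carefully for the cubic nonlinearity. Conceptually, the argument is essentially the same as in chart $\mathcal{K}_1$; the main simplification here is that the integrating factor $\me^{I_2(t_2)}$ and the heat semigroup $\me^{t_2 \Lambda_2}$ remain uniformly bounded on the finite time interval $[0, T_2]$, so that no exponentially small factors need to be tracked and the bounds in \eqref{eq:gl_coefficients_bounds_K2} are polynomial-type estimates in terms of the initial data rather than the exponentially small estimates obtained in \eqref{eq:gl_coefficients_bounds}.
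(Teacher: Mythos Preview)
Your proposal is correct and follows essentially the same approach as the paper, which states only that the proof is analogous to that of Lemma~\ref{lem:K1_gl_coefficients} and omits the details. You have in fact written out that analogy carefully, including the correct explanation for why the bounds in \eqref{eq:gl_coefficients_bounds_K2} are constant rather than exponentially small---namely that the integrating factor and semigroup in chart $\mathcal K_2$ are uniformly bounded on the finite interval $[0,T_2]$, so Lemmas~\ref{lem:K2_A1}, \ref{lem:K2_A2}, and \ref{lem:K2_Aj} yield only constant bounds when $\nu_1 = 0$.
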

	
	\begin{proof}
		The proof is analogous to the proof of Lemma \ref{lem:K1_gl_coefficients}, so we omit the details. The reason that we obtain constant bounds in the case $\mu(x) \equiv 0$ (as opposed the the exponential bounds in Lemma \ref{lem:K1_gl_coefficients}) is that we have only constant bounds when $\nu_1 = 0$ in Lemmas \ref{lem:K2_A1}, \ref{lem:K2_A2} and \ref{lem:K2_Aj}.
	\end{proof}
	
	Combining the results of Lemmas \ref{lem:K2_T2}, \ref{lem:K2_A1}, \ref{lem:K2_A2}, \ref{lem:K2_Aj} and \ref{lem:K2_gl_coefficients} we obtain the following result, which summarises the dynamics in chart $\mathcal K_2$.
	
	\begin{proposition}
		\label{prop:K2_summary}
		Let $\theta_A = 1 + 3(N-3) + \theta$ where $\theta > 1/2$. Then for $\vartheta, K > 0$ sufficiently small but fixed we have $\psi_2(\cdot,t_2) \in H_{ul}^\theta$ for all $t_2 \in [0,T_2]$. In particular, the map $\pi_2 : \Sigma_2^{in} \to \Sigma_2^{out}$ is well-defined and given by
		\begin{equation}
			\label{eq:pi_2}
			\pi_2 : ( \psi_2^\ast, -\zeta^{-1/2}, r_2 ) \mapsto \left( \psi_2(\cdot,T_2), \rho_{max}, r_2 \right) ,
		\end{equation}
		with
		\begin{equation}
			\label{eq:psi_2}
			\psi_2(x,t_2) = \psi_{GL,2}(x,t_2) + r_2 \left( A_{2,2,h}(x,t_2) \me^{ix} + c.c.\right) 
			+ r_2^2 h_{rem,2}(x,t_2) ,
		\end{equation}
		where $\psi_{GL,2}(x,t_2) = A_{1,2} (x_2,T_2) \me^{ix} + c.c.$ satisfies $\| \psi_{GL,2}(\cdot,t_2) \|_{H_{ul}^{\theta_A-1}} \leq C \|A_{1,2}^\ast \|_{H_{ul}^{\theta_A}}$, the function $A_{2,2}$ is described in Lemma \ref{lem:K2_A2}, and the remainder term $h_{rem,2}(\cdot,t_2) \in H_{ul}^{\theta}$ for all $t_2 \in [0,T_2]$.
		If additionally $\mu(x) \equiv 0$ then we have
		\begin{equation}
			\label{eq:psi_2_bound}
			\| \psi_2(\cdot,t_2) \|_{H_{ul}^\theta} \leq 
			C \| A_{1,2}^\ast \|_{H_{ul}^{\theta+1}} + r_2 C \max_{k=1,2} \| A_{k,2}^\ast  \|_{H_{ul}^{\theta+1}} + r_2^2 C \max_{k = 1, \ldots, N-2} \| A_{k,2}^\ast \|_{H_{ul}^{\theta+1}} ,
		\end{equation}
		for all $t_2 \in [0,T_2]$.
	\end{proposition}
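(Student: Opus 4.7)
The plan is to parallel the proof of Proposition \ref{prop:K1_summary}, combining the building blocks in Lemmas \ref{lem:K2_T2}, \ref{lem:K2_A1}, \ref{lem:K2_A2}, \ref{lem:K2_Aj}, and \ref{lem:K2_gl_coefficients}. The form of the map $\pi_2$ in \eqref{eq:pi_2} follows directly from Lemma \ref{lem:K2_T2}, which gives $v_2(T_2) = \rho_{mid}$, together with the observation that $r_2$ is conserved because $\partial_{t_2} r_2 = 0$ in system \eqref{eq:K2_eqns2}.

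To establish $\psi_2(\cdot,t_2) \in H_{ul}^\theta$ for all $t_2 \in [0,T_2]$, I would expand according to the definition \eqref{eq:psi_definition} and apply Lemma \ref{lem:K2_gl_coefficients}, which ensures that $A_{mj,2}(\cdot,t_2) \in H_{ul}^{\theta+1}$ for every $(m,j)$ with $m \in I_N$ and $j \in \{1,\ldots,\tilde\alpha(m)\}$ on the interval $[0,T_2]$. The additional degree of regularity beyond $\theta$ is needed in order to invoke Lemma \ref{lem:norm_rescaling} and transfer $H_{ul}^{\theta+1}$ bounds in the scaled coordinate $x_2$ into $H_{ul}^\theta$ bounds in the physical coordinate $x$. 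A triangle inequality over the finite double sum then yields the claim.

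For the asymptotic expression \eqref{eq:psi_2}, I would extract the two lowest-order contributions at the critical modes $|m| = 1$ in the expansion of $r_2 \psi_2$ and use $A_{01,2} \equiv A_{\pm21,2} \equiv 0$ on $\mathcal M^{gl}_2$ (recall \eqref{eq:A01_etc}) to show that the first non-critical contribution occurs only at order $r_2^3$, via $A_{\pm31,2} = -A_{\pm11,2}^3/64$. Concretely, I would write
\[
r_2 \psi_2(x,t_2) = r_2 \psi_{GL,2}(x,t_2) + r_2^2 \bigl( A_{12,2}(x_2,t_2) \me^{ix} + c.c. \bigr) + r_2^3 \tilde h_{rem,2}(x,t_2),
\]
substitute the decomposition of $A_{12,2}$ inherited from Lemma \ref{lem:K2_A2}, divide through by $r_2$, and collect all higher-order contributions (the $r_2^2 A_{13,2}$ piece at the critical mode, the cubic non-critical term $r_2^2 A_{\pm31,2}\me^{\pm 3ix}$, and everything above order $r_2^3$) into $r_2^2 h_{rem,2}$. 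Verification that $h_{rem,2}(\cdot,t_2) \in H_{ul}^\theta$ reduces to the $H_{ul}^{\theta+1}$ estimates of Lemma \ref{lem:K2_gl_coefficients} together with Lemma \ref{lem:norm_rescaling}. The stated bound on $\|\psi_{GL,2}\|_{H_{ul}^{\theta_A-1}}$ is then immediate from Lemma \ref{lem:K2_A1} after one further application of Lemma \ref{lem:norm_rescaling}.

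In the symmetric case $\mu(x) \equiv 0$, I would return to the triangle inequality applied to the series \eqref{eq:psi_definition}, now using the constant (as opposed to the exponentially small bounds obtained in chart $\mathcal K_1$) estimates from Lemmas \ref{lem:K2_A1}, \ref{lem:K2_A2}, \ref{lem:K2_Aj}, and \ref{lem:K2_gl_coefficients}, and group terms according to powers of $r_2$ to produce the three-term estimate \eqref{eq:psi_2_bound}. The main technical obstacle is the careful regularity bookkeeping: tracking the choice $\theta_A = 1 + 3(N-3) + \theta$ through successively higher-order modulation functions, accounting for the three-derivative loss at each recursive step (cf.~Remark \ref{rem:regularity}) and the one-derivative loss incurred when invoking Lemma \ref{lem:norm_rescaling}.
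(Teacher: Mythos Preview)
Your proposal is correct and follows essentially the same approach as the paper: the paper's own proof invokes Lemma \ref{lem:K2_T2} for the form of $\pi_2$, uses the series definition \eqref{eq:psi_definition} together with Lemmas \ref{lem:K2_A1}, \ref{lem:K2_A2}, \ref{lem:K2_Aj}, \ref{lem:K2_gl_coefficients} and Lemma \ref{lem:norm_rescaling} to establish $\psi_2(\cdot,t_2)\in H_{ul}^\theta$, and derives \eqref{eq:psi_2} and \eqref{eq:psi_2_bound} exactly as you describe, deferring the details of the asymptotic expansion to the analogous computation in Proposition \ref{prop:K1_summary}. Your write-up is in fact more explicit than the paper's, which simply says the details are ``similar'' and ``omitted for brevity.''
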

	
	\begin{proof}
		The form of the map in \eqref{eq:pi_2} follows immediately from Lemma \ref{lem:K2_T2}. The fact that $\psi_2(\cdot, t_2) \in H_{ul}^\theta$ for all $t_2 \in [0,T_2]$ follows from its definition in \eqref{eq:psi_definition} and the fact that $A_{mj,2}(\cdot,t_2) \in H_{ul}^{\theta+1}$ for all $m \in I_N$, $j = 1, \ldots , \tilde \alpha(m)$ and $t_2 \in [0,T_2]$ due to Lemmas \ref{lem:K2_A1}, \ref{lem:K2_A2}, \ref{lem:K2_Aj} and \ref{lem:K2_gl_coefficients}. Similarly to the proof of Proposition \ref{prop:K1_summary}, we need one extra degree of regularity wherever we use Lemma \ref{lem:norm_rescaling} to take the $H_{ul}^\theta$ norm with respect to $x$ instead of $x_2$.
		
		The asymptotic expression in \eqref{eq:psi_2} is obtained directly by evaluating the series expansion \eqref{eq:psi_definition} and evaluating it at $t_2 = T_2$. The details are similar to the derivation of the asymptotic formula \eqref{eq:psi_1_out} in the proof of Proposition \ref{prop:K1_summary}, and so omitted here for brevity.
		
		Finally, the bound in \eqref{eq:psi_2_bound} follows after applying a triangle inequality to the series definitions of $\psi_2$ and applying the assertions pertaining to the case $\mu(x) \equiv 0$ in Lemmas \ref{lem:K2_A1}, \ref{lem:K2_A2}, \ref{lem:K2_Aj} and \ref{lem:K2_gl_coefficients}.
	\end{proof}

	\subsection{Proof of Lemma \ref{lem:Ansatz_dynamics}}
	\label{sub:proof_of_approximation_lemma_1}
	
	Lemma \ref{lem:Ansatz_dynamics} can be proved with the information obtained in charts $\mathcal K_1$ and $\mathcal K_2$. The idea is to analyse $\pi^{mid} : \Delta^{in} \to \Delta^{mid}$ via its pre-image $\pi_2 \circ \kappa_{12} \circ \pi_1 : \Sigma_1^{in} \to \Sigma_2^{out}$ in the blown-up space, where $\kappa_{12}$ is the change of coordinates map defined in \eqref{eq:kappa_maps} and $\pi_1$, $\pi_2$ are the maps described in Propositions \ref{prop:K1_summary}, \ref{prop:K2_summary} respectively. The fact that $\Psi(\cdot,t) = r(\bar t) \psi(\cdot, \bar t) \in H_{ul}^\theta$ for all $t \in [0,T_{mid}]$ follows directly from the fact that $\psi_l(\cdot, t_l) \in H_{ul}^\theta$ for all $t_l \in [0,T_l]$ in charts $\mathcal K_l$ and $\mathcal K_2$; recall Propositions \ref{prop:K1_summary} and \ref{prop:K2_summary}. It therefore remains to show that the map $\pi^{mid}$ is given by \eqref{eq:pi_mid_asymptotics}.
	
	\
	
	We consider an initial condition $(\Psi^\ast , -\rho_{in}) \in \Delta^{in}$, which corresponds to an initial condition of the form $(\psi_1^\ast, \sqrt{\rho_{in}} , \eps_1^\ast) \in \Sigma_1^{in}$ defined by
	\begin{equation}
		\label{eq:original_coords_ic}
		\psi_1^\ast  = \frac{\Psi^\ast }{\sqrt{\rho_{in}}} , \qquad
		r_1^\ast = \sqrt{\rho_{in}} , \qquad \eps_1^\ast = \frac{\eps}{\rho_{in}^2} .
	\end{equation}
	Since $\Sigma_2^{in} = \kappa_{12}(\Sigma_1^{out})$, we obtain the following bounds for the initial modulation functions in chart $\mathcal K_2$ using the change of coordinates formula $A_{mj,2}^\ast  = \zeta^{-(\alpha(m) + j)/4} A_{mj,1}^\ast(\cdot, T_1)$, recall \eqref{eq:kappa_Amj}, together with Lemmas \ref{lem:K1_A1}, \ref{lem:K1_A2}, \ref{lem:K1_Aj} and \ref{lem:K1_gl_coefficients}:
	\begin{equation}
		\label{eq:K2_ic_bounds}
		\begin{split}
			&\| A_{1,2}^\ast \|_{H_{ul}^{\theta_A}} = \zeta^{-1/4} \| A_{1,1}(\cdot, T_1) \|_{H_{ul}^{\theta_A}} \leq C \me^{-\kappa / 2 \eps_1^\ast} \| A_{1,1}^\ast \|_{H_{ul}^{\theta_A}} , \\
			&\| A_{2,2}^\ast \|_{H_{ul}^{\theta_{2,N-2}}} = \zeta^{-1/2} \| A_{2,1}(\cdot, T_1) \|_{H_{ul}^{\theta_{2,N-2}}} \leq C \left( |\nu_1| f_{2,1}(\eps_1^\ast) + \me^{-\kappa / 2 \eps_1^\ast} \max_{k=1,2} \| A_{k,1}^\ast  \|_{H_{ul}^{\theta_{k,N-2}}} \right) , \\
			&\| A_{j,2}^\ast \|_{H_{ul}^{\theta_{j,N-2}}} = \zeta^{-j/4} \| A_{j,1}(\cdot, T_1) \|_{H_{ul}^{\theta_{j,N-2}}} \leq C , \\
			&\| A_{mj,2}^\ast \|_{H_{ul}^{\theta_{\alpha(m)+j,N}}} = \zeta^{-(\alpha(m) + j)/4} \| A_{mj,1}(\cdot, T_1) \|_{H_{ul}^{\theta_{\alpha(m)+j,N}}} \leq C ,
		\end{split}
	\end{equation}
	where $j \geq  3$ in the second-to-last line and $|m| \neq 1$ in the last. This provides sufficient information on initial conditions
	\[
	\left(\psi_2^\ast, -\zeta^{-1/2}, r_2 \right) = \kappa_{12} \left( \psi_1(\cdot,T_1), \sqrt{\rho_{in}} \left(\frac{\eps_1^\ast}{\zeta}\right)^{1/4} , \eps_1^\ast \right) = 
	\left( \zeta^{-1/4} \psi_1(\cdot,T_1) , -\zeta^{-1/2} , \sqrt{\rho_{in}} {\eps_1^\ast}^{1/4} \right) 
	\]
	in $\Sigma_2^{in}$, now in chart $\mathcal K_2$ coordinates. The evolution from $\Sigma_2^{in}$ up to $\Sigma_2^{out}$ is described by Proposition \ref{prop:K2_summary}. Using Proposition \ref{prop:K2_summary} and the bound for $\| A_{1,2}^\ast \|_{H_{ul}^{\theta_A}}$ in \eqref{eq:K2_ic_bounds} we have that
	\[
	\| \psi_{GL,2}(\cdot,T_2) \|_{{H_{ul}^{\theta_A-1}}} \leq 
	C \|A_{1,2}^\ast \|_{H_{ul}^{\theta_A}} \leq 
	C \me^{-\kappa / 2 \eps_1^\ast} \| A_{1,1}^\ast \|_{H_{ul}^{\theta_A}} .
	\]
	Thus $\psi_{GL,2}(\cdot,T_2)$ can be pushed into the higher order terms in \eqref{eq:psi_2}, such that we obtain
	\begin{equation}
		\label{eq:psi_2_out}
		\psi_2(x,t_2) = r_2 \left( A_{2,2}(x_2,t_2) \me^{ix} + c.c. \right) + r_2^2 \tilde h_{rem,2}(x,t_2) ,
	\end{equation}
	where $\tilde h_{rem,2}(\cdot,t_2) \in H_{ul}^\theta$ for all $t_2 \in [0,T_2]$ and 
	\[
	\left\| A_{2,2}(r_2 \cdot , T_2) \me^{i \cdot} + c.c. \right\|_{H_{ul}^{\theta_{2,N-2}}} \leq 
	C \left( |\nu_1|  + \me^{-\kappa / 2 \eps_1^\ast} 
	\right) ,
	\]
	due to \eqref{eq:K2_ic_bounds} and Lemma \ref{lem:K2_A2}.
	
	In order to express \eqref{eq:psi_2_out} in terms of the original function $\Psi$ we simply apply the blow-down transformations
	\[
	\Psi(x,T_{mid}) = r_2 \psi_2(x, T_2) = 
	\eps^{1/4} \psi_2 \left( x, \sqrt{\rho_{in}} {\eps_1^\ast}^{1/4} \right) 
	\]
	and $\eps_1^\ast = \eps / \rho_{in}^2$, thereby obtaining
	\[
	\Psi(x,T_{mid}) = \eps^{1/2} \left( A_{2,2}(\eps^{1/4} x, T_2) \me^{ix} + c.c. \right) + \eps^{3/4} R(x,\eps) ,
	\]
	where
	\[
	\left\| A_{2,2}(\eps^{1/4} \cdot, T_2) \me^{i\cdot} + c.c. \right\|_{H_{ul}^\theta} \leq 
	C \left( |\nu_1| + \me^{-\kappa \rho_{in}^2 / 2 \eps} \right)
	\]
	and $R(\cdot,\eps) := \tilde h_{rem,2}(\cdot,T_2) \in H_{ul}^\theta$ as required. This concludes the proof of Lemma \ref{lem:Ansatz_dynamics}.
	\qed

	\subsection{Chart $\mathcal K_3$}
	\label{sub:K3_dynamics}
	
	In order to prove Lemma \ref{lem:Ansatz_dynamics_mu_0} we also need to understand the dynamics in chart $\mathcal K_3$. We recall the equations in chart $\mathcal K_3$:
	\begin{equation}
		\label{eq:K3_eqns2}
		\begin{split}
			\partial_{t_3} A_{1,3} &= 4 \partial_{x_3}^2 A_{1,3} + \left(1 - \frac{\eps_3}{2} \right) A_{1,3} - 3 A_{1,3} |A_{1,3}|^2 , \\
			\partial_{t_3} r_3 &= \frac{1}{2} r_3 \eps_3 , \\
			\partial_{t_3} \eps_3 &= - 2 \eps_3^2 , \\
			\partial_{t_3} A_{2,3} &= 4 \partial_{x_3}^2 A_{2,3} + \left( 1 - \frac{\eps_3}{2} \right) A_{2,3} - a_{2,3} - 4i \partial_{x_3}^3 A_{1,3} 
			, \\
			\partial_{t_3} A_{j,3} &=  4 \partial_{x_3}^2 A_{j,3} + \left( 1 - \frac{\eps_3}{2} \right) A_{j,3} - a_{j,3} - 4i \partial_{x_3}^3 A_{j-1,3} - \partial_{x_3}^4 A_{j-2,3} ,
		\end{split}
	\end{equation}
	for $j = 3, \ldots, N-2$. Note that we have set $\nu_1 = 0$ in the equation for $A_{2,3}$ since our goal is to prove Lemma \ref{lem:Ansatz_dynamics_mu_0}, which has $\mu(x) \equiv 0$. Our aim in chart $\mathcal K_3$ is to describe the map
	\[
	\pi_3 : \Sigma^{in}_3 \ni \left( \psi_3^\ast , r_3^\ast, \zeta \right) \mapsto \left(\psi_3(\cdot,T_3), \rho_{out}^{1/2}, \eps_3^{out}(r_3^\ast) \right) \in \Sigma_3^{out} ,
	\]
	induced by forward evolution of initial conditions in
	\[
	\Sigma_3^{in} = \left\{ ( \psi_3^\ast  , r_3, \zeta ) : \| \psi_3^\ast  \|_{H_{ul}^\theta} \leq K, r_3 \in [0, \zeta^{-1/4} \vartheta]  \right\} ,
	\]
	which corresponds to the exit section $\Sigma^{out}_2$ in chart $\mathcal K_2$ coordinates via $\Sigma_3^{in} = \kappa_{23}(\Sigma_2^{out})$ if we set $\rho_{mid} = \zeta^{-1/2}$ (which we shall assume is the case from this point on), up to the exit section
	\[
	\Sigma_3^{out} = \left\{ ( \psi_3^\ast  , \rho_{out}^{1/2}, \eps_3 ) : \| \psi_3^\ast  \|_{H_{ul}^\theta} \leq K, \eps_3 \in [0,\zeta]  \right\} ,
	\]
	which corresponds to the original exit section $\Delta^{out}$ defined in \eqref{eq:Delta_out} \SJ{after blowing down, fixing $\eps$, and using the natural identification with points in $\Delta^{out}_\eps = \Delta^{out} \times [0,\eps_0]$}. We proceed again in steps.

	\subsubsection*{Step I: Solve for $r_3(t_3)$, $\eps_3(t_3)$ and $T_3$}
	
	We have the following result.
	
	\begin{lemma}
		\label{lem:K3_r3eps3}
		Let $r_3^\ast := r_3(0)$. The transition time $T_3$ is given by
		\[
		T_3 = \frac{1}{2 \zeta {r_3^\ast}^4} \left(\rho_{out}^2 - {r_3^\ast}^4 \right) ,
		\]
		and for $t_3 \in [0,T_3]$ we have
		\[
		r_3(t_3) = r_3^\ast (1 + 2 \zeta t_3)^{1/4} , \qquad
		\eps_3(t_3) = \frac{\zeta}{1 + 2 \zeta t_3} .
		\]
		In particular, $\eps_3^{out}(r_3^\ast) = \zeta {r_3^\ast}^4 / \rho_{out}^2$.
	\end{lemma}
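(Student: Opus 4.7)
The plan is to observe that the equations for $r_3$ and $\eps_3$ in system \eqref{eq:K3_eqns2} are completely decoupled from the PDE part (i.e.~from the equations for the modulation functions $A_{j,3}$), so the result reduces to solving two scalar ODEs with known initial data $\eps_3(0) = \zeta$ and $r_3(0) = r_3^\ast$, and then imposing the boundary condition $r_3(T_3) = \rho_{out}^{1/2}$ which defines the exit section $\Sigma_3^{out}$.

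First I would integrate the equation $\partial_{t_3}\eps_3 = -2\eps_3^2$ by separation of variables, yielding $\eps_3(t_3) = \zeta/(1+2\zeta t_3)$. Substituting this into the equation $\partial_{t_3} r_3 = \tfrac{1}{2} r_3 \eps_3$ gives the linear equation $\partial_{t_3} r_3 = \zeta r_3 / (2(1+2\zeta t_3))$, which is solved via the integrating factor (equivalently, separation of variables) to obtain $r_3(t_3) = r_3^\ast (1+2\zeta t_3)^{1/4}$. One can also verify this solution more directly by noting that $\eps = r_3^4 \eps_3$ is a constant of motion in chart $\mathcal K_3$ (this follows immediately by differentiating and using the two ODEs), so $r_3^4 \eps_3 = {r_3^\ast}^4 \zeta$ identifies $r_3(t_3)$ once $\eps_3(t_3)$ is known.

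Next I would solve for the transition time $T_3$ from the boundary condition $r_3(T_3) = \rho_{out}^{1/2}$. Raising to the fourth power gives ${r_3^\ast}^4(1+2\zeta T_3) = \rho_{out}^2$, which rearranges to $T_3 = (\rho_{out}^2 - {r_3^\ast}^4)/(2\zeta {r_3^\ast}^4)$. Finally, substituting this $T_3$ into the formula for $\eps_3(t_3)$ yields $\eps_3^{out}(r_3^\ast) = \eps_3(T_3) = \zeta {r_3^\ast}^4 / \rho_{out}^2$, as claimed.

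There is no real obstacle here: the result is a direct consequence of elementary ODE integration, and the only thing to check is the consistency of the conserved quantity $\eps = r_3^4 \eps_3$ with the local coordinate definition in chart $\mathcal K_3$ from \eqref{eq:charts}, which ensures that the formulas match the expected blow-up geometry described in Section~\ref{sub:results_modulation_eqns}.
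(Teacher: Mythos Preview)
Your proposal is correct and follows essentially the same approach as the paper: the paper's proof simply states that the expressions for $r_3(t_3)$ and $\eps_3(t_3)$ follow by direct integration, that $T_3$ is obtained from the requirement $r_3(T_3)=\rho_{out}^{1/2}$, and that $\eps_3^{out}(r_3^\ast)$ follows by evaluating $\eps_3(T_3)$. Your write-up is more detailed (including the nice observation about the conserved quantity $r_3^4\eps_3$), but the underlying argument is identical.
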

	
	\begin{proof}
		The expressions for $r_3(t_3)$ and $\eps_3(t_3)$ follow by direct integration, and $T_3$ is obtained from the requirement that $r_3(T_3) = \rho_{out}^{1/2}$. The expression for $\eps_3^{out}(r_3^\ast)$ follows after evaluating $\eps_3(T_3)$.
	\end{proof}

	\subsubsection*{Step II: Estimate $A_{1,3}(x_3,t_3)$}
	
	We prove the following result.
	
	\begin{lemma}
		\label{lem:K3_A1}
		Fix $\theta_A > 1/2$ and assume that $A_{1,3}^\ast \in H_{ul}^{\theta_A}$. If $K, \zeta > 0$ are sufficiently small then for each $\tilde \sigma_0 \geq \tilde \sigma > 1$ there is exists a constant $C > 0$ such that if initial conditions satisfy
		\begin{equation}
			\label{eq:A13_ic_bound}
			\|A_{1,3}^\ast \|_{H_{ul}^{\theta_A}}  \leq C \me^{- \tilde \sigma_0 \rho_{out}^2 / 2 \zeta {r_3^\ast}^4} ,
		\end{equation}
		then $A_{1,3}(\cdot,t_3) \in H_{ul}^{\theta_A}$ for all $t_3 \in [0,T_3]$, with
		\[
		\|A_{1,3}(\cdot,T_3)\|_{H_{ul}^{\theta_A}}  \leq C \me^{\tilde \sigma \rho_{out}^2 / 2 \zeta {r_3^\ast}^4} \|A_{1,3}^\ast \|_{H_{ul}^{\theta_A}} \leq C \me^{-(\tilde \sigma_0 - \tilde \sigma) \rho_{out}^2 / 2 \zeta {r_3^\ast}^4} .
		\]
	\end{lemma}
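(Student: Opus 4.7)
The plan is to adapt the integrating-factor/Gr\"onwall approach of Lemma \ref{lem:K1_A1} to the \emph{unstable} linear regime in chart $\mathcal K_3$. Substituting $\eps_3(t_3) = \zeta/(1+2\zeta t_3)$ from Lemma \ref{lem:K3_r3eps3} into the equation for $A_{1,3}$ in \eqref{eq:K3_eqns2} yields a non-autonomous GL-type equation whose linear coefficient $1 - \eps_3/2$ is positive (and close to $+1$), so the linearized semigroup now grows rather than decays. The appropriate integrating factor is
\[
J(t_3) := \int_0^{t_3}\!\left(1 - \tfrac12 \eps_3(s_3)\right) ds_3 = t_3 - \tfrac14 \ln(1+2\zeta t_3), \qquad
\me^{J(t_3)} = \frac{\me^{t_3}}{(1+2\zeta t_3)^{1/4}},
\]
and the substitution $A_{1,3}(x_3,t_3) =: \me^{J(t_3)}\tilde A_{1,3}(x_3,t_3)$ produces the transformed equation
\[
\partial_{t_3}\tilde A_{1,3} = 4\partial_{x_3}^2 \tilde A_{1,3} - 3\me^{2J(t_3)}\tilde A_{1,3}|\tilde A_{1,3}|^2 , \qquad \tilde A_{1,3}(\cdot,0) = A_{1,3}^\ast,
\]
whose linear part generates a uniformly bounded heat semigroup $\|\me^{t_3\cdot 4\partial_{x_3}^2}\|_{H_{ul}^{\theta_A}\to H_{ul}^{\theta_A}} \leq C$ by Lemma \ref{lem:US_8.3.7}.

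After applying variation of constants, the crucial step is to bootstrap on the \emph{untransformed} amplitude, setting
\[
\tilde T_{1,3} := \min\!\left\{T_3,\ \sup\{t_3 > 0 : \|A_{1,3}(\cdot,t_3)\|_{H_{ul}^{\theta_A}} \leq \tilde K\}\right\},
\]
and exploiting the identity $\me^{2J(s_3)}\tilde A_{1,3}|\tilde A_{1,3}|^2 = \me^{-J(s_3)} A_{1,3}|A_{1,3}|^2$. Combined with the Sobolev algebra estimate of Lemma \ref{lem:US_8.3.11}, this gives $\|\me^{2J(s_3)}\tilde A_{1,3}(\cdot,s_3)|\tilde A_{1,3}(\cdot,s_3)|^2\|_{H_{ul}^{\theta_A}} \leq C\tilde K^2\|\tilde A_{1,3}(\cdot,s_3)\|_{H_{ul}^{\theta_A}}$ for $s_3 \in [0,\tilde T_{1,3}]$; that is, the exponentially large coefficient $\me^{2J}$ is neutralized by the exponentially small size of $A_{1,3}$ itself. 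A standard Gr\"onwall inequality then yields $\|\tilde A_{1,3}(\cdot,t_3)\|_{H_{ul}^{\theta_A}} \leq C\me^{C\tilde K^2 t_3}\|A_{1,3}^\ast\|_{H_{ul}^{\theta_A}}$ on $[0,\tilde T_{1,3}]$, and multiplying back by $\me^{J(t_3)}$ gives
\[
\|A_{1,3}(\cdot,t_3)\|_{H_{ul}^{\theta_A}} \leq C\,\frac{\me^{(1 + C\tilde K^2) t_3}}{(1+2\zeta t_3)^{1/4}}\|A_{1,3}^\ast\|_{H_{ul}^{\theta_A}}.
\]

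Evaluating at $t_3 = T_3 = (\rho_{out}^2 - {r_3^\ast}^4)/(2\zeta{r_3^\ast}^4)$ and using $1 + 2\zeta T_3 = \rho_{out}^2/{r_3^\ast}^4$, the right-hand side becomes
\[
C\,\frac{r_3^\ast}{\rho_{out}^{1/2}}\,\exp\!\left(\frac{(1+C\tilde K^2)(\rho_{out}^2 - {r_3^\ast}^4)}{2\zeta{r_3^\ast}^4}\right)\|A_{1,3}^\ast\|_{H_{ul}^{\theta_A}}.
\]
Given $\tilde\sigma_0 \geq \tilde\sigma > 1$, I fix $K$ small enough that $1 + C\tilde K^2 \leq \tilde\sigma$, after which the bounded prefactor $(r_3^\ast/\rho_{out}^{1/2})\me^{-\tilde\sigma/(2\zeta)}$ is absorbed into $C$, delivering the claimed bound $\|A_{1,3}(\cdot,T_3)\|_{H_{ul}^{\theta_A}} \leq C\me^{\tilde\sigma\rho_{out}^2/(2\zeta{r_3^\ast}^4)}\|A_{1,3}^\ast\|_{H_{ul}^{\theta_A}}$.

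The main subtlety is closing the bootstrap. Inserting the hypothesis $\|A_{1,3}^\ast\|_{H_{ul}^{\theta_A}} \leq C\me^{-\tilde\sigma_0 \rho_{out}^2/(2\zeta{r_3^\ast}^4)}$ into the interior estimate and using $T_3 \leq \rho_{out}^2/(2\zeta{r_3^\ast}^4)$ together with $\tilde\sigma_0 \geq \tilde\sigma$ yields, uniformly in $t_3 \in [0,\tilde T_{1,3}]$,
\[
\|A_{1,3}(\cdot,t_3)\|_{H_{ul}^{\theta_A}} \leq C^2\, \me^{-\tilde\sigma/(2\zeta)}\,\me^{-(\tilde\sigma_0 - \tilde\sigma)\rho_{out}^2/(2\zeta{r_3^\ast}^4)},
\]
which is bounded above by $\tilde K$ once $\zeta$ (and hence $K$) is taken sufficiently small, independently of $r_3^\ast \in [0,\zeta^{-1/4}\vartheta]$. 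This rules out $\tilde T_{1,3} < T_3$, establishes $A_{1,3}(\cdot,t_3) \in H_{ul}^{\theta_A}$ throughout $[0,T_3]$, and completes the proof.
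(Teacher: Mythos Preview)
Your proof is correct. The overall strategy --- integrating factor, variation of constants, semigroup bound, Sobolev algebra estimate, Gr\"onwall, bootstrap closure using the initial-data hypothesis --- mirrors the paper's. The one technical difference is in how you split the linear part: you absorb the full coefficient $1 - \eps_3/2$ into the integrating factor $J$, leaving the bounded heat semigroup but introducing the exponentially large factor $\me^{2J}$ in front of the cubic term; you then neutralize this via the identity $\me^{2J}\tilde A|\tilde A|^2 = \me^{-J}A|A|^2$ and a bootstrap on the \emph{untransformed} amplitude $A_{1,3}$. The paper instead absorbs only the $-\eps_3/2$ piece, so the transformed linear operator is $4\partial_{x_3}^2 + 1$; its semigroup grows like $\me^{\sigma t_3}$ for any $\sigma > 1$ (via Lemma~\ref{lem:US_8.3.7}), but the nonlinear coefficient $\me^{2I_3} \leq 1$ is now harmless and the bootstrap runs on $\tilde A_{1,3}$ directly. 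Both decompositions yield the same growth rate $\tilde\sigma > 1$, arbitrarily close to $1$ for small $K$, and the same closure at $t_3 = T_3$. Your version has the minor advantage of avoiding the multiplier-theory loss $\sigma > 1$ for the growing semigroup, at the cost of running the bootstrap on $A_{1,3}$ rather than on $\tilde A_{1,3}$.
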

	
	\begin{proof}
		The proof proceeds via arguments similar to those used to prove Lemma \ref{lem:K1_A1}. Substituting the expressions for $\eps_3(t_3)$ into the equation for $A_{1,3}$ yields
		\begin{equation}
			\label{eq:A1_decoupled_K3}
			\partial_{t_3} A_{1,3} = 4 \partial_{x_3}^2 A_{1,3} + \left( 1 - \frac{\eps_3(t_3)}{2} \right) A_{1,3} - 3 A_{1,3} |A_{1,3}|^2 , \qquad
			A_{1,3}(x_3,0) = A_{1,3}^\ast(x_3) .
		\end{equation}
		Defining an integrating factor $\me^{I_3(t_3)}$ via
		\begin{equation}
			\label{eq:integrating_factor_A11_K3}
			I_3(t_3) = -\int_0^{t_3} \frac{\eps_3(s_3)}{2} ds_3 = - \frac{1}{4} \ln (1 + 2 \zeta t_3) , \qquad
			\me^{I_3(t_3)} = \frac{1}{\sqrt[4]{1 + 2 \zeta t_3}} ,
		\end{equation}
		and applying the transformation $A_{1,3}(x_3,t_3) =: \me^{I_3(t_3)} \tilde A_{1,3}(x_3,t_3)$ leads to
		\[
		\partial_{t_3} \tilde A_{1,3} = 4 \partial_{x_3}^2 \tilde A_{1,3} + \tilde A_{1,3} - 3 \me^{2 I_3(t_3)} \tilde A_{1,3} |\tilde A_{1,3}|^2, \qquad 
		\tilde A_{1,3}(x_3,0) = A_{1,3}^\ast(x_3) .
		\]
		Using the variation of constants formula, this equation can be recast in the integral form
		\begin{equation}
			\label{eq:A13_voc}
			\tilde A_{1,3}(\cdot,t_3) = \me^{t_3\Lambda_3} A_{1,3}^\ast  - 3 \int_0^{t_3} \me^{(t_3 - s_3) \Lambda_3}  \me^{2 I_3(s_3)} \tilde A_{1,3}(\cdot,s_3) |\tilde A_{1,3}(\cdot,s_3)|^2 ds_3 ,
		\end{equation}
		where $(\me^{t_3 \Lambda_3})_{t_3 \geq 0}$ is the semigroup generated by $\Lambda_3 := 4 \partial_{x_3}^2 + 1$. In order to bound $\|\tilde A_{1,3}(\cdot,t_3)\|_{H_{ul}^{\theta_A}}$ we need to bound $(\me^{t_3 \Lambda_3})_{t_3 \geq 0}$ and the cubic nonlinearity $\tilde A_{1,3}(\cdot,s_3) |\tilde A_{1,3}(\cdot,s_3)|^2$ in $H_{ul}^{\theta_A}$. We shall also use the fact that $\me^{2I_3(s_3)} \leq 1$ for all $s_3 \geq 0$.
		
		The semigroup can be bounded using Lemma \ref{lem:US_8.3.7}. We find that for each $\sigma > 1$ there exists a $C>0$ depending on ${\theta_A}$ such that
		\[
		\| \me^{t_3 \Lambda_3} \|_{H_{ul}^{\theta_A} \to H_{ul}^{\theta_A}} \leq C \me^{\sigma t_3} 
		\]
		for all $t_3 \geq 0$.
		
		Using the semigroup bound and Lemma \ref{lem:US_8.3.7} we obtain
		\[
		\| \tilde{A}_{1,3}(\cdot,t_3) \|_{H^{\theta_A}_{ul}} \leq C \me^{\sigma t_3} \|A_{1,3}^\ast \|_{H^{\theta_A}_{ul}} + C \tilde K^2 \int_0^{t_3} \me^{\sigma (t_3 - s_3)} \| \tilde{A}_{1,3}(\cdot,s_3) \|_{H^{\theta_A}_{ul}}\,ds_3 ,
		\]
		for $t_3 \in [0,\tilde T_3]$, and applying the Gr\"onwall inequality in Lemma \ref{lem:HK_2.8} leads to
		\[
		\| \tilde{A}_{1,3}(\cdot,t_3) \|_{H^{\theta_A}_{ul}} \leq C \me^{\tilde \sigma t_3} \|A_{1,3}^\ast \|_{H^{\theta_A}_{ul}} , 
		\]
		for $t_3 \in [0, \tilde T_3]$, where $\tilde \sigma := \sigma + 2 C \tilde K^2 > 1$. Therefore
		\begin{equation}
			\label{eq:A13_t3_bound}
			\| A_{1,3}(\cdot,t_3) \|_{H^{\theta_A}_{ul}} \leq C \frac{\me^{\tilde \sigma t_3}}{\sqrt[4]{1 + 2 \zeta t_3}} \|A_{1,3}^\ast \|_{H^{\theta_A}_{ul}} , 
		\end{equation}
		for $t_3 \in [0, \tilde T_3]$. Since the right-hand side is increasing on $t_3 \in [0,T_3]$, it follows that $A_{1,3}(\cdot,t_3) \in H_{ul}^{\theta_A}$ for all $t_3 \in [0,T_3]$ if the initial condition $A_{1,3}^\ast$ is sufficiently small in $H_{ul}^{\theta_A}$. A sufficient condition is that
		\begin{equation}
			\label{eq:A13_K_bound}
			\| A_{1,3}(\cdot,T_3) \|_{H^{\theta_A}_{ul}} \leq 
			C \frac{\me^{\tilde \sigma T_3}}{\sqrt[4]{1 + 2 \zeta T_3}} \|A_{1,3}^\ast \|_{H^{\theta_A}_{ul}} < K.
		\end{equation}
		This condition is satisfied due to \eqref{eq:A13_ic_bound}, which can be seen by substituting the expression for $T_3$ in Lemma \ref{lem:K3_r3eps3}. In this case we can set $\tilde T_3 = T_3$ in \eqref{eq:A13_t3_bound}, thereby proving the desired result.
	\end{proof}
	
	\begin{remark}
		Since the source term $\eps \mu(x)$ does not influence the lowest order approximation, Lemma \ref{lem:K3_A1} applies whether or not we have $\mu(x) \equiv 0$.
	\end{remark}

	\subsubsection*{Step III: Estimate $A_{2,3}(x_3,t_3)$}
	
	We prove the following result.
	
	\begin{lemma}
		\label{lem:K3_A2}
		Let $\theta_A = \theta + 3$ where $\theta > 1/2$ and assume that
		\begin{equation}
			\label{eq:A23_ic_bound}
			\|A_{k,3}^\ast \|_{H_{ul}^{\theta_k}}  \leq C \me^{- \tilde \sigma_0 \rho_{out}^2 / 2 \zeta {r_3^\ast}^4} ,
		\end{equation}
		for $k=1,2$, where $\tilde \sigma_0 > \tilde \sigma > \sigma > 1$ is the same constant as in Lemma \ref{lem:K3_A1} and $\theta_k$ is defined in \eqref{eq:theta_k}. Then $A_{2,3}(\cdot,t_3) \in H_{ul}^\theta$ for all $t_3 \in [0,T_3]$ and
		\begin{equation}
			\label{eq:A23_bound}
			\| A_{2,3}(\cdot,T_3) \|_{H_{ul}^\theta}
			\leq C {r_3^\ast}^{-1} \me^{ \tilde \sigma \rho_{out}^2 / 2 \zeta {r_3^\ast}^4} \max_{k = 1,2} \|A_{k,3}^\ast \|_{H_{ul}^{\theta_k}} \leq
			C 
			\me^{ -(\tilde \sigma_0 - \tilde \sigma) \rho_{out}^2 / 2 \zeta {r_3^\ast}^4} .
		\end{equation}
	\end{lemma}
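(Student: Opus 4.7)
The proof will follow the blueprint established in Step III of the charts $\mathcal K_1$ and $\mathcal K_2$ analyses (Lemmas \ref{lem:K1_A2} and \ref{lem:K2_A2}), with the important difference that the ``decay'' part of the linear operator, $\Lambda_3 = 4\partial_{x_3}^2 + 1$, now generates a semigroup which \emph{grows} rather than contracts, and the transition time $T_3 = (\rho_{out}^2 - {r_3^*}^4)/(2\zeta {r_3^*}^4)$ is large for small $r_3^*$. First I would substitute the explicit expressions for $r_3(t_3)$ and $\eps_3(t_3)$ from Lemma \ref{lem:K3_r3eps3} into the $A_{2,3}$ equation in \eqref{eq:K3_eqns2}, which decouples once $A_{1,3}$ is treated as given via Lemma \ref{lem:K3_A1}. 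Since $\mu(x) \equiv 0$, the inhomogeneity consists only of the cubic $a_{2,3} = 3 A_{1,3}^2\overline{A_{2,3}} + 6|A_{1,3}|^2 A_{2,3}$ (linear in $A_{2,3}$, quadratic in $A_{1,3}$) and the dispersive source $-4i\partial_{x_3}^3 A_{1,3}$; the regularity requirement $\theta_A = \theta + 3$ is precisely what is needed to control this latter term in $H_{ul}^\theta$.

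Next I would use the same integrating factor $\me^{I_3(t_3)} = (1+2\zeta t_3)^{-1/4}$ as in \eqref{eq:integrating_factor_A11_K3} to eliminate the $-\eps_3(t_3)/2$ linear term, write $A_{2,3} =: \me^{I_3}\tilde A_{2,3}$, and apply variation of constants to split $\tilde A_{2,3} = \tilde A_{2,3,h} + \tilde A_{2,3,s}$ as in \eqref{eq:A22_voc}, with $\tilde A_{2,3,s}$ absorbing the $\me^{-I_3(s_3)}\partial_{x_3}^3 A_{1,3}$ source. The semigroup bound from the proof of Lemma \ref{lem:K3_A1} gives $\|\me^{t_3\Lambda_3}\|_{H_{ul}^\theta \to H_{ul}^\theta} \leq C\me^{\sigma t_3}$ for any fixed $\sigma > 1$. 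Substituting the bound $\|A_{1,3}(\cdot,s_3)\|_{H_{ul}^{\theta_A}} \leq C(1+2\zeta s_3)^{-1/4}\me^{\tilde\sigma s_3}\|A_{1,3}^*\|_{H_{ul}^{\theta_A}}$ from Lemma \ref{lem:K3_A1}, the source integral is controlled by
\[
\|\tilde A_{2,3,s}(\cdot,t_3)\|_{H_{ul}^\theta} \leq C\me^{\sigma t_3}\|A_{1,3}^*\|_{H_{ul}^{\theta_A}}\int_0^{t_3} \me^{(\tilde\sigma - \sigma)s_3}\,ds_3 \leq C\me^{\tilde\sigma t_3}\|A_{1,3}^*\|_{H_{ul}^{\theta_A}},
\]
provided $\tilde\sigma > \sigma$, where the powers of $(1+2\zeta s_3)$ coming from $\me^{-I_3(s_3)}$ and the $A_{1,3}$ estimate cancel. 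The homogeneous part $\tilde A_{2,3,h}$ is treated analogously using Lemma \ref{lem:US_8.3.11} to bound the quadratic multipliers $\|A_{1,3}^2\|_{H_{ul}^\theta}$ pointwise in time and absorbing the resulting contribution by Gr\"onwall's inequality (Lemma \ref{lem:HK_2.8}); the key observation is that the initial data bound \eqref{eq:A23_ic_bound} is strong enough that the cubic self-coupling $\int_0^{T_3} \|A_{1,3}(\cdot,s_3)\|^2\,ds_3$ remains bounded by a small constant, so Gr\"onwall only modifies the exponent by an arbitrarily small amount (absorbable into the gap $\tilde\sigma_0 - \tilde\sigma$ or into the definition of $\tilde\sigma$ itself via $K \to 0$).

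Multiplying through by $\me^{I_3(T_3)} = r_3^*/\rho_{out}^{1/2}$ and using $T_3 = \rho_{out}^2/(2\zeta {r_3^*}^4) - 1/(2\zeta)$ gives the bound $\|A_{2,3}(\cdot,T_3)\|_{H_{ul}^\theta} \leq C r_3^*\me^{\tilde\sigma\rho_{out}^2/(2\zeta{r_3^*}^4)}\max_{k=1,2}\|A_{k,3}^*\|_{H_{ul}^{\theta_k}}$, which we then relax to the stated bound \eqref{eq:A23_bound} by absorbing $r_3^*$ into the $r_3^{*-1}$ factor (which is a worse but sufficient bound since $r_3^* \leq 1$). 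The final inequality in \eqref{eq:A23_bound} is immediate from \eqref{eq:A23_ic_bound} after observing that the $r_3^{*-1}$ factor is dominated by the exponential gap $\me^{-(\tilde\sigma_0 - \tilde\sigma)\rho_{out}^2/(2\zeta{r_3^*}^4)}$ for $r_3^*$ sufficiently small. The main technical obstacle is bookkeeping the interaction between the integrating factor, the exponentially growing semigroup, and the polynomial weights $(1+2\zeta s_3)^{\pm 1/4}$ inside the variation-of-constants integral, ensuring that the strict inequality $\tilde\sigma_0 > \tilde\sigma > \sigma > 1$ affords enough room to absorb all small losses incurred at each step into a single effective exponent.
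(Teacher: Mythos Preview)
Your proposal is correct and follows essentially the same route as the paper's proof: integrating factor $\me^{I_3}=(1+2\zeta t_3)^{-1/4}$, variation of constants with the semigroup $\me^{t_3\Lambda_3}$ satisfying $\|\me^{t_3\Lambda_3}\|\le C\me^{\sigma t_3}$, control of the $\partial_{x_3}^3 A_{1,3}$ source via the $A_{1,3}$ bound from Lemma~\ref{lem:K3_A1}, and then closing the estimate using the exponential smallness assumption \eqref{eq:A23_ic_bound}. The one substantive difference is in how the linear-in-$\tilde A_{2,3}$ coupling term $3A_{1,3}^2\overline{\tilde A_{2,3}}+6|A_{1,3}|^2\tilde A_{2,3}$ is handled: the paper uses the stopping-time device $\tilde T_{2,3}$ to impose the a~priori bound $\|\tilde A_{2,3}\|\le\tilde K$, then integrates $\|A_{1,3}(\cdot,s_3)\|^2$ directly via Lemma~\ref{lem:Gamma_integrals} (yielding the $\Gamma$-function term in \eqref{eq:A23_tilde_bound}), whereas you absorb this term via Gr\"onwall after checking that $\int_0^{T_3}\|A_{1,3}(\cdot,s_3)\|^2\,ds_3$ is small by \eqref{eq:A23_ic_bound}. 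Both are standard and lead to the same exponent; the paper's route avoids the Gr\"onwall step but requires the incomplete gamma asymptotics. Your observation that the proof actually yields a sharper prefactor $r_3^\ast$ (rather than ${r_3^\ast}^{-1}$) after multiplying by $\me^{I_3(T_3)}=r_3^\ast/\rho_{out}^{1/2}$ is correct and consistent with the paper, which simply states a looser bound in \eqref{eq:A23_bound}.
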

	
	\begin{proof}
		The proof proceeds via arguments similar to those in the proof of Lemma \ref{lem:K1_A2}. 
		We have
		\[
		\partial_{t_3} A_{2,3} = 4 \partial_{x_3}^2 A_{2,3} + \left( 1 - \frac{\eps_3(t_3)}{2} \right) A_{2,3} -  3 A_{1,3}(x_3,t_3)^2 \overline{A_{2,3}} - 6 |A_{1,3}(x_3,t_3)|^2 A_{2,3} - 4i \partial_{x_3}^3 A_{1,3}(x_3,t_3) ,
		\]
		with $A_{2,3}(x_3,0) = A_{2,3}^\ast(x_3)$ for $t_3 \in [0,T_3]$ and $x_3 \in \R$. Following the transformation $A_{2,3}(x_3,t_3) =: \me^{I_3(t_3)} \tilde A_{2,3}(x_3,t_3)$, where $\me^{I_3(t_3)}$ is the same integrating factor used in Step II (recall \eqref{eq:integrating_factor_A11_K3}), we obtain
		\[
		\partial_{t_3} \tilde A_{2,3} = 4 \partial_{x_3}^2 \tilde A_{2,3} + \tilde A_{2,3} -  3 A_{1,3}(x_3,t_3)^2 \overline{\tilde A_{2,3}} - 6 |A_{1,3}(x_3,t_3)|^2 \tilde A_{2,3} - 4i \me^{-I_3(t_3)} \partial_{x_3}^3 A_{1,3}(x_3,t_3) ,
		\]
		with $A_{2,3}(x_3,0) = A_{2,3}^\ast(x_3)$. After applying the variation of constants formula we obtain
		\begin{equation}
			\begin{split}
				\label{eq:A23_voc}
				\tilde A_{2,3} & (\cdot,t_3) = 
				\me^{t_3 \Lambda_3} A_{2,3}^\ast \ - \\
				\int_0^{t_3} & \me^{(t_3 - s_3) \Lambda_3} \left(3 A_{1,3}(\cdot,s_3)^2 \overline{\tilde A_{2,3}}(\cdot,s_3) + 6 |A_{1,3}(\cdot,s_3)|^2 \tilde A_{2,3}(\cdot,s_3) + 4i \me^{-I_3(s_3)} \partial_{x_3}^3 A_{1,3}(\cdot,s_3) \right) ds_3 ,
			\end{split}
		\end{equation}
		where $\Lambda_3 = 4 \partial_{x_3}^2 + 1$ as in Step II. We have the same bound for the semigroup as in Step II, i.e.~for each $\sigma > 1$ there is a constant $C > 0$ depending on $\theta$ such that $\| \me^{t_3 \Lambda_3} \|_{H_{ul}^\theta \to H_{ul}^\theta} \leq C \me^{\sigma t_3}$. Given the bounds for $A_{1,3}$ in Lemma \ref{lem:K3_A1} and its proof (in particular \eqref{eq:A13_t3_bound}), it follows via arguments analogous to those applied in the proof of Lemma \ref{lem:K1_A2} that for all $t_3 \in [0,\tilde T_{2,3}]$ we have 
		\begin{equation}
			\label{eq:A23_tilde_bound}
			\begin{split}
				\| \tilde A_{2,3}(\cdot,t_3) \|_{H_{ul}^\theta} &\leq 
				C \me^{\sigma t_3} \| A_{2,3}^\ast  \|_{H_{ul}^\theta} + C \me^{\tilde \sigma t_3} \| A_{1,3}^\ast \|_{H_{ul}^{\theta_A}} \\
				&+ 
				C i \me^{\sigma t_3} \|A_{1,3}^\ast \|_{H_{ul}^{\theta}}^2 \Gamma \left( \frac{1}{2}, - (-\sigma + 2 \tilde \sigma) \left(\frac{1}{2 \zeta} + s_3 \right) \right) \bigg|_{t_3}^0 ,
			\end{split}
		\end{equation}
		where $- \sigma + 2 \tilde \sigma = \sigma + 4 C \tilde K^2 > 1$ and $-\sigma + \tilde \sigma = 2C \tilde K^2 \in (0,1)$. A direct calculation using the upper gamma function asymptotics in Appendix \ref{app:technical_estimates} (equation \eqref{eq:Gamma_asymptotics}) shows that
		\begin{equation}
			\label{eq:A23_t3_bound}
			\| \tilde A_{2,3}(\cdot,t_3) \|_{H_{ul}^\theta} \leq 
			C \me^{\tilde \sigma t_3} \max_{k=1,2} \| A_{k,3}^\ast \|_{H_{ul}^{\theta_k}} + C \me^{2 \tilde \sigma t_3} \| A_{1,3}^\ast \|_{H_{ul}^\theta}^2 
			\leq C \me^{\tilde \sigma t_3} \max_{k=1,2} \| A_{k,3}^\ast \|_{H_{ul}^{\theta_k}} ,
		\end{equation}
		for $t_3 \in [0, \tilde T_{2,3}]$, where we used the bound on $\|A_{1,3}^\ast\|_{H_{ul}^\theta}$ from \eqref{eq:A23_ic_bound} in the second inequality. Using the bounds in \eqref{eq:A23_ic_bound} again leads to
		\begin{equation}
			\label{eq:A23_tilde_bound_T3}
			\| \tilde A_{2,3}(\cdot,T_3) \|_{H_{ul}^\theta} \leq 
			C \me^{-(\tilde \sigma_0 - \tilde \sigma) \rho_{out}^2 / 2 \zeta {r_3^\ast}^4} .
		\end{equation}
		Since \eqref{eq:A23_tilde_bound} is increasing on $t_3 \in [0,T_3]$ and \eqref{eq:A23_tilde_bound_T3} is bounded because $\tilde \sigma_0 > \tilde \sigma$, we can ensure that $\tilde T_{2,3} = T_3$ by choosing $\vartheta > 0$ sufficiently small. The bound in \eqref{eq:A23_bound} follows after multiplying through by the integrating factor.
	\end{proof}

	\subsubsection*{Step IV: Estimate $A_{j,3}(x_3,t_3)$, $j = 3, \ldots, N-2$}
	
	Similarly to the Step IV analysis in charts $\mathcal K_1$ and $\mathcal K_2$ we write $a_{j,3} =: a_{j,3}^L + a_{j,3}^N$, where
	\[
	a_{j,3}^L(\tilde A_{j,3}, \overline{\tilde A_{j,3}}) = 
	\left( \alpha_1 A_{1,3}^2 + \alpha_2 |A_{1,3}|^2 + \alpha_3 \overline{A_{1,3}}^2 \right) \tilde A_{j,3} + \left( \beta_1 A_{1,3}^2 + \beta_2 |A_{1,3}|^2 + \beta_3 \overline{A_{1,3}}^2 \right) \overline{\tilde A_{j,3}} ,
	\]
	and $a_{j,3}^N$ depends only modulation functions $A_{m'j',3}$ at lower orders with $\alpha(m') + j' < j$.
	
	\
	
	We prove the following result.
	
	\begin{lemma}
		\label{lem:K3_Aj}
		Let $\theta_A = \theta_A(j) = 3(j-1) + \theta$ where $j \in \{ 3, \ldots, N-2 \}$ and $\theta > 1/2$, assume
		\begin{equation}
			\label{eq:a_jN_cond_K3}
			\| a_{j,3}^N(\cdot,t_3) \|_{H_{ul}^\theta} \leq 
			C \max_{k = 1,\ldots,j-1} \| A_{k,3}(\cdot,t_3) \|_{H_{ul}^{\theta_{j,k}}}^3 ,
		\end{equation}
		and that
		\begin{equation}
			\label{eq:Aj3_ic_bound}
			\|A_{k,3}^\ast \|_{H_{ul}^{\theta_{k,j}}} \leq C \me^{- \tilde \sigma_0 \rho_{out}^2 / 2 \zeta {r_3^\ast}^4} ,
		\end{equation}
		for all $k = 1, \ldots, j$, where $\tilde \sigma_0 > \tilde \sigma > \sigma > 1$ is the same constant as in Lemmas \ref{lem:K3_A1}-\ref{lem:K3_A2} and $\theta_{k,j}$ is given by \eqref{eq:theta_k_2}. Then $A_{j,3}(\cdot,t_3) \in H_{ul}^\theta$ for all $t_3 \in [0,T_3]$ and
		\begin{equation}
			\label{eq:Aj3_bound}
			\| A_{j,3}(\cdot,T_3) \|_{H_{ul}^\theta}
			\leq C \me^{ -(\tilde \sigma_0 - \tilde \sigma) \rho_{out}^2 / 2 \zeta {r_3^\ast}^4} .
		\end{equation}
	\end{lemma}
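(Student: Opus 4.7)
The proof plan is to adapt the structural strategy used in Lemmas \ref{lem:K1_Aj} and \ref{lem:K2_Aj}, transported to chart $\mathcal{K}_3$, with the crucial difference that the linear part of the transformed equation has a \emph{positive} constant coefficient and the semigroup therefore grows exponentially. As in the previous charts, I would proceed by induction on $j \in \{3,\ldots,N-2\}$, with the base case $j=3$ handled by a minor adaptation of the induction step. The first step is to apply the integrating factor $\me^{I_3(t_3)} = (1+2\zeta t_3)^{-1/4}$ from \eqref{eq:integrating_factor_A11_K3}, setting $A_{j,3} =: \me^{I_3(t_3)} \tilde A_{j,3}$, which eliminates the time-dependent $-\eps_3(t_3)/2$ term and produces the equation
\begin{equation*}
\partial_{t_3}\tilde A_{j,3} = 4\partial_{x_3}^2 \tilde A_{j,3} + \tilde A_{j,3} - a_{j,3}^L(\tilde A_{j,3},\overline{\tilde A_{j,3}}) - \me^{-I_3(t_3)}\bigl(a_{j,3}^N + 4i\partial_{x_3}^3 A_{j-1,3} + \partial_{x_3}^4 A_{j-2,3}\bigr),
\end{equation*}
with initial data $A_{j,3}^\ast$. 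Applying variation of constants, I would split $\tilde A_{j,3} = \tilde A_{j,3,h} + \tilde A_{j,3,s}$ exactly as in \eqref{eq:Aj2_voc}, where $\tilde A_{j,3,h}$ absorbs the initial data and the linear $a_{j,3}^L$ term and $\tilde A_{j,3,s}$ absorbs the source terms (including the $\partial_{x_3}^3 A_{j-1,3}$ contribution, as in chart $\mathcal{K}_1$ Step IV).

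For the semigroup bound I would invoke Lemma \ref{lem:US_8.3.7} exactly as in the proof of Lemma \ref{lem:K3_A1}: for every $\sigma>1$ there is a constant such that $\|\me^{t_3\Lambda_3}\|_{H_{ul}^\theta\to H_{ul}^\theta} \leq C\me^{\sigma t_3}$. Estimating $\tilde A_{j,3,h}$ via the same integrating-factor/Grönwall scheme as in Lemma \ref{lem:K3_A2}, combined with the bounds on $A_{1,3}$ from Lemma \ref{lem:K3_A1}, should give
\begin{equation*}
\|\tilde A_{j,3,h}(\cdot,t_3)\|_{H_{ul}^\theta} \leq C\me^{\tilde\sigma t_3}\max_{k=1,\ldots,j}\|A_{k,3}^\ast\|_{H_{ul}^{\theta_{k,j}}},
\end{equation*}
for $t_3 \in [0,\tilde T_{j,3}]$, where $\tilde\sigma = \sigma + 2C\tilde K^2 > 1$ (arbitrarily close to $\sigma$ for small $K$), arising from the cubic $a_{j,3}^L$ terms. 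For $\tilde A_{j,3,s}$, the assumption \eqref{eq:a_jN_cond_K3} combined with the induction hypothesis $\|A_{k,3}(\cdot,t_3)\|_{H_{ul}^{\theta_{k,j}}} \leq C\me^{\tilde\sigma t_3}\max_i \|A_{i,3}^\ast\|_{H_{ul}^{\theta_{i,j}}}$ controls $a_{j,3}^N$, while the regularity choice $\theta_{k,j} = 3(j-k)+\theta$ controls the derivative source terms $\partial_{x_3}^3 A_{j-1,3}$ and $\partial_{x_3}^4 A_{j-2,3}$ through the inductively improved bounds. Inserting these into the variation-of-constants integral and using gamma-function asymptotics from Appendix \ref{app:technical_estimates} (analogously to \eqref{eq:A23_tilde_bound}) should yield $\|\tilde A_{j,3,s}(\cdot,t_3)\|_{H_{ul}^\theta} \leq C\me^{\tilde\sigma t_3}\max_{k=1,\ldots,j-1}\|A_{k,3}^\ast\|_{H_{ul}^{\theta_{k,j}}}$.

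Combining the two pieces, multiplying by the bounded integrating factor $\me^{I_3(T_3)}$, and plugging in the small-initial-data hypothesis \eqref{eq:Aj3_ic_bound} at $t_3 = T_3$ gives
\begin{equation*}
\|A_{j,3}(\cdot,T_3)\|_{H_{ul}^\theta} \leq C\me^{\tilde\sigma T_3}\me^{-\tilde\sigma_0 \rho_{out}^2/2\zeta {r_3^\ast}^4} = C\me^{-(\tilde\sigma_0 - \tilde\sigma)\rho_{out}^2/2\zeta {r_3^\ast}^4},
\end{equation*}
after substituting $T_3 = (\rho_{out}^2 - {r_3^\ast}^4)/(2\zeta {r_3^\ast}^4)$ from Lemma \ref{lem:K3_r3eps3} and absorbing the ${r_3^\ast}^4$ factor in the exponent (since $\tilde\sigma_0 > \tilde\sigma$ the resulting exponent is negative, and the inequality $\tilde T_{j,3} = T_3$ is justified provided $\vartheta$ is fixed sufficiently small so the right-hand side stays below $\tilde K$). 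The main obstacle I expect is bookkeeping: unlike in charts $\mathcal{K}_1$ and $\mathcal{K}_2$ the semigroup contributes genuine exponential growth $\me^{\tilde\sigma t_3}$, so one must verify that the gap $\tilde\sigma_0 - \tilde\sigma > 0$ is preserved through every step of the induction, and that the recursive induction hypothesis is carried with the \emph{same} constant $\tilde\sigma$ throughout, rather than letting it drift upward at each level of the approximation; keeping track of this uniformity (and verifying that \eqref{eq:a_jN_cond_K3} genuinely controls the cubic products by $\me^{3\tilde\sigma t_3}$-type terms that can be bounded by $\me^{\tilde\sigma t_3}$ via the smallness of the initial data) is where most of the care is required.
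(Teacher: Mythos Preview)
Your proposal is correct and follows essentially the same approach as the paper: the same integrating factor $\me^{I_3(t_3)}$, the same variation-of-constants splitting $\tilde A_{j,3} = \tilde A_{j,3,h} + \tilde A_{j,3,s}$, the same semigroup bound $\|\me^{t_3\Lambda_3}\| \leq C\me^{\sigma t_3}$, and the same inductive scheme in which the cubic growth $\me^{3\tilde\sigma t_3}$ from $a_{j,3}^N$ is reduced to $\me^{\tilde\sigma t_3}$ via the smallness hypothesis \eqref{eq:Aj3_ic_bound}. The paper makes the induction for the source term a bit more explicit (carrying the bound $\|A_{k,3}(\cdot,t_3)\|_{H_{ul}^{\theta_{k,j}}} \leq C\me^{\tilde\sigma t_3}(1+2\zeta t_3)^{-1/4}\max_i\|A_{i,3}^\ast\|$ through $k=3,\ldots,j$ and treating the base case $k=3$ in detail), but your outline already anticipates the key uniformity-of-$\tilde\sigma$ issue that makes this work.
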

	
	\begin{proof}	
		We proceed similarly to the proof of Lemmas \ref{lem:K1_Aj}, \ref{lem:K2_Aj}, and show that $A_{j,3}(\cdot,t_3) \in H_{ul}^\theta$ for all $j = 1, \ldots, N-2$ using an inductive argument over $k = 3, \ldots, j-1 \leq N-3$. 
		As an induction hypothesis we assume the existence of solutions $A_{k,3}(\cdot,t_3) \in H_{ul}^{\theta_{k,j}}$ for each $k = 3, \ldots, j-1 \leq N - 3$, for all $t_3 \in [0,T_3]$. As in the proofs of Lemmas \ref{lem:K1_Aj} and \ref{lem:K2_Aj}, we skip the base case for brevity since it is similar to the inductive step.
		
		\
		
		The equation for $A_{j,3}$ is given by
		\[
		\partial_{t_3} A_{j,3} =  4 \partial_{x_3}^2 A_{j,3} + \left( 1 - \frac{\eps_3(t_3)}{2} \right) A_{j,1} - a_{j,3} - 4i \partial_{x_3}^3 A_{j-1,3} - \partial_{x_3}^4 A_{j-2,3} ,
		\qquad A_{j,3}(x_3,0) = A_{j,3}^\ast(x_3) ,
		\]
		for all $x_3 \in \R$ and $t_3 \in [0,T_3]$. 
		Many of the estimates below are obtained using arguments similar to those used in Steps II-III, so we present fewer details. Note that $\partial_{x_3}^3 A_{j-1,3}$ and $\partial_{x_3}^4 A_{j-2,3}$ can be considered as source terms, since we are given solutions $A_{j-1,3}(\cdot,t_3) \in H_{ul}^{\theta_{j-1,j}}$ and $A_{j-2,3}(\cdot,t_3) \in H_{ul}^{\theta_{j-2,j}}$ for all $t_3 \in [0,T_3]$ via Lemma \ref{lem:K3_A1}, Lemma \ref{lem:K3_A2} or induction hypothesis depending on $j$.
		
		\
		
		Setting $A_{j,3}(x_3,t_3) =: \me^{I_3(t_3)} \tilde A_{j,3}(x_3,t_3)$ where $I_3(t_3)$ is given by \eqref{eq:integrating_factor_A11_K3} leads to
		\[
		\partial_{t_3} \tilde A_{j,3} = 4 \partial_{x_3}^2 \tilde A_{j,3} - \tilde A_{j,3} - a_{j,3}^L(\tilde A_{j,3}, \overline{\tilde A_{j,3}}) - \me^{-I_3(t_3)} \left( a^N_{j,3} + 4 i \partial_{x_3}^3 A_{j-1,3} + \partial_{x_3}^4 A_{j-2,3} \right) ,
		\]
		together with $\tilde A_{j,3}(x_3,0) = A_{j,3}^\ast(x_3)$. We apply the variation of constants formula and split the solution into two parts $\tilde A_{j,3} =: \tilde A_{j,3,h} + \tilde A_{j,3,s}$, where
		\begin{equation}
			\label{eq:Aj3_voc}
			\begin{split}
				\tilde A_{j,3,h}(\cdot,t_3) &= 
				\me^{t_3 \Lambda_3} A_{j,3}^\ast  - \int_0^{t_3} \me^{(t_3 - s_3) \Lambda_3} a_{j,3}^L(\tilde A_{j,3}(\cdot,s_3), \overline{\tilde A_{j,3}}(\cdot,s_3)) ds_3 , \\
				\tilde A_{j,3,s}(\cdot,t_3) &= 
				- \int_0^{t_3} \me^{(t_3 - s_3) \Lambda_3} \me^{-I_3(s_3)} \left( a^N_{j,3} (\cdot,s_3) + 4 i \partial_{x_3}^3 A_{j-1,3}(\cdot,s_3) + \partial_{x_3}^4 A_{j-2,3}(\cdot,s_3) \right) ds_3 ,
			\end{split}
		\end{equation}
		for $t_3 \in [0,T_3]$. Arguments analogous to those which led to the bound for $\|A_{2,3}(\cdot,t_3) \|_{H_{ul}^\theta}$ in Lemma \ref{lem:K3_A2} show that $A_{j,3,h}(\cdot,t_3) = \me^{I_3(t_3)} \tilde A_{j,3,h}(\cdot,t_3)$ has a similar bound. Specifically, we have 
		\begin{equation}
			\label{eq:Aj3h_bound_t}
			\begin{split}
				\| \tilde A_{j,3,h}(\cdot,t_3) \|_{H_{ul}^\theta} &\leq 
				C \me^{\sigma t_3} \| A_{j,3}^\ast  \|_{H_{ul}^\theta} + 
				C i \me^{\sigma t_3} \|A_{1,3}^\ast \|_{H_{ul}^{\theta}}^2 \Gamma \left( \frac{1}{2}, - (- \sigma + 2 \tilde \sigma) \left(\frac{1}{2 \zeta} + s_3 \right) \right) \bigg|_{t_3}^0 , \\
				& \leq C \me^{\sigma t_3} \| A_{j,3}^\ast  \|_{H_{ul}^\theta} + C \me^{2 \tilde \sigma t_3} \| A_{1,3}^\ast \|_{H_{ul}^\theta}^2 \\
				& \leq C \me^{\sigma t_3} \| A_{j,3}^\ast \|_{H_{ul}^\theta} + C \me^{\tilde \sigma t_3} \| A_{1,3}^\ast \|_{H_{ul}^\theta} \\
				& \leq C \me^{\tilde \sigma t_3} \max_{k=1,\ldots,j} \| A_{k,3}^\ast \|_{H_{ul}^{\theta}} ,
			\end{split}
		\end{equation}
		for all $t_3 \in [0, \tilde T_{j,3}]$, where we used the gamma function asymptotics \eqref{eq:Gamma_asymptotics} in Appendix \ref{app:technical_estimates} in the second inequality, and the bound for $A_{1,3}^\ast$ in \eqref{eq:Aj3_ic_bound} in the second to last inequality. Using the expression for $T_3$ in Lemma \ref{lem:K3_r3eps3} and the bounds \eqref{eq:Aj3_ic_bound}, we obtain
		\begin{equation}
			\label{eq:Aj3h_bound}
			\| A_{j,3,h}(\cdot,T_3) \|_{H_{ul}^\theta}
			\leq 
			C \me^{ -(\tilde \sigma_0 - \tilde \sigma) \rho_{out}^2 / 2 \zeta {r_3^\ast}^4} .
		\end{equation}
		
		\
		
		Now consider the source term $\tilde A_{j,3,s}$. Using the same bound for the semigroup as in Steps II-III we obtain the estimate
		\begin{equation}
			\label{eq:Aj3s_voc_bound}
			\begin{split}
				\| \tilde A_{j,3,s}(\cdot,t_3) \|_{H_{ul}^\theta} & \leq \\
				C \me^{\sigma t_3} & \int_0^{t_3} \me^{- \sigma s_3} \sqrt[4]{1 + 2\zeta s_3} \left\| a^N_{j,3} (\cdot,s_3)+ 4 i \partial_{x_3}^3 A_{j-1,3}(\cdot,s_3) + \partial_{x_3}^4 A_{j-2,3}(\cdot,s_3) \right\|_{H_{ul}^\theta} ds_3 ,
			\end{split}
		\end{equation}
		for all $t_3 \in [0, \tilde T_{j,3}]$. From this point on the proof resembles the part of the proof of Lemma \ref{lem:K1_Aj} devoted to the proving the bound \eqref{eq:Aj1_nu10_bound} when $\mu(x) \equiv 0$. We show that
		\begin{equation}
			\label{eq:Aj3s_bound}
			\| \tilde A_{j,3,s}(\cdot,t_3) \|_{H_{ul}^\theta} \leq 
			C \me^{ -(\tilde \sigma_0 - \tilde \sigma) \rho_{out}^2 / 2 \zeta {r_3^\ast}^4} ,
		\end{equation}
		by an inductive argument over $k = 3, \ldots , j$. The idea is to bound each $\| \tilde A_{k,3,s}(\cdot,t_3) \|_{H_{ul}^\theta}$ using bounds established for lower orders. 
		
		In order to prove the base case for $k=3$, we use the bounds
		\[
		\| A_{1,3}(\cdot,t_3) \|_{H_{ul}^{\theta_A(j)}} \leq
		C \frac{\me^{\tilde \sigma t_3}}{\sqrt[4]{1 + 2 \zeta t_3}} \|A_{1,3}^\ast \|_{H_{ul}^{\theta_A(j)}} , \qquad
		\| A_{2,3}(\cdot,t_3) \|_{H_{ul}^{\theta_{2,j}}} \leq \\
		C \frac{\me^{\tilde \sigma t_3}}{\sqrt[4]{1 + 2 \zeta t_3}} \max_{i = 1, 2} \| A_{k,3}^\ast \|_{H_{ul}^{\theta_{k,j}}} ,
		\]
		which apply for all $t_3 \in [0,T_3]$ due to the proofs of Lemmas \ref{lem:K3_A1} and \ref{lem:K3_A2} (c.f.~equations \eqref{eq:A13_t3_bound} and \eqref{eq:A23_t3_bound} respectively). 
		
		In order to obtain a bound on the integrand in \eqref{eq:Aj3s_voc_bound} we use \eqref{eq:a_jN_cond_K3}, which implies
		\[
		\begin{split}
			\| a^N_{3,3}(\cdot,s_3) \|_{H_{ul}^{\theta_{3,j}}} &\leq 
			C \max_{i=1,2} \| A_{i,3}(\cdot,t_3) \|_{H_{ul}^{\theta_{i,j}}}^3 \\
			&\leq C \frac{\me^{3 \tilde \sigma t_3}}{(1 + 2 \zeta t_3)^{3/4}} \max_{i=1,2} \| A_{i,3}^\ast \|_{H_{ul}^{\theta_{i,j}}}^3 \\ 
			&\leq C \frac{\me^{\tilde \sigma t_3}}{\sqrt[4]{1 + 2 \zeta t_3}} \max_{i = 1, 2} \| A_{i,3}^\ast \|_{H_{ul}^{\theta_{i,j}}} ,
		\end{split}
		\]
		where we used Lemma \ref{lem:K3_A1} in the second inequality and \eqref{eq:Aj3_ic_bound} together with $(1 + 2 \zeta t_3)^{-3/4} \leq (1 + 2 \zeta t_3)^{-1/4}$ in the last inequality. The derivative terms can be bounded using the bounds for $A_{1,3}$ and $A_{2,3}$ above. Specifically,
		\[
		\begin{split}
			\| \partial_{x_3}^3 A_{2,3}(\cdot,s_3) \|_{H_{ul}^{\theta_{3,j}}} & \leq 
			C \| A_{2,3}(\cdot,s_3) \|_{H_{ul}^{\theta_{2,j}}} \leq 
			C \frac{\me^{\tilde \sigma t_3}}{\sqrt[4]{1 + 2 \zeta t_3}} \max_{i = 1, 2} \| A_{i,3}^\ast \|_{H_{ul}^{\theta_{i,j}}} , \\
			\| \partial_{x_3}^4 A_{1,3}(\cdot,s_3) \|_{H_{ul}^{\theta_{3,j}}} & \leq 
			C \| A_{1,3}(\cdot,s_3) \|_{H_{ul}^{\theta_A(j)}} \leq 
			C \frac{\me^{\tilde \sigma t_3}}{\sqrt[4]{1 + 2 \zeta t_3}} \|A_{1,3}^\ast \|_{H_{ul}^{\theta_A(j)}} ,
		\end{split}
		\]
		for $s_3 \in [0,T_3]$. It follows that
		\[
			\left\| a^N_{3,3}(\cdot,s_3) + 4 i \partial_{x_3}^3 A_{2,3}(\cdot,s_3) + \partial_{x_3}^4 A_{1,3}(\cdot,s_3) \right\|_{H_{ul}^{\theta_{3,j}}} \leq 
			C \frac{\me^{\tilde \sigma t_3}}{\sqrt[4]{1 + 2 \zeta t_3}} \max_{i = 1, 2} \| A_{i,3}^\ast \|_{H_{ul}^{\theta_{i,j}}} ,
		\]
		for all $s_3 \in [0,\tilde T_{3,3}]$. Using this to bound \eqref{eq:Aj3s_voc_bound} with $j=3$, we obtain
		\[
		\| \tilde A_{3,3,s}(\cdot,t_3) \|_{H_{ul}^{\theta_{3,j}}} \leq 
		C \me^{\sigma t_3} \max_{i = 1, 2} \| A_{i,3}^\ast \|_{H_{ul}^{\theta_{i,j}}} \int_0^{t_3} \me^{(\tilde \sigma - \sigma) s_3} ds_3 \leq
		C \me^{\tilde \sigma t_3} \max_{i = 1, 2} \| A_{i,3}^\ast \|_{H_{ul}^{\theta_{i,j}}} ,
		\]
		for $t_3 \in [0, \tilde T_{3,3}]$. 
		
		Multiplying through by the integrating factor and using the bounds on the initial conditions in \eqref{eq:Aj3_ic_bound} we obtain
		\begin{equation}
			\label{eq:A33s_bound}
			\| A_{3,3,s}(\cdot,T_3) \|_{H_{ul}^{\theta_{3,j}}} \leq 
			C \me^{ -(\tilde \sigma_0 - \tilde \sigma) \rho_{out}^2 / 2 \zeta {r_3^\ast}^4} ,
		\end{equation}
		which is valid for each $\tilde \sigma_0 > \tilde \sigma$ if $\vartheta > 0$ is sufficiently small. The desired bound for $\| A_{3,3}(\cdot,t_3) \|_{H_{ul}^{\theta_{3,j}}}$ is obtained using \eqref{eq:Aj3h_bound}, \eqref{eq:A33s_bound} and the triangle inequality. Note that $\tilde T_{3,3} = T_3$ as required since the right-hand sides in \eqref{eq:Aj3h_bound} and \eqref{eq:A33s_bound} are bounded below $\tilde K$.
		
		\
		
		We turn now to the induction step, and assume that \eqref{eq:Aj3s_bound} holds for $k=3,\ldots, j-1$. Similarly to the proof of Lemma \ref{lem:K1_Aj}, we present fewer details since the structure of the argument is the same as the base case. Using \eqref{eq:a_jN_cond_K3}, the induction hypothesis and the bounds in \eqref{eq:Aj3_ic_bound} we obtain 
		\[
		\| a^N_{j,3}(\cdot,s_3) \|_{H_{ul}^{\theta}} \leq 
		C \frac{\me^{3 \tilde \sigma s_1}}{(1 + 2 \zeta s_3)^{3/4}} \max_{i = 1,\ldots,j-1} \| A_{i,3}^\ast \|_{H_{ul}^{\theta_{i,j}}}^3 \leq
		C \frac{\me^{\tilde \sigma s_3}}{\sqrt[4]{1 + 2 \zeta s_3}} \max_{i = 1,\ldots,j-1} \| A_{i,3}^\ast \|_{H_{ul}^{\theta_{i,j}}} ,
		\]
		for $s_3 \in [0,T_3]$. The derivative terms are controlled because we assume sufficient regularity, so that the induction hypothesis implies
		\[
		\begin{split}
			\| \partial_{x_3}^3 A_{j-1,3}(\cdot,s_3) \|_{H_{ul}^{\theta}} &\leq 
			C \| A_{j-1,3}(\cdot,s_3) \|_{H_{ul}^{\theta_{j-1,j}}} 
			\leq C \frac{\me^{\tilde \sigma s_3}}{\sqrt[4]{1 + 2 \zeta s_3}} \max_{i = 1, \ldots, j-1} \| A_{i,3}^\ast \|_{H_{ul}^{\theta_{i,j}}} , \\
			\| \partial_{x_3}^4 A_{j-2,3}(\cdot,s_3) \|_{H_{ul}^{\theta}} &\leq 
			C \| A_{j-2,3}(\cdot,s_3) \|_{H_{ul}^{\theta_{j-2,j}}} 
			\leq C \frac{\me^{\tilde \sigma s_3}}{\sqrt[4]{1 + 2 \zeta s_3}} \max_{i = 1, \ldots, j-2} \| A_{i,3}^\ast \|_{H_{ul}^{\theta_{i,j}}} ,
		\end{split}
		\]
		for $s_3 \in [0,T_3]$. From here the bound for $\| A_{j,3}(\cdot,t_3) \|_{H_{ul}^{\theta}}$ on $t_3 \in [0,T_3]$ follows analogously to the base case $k=3$. 
	\end{proof}

	\subsubsection{Step V: Estimate $\psi_3$}
	
	We need the following result before we can describe the dynamics of $\psi_3$. As in the chart $\mathcal K_2$ and $\mathcal K_3$ analyses, we reinstate the full notation $A_{\pm1j,3}$ instead of $A_{j,3}$ in order to distinguish modulation functions with $|m|=1$ and $|m|\neq1$, \SJ{and recall that $\textbf A_{mj,3}$ denotes the set of lower order critical modulation functions $A_{\pm1j',3}$ with $j' \in \{1,\ldots,\alpha(m) + j - 1\}$.}
	
	\begin{lemma}
		\label{lem:K3_gl_coefficients}
		Let $\theta_A = 3(N-3) + \theta$ where $\theta > 1/2$ and assume that initial conditions $A_{\pm1j,3}^\ast$ satisfy \eqref{eq:Aj3_ic_bound}. Then the conditions on $a_{j,3}^N$ for the applicability of Lemma \ref{lem:K3_Aj} are satisfied, and $A_{\pm1j,3}(\cdot,t_3) \in H_{ul}^{\theta_{j,N-2}}$ for all $j = 1, \ldots, N-2$ and $t_3 \in [0,T_3]$. Moreover,
		\begin{equation}
			\label{eq:gl_coefficients_smooth_K3}
			A_{mj,3}(\cdot, t_3) = g_{mj,3}^{gl} \left( \textbf{\textup{A}}_{mj,3}(\cdot,t_3) , \eps_3(t_3) \right) \in H_{ul}^{\theta_{\alpha(m)+j,N}} ,
		\end{equation}
		and
		\begin{equation}
			\label{eq:gl_coefficients_bounds_K3}
			\big\| g_{mj,3}^{gl} \left( \textbf{\textup{A}}_{mj,3}(\cdot,T_3) , \eps_3(T_3) \right) \big\|_{H_{ul}^{\theta_{\alpha(m)+j,N}}}
			\leq C \me^{ -(\tilde \sigma_0 - \tilde \sigma) \rho_{out}^2 / 2 \zeta {r_3^\ast}^4} ,
		\end{equation}
		for all $t_2 \in [0,T_2]$, $m \in I_N \setminus \{-1,1\}$ and $j = 1, \ldots, \tilde \alpha(m)$.
	\end{lemma}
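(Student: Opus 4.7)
My plan is to mirror the inductive argument used in the proofs of Lemmas \ref{lem:K1_gl_coefficients} and \ref{lem:K2_gl_coefficients}, suitably adapted for the $\mathcal K_3$ dynamics where solutions grow rather than decay. The induction will be on $l := \alpha(m) + j$, running from $l=1$ up to $l=N$. At each level $l$, the induction hypothesis will be that for all $(m',j')$ with $\alpha(m')+j' \leq l-1$, the modulation function $A_{m'j',3}(\cdot,t_3)$ lies in $H_{ul}^{\theta_{\alpha(m')+j',N}}$ on $[0,T_3]$, and satisfies the exponential bound \eqref{eq:gl_coefficients_bounds_K3} (with the convention that for critical modes $A_{\pm1j,3}$ the bound \eqref{eq:Aj3_bound} plays this role). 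The base cases $l=1,2,3$ follow from Lemmas \ref{lem:K3_A1}--\ref{lem:K3_A2} together with the explicit formulae \eqref{eq:A01_etc}, which show that $A_{01,3} \equiv A_{\pm21,3} \equiv 0$ and $A_{\pm31,3} = -A_{\pm11,3}^3/64$; the latter inherits the exponential bound from Lemma \ref{lem:K3_A1} since $H_{ul}^{\theta_A}$ is a Banach algebra.

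For the inductive step at level $l$, I will split into the critical and non-critical cases. For $|m|=1$, the equation for $A_{\pm1j,3}$ is the evolution equation in \eqref{eq:K3_eqns2}. The key verification is that the hypotheses of Lemma \ref{lem:K3_Aj} hold: namely \eqref{eq:a_jN_cond_K3} and \eqref{eq:Aj3_ic_bound}. The latter is part of the lemma's assumption. For \eqref{eq:a_jN_cond_K3}, I will use the structure of $a_{j,3}^N$ from Section \ref{sec:Amplitude_reduction_via_geometric_blow-up}, namely that it is a finite sum of cubic monomials in modulation functions at strictly lower order $\alpha(m')+j' < j$. Applying Lemma \ref{lem:US_8.3.11} (algebra property of $H_{ul}^\theta$ for $\theta>1/2$) together with the induction hypothesis gives the required cubic bound. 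Applying Lemma \ref{lem:K3_Aj} then yields $A_{\pm1j,3}(\cdot,t_3) \in H_{ul}^{\theta_{j,N-2}}$ on $[0,T_3]$ together with the exponential bound \eqref{eq:Aj3_bound}.

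For $|m|\neq 1$, the modulation function is defined algebraically by \eqref{eq:GL_manifold_K3}, i.e.\ $A_{mj,3} = g_{mj,3}^{gl}(\textbf{A}_{mj,3},\eps_3)$, which depends only on $A_{m'j',3}$ with $\alpha(m')+j' \leq \alpha(m)+j-1$ (together with the nonlinear term $a_{mj,3}$, which by the matching conditions in Section \ref{sec:Amplitude_reduction_via_geometric_blow-up} involves strictly lower orders $\leq \alpha(m)+j-2$). Since $\mathcal L_{m,3}^{(0)} = -(1-m^2)^2$ is a nonzero constant, $(\mathcal L_{m,3}^{(0)})^{-1}$ acts as multiplication by a bounded number and preserves all Sobolev regularity. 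The remaining operators $\mathcal L_{m,3}^{(k)}$, $k=1,2,3,4$, are differential operators of order at most four; applying them costs at most four derivatives of regularity, which is absorbed by the gap of $3(j-1)+\theta$ built into $\theta_{l,N}$. Using the estimate
\[
\| a_{mj,3}(\cdot,t_3) \|_{H_{ul}^{\theta_{l,N}}} \leq C \sup_{\{(m',j') : \alpha(m')+j' \leq l-2\}} \| A_{m'j',3}(\cdot,t_3) \|_{H_{ul}^{\theta_{\alpha(m')+j',N}}}
\]
(analogous to \eqref{eq:a_bound}) together with the induction hypothesis yields both $A_{mj,3}(\cdot,t_3) \in H_{ul}^{\theta_{l,N}}$ and the exponential bound \eqref{eq:gl_coefficients_bounds_K3}.

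The main obstacle I anticipate is bookkeeping the exponential bounds with the right constants $\tilde \sigma_0 > \tilde \sigma > \sigma > 1$. Unlike chart $\mathcal K_1$, where every iterate decays, here the bounds $\me^{\tilde\sigma t_3}$ grow and the cubic nonlinearity produces triple exponents $\me^{3\tilde\sigma t_3}$ which must be tamed by the smallness of the initial conditions $\me^{-\tilde\sigma_0 \rho_{out}^2/(2\zeta {r_3^\ast}^4)}$ from \eqref{eq:Aj3_ic_bound}. This is precisely the delicate balance appearing already in Lemma \ref{lem:K3_Aj}, and I will propagate it through the induction by choosing $\vartheta>0$ small enough (so that $T_3$ is short enough in the scaling implied by Lemma \ref{lem:K3_r3eps3}) that at each level the cubic growth is reabsorbed into a factor of the form $\me^{-(\tilde\sigma_0-\tilde\sigma)\rho_{out}^2/(2\zeta {r_3^\ast}^4)}$. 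This mirrors the way \eqref{eq:A23_tilde_bound_T3} is derived in the proof of Lemma \ref{lem:K3_A2}.
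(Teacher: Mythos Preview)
Your proposal is correct and follows essentially the same approach as the paper, which simply states that the proof proceeds analogously to (the second part of) the proof of Lemma~\ref{lem:K1_gl_coefficients}, with the initial-condition bounds \eqref{eq:Aj3_ic_bound} needed to invoke Lemmas~\ref{lem:K3_A1}, \ref{lem:K3_A2} and \ref{lem:K3_Aj}. You have accurately fleshed out the inductive structure, the split into critical and non-critical modes, and the mechanism by which the exponential smallness of initial data compensates the $\me^{\tilde\sigma t_3}$ growth; these are exactly the details the paper omits for brevity.
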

	
	\begin{proof}
		The proof proceeds similarly to the (second part of the) proof of Lemma \ref{lem:K1_gl_coefficients}, so we omit the details for brevity. It is necessary to assume the initial condition bounds \eqref{eq:Aj3_ic_bound} in order to apply Lemmas \ref{lem:K3_A1}, \ref{lem:K3_A2} and \ref{lem:K3_Aj}. 
	\end{proof}
	
	Combining the results of Lemmas \ref{lem:K3_r3eps3}, \ref{lem:K3_A1}, \ref{lem:K3_A2}, \ref{lem:K3_Aj} and \ref{lem:K3_gl_coefficients} we obtain the following result, which summarises the dynamics in chart $\mathcal K_3$.
	
	\begin{proposition}
		\label{prop:K3_summary}
		Let $\theta_A = 1 + 3(N-3) + \theta$ where $\theta > 1/2$ and assume that initial conditions are small according to \eqref{eq:Aj3_ic_bound}. Then for $\zeta, K > 0$ sufficiently small but fixed we have $\psi_3(\cdot,t_3) \in H_{ul}^\theta$ for all $t_3 \in [0,T_3]$. In particular, the map $\pi_3 : \Sigma_3^{in} \to \Sigma_3^{out}$ is well-defined and given by 
		\begin{equation}
			\label{eq:pi_3}
			\pi_3 : \left( \psi_3^\ast, r_3^\ast, \zeta \right) \mapsto 
			\left( \psi_3(\cdot,T_3) , \rho_{out}^{1/2}, \zeta \frac{{r_3^\ast}^4}{\rho_{out}^2} \right) ,
		\end{equation}
		where $\psi_3(x,T_3)$ satisfies
		\begin{equation}
			\label{eq:psi_3_bound}
			\| \psi_3(\cdot,T_3) \|_{H_{ul}^\theta} \leq 
			C \me^{ -(\tilde \sigma_0 - \tilde \sigma) \rho_{out}^2 / 2 \zeta {r_3^\ast}^4} ,
		\end{equation}
		with $\tilde \sigma_0 > \tilde \sigma > \sigma > 1$.
	\end{proposition}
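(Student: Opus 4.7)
The proof will mirror the structure of the proofs of Propositions \ref{prop:K1_summary} and \ref{prop:K2_summary}, assembling the component estimates obtained in Steps I--IV for chart $\mathcal K_3$ (Lemmas \ref{lem:K3_r3eps3}--\ref{lem:K3_gl_coefficients}) into a statement about the full approximation $\psi_3$ and the transition map $\pi_3$. The form of $\pi_3$ in \eqref{eq:pi_3} is immediate from Lemma \ref{lem:K3_r3eps3}, since the transition time $T_3$ and the exit value $\eps_3^{out}(r_3^\ast) = \zeta {r_3^\ast}^4/\rho_{out}^2$ are read off directly there. The main content is therefore the regularity claim $\psi_3(\cdot,t_3) \in H_{ul}^\theta$ on $[0,T_3]$ and the exponential bound \eqref{eq:psi_3_bound}.

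For the regularity assertion I would use the series representation \eqref{eq:psi_definition} for $\psi_3$ in chart $\mathcal K_3$ together with the control on each modulation function provided by Lemma \ref{lem:K3_gl_coefficients}, which in turn relies on Lemmas \ref{lem:K3_A1}--\ref{lem:K3_Aj} and the inductive construction of $g_{mj,3}^{gl}$ on $\mathcal M_3^{gl}$. Concretely, for each $m \in I_N$ and each corresponding $j \in \{1,\ldots,\tilde\alpha(m)\}$, Lemma \ref{lem:K3_gl_coefficients} delivers $A_{mj,3}(\cdot,t_3) \in H_{ul}^{\theta_{\alpha(m)+j,N}}$, which is at least $H_{ul}^{\theta+1}$ on $[0,T_3]$; the extra degree of regularity is used to invoke Lemma \ref{lem:norm_rescaling} and pass from the scaled variable $x_3$ to the physical variable $x$. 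Summing the finitely many terms in \eqref{eq:psi_definition} and using that $r_3(t_3) \le \rho_{out}^{1/2}$ by Lemma \ref{lem:K3_r3eps3} gives $\psi_3(\cdot,t_3) \in H_{ul}^\theta$ for all $t_3 \in [0,T_3]$, provided $\zeta,K > 0$ are fixed sufficiently small so that the smallness hypothesis \eqref{eq:Aj3_ic_bound} is compatible with the constants appearing in Lemmas \ref{lem:K3_A1}--\ref{lem:K3_Aj}.

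For the exponential estimate \eqref{eq:psi_3_bound}, the triangle inequality applied to the series for $\psi_3(x,T_3)$ together with Lemma \ref{lem:norm_rescaling} reduces the problem to bounding each $\| A_{mj,3}(\cdot,T_3) \|_{H_{ul}^{\theta+1}}$. For critical modes $|m|=1$ with $j=1,2$, the bounds in Lemmas \ref{lem:K3_A1}--\ref{lem:K3_A2} yield
\[
\| A_{\pm1 j,3}(\cdot,T_3) \|_{H_{ul}^{\theta_{j,N-2}}} \le C \me^{-(\tilde\sigma_0 - \tilde\sigma)\rho_{out}^2/2\zeta {r_3^\ast}^4},
\]
and for $j \ge 3$ Lemma \ref{lem:K3_Aj} delivers the same form of bound. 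For non-critical modes $|m| \ne 1$, \eqref{eq:gl_coefficients_bounds_K3} in Lemma \ref{lem:K3_gl_coefficients} provides the identical exponential decay. Combining these uniform bounds (with a common constant $\tilde\sigma_0 - \tilde\sigma > 0$) across the finitely many terms of \eqref{eq:psi_definition} and factoring out $r_3(t_3)$ which is bounded by $\rho_{out}^{1/2}$ yields \eqref{eq:psi_3_bound}.

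The main obstacle -- which has already been absorbed into the preceding lemmas -- is the book-keeping needed to ensure that the cascade of estimates closes: the smallness hypothesis \eqref{eq:Aj3_ic_bound} on the initial data must be strong enough that the (at most) exponential growth $\me^{\tilde\sigma t_3}$ of solutions on $[0,T_3]$ combines with the prefactor $\me^{-\tilde\sigma_0 \rho_{out}^2/2\zeta {r_3^\ast}^4}$ to produce a net decay of the form $\me^{-(\tilde\sigma_0 - \tilde\sigma)\rho_{out}^2/2\zeta {r_3^\ast}^4}$, and the constants $K,\zeta,\vartheta$ must be tuned so that the stopping times $\tilde T_{j,3}$ introduced in each lemma actually coincide with $T_3$. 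Since all of this is already handled inside Lemmas \ref{lem:K3_A1}--\ref{lem:K3_gl_coefficients}, the proof of Proposition \ref{prop:K3_summary} itself amounts to an essentially routine synthesis and I expect no genuinely new difficulty at this stage.
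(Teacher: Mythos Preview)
Your proposal is correct and follows essentially the same approach as the paper's proof: both read off the form of $\pi_3$ from Lemma \ref{lem:K3_r3eps3}, establish $\psi_3(\cdot,t_3)\in H_{ul}^\theta$ by combining the series \eqref{eq:psi_definition} with the modulation-function bounds from Lemmas \ref{lem:K3_A1}--\ref{lem:K3_gl_coefficients} (using the extra degree of regularity to apply Lemma \ref{lem:norm_rescaling}), and obtain \eqref{eq:psi_3_bound} by a triangle inequality over the finitely many terms together with the uniform exponential bound $\|A_{mj,3}(\cdot,T_3)\|_{H_{ul}^{\theta+1}}\le C\me^{-(\tilde\sigma_0-\tilde\sigma)\rho_{out}^2/2\zeta{r_3^\ast}^4}$. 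The paper additionally remarks that \eqref{eq:psi_3_bound} ensures $\|\psi_3(\cdot,T_3)\|_{H_{ul}^\theta}\le K$ for $\vartheta$ small, so that $\pi_3$ indeed maps into $\Sigma_3^{out}$; you may wish to note this explicitly as well.
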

	
	\begin{proof}
		The conditions of Proposition \ref{prop:K3_summary} imply that Lemmas \ref{lem:K3_r3eps3}, \ref{lem:K3_A1}, \ref{lem:K3_A2}, \ref{lem:K3_Aj} and \ref{lem:K3_gl_coefficients} apply. It follows that all modulation functions $A_{mj,3}(\cdot,t_3) \in H_{ul}^{\theta+1}$ for all $t_3 \in [0,T_3]$, implying that $\psi_3(\cdot,t_3) \in H_{ul}^\theta$ for all $t_3 \in [0,T_3]$ (recall the definition of $\psi_3$ in \eqref{eq:psi_definition}).
		
		The form of the map in \eqref{eq:pi_3} follows from Lemma \ref{lem:K3_r3eps3}. To verify the bound in \eqref{eq:psi_3_bound}, notice that Lemmas \ref{lem:K3_A1}, \ref{lem:K3_A2}, \ref{lem:K3_Aj} and \ref{lem:K3_gl_coefficients} imply that for each $m \in I_N$ we have
		\[
		\| A_{mj,3}(\cdot, T_3) \|_{H_{ul}^{\theta+1}} \leq 
		C \me^{ -(\tilde \sigma_0 - \tilde \sigma) \rho_{out}^2 / 2 \zeta {r_3^\ast}^4} ,
		\]
		for all $j = 1, \ldots, \tilde \alpha(m)$. Thus
		\[
		\begin{split}
			\| \psi_3(\cdot,T_3) \|_{H_{ul}^\theta} &\leq 
			\frac{1}{r_3(T_3)} \sum_{m \in I_N} \sum_{j=1}^{\tilde \alpha(m)} r_3(T_3)^{\alpha(m) + j} \| A_{mj,3}(r_3(T_s) \cdot,T_3) \me^{imx} \|_{H_{ul}^{\theta}} \\
			&\leq C \max_{m \in I_N, j \in \{1, \ldots, \tilde \alpha(m)\}} \| A_{mj,3}(\cdot, T_3) \|_{H_{ul}^{\theta+1}} \\
			&\leq C \me^{ -(\tilde \sigma_0 - \tilde \sigma) \rho_{out}^2 / 2 \zeta {r_3^\ast}^4} ,
		\end{split}
		\]
		as required. The bound \eqref{eq:psi_3_bound} implies that $\pi_3$ is well-defined because $\| \psi_3(\cdot,T_3) \|_{H_{ul}^\theta} \leq K$ for all $r_3^\ast \in [0,\zeta^{-1/4}\vartheta]$ with $\vartheta > 0$ sufficiently small. 
	\end{proof}

	\subsection{Proof of Lemma \ref{lem:Ansatz_dynamics_mu_0}}
	\label{sub:proof_of_approximation_lemma_2}
	
	We are now in a position to prove Lemma \ref{lem:Ansatz_dynamics_mu_0} by combining results obtained for the case $\mu(x) \equiv 0$ in each chart $\mathcal K_l$, $l = 1,2,3$. The fact that $\Psi(\cdot,t) = r(\bar t) \psi(\cdot, \bar t) \in H_{ul}^\theta$ for all $t \in [0,T]$ follows directly from the fact that $\psi_l(\cdot, t_l) \in H_{ul}^\theta$ for all $t_l \in [0,T_l]$ in each chart $\mathcal K_l$; recall Propositions \ref{prop:K1_summary}, \ref{prop:K2_summary} and \ref{prop:K3_summary}. It therefore remains to prove the bound \eqref{eq:Psi_bound}.
	
	\
	
	Similarly to the proof of Lemma \ref{lem:Ansatz_dynamics} in Section \ref{sub:proof_of_approximation_lemma_1}, we consider an initial condition $(\Psi^\ast , -\rho_{in}) \in \Delta^{in}$, which can be rewritten as initial condition $(\psi_1^\ast, \sqrt{\rho_{in}} , \eps_1^\ast) \in \Sigma_1^{in}$ in chart $\mathcal K_1$ using \eqref{eq:K2_ic_bounds}. Since $\mu(x) \equiv 0$, Proposition \ref{prop:K1_summary} implies that solutions are exponentially small in $H_{ul}^\theta$ at $\Sigma_2^{in} = \kappa_{12}(\Sigma_1^{out})$. This time, we obtain the following bounds for the initial modulation functions $A_{mj,2}^\ast  = \zeta^{-(\alpha(m) + j)/4} A_{mj,1}^\ast(\cdot, T_1)$ in chart $\mathcal K_2$ using the assertions of Lemmas \ref{lem:K1_A1}, \ref{lem:K1_A2}, \ref{lem:K1_Aj} and \ref{lem:K1_gl_coefficients} relevant to the case $\mu(x) \equiv 0$:
	\begin{equation}
		\label{eq:K2_ic_bounds_2}
		\begin{split}
			& \| A_{1,2}^\ast \|_{H_{ul}^{\theta_A}} = \zeta^{-1/4} \| A_{1,1}(\cdot, T_1) \|_{H_{ul}^{\theta_A}} \leq C \me^{-\kappa / 2\eps_1^\ast} \| A_{1,1}^\ast  \|_{H_{ul}^{\theta_A}} , \\
			& \| A_{2,2}^\ast \|_{H_{ul}^{\theta_{2,N-2}}} = \zeta^{-1/2} \| A_{2,1}(\cdot, T_1) \|_{H_{ul}^{\theta_{2,N-2}}} \leq C \me^{- \kappa / 2 \eps_1^\ast} \max_{k=1,2} \| A_{k,1}^\ast \|_{H_{ul}^{\theta_{k,N-2}}} , \\
			& \| A_{j,2}^\ast \|_{H_{ul}^{\theta_{j,N-2}}} = \zeta^{-j/4} \| A_{j,1}(\cdot, T_1) \|_{H_{ul}^{\theta_{j,N-2}}} \leq C \me^{-\kappa / 2 \eps_1^\ast} \max_{k=1,\ldots,j} \| A_{k,1}^\ast \|_{H_{ul}^{\theta_{k,N-2}}} , \\ 
			& \| A_{mj,2}^\ast \|_{H_{ul}^{\theta_{\alpha(m)+j,N}}} = \zeta^{-(\alpha(m) + j)/4} \| A_{mj,1}(\cdot, T_1) \|_{H_{ul}^{\theta_{\alpha(m)+j,N}}} \leq C \me^{- \kappa / 2 \eps_1^\ast} \max_{k=1,\ldots,j-2} \| A_{k,1}^\ast \|_{H_{ul}^{\theta_{k,N-2}}} , 
		\end{split}
	\end{equation}
	where $j \geq  3$ in the second-to-last line and $|m| \neq 1$ in the last. 
	The evolution from $\Sigma_2^{in}$ up to $\Sigma_2^{out}$ is understood in chart $\mathcal K_2$ and summarised in Proposition \ref{prop:K2_summary}. Bounds for the initial modulation functions $A_{mj,3}^\ast  = \zeta^{(\alpha(m) + j)/4} A_{mj,2}^\ast(\cdot, T_2)$ in chart $\mathcal K_3$ are obtained via Lemmas \ref{lem:K2_A1}, \ref{lem:K2_A2}, \ref{lem:K2_Aj} and \ref{lem:K2_gl_coefficients}. These bounds can be formulated in terms of initial data in $\mathcal K_1$ using \eqref{eq:K2_ic_bounds_2}. We obtain
	\begin{equation}
		\label{eq:K3_ic_bounds}
		\begin{split}
			&\| A_{1,3}^\ast \|_{H_{ul}^{\theta_A}} = \zeta^{1/4} \| A_{1,2}(\cdot, T_2) \|_{H_{ul}^{\theta_A}} \leq C \me^{- \kappa / 2\eps_1^\ast} \| A_{1,1}^\ast \|_{H_{ul}^{\theta_A}} , \\
			&\| A_{2,3}^\ast \|_{H_{ul}^{\theta_{2,N-2}}} = \zeta^{1/2} \| A_{2,2}(\cdot, T_2) \|_{H_{ul}^{\theta_{2,N-2}}} \leq C \me^{-\kappa / 2 \eps_1^\ast} \max_{k=1,2} \| A_{k,1}^\ast \|_{H_{ul}^{\theta_{k,N-2}}} , \\
			&\| A_{j,3}^\ast \|_{H_{ul}^{\theta_{j,N-2}}} = \zeta^{j/4} \| A_{j,2}(\cdot, T_2) \|_{H_{ul}^{\theta_{j,N-2}}} \leq C \me^{-\kappa / 2\eps_1^\ast} \max_{k=1,\ldots,j} \| A_{k,1}^\ast \|_{H_{ul}^{\theta_{k,N-2}}} , \\ 
			&\| A_{mj,3}^\ast \|_{H_{ul}^{\theta_{\alpha(m)+j,N}}} = \zeta^{(\alpha(m) + j)/4} \| A_{mj,2}(\cdot, T_2) \|_{H_{ul}^{\theta_{\alpha(m)+j,N}}} \leq C \me^{- \kappa / 2\eps_1^\ast} \max_{k=1,\ldots,j-2} \| A_{k,1}^\ast \|_{H_{ul}^{\theta_{k,N-2}}} , 
		\end{split}
	\end{equation}
	where again, $j \geq  3$ in the second-to-last line and $m \neq 1$ in the last. The idea from here is to apply Proposition \ref{prop:K3_summary}. In order to do so, we need to check the bounds \eqref{eq:Aj3_ic_bound}.
	
	Since the argument is the same for each $A_{j,1}^\ast$, we present the details for $j=1$ only. A sufficient condition for \eqref{eq:Aj3_ic_bound} to be satisfied is
	\begin{equation}
		\label{eq:A13_ic_bound_2}
		\| A_{1,3}^\ast  \|_{H_{ul}^\theta} \leq
		C \me^{-\kappa / 2\eps_1^\ast} \| A_{1,1}^\ast \|_{H_{ul}^\theta} \leq
		C \me^{- \tilde \sigma_0 \rho_{out}^2 / 2 \zeta {r_3^\ast}^4} ,
	\end{equation}
	which is satisfied for all $\eps_1^\ast$ and $r_3^\ast$ sufficiently small if
	\begin{equation}
		\label{eq:exponent_inequality}
		\frac{\kappa}{2 \eps_1^\ast} \geq \frac{\tilde \sigma_0 \rho_{out}^2}{2 \zeta {r_3^\ast}^4} .
	\end{equation}
	It follows from the form of the maps $\pi_1$ and $\pi_2$ in Propositions \ref{prop:K1_summary} and \ref{prop:K2_summary} respectively that
	\begin{equation}
		\label{eq:r_3_ast}
		r_3^\ast = \zeta^{-1/4} r_2(T_2) = \zeta^{-1/4} r_2^\ast = r_1(T_1) = \rho_{in}^{1/2} \sqrt[4]{\frac{\eps_1^\ast}{\zeta}} .
	\end{equation}
	We also recall that for sufficiently small (but fixed) $K > 0$, the constants $\kappa \in (0,1)$ and $\tilde \sigma_0 > 1$ can be chosen arbitrarily close to $1$. It follows that \eqref{eq:exponent_inequality} (and therefore \eqref{eq:A13_ic_bound_2}) is satisfied if
	\[
	\frac{\rho_{out}}{\rho_{in}} \leq \sqrt{\frac{\kappa}{\tilde \sigma_0}} =: 1 - \omega ,
	\]
	where $\omega \in (0,1)$ can be chosen arbitrarily close to $1$ if $K > 0$ is fixed but sufficiently small, which is precisely the assumption in the statement of Lemma \ref{lem:Ansatz_dynamics_mu_0}.
	
	The preceding arguments show that Proposition \ref{prop:K3_summary} applies. Setting $\tilde \sigma_0 = \kappa (\rho_{in} /\rho_{out})^2$ in order to maximise the contraction, we obtain the following via \eqref{eq:psi_3_bound}, \eqref{eq:r_3_ast} and the blow-down transformation $\eps_1^\ast = \eps / \rho_{in}^2$:
	\[
	\| \psi_3(\cdot,T_3) \|_{H_{ul}^\theta} \leq C \exp\left( -\frac{\kappa}{2\eps} \left( \rho_{in}^2 - \frac{\tilde \sigma}{\kappa} \rho_{out}^2 \right) \right) ,
	\]
	where $\tilde \sigma / \kappa > 1$ can be chosen arbitrarily close to $1$ if $K > 0$ is sufficiently small. Setting $\kappa_- := \kappa$ and $\kappa_+ := \tilde \sigma / \kappa$ and applying the blow-down transformation
	\[
	\psi_3(\cdot, T_3) = \frac{\Psi(\cdot, T)}{\sqrt{\rho_{out}}}
	\]
	yields the desired result.
	\qed

	\section{Proof of Theorem \ref{thm:Error}}
	\label{sec:Proof_of_thm_error}

	In this section we prove Theorem \ref{thm:Error}. 
	In order to do so, we need to bound the norm of the error $\| u(\cdot,t) - \Psi(\cdot,t) \|_{H_{ul}^\theta}$ at particular times $t = T_{mid}$ and $t = T$. Similarly to the established approaches for the static SH problem \eqref{eq:sh} in e.g.~\cite[Ch.~10]{Schneider2017}, we define a weighted error function $R$ via
	\begin{equation}
		\label{eq:error}
		r(\bar t)^\beta R(x, \bar t) := u(x,t) - r(\bar t) \psi(x, \bar t) ,
	\end{equation}
	where the exponent $\beta > 0$ is expected to depend upon the order $n$ of the approximation $\Psi$, but left unspecified for now. The first step is to obtain an evolution equation for $R$ in terms of $\psi$ and $\textrm{Res}(r\psi)$. This equation is expected to differ in the dynamic setting since the variable $r$, which plays the role of the small parameter, depends on time (recall the discussion following the statement of Theorem \ref{thm:Error}).
	
	\
	
	Differentiating $u$ with respect to $t$ and applying the definition \eqref{eq:error} leads to
	\[
	\begin{split}
		\partial_t u &= \partial_t (r^\beta R) + \partial_t (r \psi) \\
		&= \partial_t (r^\beta R) - (1 + \partial_x^2)^2 r \psi + r^3 \bar v \psi - r^2 \psi^3 + r^4 \eps \mu(x) - \textrm{Res} (r \psi) \\
		& = - (1 + \partial_x^2)^2 (r^\beta R + r \psi) + r^2 \bar v (r^\beta R + r \psi) - (r^\beta R + r \psi)^3 + r^4 \bar \eps \mu(x) .
	\end{split}
	\]
	Expanding $\partial_t (r^\beta R) = r^\beta \partial_t R + \beta r^{\beta - 1} R \partial_t r$, changing to the desingularized time $\bar t$ and rearranging a little, we obtain the following equation for the (weighted) error function $R$:
	\begin{equation}
		\label{eq:error_eqn}
		\partial_{\bar t} R = - r^{-2} (1 + \partial_x^2)^2 R + \left( \bar v - \beta r^{-1} \partial_{\bar t} r - 3 \psi^2 \right) R - 3 r^{\beta - 1} R^2 \psi - r^{2 \beta - 2} R^3 + r^{- \beta - 2} \textrm{Res} (r \psi) .
	\end{equation}
	Some of the time-dependence in the linear part can be mitigated via the use of an integrating factor. Defining a new function $\tilde R$ via $R(x,\bar t) =: \me^{I(\bar t)} \tilde R(x,\bar t)$, where
	\[
	I(\bar t) = - \beta \int_0^{\bar t} \frac{\partial_{\bar s} r (\bar s)}{r(\bar s)} d\bar s ,
	\]
	leads to
	\[
	\partial_{\bar t} \tilde R = - r^{-2} (1 + \partial_x^2)^2 \tilde R + \bar v \tilde R - 3 \psi^2 \tilde R - 3 r^{\beta - 1} \me^{I} \tilde R^2 \psi - r^{2 \beta - 2} \me^{2I} \tilde R^3 + r^{- \beta - 2} \me^{-I} \textrm{Res} (r \psi) .
	\]
	Using the variation of constants formula we obtain
	\begin{equation}
		\label{eq:R}
		\tilde R(\cdot, \bar t) = \me^{\bar t \Lambda} R^\ast  + \int_{0}^{\bar t} \me^{(\bar t - \bar s) \Lambda} \left( F(\cdot, \bar s) + r(\bar s)^{- \beta - 2} \me^{-I(\bar s)} \textrm{Res} (r(\bar s) \psi(\cdot, \bar s)) \right) d \bar s ,
	\end{equation}
	where $(\me^{\bar t \Lambda})_{\bar t \geq 0}$ is the evolution family generated by $\Lambda := - r(\bar t)^{-2} (1 + \partial_x^2)^2 + \bar v(\bar t)$, the initial condition satisfies $R^\ast := \tilde R(\cdot,0) = \tilde R(\cdot,0)$, and
	\begin{equation}
		\label{eq:nonlinear_terms}
		F(\cdot, \bar s) := - 3 \psi(\cdot, \bar s)^2 \tilde R(\cdot, \bar s) - 3 r(\bar s)^{\beta - 1} \me^{I(\bar s)} \tilde R(\cdot, \bar s)^2 \psi(\cdot, \bar s) - r(\bar s)^{2 \beta - 2} \me^{2 I(\bar s)} \tilde R(\cdot, \bar s)^3 .
	\end{equation}
	
	In order to estimate the norm of \eqref{eq:R} (and thus also the norm of $R$) in $H_{ul}^\theta$, we need to estimate the following:
	\begin{itemize}
		\item The evolution family $(\me^{\bar t \Lambda})_{\bar t \geq 0}$;
		\item The residual $\textrm{Res} (r(\bar s) \psi(\cdot, \bar s))$;
		\item The `nonlinear' terms in $F(\cdot, \bar s)$.
	\end{itemize}
	We derive estimates for each of these terms below. Since the equation for the error in \eqref{eq:error_eqn} is posed in the blown-up space, we will often work in coordinate charts $\mathcal K_l$.

	\subsection{Evolution family estimates}
	\label{sub:evolution_family_estimates}
	
	We prove the following result.
	
	\begin{lemma}
		\label{lem:error_evolution_family}
		The following estimates for the evolution family are obtained in coordinate charts $\mathcal K_l$, and hold for every $\theta \geq 0$:
		\begin{enumerate}
			\item[(i)] Chart $\mathcal K_1$: For each $c_1 \in (0,1)$ there exists a constant $C > 0$ depending on $\theta$ such that
			\[
			\| \me^{t_1 \Lambda_1} \|_{H_{ul}^\theta \to H_{ul}^\theta} \leq C \me^{-c_1 t_1}
			\]
			for all $t_1 \in [0,T_1]$, where $\Lambda_1 = - r_1(t_1)^{-2} (1 + \partial_x^2)^2 - 1$.
			\item[(ii)] Chart $\mathcal K_2$: There exists a constant $C > 0$ such that
			\[
			\| \me^{t_2 \Lambda_2} \|_{H_{ul}^\theta \to H_{ul}^\theta} \leq C
			\]
			for all $t_2 \in [0,T_2]$, where $\Lambda_2 = - r_2^{-2} (1 + \partial_x^2)^2 + v_2(t_2)$.
			\item[(iii)] Chart $\mathcal K_3$: For each $c_3 > 1$ there exists a constant $C > 0$ depending on $\theta$ such that
			\[
			\| \me^{t_3 \Lambda_3} \|_{H_{ul}^\theta \to H_{ul}^\theta} \leq C \me^{c_3 t_3}
			\]
			for all $t_3 \in [0,T_3]$, where $\Lambda_3 = - r_3(t_3)^{-2} (1 + \partial_x^2)^2 + 1$.
		\end{enumerate}
	\end{lemma}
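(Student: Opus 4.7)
In each chart, $\Lambda_l(t_l)$ is a constant-coefficient pseudodifferential operator in $x$ with a purely time-dependent scalar coefficient, so the family $\{\Lambda_l(s_l)\}_{s_l \geq 0}$ is mutually commuting and the evolution family acts in Fourier space as multiplication by the explicit symbol
\[
\hat m_l(t_l, k)
\;=\; \me^{I_l(t_l)} \exp\!\bigl(-(1-k^2)^2 J_l(t_l)\bigr),
\quad
I_l(t_l) := \int_0^{t_l} \!\bar v_l(s_l)\, ds_l,
\quad
J_l(t_l) := \int_0^{t_l} \!r_l(s_l)^{-2}\, ds_l,
\]
with $\bar v_1 \equiv -1$, $\bar v_2(s_2) = -\zeta^{-1/2} + s_2$, $\bar v_3 \equiv +1$, and with $r_l$ given explicitly by Lemmas \ref{lem:K1_r1eps1}, \ref{lem:K2_T2} and \ref{lem:K3_r3eps3}. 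The plan is then to bound $\|\me^{t_l \Lambda_l}\|_{H_{ul}^\theta \to H_{ul}^\theta}$ via the Fourier multiplier theorem (Lemma \ref{lem:US_8.3.7}) applied to $\hat m_l(t_l,\cdot)$; the scalar prefactor $\me^{I_l(t_l)}$ comes out as the required dominant exponential, so the task reduces to controlling the multiplier norm of the ``concentration'' factor $g_l(t_l, k) := \exp(-(1-k^2)^2 J_l(t_l))$.

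The key step is to show that this multiplier norm grows no faster than a polynomial in $J_l(t_l)$. A direct differentiation argument using the elementary inequality $y^\alpha \me^{-y} \leq C_\alpha$ yields $\|\partial_k^{\,j} g_l(t_l,\cdot)\|_\infty \leq C_j (1 + J_l(t_l))^{j/2}$ and, after interpolating against the polynomial weights required by Lemma \ref{lem:US_8.3.7}, an estimate of the form $\|g_l(t_l,\cdot)\|_{C^N_b} \leq C_N (1 + J_l(t_l))^{N/2}$ for some $N = N(\theta)$. Alternatively, and more cleanly, $\mathcal F^{-1}[g_l(t_l,\cdot)]$ is (up to constants) a Gaussian in $x$ of $L^1$-norm independent of $J_l$, modulated by $\cos x$, which gives a convolution operator on $H_{ul}^\theta$ of norm bounded uniformly in $J_l$. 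Either route delivers
\[
\bigl\| \me^{t_l \Lambda_l} \bigr\|_{H_{ul}^\theta \to H_{ul}^\theta}
\;\leq\; C_\theta\, \me^{I_l(t_l)} \bigl(1+J_l(t_l)\bigr)^{N/2}.
\]

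Chart-by-chart verification then completes the proof by substituting the explicit $I_l(t_l)$ and $J_l(t_l)$ into the above estimate. In chart $\mathcal K_1$, $I_1(t_1) = -t_1$ and $J_1(t_1) = (\rho_{in}\eps_1^\ast)^{-1}[1-\sqrt{1-2\eps_1^\ast t_1}]$; for any $c_1 \in (0,1)$ the slack $\me^{-(1-c_1)t_1}$ absorbs the polynomial growth in $J_1$ via the elementary bound $t^\alpha\me^{-\delta t} \leq C_{\alpha,\delta}$. Chart $\mathcal K_2$ is the most immediate, since $[0,T_2]$ is a bounded interval: $I_2(t_2)$ attains its maximum $0$ at $t_2 = 0$, and $J_2(t_2) = r_2^{-2} t_2$ is bounded, giving a uniform constant. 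Chart $\mathcal K_3$ mirrors $\mathcal K_1$ with opposite sign: $I_3(t_3) = +t_3$ and the excess rate $c_3 - 1 > 0$ absorbs the analogous polynomial growth of $J_3$. The main technical obstacle is the apparent sharpness of $g_l$ around the critical modes $k = \pm 1$ as $J_l \to \infty$, which would naively inflate a $C^N_b$-type multiplier bound; the resolution is either the kernel viewpoint, which sees that the $L^1$-norm of the convolution kernel does not depend on $J_l$, or the absorption of the polynomial $J_l$-growth into the exponential slack in the target rate.
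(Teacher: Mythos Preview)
Your approach is essentially the same as the paper's: compute the explicit Fourier symbol, factor it as $\me^{I_l(t_l)}\exp\bigl(-(1-k^2)^2 J_l(t_l)\bigr)$, and invoke the multiplier theorem (Lemma~\ref{lem:US_8.3.7}) together with the observation that the $C_b^2$-norm of the second factor grows only polynomially in $J_l$. In charts $\mathcal K_1$ and $\mathcal K_3$ your absorption argument is fine, since $J_1(t_1)\leq 2\rho_{in}^{-1}t_1$ (use $1-\sqrt{1-x}\leq x$) so the polynomial is genuinely polynomial in $t_1$, and the excess rate $1-c_1$ (resp.\ $c_3-1$) soaks it up. This matches the paper's reasoning.

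There is, however, a real gap in your treatment of chart $\mathcal K_2$. You write that ``$J_2(t_2)=r_2^{-2}t_2$ is bounded, giving a uniform constant''. This is only true for \emph{fixed} $r_2$; but $r_2=\eps^{1/4}\to 0$ in the application, and the bound has to be uniform in $r_2\in(0,\vartheta]$ for the downstream error estimates (Proposition~\ref{prop:R_bounds_Kl}) to go through. With your $C_b^N$ bound $(1+J_2)^{N/2}\sim (t_2/r_2^2)^{N/2}$ there is no exponential slack in $\mathcal K_2$ to absorb the $r_2^{-N}$ blow-up, so this route fails as stated. The paper confronts exactly this point: it observes the $t_2^2/r_2^4$ growth of the naive $C_b^2$ bound and resolves the uniformity by a mode-filter decomposition near $\xi=\pm 1$, after which the rescaled symbol in $\bar\xi=(\xi\mp 1)/r_2$ has $C_b^2$ norm growing only like $t_2^2$, which is harmless on the bounded interval $[0,T_2]$.

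Your alternative ``kernel'' route could in principle close this gap, but not as written. The inverse Fourier transform of $\exp\bigl(-(1-k^2)^2 J\bigr)$ is \emph{not} literally a Gaussian modulated by $\cos x$; that description is only the leading behaviour coming from the two near-critical Gaussian bumps at $k=\pm1$, and to make it rigorous you must first localise the symbol near $\pm1$ (which is again a mode-filter step) and then control the remainder, which is exponentially small in $J$. So the kernel argument, done carefully, reduces to the same near-critical/far-from-critical splitting the paper uses. In short: your plan is correct for $\mathcal K_1$ and $\mathcal K_3$, but in $\mathcal K_2$ you need the mode-filter decomposition (or an equivalent localisation) to obtain a constant uniform in $r_2$.
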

	
	\begin{proof}
		We begin in global coordinates. The evolution family $(\me^{\bar t \Lambda})_{\bar t \geq 0}$ is induced by the linear equation
		\[
		\partial_{\bar t} \tilde{w}(x, \bar t) = -\frac{(1 + \partial_{x}^2)^2}{r_1(\bar t)^2}\tilde{w}(x, \bar t) + \bar v(\bar s) \tilde{w}(x, \bar t) , \qquad \tilde{w}(x,0) = w^\ast(x), \qquad (\bar t \geq 0, \ x \in \R ) .
		\]
		Fourier transforming in space leads to a family of linear non-autonomous ODEs
		\[
		\partial_{\bar t} \hat{\tilde{w}}(\xi, \bar t) = -\frac{(1 - \xi^2)^2}{r(\bar t)^2} \hat{\tilde{w}}(\xi, \bar t) + \bar v(\bar s) \hat{\tilde{w}}(\xi, \bar t) , \qquad
		\hat{\tilde{w}}(\xi,0) = \hat{w}^\ast(\xi), \qquad 
		(\bar t \geq 0, \ \xi \in \R ) ,
		\]
		with solutions
		\[
		\hat{\tilde{w}}(\xi, \bar t) = \exp\left( \int_{0}^{\bar t} \left( - \frac{(1 - \xi^2)^2}{r(\bar s)^2} + \bar v(\bar s) \right) d\bar s \right) \hat{w}^\ast(\xi) .
		\]
		Taking the inverse Fourier transform leads to the solution formula
		\[
		\tilde{w}(\cdot, \bar t) = \mathscr{F}^{-1} \exp \left( \int_{0}^{\bar t} \left(- \frac{(1-\xi^2)^2}{r(\bar s)^2}  + \bar v(\bar s) \right) d\bar s \right) \mathscr{F} w^\ast  =: P( \bar t) w^\ast .
		\]
		Fourier multiplier techniques allow us to study the mapping properties of $\tilde{P}(\bar t) \colon H^\theta_{ul} \to H^\theta_{ul}, \ w^\ast \mapsto P(\bar t)w^\ast$ via its symbol
		\[
		\tilde{p}(\xi, \bar t) := \exp \left( \int_{0}^{\bar t} \left(- \frac{(1-\xi^2)^2}{r(\bar s)^2}  + \bar v(\bar s) \right) d\bar s \right).
		\]
		In particular, Lemma \ref{lem:US_8.3.7} shows that it suffices to derive estimates for the $C^2_b$-norm of $\tilde{p}(\cdot,\bar t)$. These can be estimated directly using the above and
		\[
		\begin{split}
			\partial_\xi \tilde p(\xi, \bar t) &= 4 \xi (1 - \xi^2) \left( \int_0^{\bar t} \frac{1}{r(\bar s)^2} d\bar s \right) \tilde p(\xi, \bar t) , \\
			\partial_\xi^2 \tilde p(\xi, \bar t) &= \left( 4 - 12 \xi^2 + 16 \xi^2 (1 - \xi^2)^2 \right) \left( \int_0^{\bar t} \frac{1}{r(\bar s)^2} d\bar s \right)^2 \tilde p(\xi, \bar t) .
		\end{split}
		\]
		In order to estimate $\tilde p(\xi,\bar t)$ and the integral terms, we work in charts $\mathcal K_l$.
		
		\
		
		In $\mathcal K_1$ we have that $r(\bar t) = r_1(t_1) = \rho_{in}^{1/2} (1 - 2\eps_1^\ast t_1)^{1/4}$ and $\bar v(\bar t) = -1$ for all $t_1 \in [0,T_1]$; recall Lemma \ref{lem:K1_r1eps1}. There are algebraic growth rates in time due to the fact that
		\[
		\int_0^{t_1} \frac{1}{r_1(s_1)^2} ds_1 = \rho_{in}^{-1} {\eps_1^\ast}^{-1} (1 - \sqrt{1 - 2 \eps_1^\ast t_1}) ,
		\]
		but these are compensated by the exponential decay in the expression for $\tilde p(\xi,\bar t)$, due to the fact that $\bar v = -1$ in chart $\mathcal K_1$. In particular, letting $\tilde p_1$ denote the representation of $\tilde p$ in $\mathcal K_1$, we find that for each $c_1 \in (0,1)$ there is a constant $C > 0$ depending on $\theta$ such that
		\[
		\| \tilde p_1(\cdot, t_1) \|_{C_b^2} \leq C \me^{- c_1 t_1} ,
		\]
		for all $t_1 \in [0,T_1]$. Lemma \ref{lem:US_8.3.7} implies the corresponding result in $H_{ul}^\theta$, thereby proving Assertion (i).
		
		\
		
		In $\mathcal K_2$ we have $r(\bar t) = r_2$ and $\bar v(\bar t) = v_2(t_2) = - \zeta^{-1/2} + t_2$ for all $t_2 \in [0,T_2]$; recall Lemma \ref{lem:K2_T2}. A direct calculation shows that
		\[
		\| \tilde p_2(\cdot, t_2) \|_{C_b^2} \leq C \frac{t_2^2}{r_2^4} \me^{- (1 - \xi^2)^2 t_2 / r_2^2} 
		\]
		for all $t_2 \in [0,T_2]$. Thus there is exponential decay for all wavenumbers $\xi$ bounded away from the critical modes $\xi \pm 1$. It turns out that the unwanted algebraic growth close to $\xi \pm 1$ can be mitigated using \textit{mode filters}. Due to the similarity with pre-existing arguments in e.g.~\cite{Mielke1995,Schneider1994,Schneider1994b,Schneider2017}, we omit the details, noting simply that close for $\xi = \pm 1 \pm r_2 \bar \xi$ close to $\xi = \pm 1$ one has $-(1 - \xi^2)^2 = - 4 (r_2 \bar \xi)^2 + O((r_2 \bar \xi)^4)$ so that
		\[
		\exp \left( - \int_0^{t_2} \left( 4 \bar \xi^2 + O((r_2 \bar \xi)^2 \right) ds_2 \right) = \exp \left( - 4 \bar \xi^2 t_2 + O((r_2 \bar \xi)^2 t_2) \right) ,
		\]
		which is bounded for all $t_2 \in [0,T_2]$. This leads to a $C^2_b$ norm which is growing at a rate proportional to $t_2^2$, which is finite on $[0,T_2]$. Applying Lemma \ref{lem:US_8.3.7} yields the $H_{ul}^\theta$ estimate in Assertion (ii).
		
		\
		
		In $\mathcal K_3$ we have $r(\bar t) = r_3(t_3) = r_3^\ast (1 + 2 \zeta t_3)^{1/4}$ and $\bar v(\bar t) = 1$ for all $t_3 \in [0,T_3]$; recall Lemma \ref{lem:K3_r3eps3}. In this case we also identify algebraic growth rates in time, however these are dominated by the exponential growth due to the fact that $\bar v = 1$ in chart $\mathcal K_3$. In particular, for each $c_3 > 1$ there is a $c_3$ such that
		\[
		\| \tilde p_3(\xi,t_3) \|_{C_b^2} \leq C \me^{c_3 t_3} 
		\]
		for all $t_3 > 0$. The bound in Assertion (iii) follows after applying Lemma \ref{lem:US_8.3.7}.
	\end{proof}

	\subsection{Residual estimates}
	\label{sub:residual_estimates}
	
	In the following we require extra regularity in order to control higher order contributions to the residual because of the reasons given in Remark \ref{rem:regularity}. In order to keep track of this extra regularity we use
	\[
	\theta_{j,k} := 1 + 3 (j - k + 1) + \theta = 
	\begin{cases}
		\theta_A(j) = 1 + 3j + \theta, & k=1 , \\
		3 (j - k + 1) + \theta, & k=2, \ldots, j-1, \\
		4 + \theta , & k = j ,
	\end{cases}
	\]
	instead of the earlier definition in \eqref{eq:theta_k_2}. We obtain the following result.
	
	\begin{lemma}
		\label{lem:error_residual}
		Let $\theta_A = 1 + 3(n-3) + \theta$ and $\theta \geq 1$. Then
		\begin{equation}
			\label{eq:residual_error}
			\| \textup{Res} (r_l(t_l) \psi_l(\cdot,t_l)) \|_{H_{ul}^\theta} \leq C r_l(t_l)^n \left( \delta_{n,4} \max_{m \in I_N}|\nu_m| + \sup_{m \in I_N , j = 1,\ldots,\tilde \alpha(m)} \| A_{mj,l}(\cdot,t_l) \|_{H_{ul}^{\alpha(m)+j,N}} \right) ,
		\end{equation}
		for $t_l \in [0, T_l]$ and $l = 1,2$, where 
		\[
		\delta_{n,4} := 
		\begin{cases}
			1, & n = 4 , \\
			0, & n \geq 4 .
		\end{cases}
		\]
		If additionally $\mu(x) \equiv 0$ and the conditions of Proposition \ref{prop:K3_summary} are satisfied, then 
		\begin{equation}
			\label{eq:residual_error_mu_0}
			\| \textup{Res} (r_l(t_l) \psi_l(\cdot,t_l)) \|_{H_{ul}^\theta} \leq C r_l(t_l)^n \sup_{k = 1,\ldots,N-2} \| A_{1k,l}(\cdot,t_l) \|_{H_{ul}^{\theta_{k,N-2}}} ,
		\end{equation}
		for all $t_1 \in [0,T_l]$ and $l=1,2,3$.
	\end{lemma}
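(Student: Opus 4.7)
My plan is to substitute the local coordinate expression for $\psi_l$ from \eqref{eq:psi_definition} into the residual \eqref{eq:residual_def_blowup} and exploit the matching procedure of Section \ref{sec:Amplitude_reduction_via_geometric_blow-up}. In each chart the residual decomposes as
\[
\textup{Res}(r_l \psi_l) = \sum_{m \in I_N} \left( \mathcal L_{m,l} c_{m,l} - b_{m,l} + r_l^4 \bar \eps_l \nu_m \right) \me^{imx} - \sum_{m \in I_{3N} \setminus I_N} b_{m,l} \me^{imx},
\]
and by the construction of the modulation equations, the coefficient of $r_l^j \me^{imx}$ in the first sum vanishes for all $j \leq N$, while the second sum contains only terms of order $r_l^n$ or higher (by the matching condition $\alpha(m_1)+\alpha(m_2)+\alpha(m_3)+j_1+j_2+j_3 \geq n$ when $|m_1+m_2+m_3| > N$). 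Thus three types of contributions remain: (a) the overflow terms $r_l^{\alpha(m)+j+p}\mathcal L_m^{(p)} A_{mj,l}$ with $\alpha(m)+j+p > N$ and $p \in \{1,2,3,4\}$; (b) the cubic remainder terms $b_{m,l}$ for $|m| > N$; and, in the special case $n=4$, (c) the source contributions $r_l^4 \bar \eps_l \nu_{\pm 1}$ at the critical modes, which cannot be absorbed because the equation for $A_{\pm 1,2}$ does not exist when $\tilde \alpha(\pm 1) = 1$.

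With this decomposition in hand, I would estimate each contribution in $H_{ul}^\theta$ separately. For the overflow terms of type (a), I extract the factor $r_l^n$ and use the algebraic identity $\|\partial_{\bar x}^p A_{mj,l}\|_{H_{ul}^\theta} \leq C\|A_{mj,l}\|_{H_{ul}^{\theta+p}}$; the assumed regularity index $\theta_{\alpha(m)+j,N}$ is precisely calibrated to absorb up to four spatial derivatives on the lowest-order modulation functions, together with the time derivative from $\tilde{\mathcal L}_m^{(2)}$ (which is controlled via the modulation equations themselves). For the cubic remainder terms of type (b), I use the algebra property of $H_{ul}^\theta$ for $\theta \geq 1$ (via Lemma \ref{lem:US_8.3.11}) to bound each $\|b_{m,l}\|_{H_{ul}^\theta}$ by a cubic expression in $\|A_{mj,l}\|_{H_{ul}^\theta}$, with the matching constraint extracting the factor $r_l^n$. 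For the source term of type (c), the bound is immediate as a finite trigonometric polynomial. Finally, Lemma \ref{lem:norm_rescaling} converts between norms taken with respect to $x$ and $x_l$, contributing only bounded constants on the transition intervals.

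For the improved bound in the case $\mu(x) \equiv 0$, which also applies in chart $\mathcal K_3$, I would invoke the GL-manifold graph structure established in Lemmas \ref{lem:K1_gl_coefficients}, \ref{lem:K2_gl_coefficients} and \ref{lem:K3_gl_coefficients}. These lemmas express each non-critical $A_{mj,l}$ with $|m| \neq 1$ as a graph over lower-order critical modulation functions, with norm bounds in $H_{ul}^{\theta_{\alpha(m)+j,N}}$ controlled by the corresponding critical norms $\|A_{\pm 1 k, l}\|_{H_{ul}^{\theta_{k,N-2}}}$ for $k < \alpha(m)+j$. Substituting these bounds into the estimate from step two collapses the supremum over all $(m,j)$-pairs to a supremum over critical modes only, yielding \eqref{eq:residual_error_mu_0}. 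The extension to $\mathcal K_3$ is valid here because the potentially troublesome source term $\bar \eps_l \nu_m$ is identically zero; without this assumption, the $r_l^4 \bar\eps_3\nu_m$ contribution in $\mathcal K_3$ (where $\bar\eps_3$ need not be small) would not improve with $r_l$.

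The main obstacle is the bookkeeping: while the strategy is conceptually transparent once the decomposition is in hand, carefully enumerating which $(m,j,p)$-combinations contribute at which order $r_l^{N+k}$, and verifying that the specified regularity suffices to control \emph{all} derivatives $\partial_{\bar x}^p A_{mj,l}$ appearing in the overflow terms simultaneously across all $m \in I_N$, requires a careful case analysis. In particular, for the $\mathcal L_m^{(4)}$ contributions at $\alpha(m)+j$ close to $N-4$ the requested regularity must exactly match $\theta+4$ on those components, and one must confirm that the hierarchy $\theta_{j,k}$ interpolates correctly between $\theta_A(j)$ at the highest end and a few extra derivatives above $\theta$ at the lowest, so that both the linear overflow and cubic remainder estimates close consistently.
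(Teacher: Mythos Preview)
Your proposal is correct and follows essentially the same approach as the paper's proof: the same three-way decomposition into linear overflow terms $r_l^{\alpha(m)+j+p}\mathcal L_m^{(p)} A_{mj,l}$ with $\alpha(m)+j+p>N$, the cubic remainder $b_{m,l}$ for $|m|>N$, and the unabsorbed source contribution when $n=4$; the same use of Lemma~\ref{lem:norm_rescaling} and the algebra property; and the same appeal to Lemmas~\ref{lem:K1_gl_coefficients}, \ref{lem:K2_gl_coefficients}, \ref{lem:K3_gl_coefficients} to collapse the supremum onto critical modes when $\mu\equiv 0$. Your identification of the bookkeeping as the main labor, and of the regularity hierarchy $\theta_{k,j}$ as being calibrated precisely to absorb the up-to-four spatial derivatives in the overflow terms, is exactly right.
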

	
	\begin{proof}
		We begin in global coordinates. It follows from the formal construction for $\Psi = r \psi$ in Section \ref{sec:Amplitude_reduction_via_geometric_blow-up} that
		\begin{equation}
			\label{eq:residual_3}
			\begin{split}
				\textrm{Res} (r \psi) &= 
				\delta_{n,4} r^4 \bar \eps \sum_{m \in I_N} \nu_m \me^{imx} \\
				&+ \sum_{m \in I_N} \sum_{j=1}^{\tilde \alpha(m)} r^{\alpha(m) + j} \big( r \mathcal L_m^{(1)} + r^2 \tilde{\mathcal L_m^{(2)}} + r^3 \mathcal L_m^{(3)} + r^4 \mathcal L_m^{(4)} \big) A_{mj} \me^{imx} - \sum_{m \in I_{3N} \setminus I_N} b_m \me^{imx} .
			\end{split}
		\end{equation}
		The first sum is easily controlled. The second sum has terms which vanish up to and including $O(r^N)$, however we need to control the higher order terms at $O(r^{N+1}) = O(r^n)$, $O(r^{n+1})$, $O(r^{n+2})$ and $O(r^{n+3})$. Since $\tilde \alpha(m) \leq N-1$ for all $m \in I_N$, terms involving $r \mathcal L_m^{(1)}$ must be $O(r^n)$ with $\alpha(m) + j = N$. It follows that
		\[
		\sum_{m \in I_N} \sum_{j=1}^{\tilde \alpha(m)} r^{\alpha(m) + j} r \mathcal L_m^{(1)} A_{mj} \me^{imx} = 
		r^n \mathcal L_m^{(1)} \left( A_{0N-1} + A_{2N-1} \me^{2ix} + A_{3N-2} \me^{3ix} + \cdots + A_{N1} \me^{Nix} \right) + c.c. ,
		\]
		the norm of which is bounded by $C r^n$ since the non-critical modulation functions $A_{mj}$ with $|m| \neq 1$ inside the term in parentheses are bounded according to Lemmas \ref{lem:K1_gl_coefficients}, \ref{lem:K2_gl_coefficients} and \ref{lem:K3_gl_coefficients}. More precisely, we have that
		\[
		\bigg\| x \mapsto \sum_{m \in I_N} \sum_{j=1}^{\tilde \alpha(m)} r^{\alpha(m) + j} r \mathcal L_m^{(1)} A_{mj} \me^{imx} \bigg\|_{H_{ul}^\theta} \leq 
		C r^n \sup_{ \{(m,j): \alpha(m) + j = N\} } \| A_{mj}(\cdot, \bar t) \|_{H_{ul}^{\theta + 2}} ,
		\]
		where we require two extra degrees of regularity (one is needed to apply Lemma \ref{lem:norm_rescaling}, the other is needed to control the $\partial_{\bar x}$ derivative in $\mathcal L_m^{(1)}$). Similar arguments relying on the bounds obtained for the modulation functions $A_{mj}$ in Section \ref{sec:Proof_of_thm_dynamics} show that
		\begin{equation}
			\label{eq:residual_linear_part_bounds}
			\begin{split}
				\bigg\| x \mapsto \sum_{m \in I_N} \sum_{j=1}^{\tilde \alpha(m)} r^{\alpha(m) + j} r^2 \tilde{\mathcal L}_m^{(2)} A_{mj} \me^{imx} \bigg\|_{H_{ul}^\theta} &\leq 
				C r^n \sup_{ \{(m,j): \alpha(m) + j = N-1\} } \| A_{mj}(\cdot, \bar t) \|_{H_{ul}^{\theta + 3}} \\
				&+ C r^{n+1} \sup_{ \{(m,j): \alpha(m) + j = N\} } \| A_{mj}(\cdot, \bar t) \|_{H_{ul}^{\theta + 3}} , \\
				\bigg\| x \mapsto \sum_{m \in I_N} \sum_{j=1}^{\tilde \alpha(m)} r^{\alpha(m) + j} r^3 \mathcal L_m^{(3)} A_{mj} \me^{imx} \bigg\|_{H_{ul}^\theta} &\leq 
				C r^n \sup_{ \{(m,j): \alpha(m) + j = N-2\} } \| A_{mj}(\cdot, \bar t) \|_{H_{ul}^{\theta + 4}} \\
				&+ C r^{n+1} \sup_{ \{(m,j): \alpha(m) + j = N-1\} } \| A_{mj}(\cdot, \bar t) \|_{H_{ul}^{\theta + 4}} \\
				&+ C r^{n+2} \sup_{ \{(m,j): \alpha(m) + j = N\} } \| A_{mj}(\cdot, \bar t) \|_{H_{ul}^{\theta + 4}} , \\
				\bigg\| x \mapsto \sum_{m \in I_N} \sum_{j=1}^{\tilde \alpha(m)} r^{\alpha(m) + j} r^4 \mathcal L_m^{(4)} A_{mj} \me^{imx} \bigg\|_{H_{ul}^\theta} &\leq 
				C r^n \sup_{ \{(m,j): \alpha(m) + j = N-3\} } \| A_{mj}(\cdot, \bar t) \|_{H_{ul}^{\theta + 5}} \\
				&+ C r^{n+1} \sup_{ \{(m,j): \alpha(m) + j = N-2\} } \| A_{mj}(\cdot, \bar t) \|_{H_{ul}^{\theta + 5}} \\
				&+ C r^{n+2} \sup_{ \{(m,j): \alpha(m) + j = N-1\} } \| A_{mj}(\cdot, \bar t) \|_{H_{ul}^{\theta + 5}} \\
				&+ C r^{n+3} \sup_{ \{(m,j): \alpha(m) + j = N\} } \| A_{mj}(\cdot, \bar t) \|_{H_{ul}^{\theta + 5}} ,
			\end{split}
		\end{equation}
		from which it follows that
		\[
		\begin{split}
			\bigg\| x \mapsto \sum_{m \in I_N} \sum_{j=1}^{\tilde \alpha(m)} r^{\alpha(m) + j} \big( r \mathcal L_m^{(1)} + r^2 \tilde{\mathcal L_m^{(2)}} + r^3 \mathcal L_m^{(3)} &+ r^4 \mathcal L_m^{(4)} \big) A_{mj} \me^{imx} \bigg\|_{H_{ul}^\theta} \leq \\
			&C r^n \sup_{\{(m,j) : N-3 \leq \alpha(m) + j \leq N\}} \| A_{mj}(\cdot,\bar t) \|_{H_{ul}^{\theta+5}} .
		\end{split}
		\]

		\
		
		It remains to consider the third sum in \eqref{eq:residual_3}. Recall that for each $m \in I_{3N} \setminus I_N$, $b_m$ is a linear combination of terms of the form
		\[
		r^{\alpha(m_1) + \alpha(m_2) + \alpha(m_3) + j_1 + j_2 + j_3} A_{m_1j_1} A_{m_2j_2} A_{m_3j_3} ,
		\]
		with $m_i \in I_N$ and $j_i \in \{1 , \ldots, \tilde \alpha(m_i)\}$ for each $i=1,2,3$. Moreover,
		\[
		\alpha(m_1) + \alpha(m_2) + \alpha(m_3) + j_1 + j_2 + j_3 = \alpha(m) + j \geq |m| - 1 + 1 \geq n ,
		\]
		since $m = m_1 + m_2 + m_3 \in I_{3N} \setminus I_N$ and $j \geq 1$. For $l = 1,2$ and additionally $l=3$ if $\mu(x) \equiv 0$ and the conditions of Proposition \ref{prop:K3_summary} are satisfied, the results in Section \ref{sec:Proof_of_thm_dynamics} show that all modulation functions satisfy $A_{mj,l}(\cdot,t_l) \in H_{ul}^{\theta+1}$, for all $t_l \in [0, T_l]$; we refer again to Lemmas \ref{lem:K1_gl_coefficients}, \ref{lem:K2_gl_coefficients} and \ref{lem:K3_gl_coefficients}. Using Lemmas \ref{lem:norm_rescaling} and \ref{lem:US_8.3.11} we therefore obtain
		\[
		\begin{split}
			\bigg\| x \mapsto \sum_{m \in I_{3N} \setminus I_N} b_m(r_l(t_l) x,t_l) \me^{imx} \bigg\|_{H_{ul}^\theta} & \leq
			\sum_{m \in I_{3N} \setminus I_N} \| x \mapsto b_m(r_l(t_l) x,t_l) \me^{imx} \|_{H_{ul}^\theta} \\
			&\leq C r_l(t_l)^n \sup_{m \in I_N,j \in \{1,\ldots,\tilde \alpha(m)\}} \| x \mapsto A_{mj,l}(r_l(t_l) x,t_l) \me^{imx} \|_{H_{ul}^\theta} \\
			&\leq C r_l(t_l)^n \sup_{m \in I_N,j \in \{1,\ldots,\tilde \alpha(m)\}} \| A_{mj,l}(\cdot,t_l) \|_{H_{ul}^{\theta+1}} ,
		\end{split}
		\]
		for all $t_l \in [0,T_l]$, $l = 1,2$ and also for $l=3$ if $\mu(x) \equiv 0$ and the conditions of Proposition \ref{prop:K3_summary} are satisfied. 
		
		\
		
		Taking a triangle inequality in \eqref{eq:residual_3} and using the bounds for the second and third sums derived above yields the bound in \eqref{eq:residual_error}. If $\mu(x) \equiv 0$, then $\nu_m = 0$ for all $m \in I_N$ in \eqref{eq:residual_error}. It also follows from the proof of Lemmas \ref{lem:K1_Aj} and \ref{lem:K1_gl_coefficients} (see equations \eqref{eq:Aki_bounds} and \eqref{eq:Amj1_bounds} in particular) that
		\[
		\|A_{mj,l}(\cdot,t_l)\|_{H_{ul}^{\theta_{\alpha(m)+j,N}}} \leq 
		C \sup_{k = 1,\ldots,j-2} \| A_{\pm1k,l}(\cdot,t_1) \|_{H_{ul}^{\theta_{k,N-2}}} ,
		\]
		for all $t_l \in [0,T_l]$ and $l=1,2,3$. Combining this with \eqref{eq:residual_error} yields the bound in \eqref{eq:residual_error_mu_0}.
	\end{proof}
	
	\begin{remark}
		In Lemma \ref{lem:error_residual} we require $\theta_A = 1 + 3(n-3) + \theta$ as opposed to lesser requirement that $\theta_A = 1 + 3(N-3) + \theta = 1 + 3(n-4) + \theta$, which is all that was needed in order to describe the approximation dynamics in Lemmas \ref{lem:Ansatz_dynamics} and \ref{lem:Ansatz_dynamics_mu_0}. Greater regularity is needed to bound the residual because we need to control the norms in \eqref{eq:residual_linear_part_bounds}; see also Remark \ref{rem:regularity}.
	\end{remark}

	\subsection{Estimates for the nonlinear terms}
	\label{sub:estimates_for_the_nonlinear_terms}
	
	Let $F_l(x,t_l)$ denote the local representation of $F(x,\bar t)$ in chart $\mathcal K_l$, and define
	\[
	\tilde T_l := \min\left\{ T_l , \sup \left\{ t_l > 0 : \| \tilde R_l(\cdot,t_l) \|_{H_{ul}^\theta} \leq \tilde M \right\} \right\}
	\]
	where $\tilde M > M \geq \|R^\ast\|_{H_{ul}^\theta}$, for each $l=1,2,3$. We prove the following result.
	
	\begin{lemma}
		\label{lem:error_nonlinear}
		Let $\theta_A = 1 + 3(N-3) + \theta$ where $\theta > 1/2$. 
		We obtain the following estimates in charts $\mathcal K_l$:
		\begin{enumerate}
			\item[(i)] In chart $\mathcal K_1$ we have
			\[
			\begin{split}
				\| F_1(\cdot, s_1) \|_{H_{ul}^\theta} \leq 
				C \bigg( \| \psi_1(\cdot, s_1) \|_{H_{ul}^\theta}^2 + \frac{\rho_{in}^{(\beta - 1)/2}}{(1 - 2 \eps_1^\ast s_1)^{1 / 4}} \| \tilde R_1(\cdot, s_1) \|_{H_{ul}^\theta} & \| \psi_1(\cdot, s_1) \|_{H_{ul}^\theta} \\ 
				+ \frac{\rho_{in}^{\beta - 1}}{(1 - 2 \eps_1^\ast s_1)^{1 / 2}} & \| \tilde R_1(\cdot, s_1) \|_{H_{ul}^\theta}^2 \bigg) \| \tilde R_1(\cdot, s_1) \|_{H_{ul}^\theta} ,
			\end{split}
			\]
			for all $t_1 \in [0, \tilde T_1]$.
			\item[(ii)] In chart $\mathcal K_2$ we have
			\[
			\begin{split}
				\| F_2(\cdot, s_2) \|_{H_{ul}^\theta} \leq 
				C \bigg( \| \psi_2(\cdot, s_2) \|_{H_{ul}^\theta}^2 + r_2^{\beta - 1} \| \tilde R_2(\cdot, s_2) \|_{H_{ul}^\theta} & \| \psi_2(\cdot, s_2) \|_{H_{ul}^\theta} \\ 
				+ r_2^{2\beta - 2} & \| \tilde R_2(\cdot, s_2) \|_{H_{ul}^\theta}^2 \bigg) \| \tilde R_2(\cdot, s_2) \|_{H_{ul}^\theta} ,
			\end{split}
			\]
			for all $t_2 \in [0, \tilde T_2]$.
			\item[(iii)] If $\mu(x) \equiv 0$ and the conditions of Proposition \ref{prop:K3_summary} are satisfied then in chart $\mathcal K_3$ we have
			\[
			\begin{split}
				\| F_3(\cdot, s_3) \|_{H_{ul}^\theta} \leq 
				C \bigg( \| \psi_3(\cdot, s_3) \|_{H_{ul}^\theta}^2 + \frac{{r_3^\ast}^{\beta - 1}}{(1 + 2 \zeta s_3)^{1 / 4}} \| \tilde R_3(\cdot, s_3) \|_{H_{ul}^\theta} & \| \psi_3(\cdot, s_3) \|_{H_{ul}^\theta} \\ 
				+ \frac{{r_3^\ast}^{2\beta - 2}}{(1 + 2 \zeta s_3)^{1 / 2}} & \| \tilde R_3(\cdot, s_3) \|_{H_{ul}^\theta}^2 \bigg) \| \tilde R_3(\cdot, s_3) \|_{H_{ul}^\theta} ,
			\end{split}
			\]
			for all $t_3 \in [0, \tilde T_3]$.
		\end{enumerate}
	\end{lemma}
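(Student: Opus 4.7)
} The plan is to work in each chart $\mathcal K_l$ separately, substitute the explicit form of the integrating factor $I_l(t_l)$ into the definition \eqref{eq:nonlinear_terms} of $F$, and then bound each of the three summands in $H_{ul}^\theta$ by exploiting the fact that $H_{ul}^\theta$ is a Banach algebra for $\theta > 1/2$ (Lemma \ref{lem:US_8.3.11}). Since $\tilde T_l$ is defined so that $\|\tilde R_l(\cdot,t_l)\|_{H_{ul}^\theta} \leq \tilde M$ on $[0,\tilde T_l]$, and $\psi_l(\cdot,t_l) \in H_{ul}^\theta$ on $[0,T_l]$ by Propositions \ref{prop:K1_summary}, \ref{prop:K2_summary} and \ref{prop:K3_summary}, all products appearing in $F_l$ are well defined in $H_{ul}^\theta$ on the relevant time interval.

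First I would compute $I_l(t_l)$ in each chart. Using Lemmas \ref{lem:K1_r1eps1}, \ref{lem:K2_T2} and \ref{lem:K3_r3eps3}, direct integration of $-\beta \, \partial_{s} r_l / r_l$ gives
\[
\me^{I_1(t_1)} = (1 - 2 \eps_1^\ast t_1)^{-\beta / 4} , \qquad
\me^{I_2(t_2)} \equiv 1 , \qquad
\me^{I_3(t_3)} = (1 + 2 \zeta t_3)^{-\beta / 4} .
\]
Combining these with the explicit formulas $r_1(t_1) = \rho_{in}^{1/2}(1 - 2 \eps_1^\ast t_1)^{1/4}$, $r_2(t_2) = r_2$ and $r_3(t_3) = r_3^\ast (1 + 2 \zeta t_3)^{1/4}$, the exponents in the powers of $(1 - 2 \eps_1^\ast t_1)$ and $(1 + 2 \zeta t_3)$ collapse precisely to $-1/4$ in the quadratic-in-$\tilde R_l$ term and $-1/2$ in the cubic-in-$\tilde R_l$ term, which is how the time-dependent prefactors in Assertions (i) and (iii) arise. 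In chart $\mathcal K_2$ there is no integrating factor contribution, so the coefficients are purely powers of $r_2$.

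Next, applying the triangle inequality to
\[
F_l = - 3 \psi_l^2 \tilde R_l - 3 r_l^{\beta - 1} \me^{I_l} \tilde R_l^2 \psi_l - r_l^{2\beta - 2} \me^{2 I_l} \tilde R_l^3
\]
and using Lemma \ref{lem:US_8.3.11} to bound each product $\|fgh\|_{H_{ul}^\theta} \leq C \|f\|_{H_{ul}^\theta} \|g\|_{H_{ul}^\theta} \|h\|_{H_{ul}^\theta}$ yields the claimed estimates after factoring out a single copy of $\|\tilde R_l(\cdot,s_l)\|_{H_{ul}^\theta}$. For Assertion (iii), the hypothesis that $\mu(x) \equiv 0$ and the conditions of Proposition \ref{prop:K3_summary} hold is used only to guarantee that $\psi_3(\cdot,t_3) \in H_{ul}^\theta$ on $[0,T_3]$, since without this the blow-up ansatz in chart $\mathcal K_3$ need not remain controlled.

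Since the argument reduces to three routine Banach-algebra estimates, there is no real analytical obstacle; the only care required is the bookkeeping of the exponents in the integrating factors and powers of $r_l$ in charts $\mathcal K_1$ and $\mathcal K_3$, where the compensation between $r_l^{\beta - 1}$ (resp.\ $r_l^{2\beta - 2}$) and $\me^{I_l}$ (resp.\ $\me^{2 I_l}$) produces the $\beta$-independent negative fractional powers of $(1 - 2 \eps_1^\ast s_1)$ and $(1 + 2 \zeta s_3)$ displayed in the lemma.
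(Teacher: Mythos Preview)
Your proposal is correct and follows essentially the same route as the paper: compute the integrating factors $\me^{I_l}$ explicitly in each chart, take the $H_{ul}^\theta$ norm of \eqref{eq:nonlinear_terms} term by term using the Banach algebra property from Lemma \ref{lem:US_8.3.11}, and then observe the $\beta$-independent cancellation in the exponents of $(1-2\eps_1^\ast s_1)$ and $(1+2\zeta s_3)$. The paper's proof differs only in presentation, first writing a chart-uniform bound in terms of $r_l(s_l)^{\beta-1}\me^{I_l(s_l)}$ and $r_l(s_l)^{2\beta-2}\me^{2I_l(s_l)}$ and then specializing to each chart.
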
	
	
	\begin{proof}
		By Propositions \ref{prop:K1_summary}, \ref{prop:K2_summary} and \ref{prop:K3_summary} we have that $\psi_l(\cdot,t_l) \in H_{ul}^\theta$ for all $t_l \in [0,\tilde T_l]$ (although only if $\mu(x) \equiv 0$ and the conditions of Proposition \ref{prop:K3_summary} are satisfied in case $l=3$). Since $H_{ul}^\theta$ is closed under multiplication and $\tilde R_l(\cdot,t_l) \in H_{ul}^\theta$ for all $t_l \in [0,\tilde T_l]$ by the definition of $\tilde T_l$, it follows that for any fixed integers $n_1, n_2 \geq 1$ there is a constant $C>0$ such that
		\[
		\| \psi_l(\cdot,s_l)^{n_1} \tilde R_l(\cdot, s_l)^{n_2} \|_{H_{ul}^\theta} \leq 
		C \| \psi_l(\cdot,s_l) \|_{H_{ul}^\theta}^{n_1} \| \tilde R_l(\cdot,s_l) \|_{H_{ul}^\theta}^{n_2} , \qquad t_l \in [0,\tilde T_l] .
		\]
		Thus we obtain the following after taking the norm of \eqref{eq:nonlinear_terms}:
		\[
		\begin{split}
			\| F_l(\cdot, s_l) \|_{H_{ul}^\theta} \leq 
			C \bigg( \| \psi_l(\cdot, s_l) \|_{H_{ul}^\theta}^2 + r_l(s_l)^{\beta - 1} \me^{I_l(s_l)} \| \tilde R_l(\cdot, s_l) \|_{H_{ul}^\theta} & \| \psi_l(\cdot, s_l) \|_{H_{ul}^\theta} \\ 
			+ r_l(s_l)^{2 \beta - 2} \me^{2 I_l(s_l)} & \| \tilde R_l(\cdot, s_l) \|_{H_{ul}^\theta}^2 \bigg) \| \tilde R_l(\cdot, s_l) \|_{H_{ul}^\theta} ,
		\end{split}
		\]
		where $I_l(s_l)$ denotes the local representation of $I(\bar s)$ in chart $\mathcal K_l$. The expressions in Assertions (i), (ii) and (iii) follow from the explicit solution formulae for $r_l(t_l)$ (see Lemmas \ref{lem:K1_r1eps1}, \ref{lem:K3_r3eps3} and recall that $r_2(t_2) = r_2$) together with
		\[
		\me^{I_1(s_1)} = \frac{1}{(1 - 2 \eps_1^\ast s_1)^{\beta / 4}} , \qquad 
		\me^{I_2(s_2)} = 1 , \qquad 
		\me^{I_3(s_3)} = \frac{1}{(1 + 2 \zeta s_3)^{\beta / 4}} .
		\]
	\end{proof}

	\subsection{Proof of Theorem \ref{thm:Error}}
	\label{sub:proof_of_theorem_error}
	
	In this section we use the estimates derived in Sections \ref{sub:evolution_family_estimates}, \ref{sub:residual_estimates} and \ref{sub:estimates_for_the_nonlinear_terms} above to prove Theorem \ref{thm:Error}. We begin by deriving estimates for the (weighted) error function $R_l$ in local coordinate charts $\mathcal K_l$.

	\subsubsection{Error estimates in charts}
	
	Using \eqref{eq:R} and the results obtained above we can estimate the error in each chart $\mathcal K_l$. 
	
	\begin{proposition}
		\label{prop:R_bounds_Kl}
		Let $\theta_A = 1 + 3(n-3) + \theta$ where $\theta \geq 1$, and $\beta = n - 2$. Then for $l=1,2$ we have that $R_l(\cdot, t_l) \in H_{ul}^\theta$ for all $t_l \in [0, T_l]$, with
		\[
		\| R_1(\cdot,T_1) \|_{H_{ul}^\theta} \leq
		C \left( 1 + \| R_1^\ast \|_{H_{ul}^\theta} \me^{- \tilde c_1 / 2\eps_1^\ast} \right) , \qquad
		\| R_2(\cdot,T_2) \|_{H_{ul}^\theta}
		\leq C \left( 1 + \| R_2^\ast \|_{H_{ul}^\theta} \right) ,
		\]
		where $\tilde c_1 = c_1 - 2 C \tilde C(\tilde K,\tilde M) \in (0,1)$ can be chosen arbitrarily close to $1$ by fixing $K, M > 0$ sufficiently small. If additionally $\mu(x) \equiv 0$ then the following assertions are true:
		\begin{enumerate}
			\item[(i)] In chart $\mathcal K_1$ we have
			\[
			\| R_1(\cdot,T_1) \|_{H_{ul}^\theta} \leq 
			C \left( \| R_1^\ast  \|_{H_{ul}^\theta} + \max_{k = 1, \ldots, N-2} \| A_{k,1}^\ast \|_{H_{ul}^{\theta_{k,N-2}}} \right) \me^{-\kappa / 2\eps_1^\ast} .
			\]
			\item[(ii)] In chart $\mathcal K_2$ we have
			\[
			\| R_2(\cdot,T_2) \|_{H_{ul}^\theta} \leq 
			C \left( \| R_2^\ast \|_{H_{ul}^\theta} + \max_{k = 1, \ldots, N-2} \| A_{k,2}^\ast \|_{H_{ul}^{\theta_{k,N-2}}} \right) .
			\]
			\item[(iii)] If the conditions of Proposition \ref{prop:K3_summary} are satisfied, in chart $\mathcal K_3$ we have that $R_3(\cdot,t_3) \in H_{ul}^\theta$ for all $t_3 \in [0,T_3]$ and
			\[
			\| R_3(\cdot,T_3) \|_{H_{ul}^\theta} \leq 
			C {r_3^\ast}^{n-2} \left( \| R_3^\ast \|_{H_{ul}^\theta} + \me^{-(\tilde \sigma_0 - \tilde \sigma) \rho_{out}^2 / 2 \zeta {r_3^\ast}^4} \right) \me^{\tilde c_3 \rho_{out}^2 / 2 \zeta {r_3^\ast}^4} ,
			\]
			where $\tilde c_3 := c_3 + 2 C \tilde C(\tilde K,\tilde M) > 1$ can be set arbitrarily close to $1$ by fixing sufficiently small $K, M > 0$.
		\end{enumerate}
	\end{proposition}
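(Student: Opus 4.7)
The strategy is to work in each chart separately, applying the norm $\|\cdot\|_{H_{ul}^\theta}$ to the local representation of the integral equation \eqref{eq:R}, and then combining the three ingredients developed in Sections \ref{sub:evolution_family_estimates}, \ref{sub:residual_estimates} and \ref{sub:estimates_for_the_nonlinear_terms}: the evolution-family bounds from Lemma \ref{lem:error_evolution_family}, the residual bounds from Lemma \ref{lem:error_residual}, and the $F_l$-bounds from Lemma \ref{lem:error_nonlinear}. The choice $\beta = n-2$ is dictated by the residual source term in \eqref{eq:R}: since $\|\textup{Res}(r_l\psi_l)\|_{H_{ul}^\theta} = O(r_l^n)$ and the prefactor in \eqref{eq:R} is $r_l^{-\beta-2}\me^{-I_l}$, taking $\beta = n-2$ eliminates the $r_l$-singularity and leaves only the integrating factor $\me^{-I_l(s_l)}$, which is uniformly bounded.

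In chart $\mathcal K_l$ one first substitutes the explicit solutions for $r_l$, $\eps_l$, $v_l$ from Lemmas \ref{lem:K1_r1eps1}, \ref{lem:K2_T2}, \ref{lem:K3_r3eps3}, the $\psi_l$-bounds from Propositions \ref{prop:K1_summary}, \ref{prop:K2_summary}, \ref{prop:K3_summary}, and the estimates of Lemma \ref{lem:error_nonlinear} into the local version of \eqref{eq:R}, then takes the norm on $[0,\tilde T_l]$. The resulting inequality has the Gr\"onwall-type structure
\[
\|\tilde R_l(\cdot,t_l)\|_{H_{ul}^\theta} \leq C\me^{\pm c_l t_l}\|R_l^\ast\|_{H_{ul}^\theta} + G_l(t_l) + 2C\tilde C(\tilde K,\tilde M)\int_0^{t_l}\me^{\pm c_l(t_l-s_l)}\omega_l(s_l)\|\tilde R_l(\cdot,s_l)\|_{H_{ul}^\theta}\,ds_l,
\]
where $G_l$ collects the (bounded) residual integral and $\omega_l$ absorbs the harmless algebraic singularities $(1\mp 2\eps_l s_l)^{-1/2}$, etc. Applying Lemma \ref{lem:HK_2.8} produces an effective exponential rate $\tilde c_l = c_l \pm 2C\tilde C(\tilde K,\tilde M)$ that stays on the correct side of $1$ provided $K$, $M$ are sufficiently small, which also ensures $\tilde T_l = T_l$ a-posteriori. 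Converting back via $R_l = \me^{I_l}\tilde R_l$ and evaluating at $t_l = T_l$ (using $\me^{I_1(T_1)}=(\eps_1^\ast/\zeta)^{-\beta/4}$, $\me^{I_2(T_2)}=1$, $\me^{I_3(T_3)} = (r_3^\ast)^\beta\rho_{out}^{-\beta/2}$) then yields the claimed bounds; note that in $\mathcal K_1$ the blow-up of $\me^{I_1(T_1)}$ is exactly compensated by the analogous factor $(\eps_1^\ast/\zeta)^{\beta/4}$ inside the residual integral (evaluable as an incomplete gamma function, cf.~Appendix \ref{app:technical_estimates}), so that the constant-in-time contribution is genuinely $O(1)$.

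The symmetric case $\mu\equiv 0$ in charts $\mathcal K_1$ and $\mathcal K_2$ follows by the same template, but with the exponentially small $\|\psi_1(\cdot,t_1)\|_{H_{ul}^\theta}$ from Proposition \ref{prop:K1_summary} (and the improved $\mathcal K_2$ bound \eqref{eq:psi_2_bound}) fed into both the nonlinear terms and the residual via the improved bound \eqref{eq:residual_error_mu_0}; this removes the $O(1)$ source that produced the ``$1$'' in the general case and replaces it by a term of the size of $\max_k\|A_{k,l}^\ast\|_{H_{ul}^{\theta_{k,N-2}}}$. In chart $\mathcal K_3$ one must additionally verify the hypotheses of Proposition \ref{prop:K3_summary}, which are automatically inherited from Section \ref{sec:Proof_of_thm_dynamics}; then the same Gr\"onwall argument delivers a factor $\me^{\tilde c_3 T_3}$ from the semigroup which, after multiplication by $\me^{I_3(T_3)} = (r_3^\ast)^\beta \rho_{out}^{-\beta/2}$ and evaluation at $T_3 = (\rho_{out}^2 - {r_3^\ast}^4)/(2\zeta{r_3^\ast}^4)$, produces the stated prefactor $(r_3^\ast)^{n-2}\me^{\tilde c_3\rho_{out}^2/2\zeta{r_3^\ast}^4}$.

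The main technical obstacle is the simultaneous bookkeeping in $\mathcal K_3$, where the semigroup grows exponentially (Lemma \ref{lem:error_evolution_family}(iii)), the integrating factor $\me^{I_3}$ contributes an additional power of $r_3^\ast/r_3(T_3)$, and the residual/nonlinear terms must be kept smaller than the effective decay margin required to close the bootstrap $\|\tilde R_3(\cdot,t_3)\|_{H_{ul}^\theta}\leq \tilde M$. This is exactly where the exponentially small bounds on the modulation functions provided by Lemma \ref{lem:K3_gl_coefficients} (which in turn require \eqref{eq:Aj3_ic_bound}) are crucial, and it is the reason why the condition \eqref{eq:error_cond}, stronger than the delay criterion \eqref{eq:delay_cond}, will ultimately appear when passing to global statements about $u-\Psi$ in Theorem \ref{thm:Error}. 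A secondary technical point, handled uniformly across charts, is that the singular weight $(1\mp 2\eps_l s_l)^{-1/2}$ arising in Lemma \ref{lem:error_nonlinear} must be integrated against the semigroup; this is done via the same incomplete-gamma manipulations used throughout Section \ref{sec:Proof_of_thm_dynamics} and produces only bounded contributions after multiplication by $\me^{-c_l(t_l-s_l)}$ (resp.~$\me^{c_l(t_l-s_l)}$) under the smallness condition on $K$ and $M$.
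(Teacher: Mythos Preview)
Your proposal is correct and follows essentially the same approach as the paper: chart-by-chart estimation of the variation-of-constants formula \eqref{eq:R} using Lemmas \ref{lem:error_evolution_family}, \ref{lem:error_residual} and \ref{lem:error_nonlinear}, Gr\"onwall via Lemma \ref{lem:HK_2.8}, and conversion back through the integrating factor, with the choice $\beta=n-2$ balancing the residual against $\me^{I_l(T_l)}$ exactly as you describe. The only notable difference is that the paper does \emph{not} carry a time-dependent weight $\omega_l$ through the Gr\"onwall step; it simply bounds $\|F_l(\cdot,s_l)\|_{H_{ul}^\theta}\le \tilde C(\tilde K,\tilde M)\|\tilde R_l(\cdot,s_l)\|_{H_{ul}^\theta}$ with a constant coefficient, absorbing the algebraic factors $(1\mp 2\eps_l^\ast s_l)^{-1/4},(1\mp 2\eps_l^\ast s_l)^{-1/2}$ via the bootstrap bounds $\|\psi_l\|\le K$ and $\|\tilde R_l\|\le\tilde M$ (equivalently, one may note that in the pre-integrating-factor variable $R_l$ these nonlinear coefficients are just $r_l^{\beta-1}$ and $r_l^{2\beta-2}$, which are uniformly bounded). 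Your incomplete-gamma route would also work but is not needed at the level of detail the paper adopts.
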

	
	\begin{proof}
		In chart $\mathcal K_1$ we have
		\[
		\begin{split}
			\tilde R_1(\cdot,t_1) = \me^{t_1 \Lambda_1} R_1^\ast  &+ \int_0^{t_1} \me^{(t_1 - s_1) \Lambda_1} F_1(\cdot,s_1) ds_1 \\
			&+ \int_0^{t_1} \me^{(t_1 - s_1) \Lambda_1} r_1(s_1)^{-\beta-2} \me^{-I_1(s_1)} \textup{Res}(r_1(s_1) \psi_1(\cdot, s_1)) ds_1 ,
		\end{split}
		\]
		for $t_1 \in [0, \tilde T_1]$. Using the bound for the evolution family in Lemma \ref{lem:error_evolution_family}, we obtain
		\begin{equation}
			\label{eq:R1_tilde_voc}
			\begin{split}
				\| \tilde R_1(\cdot,t_1) \|_{H_{ul}^\theta} \leq C \me^{-c_1 t_1} \| R_1^\ast  \|_{H_{ul}^\theta} &+ C \int_0^{t_1} \me^{-c_1(t_1 - s_1)} \| F_1(\cdot,s_1) \|_{H_{ul}^\theta} ds_1 \\
				+ \ C \int_0^{t_1} \me^{- c_1 (t_1 - s_1)} & r_1(s_1)^{-\beta-2} \me^{-I_1(s_1)} \| \textup{Res}(r_1(s_1) \psi_1(\cdot, s_1)) \|_{H_{ul}^\theta} ds_1 ,
			\end{split}
		\end{equation}
		for all $t_1 \in [0,\tilde T_1]$. The right-most integral can be bounded using Lemma \ref{lem:error_residual} and the fact that $r_1(s_1)^{-\beta-2} \me^{-I_1(s_1)} = 1 / (\rho_{in}^{\beta / 2} r_1(s_1)^2)$. We obtain
		\begin{equation}
			\label{eq:R_bound_K1}
			\begin{split}
				\int_0^{t_1} \me^{- c_1 (t_1 - s_1)} r_1(s_1)^{-\beta-2} \me^{-I_1(s_1)} & \| \textup{Res}(r_1(s_1) \psi_1(\cdot, s_1)) \|_{H_{ul}^\theta} ds_1 \\ \leq 
				\frac{C}{\rho_{in}^{\beta / 2}} \int_0^{t_1} \me^{- c_1 (t_1 - s_1)} & r_1(s_1)^{n - 2} \left( \delta_{n,4} \max_{m \in I_N}|\nu_m| + \sup_{m,j} \| A_{mj,1}(\cdot,s_1) \|_{H_{ul}^{\theta_{\alpha(m)+j,N}}} \right) ds_1 ,
			\end{split}
		\end{equation}
		for all $t_1 \in [0,T_1]$, where the supremum is taken with respect to modulation functions with $m \in I_N$ and $j \in \{1,\ldots,\tilde\alpha(m)\}$. By Lemma \ref{lem:K1_gl_coefficients}, all such modulation functions in chart $\mathcal K_1$ satisfy $A_{mj,1}(\cdot,s_1) \in H_{ul}^{\theta + 1}$ for all $s_1 \in [0,T_1]$. It follows that
		\[
		\begin{split}
			\frac{C}{\rho_{in}^{\beta / 2}} \int_0^{t_1} \me^{- c_1 (t_1 - s_1)} r_1(s_1)^{n - 2} 
			\left( \delta_{n,4} \max_{m \in I_N}|\nu_m| + \sup_{m,j} \| A_{mj,1}(\cdot,s_1) \|_{H_{ul}^{\theta_{\alpha(m)+j,N}}} \right) ds_1 &\leq \\
			\frac{C}{\rho_{in}^{\beta / 2}} \int_0^{t_1} \me^{- c_1 (t_1 - s_1)} & r_1(s_1)^{n - 2} ds_1 .
		\end{split}
		\]
		The right-most integral can be evaluated directly using the expression for $r_1(s_1)$ in Lemma \ref{lem:K1_r1eps1} together with the integral identity and gamma function properties in Lemma \ref{lem:Gamma_integrals}. We obtain
		\[
		\begin{split}
			\me^{-c_1 t_1} \int_0^{t_1} \me^{c_1 s_1} r_1(s_1)^{n - 2} ds_1 &= 
			C \rho_{in}^{n-2} \me^{- c_1 (t_1 - 1 / 2 \eps_1^\ast)} {\eps_1^\ast}^{(n-2) / 4} \Gamma \left( \frac{n+2}{4}, c_1 \left( \frac{1}{2 \eps_1^\ast} - s_1 \right) \right) \bigg|_0^{t_1} \\
			&\leq C \rho_{in}^{n-2} {\eps_1^\ast}^{(n-2) / 4} ,
		\end{split}
		\]
		for all $t_1 \in [0,T_1]$. Combining this with \eqref{eq:R1_tilde_voc} leads to
		\[
		\| \tilde R_1(\cdot,t_1) \|_{H_{ul}^\theta} \leq C \left( \me^{-c_1 t_1} \| R_1^\ast  \|_{H_{ul}^\theta} + \rho_{in}^{n - \beta - 2} {\eps_1^\ast}^{(n-2) / 4} \right) + C \int_0^{t_1} \me^{-c_1(t_1 - s_1)} \| F_1(\cdot,s_1) \|_{H_{ul}^\theta} ds_1 ,
		\]
		for $t_1 \in [0,\tilde T_1]$ (at this point the estimates in Lemma \ref{lem:error_nonlinear} are only shown to hold for $t_1 \in [0,\tilde T_1]$). 
		For our present purposes it suffices to note that by Lemma \ref{lem:error_nonlinear} Assertion (i) we have
		\[
		\| F_1(\cdot,s_1) \|_{H_{ul}^\theta} \leq \tilde C(\tilde M,\tilde K) \| \tilde R_1(\cdot,s_1) \|_{H_{ul}^\theta}
		\]
		for all $s_1 \in [0,\tilde T_1]$, where the constant $\tilde C(\tilde K,\tilde M) > 0$ can be chosen arbitrarily small by choosing $K, M > 0$ sufficiently small (we only require that $\tilde K > K$ and $\tilde M > M$). Thus
		\[
		\| \tilde R_1(\cdot,t_1) \|_{H_{ul}^\theta} \leq C \left( \me^{-c_1 t_1} \| R_1^\ast  \|_{H_{ul}^\theta} + \rho_{in}^{n - \beta - 2} {\eps_1^\ast}^{(n-2) / 4} \right) + C \tilde C(\tilde K,\tilde M) \int_0^{t_1} \me^{-c_1(t_1 - s_1)} \| \tilde R_1(\cdot,s_1) \|_{H_{ul}^\theta} ds_1 ,
		\]
		for all $t_1 \in [0, \tilde T_1]$. Applying the Gr\"onwall inequality in Lemma \ref{lem:HK_2.8} yields
		\[
		\begin{split}
			\| \tilde R_1(\cdot,t_1) \|_{H_{ul}^\theta} &\leq 
			C \left( \rho_{in}^{n - \beta - 2} {\eps_1^\ast}^{(n-2)/4} + \| R_1^\ast  \|_{H_{ul}^\theta} \right) \me^{- (c_1 - 2 C \tilde C(\tilde K,\tilde M)) t_1} \\
			&+  {\eps_1^\ast}^{(n-2)/4} \rho_{in}^{n - \beta - 2} \int_0^{t_1} \me^{- (c_1 - 2 C \tilde C(\tilde K,\tilde M)) (t_1 - s_1)} ds_1 \\
			&= C \left( \rho_{in}^{n - \beta - 2} {\eps_1^\ast}^{(n-2)/4} + \| R_1^\ast  \|_{H_{ul}^\theta} \right) \me^{- (c_1 - 2 \tilde C) t_1} \\
			&+ C \rho_{in}^{n - \beta - 2} {\eps_1^\ast}^{(n-2)/4} \left( \frac{1 - \me^{- (c_1 - 2 \tilde C) t_1}}{c_1 - 2 C \tilde C(\tilde K, \tilde M)} \right) ,
		\end{split}
		\]
		for all $t_1 \in [0,\tilde T_1]$. Choosing $K, M > 0$ sufficiently small allows us to choose $\tilde C(\tilde K,\tilde M)$ such that  $c_1 - 2 C \tilde C(\tilde K, \tilde M) \in (0,1)$. In this case the right-hand side is finite for all $t_1 \in [0,T_1]$, such that $\tilde T_1 = T_1$. Multiplying through by the integrating factor yields
		\begin{equation}
			\label{eq:R1_bound}
			\begin{split}
				\| R_1(\cdot,t_1) \|_{H_{ul}^\theta} &= 
				\me^{I_1(t_1)} \| \tilde R_1(\cdot,t_1) \|_{H_{ul}^\theta} \\
				&\leq \frac{C}{(1 - 2 \eps_1^\ast t_1)^{\beta/4}} \left( \| R_1^\ast \|_{H_{ul}^\theta} \me^{- \tilde c_1 t_1} + \rho_{in}^{n-\beta-2} {\eps_1^\ast}^{(n-2)/4} \right) ,
			\end{split}
		\end{equation}
		where $\tilde c_1 = c_1 - 2 C \tilde C(\tilde K,\tilde M) \in (0,1)$. Since $(1 - 2 \eps_1^\ast t_1)^{-\beta/4} \in [1,(\zeta / \eps_1^\ast)^{\beta/4}]$ for $t_1 \in [0,T_1]$, \eqref{eq:R1_bound} is bounded above by a constant if we set $\beta = n-2$, as required. Subsequent evaluation at $t_1 = T_1$ yields the desired bound.
		
		\
		
		In order get a better bound when $\mu(x) \equiv 0$, we start again with \eqref{eq:R_bound_K1} except with $\nu_m = 0$ for all $m \in I_N$. We use the improved bound \eqref{lem:error_residual} in Lemma \ref{lem:error_residual}, which applies when $\mu(x) \equiv 0$.
		Substituting this for right-most bound into \eqref{eq:R_bound_K1} leads to the upper bound
		\[
		\begin{split}
			C \me^{- \tilde c t_1} \max_{k = 1, \ldots, N-2} \| A_{k,1}^\ast \|_{H_{ul}^{\theta_{k,N-2}}} \int_0^{t_1} \me^{(c_1 - \tilde c) s_1} (1 - 2 \eps_1^\ast s_1)^{(n-3)/4} ds_1 &\leq \\ 
			C \me^{- \tilde c t_1} & \max_{k = 1, \ldots, N-2} \| A_{k,1}^\ast \|_{H_{ul}^{\theta_{k,N-2}}} ,
		\end{split}
		\]
		which we obtain by using $(1 - 2 \eps_1^\ast s_1)^{(n-3)/4} \leq 1$ and setting $c_1 = \tilde c$. Thus we have shown that
		\[
		\begin{split}
			\| \tilde R_1(\cdot,t_1) \|_{H_{ul}^\theta} \leq C \me^{-c_1 t_1} & \left( \| R_1^\ast  \|_{H_{ul}^\theta} + \max_{k = 1, \ldots, N-2} \| A_{k,1}^\ast \|_{H_{ul}^{\theta_{k,N-2}}} \right) \\
			& \qquad \qquad + C \tilde C(\tilde K, \tilde M) \int_0^{t_1} \me^{-c_1(t_1 - s_1)} \| \tilde R_1(\cdot,s_1) \|_{H_{ul}^\theta} ds_1 .
		\end{split}
		\]
		Applying the Gr\"onwall inequality in Lemma \ref{lem:HK_2.8} gives
		\[
		\| \tilde R_1(\cdot,t_1) \|_{H_{ul}^\theta} \leq 
		C \left( \| R_1^\ast \|_{H_{ul}^\theta} + \max_{k = 1, \ldots, N-2} \| A_{k,1}^\ast \|_{H_{ul}^{\theta_{k,N-2}}} \right) \me^{-(c_1 - 2 C \tilde C(\tilde K, \tilde M)) t_1} .
		\]
		Multiplying through by the integrating factor and evaluating at $t_1 = T_1$ we obtain
		\[
		\| R_1(\cdot,T_1) \|_{H_{ul}^\theta} \leq 
		C \left( \| R_1^\ast  \|_{H_{ul}^\theta} + \max_{k = 1, \ldots, N-2} \| A_{k,1}^\ast \|_{H_{ul}^{\theta_{k,N-2}}} \right) \me^{-\kappa / 2\eps_1^\ast} ,
		\]
		as required.
		
		\
		
		In chart $\mathcal K_2$ we have
		\begin{equation}
			\label{eq:R2_voc}
				R_2(\cdot,t_2) = \me^{t_2 \Lambda_2} R_2^\ast  + \int_0^{t_2} \me^{(t_2 - s_2) \Lambda_2} F_2(\cdot,s_2) ds_2 \\
				+ \int_0^{t_2} \me^{(t_2 - s_2) \Lambda_2} r_2^{-\beta-2} \textup{Res}(r_2 \psi_2(\cdot, s_2)) ds_2 ,
		\end{equation}
		for $t_2 \in [0, \tilde T_2]$ (recall that $\me^{I_2(t_2)} = 1$ so that we can work directly with $R_2 = \tilde R_2$). Using the bound for the evolution family in Lemma \ref{lem:error_evolution_family} and the bound for the residual in Lemma \ref{lem:error_residual}, we obtain
		\[
		\begin{split}
			\| R_2(\cdot,t_2) \|_{H_{ul}^\theta} \leq 
			&C \| R_2^\ast  \|_{H_{ul}^\theta} + C \int_0^{t_2} \| F_2(\cdot,s_2) \|_{H_{ul}^\theta} ds_2 \\
			&+ C r_2^{n-\beta-2} \int_0^{t_2} \left( \delta_{n,4} \max_{m \in I_N}|\nu_m| + \sup_{m,j} \| A_{mj,2}(\cdot,s_2) \|_{H_{ul}^{\theta_{\alpha(m)+j,N}}} \right) ds_2 .
		\end{split}
		\]
		If we set $\beta = n-2$ and use Lemma \ref{lem:error_nonlinear} together with the fact that each $A_{mj,2}(\cdot,s_2) \in H_{ul}^{\theta_{\alpha(m)+j,N}}$ for all $s_2 \in [0,T_2]$ (this follows from Lemma \ref{lem:K2_Aj}), we obtain
		\[
		\| R_2(\cdot,t_2) \|_{H_{ul}^\theta} \leq 
		C \left( 1 + \| R_2^\ast \|_{H_{ul}^\theta} \right) + C  \tilde C(\tilde M,\tilde K) \left(1 + r_2^{n-3} + r_2^{2(n-3)} \right) \int_0^{t_2} \| R_2(\cdot,s_2) \|_{H_{ul}^\theta} ds_2 ,
		\]
		for all $t_2 \in [0,\tilde T_2]$, where the constant $\tilde C(\tilde M,\tilde K) > 0$ can again be chosen as small as necessary by choosing sufficiently small $K, M > 0$. Applying Gr\"onwall's inequality yields
		\[
		\| R_2(\cdot,t_2) \|_{H_{ul}^\theta}
		\leq C \left( 1 + \| R_2^\ast \|_{H_{ul}^\theta} \right) \exp \left( C  \tilde C(\tilde M,\tilde K) \left(1 + r_2^{n-3} + r_2^{2(n-3)} \right) t_2 \right) .
		\]
		It follows that $R_2(\cdot,t_2) \in H_{ul}^\theta$ for all $t_2 \in [0,T_2]$ (recall that $T_2$ is finite).
		
		\
		
		Similarly to the $\mathcal K_1$ analysis above, the bound on the residual integral can be improved if $\mu(x) \equiv 0$ because we can use the bound \eqref{eq:residual_error_mu_0} in Lemma \ref{lem:error_residual}. 
		Using this result 
		to improve the bound on the right-most integral in \eqref{eq:R2_voc} leads to
		\[
		\begin{split}
			\| R_2(\cdot,t_2) \|_{H_{ul}^\theta} \leq 
			C & \left( \| R_2^\ast \|_{H_{ul}^\theta} + \max_{k = 1, \ldots, N-2} \| A_{k,2}^\ast \|_{H_{ul}^{\theta_{k,N-2}}} \right) + \\
			&\qquad \qquad C \tilde C(\tilde M,\tilde K) \left(1 + r_2^{n-3} + r_2^{2(n-3)} \right) \int_0^{t_2} \| R_2(\cdot,s_2) \|_{H_{ul}^\theta} ds_2 ,
		\end{split}
		\]
		which holds for all $t_2 \in [0,\tilde T_2]$. The bound in Assertion (ii) follows after an application of Gr\"onwall's inequality. 
		
		\
		
		It remains to prove Assertion (iii). We work in chart $\mathcal K_3$, set $\mu(x) \equiv 0$ and assume that the hypotheses of Proposition \ref{prop:K3_summary} are satisfied. We have
		\[
		\begin{split}
			\tilde R_3(\cdot,t_3) = \me^{t_3 \Lambda_3} R_3^\ast  &+ \int_0^{t_3} \me^{(t_3 - s_3) \Lambda_3} F_3(\cdot,s_3) ds_3 \\
			&+ \int_0^{t_3} \me^{(t_3 - s_3) \Lambda_3} r_3(s_3)^{-\beta-2} \me^{-I_3(s_3)} \textup{Res}(r_3(s_3) \psi_3(\cdot, s_3)) ds_3 ,
		\end{split}
		\]
		for $t_3 \in [0, \tilde T_3]$. Using the bound for the evolution family in Lemma \ref{lem:error_evolution_family}, we obtain
		\[
		\begin{split}
			\| \tilde R_3(\cdot,t_3) \|_{H_{ul}^\theta} \leq C \me^{c_3 t_3} \| R_3^\ast  \|_{H_{ul}^\theta} &+ C \int_0^{t_3} \me^{c_3(t_3 - s_3)} \| F_3(\cdot,s_3) \|_{H_{ul}^\theta} ds_3 \\
			+ C \int_0^{t_3} \me^{c_3 (t_3 - s_3)} & r_3(s_3)^{-\beta-2} \me^{-I_3(s_3)} \| \textup{Res}(r_3(s_3) \psi_3(\cdot, s_3)) \|_{H_{ul}^\theta} ds_3 ,
		\end{split}
		\]
		for all $t_3 \in [0,\tilde T_3]$. The right-most integral can be bounded using Lemma \ref{lem:error_residual} and the fact that $r_3(s_1)^{-\beta-2} \me^{-I_3(s_1)} = 1 / ({r_3^\ast}^{\beta} r_3(s_3)^2)$. We obtain
		\[
		\begin{split}
			\int_0^{t_3} \me^{c_3 (t_3 - s_3)} r_3(s_3)^{-\beta-2} \me^{-I_3(s_3)} & \| \textup{Res}(r_3(s_3) \psi_3(\cdot, s_3)) \|_{H_{ul}^\theta} ds_3 \\ \leq 
			& \frac{C}{{r_3^\ast}^\beta} \int_0^{t_3} \me^{c_3 (t_3 - s_3)} r_3(s_3)^{n - 2} \sup_{k = 1,\ldots,N-2} \| A_{k,3}(\cdot,s_3) \|_{H_{ul}^{\theta_{k,N-2}}} ds_3 ,
		\end{split}
		\]
		for all $t_1 \in [0,T_3]$. Under the assumptions of Proposition \ref{prop:K3_summary} we have that
		\[
		\sup_{k = 1,\ldots,N-2} \| A_{k,3}(\cdot,s_3) \|_{H_{ul}^{\theta+1}} \leq
		C \me^{-(\tilde \sigma_0 - \tilde \sigma) \rho_{out}^2 / 2 \zeta {r_3^\ast}^4} ,
		\]
		for all $t_3 \in [0,T_3]$. Using this and Lemma \ref{lem:Gamma_integrals}, we obtain
		\[
		\begin{split}
			\frac{C}{{r_3^\ast}^\beta} & \int_0^{t_3} \me^{c_3 (t_3 - s_3)} r_3(s_3)^{n - 2} \sup_{k = 1,\ldots,N-2} \| A_{k,3}(\cdot,s_3) \|_{H_{ul}^{\theta_{k,N-2}}} ds_3 \\
			&\leq C {r_3^\ast}^{n-\beta-2} \me^{-(\tilde \sigma_0 - \tilde \sigma) \rho_{out}^2 / 2 \zeta {r_3^\ast}^4} \me^{c_3 t_3} \int_0^{t_3} \me^{- c_3 s_3} (1 + 2 \zeta s_3)^{(n-2)/4} ds_3 \\
			&\leq C {r_3^\ast}^{n-\beta-2} \me^{-(\tilde \sigma_0 - \tilde \sigma) \rho_{out}^2 / 2 \zeta {r_3^\ast}^4} \me^{c_3 t_3} \Gamma\left( \frac{n+2}{4}, c_3 \left( \frac{1}{2 \zeta} + s_3 \right) \right) \bigg|_0^{t_3} \\
			&\leq C \me^{-(\tilde \sigma_0 - \tilde \sigma) \rho_{out}^2 / 2 \zeta {r_3^\ast}^4} \me^{c_3 t_3} ,
		\end{split}
		\]
		where we set $\beta = n-2$ and used the gamma function asymptotics \eqref{eq:Gamma_asymptotics} in Appendix \ref{app:technical_estimates} to derive the last inequality. We shall also use
		\[
		\| F_3(\cdot,s_3) \|_{H_{ul}^\theta} \leq 
		\tilde C(\tilde K, \tilde M) \| \tilde R_3(\cdot,s_3) \|_{H_{ul}^\theta} , \qquad t_3 \in [0,\tilde T_3] ,
		\] 
		which follows from Lemma \ref{lem:error_nonlinear}. Applying the Gr\"onwall inequality in Lemma \ref{lem:HK_2.8} to
		\[
		\| \tilde R_3(\cdot,t_3) \|_{H_{ul}^\theta} \leq 
		C \me^{c_3 t_3} \left( \| R_3^\ast \|_{H_{ul}^\theta} + \me^{-(\tilde \sigma_0 - \tilde \sigma) \rho_{out}^2 / 2 \zeta {r_3^\ast}^4} \right) + C \tilde C(\tilde K, \tilde M) \int_0^{t_3} \me^{c_3(t_3 - s_3)} \| \tilde R_3(\cdot,s_3) \|_{H_{ul}^\theta} ds_3 ,
		\]
		yields the estimate
		\[
		\| \tilde R_3(\cdot,t_3) \|_{H_{ul}^\theta} \leq 
		C \left( \| R_3^\ast \|_{H_{ul}^\theta} + \me^{-(\tilde \sigma_0 - \tilde \sigma) \rho_{out}^2 / 2 \zeta {r_3^\ast}^4} \right) \me^{(c_3 + 2 C \tilde C(\tilde K,\tilde M))t_3} ,
		\]
		for all $t_3 \in [0,\tilde T_3]$. Multiplying through by the integrating factor gives
		\[
		\| R_3(\cdot,t_3) \|_{H_{ul}^\theta} \leq 
		\frac{C}{(1 + 2 \zeta t_3)^{(n-2)/4}} \left( \| R_3^\ast \|_{H_{ul}^\theta} + \me^{-(\tilde \sigma_0 - \tilde \sigma) \rho_{out}^2 / 2 \zeta {r_3^\ast}^4} \right) \me^{(c_3 + 2 C \tilde C(\tilde K,\tilde M))t_3} .
		\]
		Evaluating this at $t_3 = T_3$ yields the estimate in Assertion (iii).
	\end{proof}
	
	\subsubsection{Proof of Theorem \ref{thm:Error}}
	
	We are now in a position to prove Theorem \ref{thm:Error}. The fact that $u(\cdot,t) - \Psi(\cdot,t) \in [0, \widehat T]$, where $\widehat T = T_{mid}$ for general $\mu(x)$ and $\widehat T = T$ if $\mu(x) \equiv 0$, follows from the definition of the weighted error function $R$ in \eqref{eq:R} and the fact that $R_l(\cdot,t_l) \in H_{ul}^\theta$ for all $t_l \in [0,T_l]$ by Proposition \ref{prop:R_bounds_Kl}, for each $l = 1,2,3$ (assuming the additional conditions including $\mu(x) \equiv 0$ are satisfied in case $l = 3$).
	
	\
	
	In order to prove the bounds in Assertions (i) and (ii), we need to relate estimates for the (weighted) error functions $R_l$ in different charts $\mathcal K_l$. This can be done with the following change of coordinate transformations, which are derived from the form of the maps $\kappa_{ij}$ in \eqref{eq:kappa_maps}:
	\[
	R_1 = (-v_2)^{-\beta/2} R_2 , \ v_2 < 0 , \qquad 
	R_2 = \eps_3^{-\beta/4} R_3 , \ \eps_3 > 0 .
	\]
	In particular, we have
	\[
	R_1(\cdot,T_1) = \zeta^{-\beta/4} R_2^\ast ,  \qquad 
	R_2(\cdot,T_2) = \zeta^{-\beta/4} R_3^\ast .
	\]
	
	\
	
	We start with the general case where $\mu(x)$ is given by \eqref{eq:mu}, i.e.~we first prove Assertion (i). We need to consider the size of the (weighted) error function $R$ as it evolves forward in time from an initial condition in $\Sigma_1^{in}$ in chart $\mathcal K_1$ up to the exit section $\Sigma^{out}_2$ in chart $\mathcal K_2$. At $\Sigma_2^{in} = \kappa_{12}(\Sigma_1^{out})$ we have
	\[
	\| R_2^\ast \|_{H_{ul}^\theta} = \zeta^{\beta / 4} \| R_1(\cdot,T_1) \|_{H_{ul}^\theta} 
	\leq C \left( 1 + \| R_1^\ast \|_{H_{ul}^\theta} \me^{- \tilde c_1 / 2\eps_1^\ast} \right) ,
	\]
	and therefore
	\[
	\| R_2(\cdot,T_2) \|_{H_{ul}^\theta} \leq C \left( 1 + \| R_1^\ast \|_{H_{ul}^\theta} \me^{- \tilde c_1 / 2\eps_1^\ast} \right) ,
	\]
	by Proposition \ref{prop:R_bounds_Kl}. This implies that
	\[
	r_2^\beta \| R_2(\cdot,T_2) \|_{H_{ul}^\theta} = 
	\| u(x,T_{mid}) - \Psi(x,T_{mid}) \|_{H_{ul}^\theta} \leq 
	C r_2^\beta \left( 1 + \| R_1^\ast \|_{H_{ul}^\theta} \me^{- \tilde c_1 / 2\eps_1^\ast} \right) .
	\]
	Applying the blow-down transformations $\eps_1^\ast = \eps / \rho_{in}^2$, $r_2 = \eps^{1/4}$ and setting $\beta = n-2$ yields
	\[
	\|u(\cdot,T_{mid}) - \Psi(\cdot,T_{mid}) \|_{H_{ul}^\theta} \leq C \eps^{(n-2)/4} \left( 1 + \| R_1^\ast \|_{H_{ul}^\theta} \me^{- \tilde c_1 / 2\eps_1^\ast} \right) \leq C \eps^{(n-2)/4} ,
	\]
	thereby proving Assertion (i) in Theorem \ref{thm:Error}.
	
	\
	
	It remains to prove Assertion (ii), for which we set $\mu(x) \equiv 0$. In this case, the error is exponentially small in chart $\mathcal K_1$, and Proposition \ref{prop:R_bounds_Kl} implies the following at bound at $\Sigma_2^{in} = \kappa_{12}(\Sigma_1^{out})$:
	\[
	\| R_2^\ast \|_{H_{ul}^\theta} = \zeta^{\beta / 4} \| R_1(\cdot,T_1) \|_{H_{ul}^\theta} 
	\leq C \left( \| R_1^\ast \|_{H_{ul}^\theta} + \max_{k = 1,\ldots,N-2} \|A_{k,1}^\ast\|_{H_{ul}^{\theta_{k,N-2}}} \right) \me^{-\kappa / 2\eps_1^\ast} \leq C \me^{-\kappa / 2\eps_1^\ast} .
	\]
	Applying Proposition \ref{prop:R_bounds_Kl} again, this time at $\Sigma_3^{in} = \kappa_{23}(\Sigma_2^{out})$, we obtain
	\[
	\| R_3^\ast \|_{H_{ul}^\theta} = \zeta^{\beta / 4} \| R_2(\cdot,T_2) \|_{H_{ul}^\theta} 
	\leq C \left( \| R_2^\ast \|_{H_{ul}^\theta} + \max_{k = 1,\ldots,N-2} \|A_{k,2}^\ast\|_{H_{ul}^{\theta_{k,N-2}}} \right) .
	\]
	Using the bound for $\| R_2^\ast \|_{H_{ul}^\theta}$ above and the bounds for $\|A_{k,2}^\ast\|_{H_{ul}^\theta}$ in \eqref{eq:K2_ic_bounds_2}, which apply when $\mu(x) \equiv 0$, we have that
	\[
	\| R_3^\ast \|_{H_{ul}^\theta} \leq C \me^{-\kappa / 2\eps_1^\ast} .
	\]
	Hence, Assertion (iii) of Proposition \ref{prop:R_bounds_Kl} implies that
	\[
	\begin{split}
		\| R_3(\cdot,t_3) \|_{H_{ul}^\theta} &\leq 
		C {r_3^\ast}^{n-2} \left( \| R_3^\ast \|_{H_{ul}^\theta} + \me^{-(\tilde \sigma_0 - \tilde \sigma) \rho_{out}^2 / 2 \zeta {r_3^\ast}^4} \right) \me^{(c_3 + 2 C \tilde C(\tilde K,\tilde M))t_3} \\
		&\leq C {r_3^\ast}^{n-2} \left( \me^{-\kappa / 2\eps_1^\ast} + \me^{-(\tilde \sigma_0 - \tilde \sigma) \rho_{out}^2 / 2 \zeta {r_3^\ast}^4} \right) \me^{(c_3 + 2 C \tilde C(\tilde K,\tilde M))t_3} .
	\end{split}
	\]
	We recall from the proof of Lemma \ref{lem:Ansatz_dynamics_mu_0} in Section \ref{sub:proof_of_approximation_lemma_1} that the optimal choice for $\tilde \sigma_0 > \tilde \sigma > 1$ is $\tilde \sigma_0 \rho_{out}^2 / 2 \zeta {r_3^\ast}^4 = \kappa / 2\eps_1^\ast$. Applying this choice and $\eps_1^\ast = \eps / \rho_{in}^2$ leads to
	\[
	\| R_3(\cdot,T_3) \|_{H_{ul}^\theta} \leq
	C \exp\left( - \frac{\kappa}{2 \eps} \left( \rho_{in}^2 - \frac{\tilde \sigma}{\kappa} \rho_{out}^2 - \frac{\tilde c_3}{\kappa} \rho_{out}^2 \right) \right) ,
	\]
	where $\tilde c_3 := c_3 + 2 C \tilde C(\tilde K,\tilde M) > c_3 > 1$ can be chosen arbitrarily close to $c_3$ (and thus to $1$) if $K, M > 0$ are chosen sufficiently small. Since the same is true of $\tilde \sigma$, we can choose $\tilde c_3 = \tilde \sigma$, leading to
	\[
	\| R_3(\cdot,t_3) \|_{H_{ul}^\theta} \leq
	C \exp\left( - \frac{\kappa}{2 \eps} \left( \rho_{in}^2 - 2 \frac{\tilde \sigma}{\kappa} \rho_{out}^2 \right) \right) ,
	\]
	which is exponentially small as $\eps \to 0$ as long as
	\[
	\frac{\rho_{in}}{\rho_{out}} \geq \sqrt{\frac{2 \tilde \sigma}{\kappa}} > \sqrt 2,
	\]
	where the strict inequality on the right can be arbitrarily close to equality if $K, M>0$ are sufficiently small. This is precisely the assumption in Theorem \ref{thm:Error}. It follows that
	\[
	\| u(\cdot,T) - \Psi(\cdot,T) \|_{H_{ul}^\theta} = r_3(T_3)^\beta \| R_3(\cdot,T_3) 	\|_{H_{ul}^\theta} 
	\leq C \exp\left( - \frac{\kappa}{2 \eps} \left( \rho_{in}^2 - 2 \frac{\tilde \sigma}{\kappa} \rho_{out}^2 \right) \right) .
	\]
	Writing $\kappa_- := \kappa$ and $\kappa_+ := \tilde \sigma / \kappa$ as in the proof of Lemma \ref{lem:Ansatz_dynamics_mu_0} yields the bound in Assertion (ii).
	\qed

	\section{Summary}
	\label{sec:Summary_and_outlook}
	
	Turing instabilities have been identified as a key local mechanism for the emergence of patterned steady states in a wide variety of different contexts \cite{Cross1993,Hoyle2006,Murray1989,Rabinovich2000}. The dynamic counterpart, i.e.~the slow passage problem, is also expected to arise in a wide variety of applications, based on the fact that realistic models often feature system parameters which evolve slowly in time. Nevertheless, the slow passage through a Turing bifurcation has, by comparison, received far less attention in the literature. As notable exceptions we refer again to \cite{Chen2015} for a detailed numerical study of a model with applications in vegetation patterns, and to \cite{Avitabile2020} for the first rigorous results on the local mechanisms for delay phenomena for systems with a slow passage through an $O(2)$-symmetric Turing bifurcation using the center manifold theory of \cite{Haragus2010,Vanderbauwhede1992}, which applied for systems possessing a spectral gap. The inapplicability of reduction methods based on center manifold theory or Lyapunov-Schmidt reduction on large or unbounded spatial domains presents a significant analytical challenge, which arises naturally in the context of pattern forming systems for which the spatial scale characterising the inhomogeneity of the pattern is small in comparison to the size of the domain itself. Modulation theory has developed in order to address this problem in classical bifurcation theory, but has as yet, to the best of our knowledge, not been extended to the fast-slow setting which is natural for the study of slow passage problems.
	
	\
	
	This work constitutes a first step towards the development of modulation theory for fast-slow and singularly perturbed systems in infinite dimensions. Although we developed our methods in the context of the dynamic SH equation \eqref{eq:sh_dynamic}, the methods themselves are intended to be general in the sense that they can be applied to a wide variety of applications. In Section \ref{sec:Amplitude_reduction_via_geometric_blow-up}, we showed that the formal modulation ansatz \eqref{eq:GL_ansatz_n} which is known from classical modulation theory (and can be derived for different applications according to their `clustered mode distribution'; recall Figure \ref{fig:mode_distribution}) admits of a fast-slow generalisation in the form of the blow-up transformation \eqref{eq:blowup_Psi}. This blow-up transformation is easy to derive if the multiple-scales ansatz from classical modulation theory is already known. The idea is to replace the small parameter $\delta$ in the classical ansatz with a time-dependent variable $r(\bar t)$, and to replace the simple rescalings $\bar x = \delta^2 x$ and $\bar t = \delta^2 t$ with the time-dependent desingularizations in \eqref{eq:desingularization}.
	
	Applying this method to the dynamic SH equation \eqref{eq:sh_dynamic}, we derived a set of modulation equations which take the form of non-autonomous GL equations posed in the blown-up space. The modulation equations were presented in both local and global coordinates in Section \ref{sub:results_modulation_eqns}. Lemmas \ref{lem:Ansatz_dynamics} and \ref{lem:Ansatz_dynamics_mu_0} characterise the dynamics of the modulation equations, which are still infinite dimensional, but dynamically simpler in the sense that they are lower order equations. Lemma \ref{lem:Ansatz_dynamics} provides rigorous asymptotics for the approximation $\Psi(x,t)$ in the case that the source term is given by \eqref{eq:mu}. In contrast to the static theory, we found that the leading order dynamics in this regime are actually governed by the formal second order term at $O(\eps^{1/2})$ (recall equation \eqref{eq:pi_mid_asymptotics}), since the formal leading order term is exponentially small due to a delay effect. 
	Lemma \ref{lem:Ansatz_dynamics_mu_0} describes a delayed loss of stability in the symmetric approximation with $\mu(x) \equiv 0$. The requirement \eqref{eq:delay_cond} and the bound in \eqref{eq:Psi_bound} together imply that initial conditions with $v(0) = - \rho_{in}$ remain exponentially small up to $v(T) \approx \rho_{in}$. In order to prove that a hard onset of patterned states occurs near $v = \rho_{in}$ we would also need to provide a lower bound on the norm of $\Psi(\cdot,T)$. Such an approach has been successfully applied in the context of scalar reaction-diffusion problems in \cite{Kaper2018,Nefedov2003}, but the analysis for equation \eqref{eq:sh_dynamic} is left for future work.
	
	\
	
	As in classical modulation theory, rigorous bounds for the error associated to the approximation are necessary in order to relate the dynamics of the approximation $\Psi$ and solutions to the SH problem \eqref{eq:sh_dynamic}. We provided detailed error estimates in $H_{ul}^\theta$ spaces in Theorem \ref{thm:Error}. Different bounds were obtained in the general case $\mu(x) \neq 0$ and the symmetric case $\mu(x) \equiv 0$. In the first case we were able to control the error up to a distance of $O(\eps^{1/2})$ from the static bifurcation point at $v=0$, i.e.~up to the exit section $\Delta^{mid}$ in chart $\mathcal K_2$. In the symmetric case $\mu(x) \equiv 0$ we were able to show that the error remains exponentially small up to an $O(1)$ distance from $v=0$, but estimate obtained in Assertion (ii) is likely to be sub-optimal due to the unwanted growth of the residual. It is conjectured that a better bound can be obtained if the algebraic control over the residual is improved to exponential control; see again Remark \ref{rem:improved_residual_estimates}. Finally, we note once more that the error estimates of Theorem \ref{thm:Error} apply for a rather large set of initial conditions $u(x,0) = u^\ast(x)$. We require only that $\|u^\ast\|_{H_{ul}^\theta} \leq K$ for some constant $K>0$ which is small but $O(1)$ as $\eps \to 0$. This is an improvement on known results for the static SH equation in e.g.~\cite{Eckhaus1993,Schneider1995b}, which only apply for initial conditions that are $O(\delta)$ as $\delta \to 0$; see again the classical result in Theorem \ref{thm:Error_static} and the discussion which follows it. This is because of additional contraction undergone by solutions in the fast-slow setting during the time spend in the asymptotically stable regime with $v(t) < 0$.
	
	Finally, we presented rigorous results on the dynamics of SH solutions in Theorem \ref{thm:Dynamics}. Assertion (i) shows that a large space of SH solutions (all those with initial conditions $(u(x,0),v(0)) = (u^\ast(x),-\rho_{in})$ such that $\|u^\ast\|_{H_{ul}^\theta} \leq K$) have the same asymptotic form at $\Delta^{mid}$, i.e.~at a distance of $O(\eps^{1/2})$ past the static bifurcation point. Assertion (ii) characterises the delayed stability loss in the symmetric case $\mu(x) \equiv 0$. A lower bound on the delay time is given, but limited by the (likely sub-optimal) bound on the error estimates in Theorem \ref{thm:Error}. Nevertheless, this result constitutes, to the best of our knowledge, the first rigorous and \textit{global} result on the existence of delayed stability loss in systems featuring a slow passage through a Turing bifurcation.

	\section*{Acknowledgements}
	
	SJ and CK acknowledge funding from the SFB/TRR 109 Discretization and Geometry in Dynamics grant. FH and CK acknowledge support of the EU within the TiPES project funded the European Unions Horizon 2020 research and innovation programme under grant agreement No.~820970. CK has also been supported by a Lichtenberg Professorship of the VolkswagenStiftung.

	\section*{Data availability}
	
	Data sharing not applicable to this article as no datasets were generated or analysed during the current study.

	\bibliographystyle{siam}
	\bibliography{swift_hohenberg}

\begin{thebibliography}{10}

\bibitem{Andreev2012}
{\sc V.~K. Andreev, Y.~A. Gaponenko, O.~N. Goncharova, and V.~V. Pukhnachev},
  {\em Mathematical models of convection}, in Mathematical Models of
  Convection, de Gruyter, 2012.

\bibitem{Aranson2002}
{\sc I.~S. Aranson and L.~Kramer}, {\em The world of the complex
  {G}inzburg-{L}andau equation}, Reviews of Modern Physics, 74 (2002), p.~99.

\bibitem{Arcidiacono2020}
{\sc L.~Arcidiacono and C.~Kuehn}, {\em Blowing-up nonautonomous vector fields:
  Infinite delay equations and invariant manifolds}, arXiv preprint
  arXiv:2010.14335,  (2020).

\bibitem{Avitabile2017}
{\sc D.~Avitabile, M.~Desroches, and E.~Knobloch}, {\em Spatiotemporal canards
  in neural field equations}, Physical Review E, 95 (2017), p.~042205.

\bibitem{Avitabile2017b}
{\sc D.~Avitabile, M.~Desroches, E.~Knobloch, and M.~Krupa}, {\em Ducks in
  space: From nonlinear absolute instability to noise-sustained structures in a
  pattern-forming system}, Proceedings of the Royal Society A: Mathematical,
  Physical and Engineering Sciences, 473 (2017), p.~20170018.

\bibitem{Avitabile2020}
{\sc D.~Avitabile, M.~Desroches, R.~Veltz, and M.~Wechselberger}, {\em Local
  theory for spatio-temporal canards and delayed bifurcations}, SIAM Journal on
  Mathematical Analysis, 52 (2020), pp.~5703--5747.

\bibitem{Avitabile2010}
{\sc D.~Avitabile, D.~J. Lloyd, J.~Burke, E.~Knobloch, and B.~Sandstede}, {\em
  To snake or not to snake in the planar {S}wift--{H}ohenberg equation}, SIAM
  Journal on Applied Dynamical Systems, 9 (2010), pp.~704--733.

\bibitem{Baesens1991}
{\sc C.~Baesens}, {\em Slow sweep through a period-doubling cascade: Delayed
  bifurcations and renormalisation}, Physica D: Nonlinear Phenomena, 53 (1991),
  pp.~319--375.

\bibitem{Benoit1991}
{\sc E.~Benoit}, {\em Dynamic bifurcations: Proceedings of a conference held in
  Luminy, France, March 5-10, 1990}, Springer Berlin, Heidelberg, 1991.

\bibitem{Bilinsky2018}
{\sc L.~Bilinsky and S.~Baer}, {\em Slow passage through a {H}opf bifurcation
  in excitable nerve cables: Spatial delays and spatial memory effects},
  Bulletin of Mathematical Biology, 80 (2018), pp.~130--150.

\bibitem{Brena2014}
{\sc V.~Bre{\~n}a-Medina and A.~Champneys}, {\em Subcritical {T}uring
  bifurcation and the morphogenesis of localized patterns}, Physical Review E,
  90 (2014), p.~032923.

\bibitem{Bricmont1995}
{\sc J.~Bricmont and A.~Kupiainen}, {\em Renormalizing partial differential
  equations}, in Constructive Physics Results in Field Theory, Statistical
  Mechanics and Condensed Matter Physics, Springer, 1995, pp.~83--115.

\bibitem{Burke2006}
{\sc J.~Burke and E.~Knobloch}, {\em Localized states in the generalized
  {S}wift-{H}ohenberg equation}, Physical Review E, 73 (2006), p.~056211.

\bibitem{Butuzov2002b}
{\sc V.~Butuzov}, {\em Singularly perturbed parabolic equation in the case of
  intersecting roots of the degenerate equation}, Russian Journal of
  Mathematical Physics, 9 (2002), pp.~50--59.

\bibitem{Butuzov1999}
{\sc V.~Butuzov and I.~Smurov}, {\em Initial boundary value problem for a
  singularly perturbed parabolic equation in case of exchange of stability},
  Journal of Mathematical Analysis and Applications, 234 (1999), pp.~183--192.

\bibitem{Butuzov2000}
{\sc V.~F. Butuzov, N.~N. Nefedov, and K.~R. Schneider}, {\em Singularly
  perturbed reaction-diffusion systems in cases of exchange of stabilities},
  Natural Resource Modeling, 13 (2000), pp.~247--269.

\bibitem{Butuzov2001}
\leavevmode\vrule height 2pt depth -1.6pt width 23pt, {\em Singularly perturbed
  elliptic problems in the case of exchange of stabilities}, Journal of
  Differential Equations, 169 (2001), pp.~373--395.

\bibitem{Butuzov2002}
{\sc V.~F. Butuzov, N.~N. Nefedov, and K.~R. Schneider}, {\em On a singularly
  perturbed system of parabolic equations in the case of intersecting roots of
  the degenerate equation}, Zhurnal Vychislitel'noi Matematiki i
  Matematicheskoi Fiziki, 42 (2002), pp.~185--196.

\bibitem{Chapman2021}
{\sc S.~J. Chapman, M.~Kavousanakis, I.~Kevrekidis, and P.~Kevrekidis}, {\em
  Normal form for the onset of collapse: The prototypical example of the
  nonlinear {S}chr{\"o}dinger equation}, Physical Review E, 104 (2021),
  p.~044202.

\bibitem{Chen2015}
{\sc Y.~Chen, T.~Kolokolnikov, J.~Tzou, and C.~Gai}, {\em Patterned vegetation,
  tipping points, and the rate of climate change}, European Journal of Applied
  Mathematics, 26 (2015), pp.~945--958.

\bibitem{Chossat2012}
{\sc P.~Chossat and G.~Iooss}, {\em The Couette-Taylor Problem}, vol.~102,
  Springer Science \& Business Media, 2012.

\bibitem{Collet1990}
{\sc P.~Collet and J.-P. Eckmann}, {\em The time dependent amplitude equation
  for the {S}wift-{H}ohenberg problem}, Communications in Mathematical Physics,
  132 (1990), pp.~139--153.

\bibitem{Collet1990b}
{\sc P.~Collet and J.-P. Eckmann}, {\em Instabilities and fronts in extended
  systems}, in Instabilities and Fronts in Extended Systems, Princeton
  University Press, 1990b.

\bibitem{Cross1993}
{\sc M.~C. Cross and P.~C. Hohenberg}, {\em Pattern formation outside of
  equilibrium}, Reviews of Modern Physics, 65 (1993), p.~851.

\bibitem{Maesschalck2009}
{\sc P.~De~Maesschalck, T.~J. Kaper, and N.~Popovi{\'c}}, {\em Canards and
  bifurcation delays of spatially homogeneous and inhomogeneous types in
  reaction-diffusion equations}, Advances in Differential Equations, 14 (2009),
  pp.~943--962.

\bibitem{Dean2011}
{\sc A.~D. Dean, P.~Matthews, S.~Cox, and J.~King}, {\em Exponential
  asymptotics of homoclinic snaking}, Nonlinearity, 24 (2011), p.~3323.

\bibitem{diPrima1971}
{\sc R.~DiPrima, W.~Eckhaus, and L.~Segel}, {\em Non-linear wave-number
  interaction in near-critical two-dimensional flows}, Journal of Fluid
  Mechanics, 49 (1971), pp.~705--744.

\bibitem{Dumortier1996}
{\sc F.~Dumortier and R.~Roussarie}, {\em Canard cycles and center manifolds},
  no.~577 in Memoirs of the American Mathematical Society, American
  Mathematical Society, 1996.

\bibitem{Eckhaus1993}
{\sc W.~Eckhaus}, {\em The {G}inzburg-{L}andau manifold is an attractor},
  Journal of Nonlinear Science, 3 (1993), pp.~329--348.

\bibitem{Eckmann2002}
{\sc J.-P. Eckmann and G.~Schneider}, {\em Non-linear stability of modulated
  fronts for the {S}wift--{H}ohenberg equation}, Communications in Mathematical
  Physics, 225 (2002), pp.~361--397.

\bibitem{Eckmann1997}
{\sc J.-P. Eckmann, C.~E. Wayne, and P.~Wittwer}, {\em Geometric stability
  analysis for periodic solutions of the {S}wift-{H}ohenberg equation},
  Communications in Mathematical Physics, 190 (1997), pp.~173--211.

\bibitem{Engel2021}
{\sc M.~Engel, F.~Hummel, and C.~Kuehn}, {\em Connecting a direct and a
  {G}alerkin approach to slow manifolds in infinite dimensions}, Proceedings of
  the American Mathematical Society, Series B, 8 (2021), pp.~252--266.

\bibitem{Engel2020}
{\sc M.~Engel and C.~Kuehn}, {\em Blow-up analysis of fast-slow {PDE}s with
  loss of hyperbolicity}, arXiv preprint arXiv:2007.09973,  (2020).

\bibitem{Evans2010}
{\sc L.~C. Evans}, {\em Partial differential equations}, vol.~19, American
  Mathematical Society, 2010.

\bibitem{Fruchard2009}
{\sc A.~Fruchard and R.~Sch{\"a}fke}, {\em A survey of some results on
  overstability and bifurcation delay}, Discrete \& Continuous Dynamical
  Systems-S, 2 (2009), p.~931.

\bibitem{Genieys2006}
{\sc S.~Genieys, V.~Volpert, and P.~Auger}, {\em Pattern and waves for a model
  in population dynamics with nonlocal consumption of resources}, Mathematical
  Modelling of Natural Phenomena, 1 (2006), pp.~63--80.

\bibitem{Ginzburg2002}
{\sc V.~L. Ginzburg and L.~D. Landau}, {\em On the theory of
  superconductivity}, Zhurnal Eksperimental'noi i Teoreticheskoi Fiziki, 74
  (1950), p.~1064.

\bibitem{Goh2022}
{\sc R.~Goh, T.~J. Kaper, and T.~Vo}, {\em Delayed {H}opf bifurcation and
  space--time buffer curves in the complex {G}inzburg--{L}andau equation}, IMA
  Journal of Applied Mathematics, 87 (2022), pp.~131--186.

\bibitem{Haragus2010}
{\sc M.~Haragus and G.~Iooss}, {\em Local bifurcations, center manifolds, and
  normal forms in infinite-dimensional dynamical systems}, Springer Science \&
  Business Media, 2010.

\bibitem{Hayes2016}
{\sc M.~G. Hayes, T.~J. Kaper, P.~Szmolyan, and M.~Wechselberger}, {\em
  Geometric desingularization of degenerate singularities in the presence of
  fast rotation: A new proof of known results for slow passage through {H}opf
  bifurcations}, Indagationes Mathematicae, 27 (2016), pp.~1184--1203.

\bibitem{Hoyle2006}
{\sc R.~Hoyle and R.~B. Hoyle}, {\em Pattern formation: An introduction to
  methods}, Cambridge University Press, 2006.

\bibitem{Hummel2022}
{\sc F.~Hummel and C.~Kuehn}, {\em Slow manifolds for infinite-dimensional
  evolution equations}, Commentarii Mathematici Helvetici, 97 (2022),
  pp.~61--132.

\bibitem{Jardon2019b}
{\sc H.~Jard{\'o}n-Kojakhmetov and C.~Kuehn}, {\em A survey on the blow-up
  method for fast-slow systems}, arXiv preprint arXiv:1901.01402,  (2019).

\bibitem{Kaper2018}
{\sc T.~J. Kaper and T.~Vo}, {\em Delayed loss of stability due to the slow
  passage through {H}opf bifurcations in reaction--diffusion equations}, Chaos:
  An Interdisciplinary Journal of Nonlinear Science, 28 (2018), p.~091103.

\bibitem{Kaper2021}
\leavevmode\vrule height 2pt depth -1.6pt width 23pt, {\em A new class of
  chimeras in locally coupled oscillators with small-amplitude, high-frequency
  asynchrony and large-amplitude, low-frequency synchrony}, Chaos: An
  Interdisciplinary Journal of Nonlinear Science, 31 (2021), p.~123111.

\bibitem{Kevrekidis2003}
{\sc P.~Kevrekidis, S.~Kumar, and I.~Kevrekidis}, {\em An exploding glass?},
  Physics Letters A, 318 (2003), pp.~364--372.

\bibitem{Kirrmann1992}
{\sc P.~Kirrmann, G.~Schneider, and A.~Mielke}, {\em The validity of modulation
  equations for extended systems with cubic nonlinearities}, Proceedings of the
  Royal Society of Edinburgh Section A: Mathematics, 122 (1992), pp.~85--91.

\bibitem{Krupa2001a}
{\sc M.~Krupa and P.~Szmolyan}, {\em Extending geometric singular perturbation
  theory to nonhyperbolic points---fold and canard points in two dimensions},
  SIAM Journal on Mathematical Analysis, 33 (2001), pp.~286--314.

\bibitem{Krupa2001c}
\leavevmode\vrule height 2pt depth -1.6pt width 23pt, {\em Extending slow
  manifolds near transcritical and pitchfork singularities}, Nonlinearity, 14
  (2001), p.~1473.

\bibitem{Krupa2001b}
\leavevmode\vrule height 2pt depth -1.6pt width 23pt, {\em Relaxation
  oscillation and canard explosion}, Journal of Differential Equations, 174
  (2001), pp.~312--368.

\bibitem{Kuehn2015}
{\sc C.~Kuehn}, {\em Multiple time scale dynamics}, vol.~191 of Applied
  Mathematical Sciences, Springer, 2015.

\bibitem{Lappa2009}
{\sc M.~Lappa}, {\em Thermal convection: Patterns, evolution and stability},
  John Wiley \& Sons, 2009.

\bibitem{Lega1994}
{\sc J.~Lega, J.~Moloney, and A.~Newell}, {\em {S}wift-{H}ohenberg equation for
  lasers}, Physical Review Letters, 73 (1994), p.~2978.

\bibitem{Lega1995}
\leavevmode\vrule height 2pt depth -1.6pt width 23pt, {\em Universal
  description of laser dynamics near threshold}, Physica D: Nonlinear
  Phenomena, 83 (1995), pp.~478--498.

\bibitem{Lloyd2009}
{\sc D.~Lloyd and B.~Sandstede}, {\em Localized radial solutions of the
  {S}wift--{H}ohenberg equation}, Nonlinearity, 22 (2009), p.~485.

\bibitem{Lloyd2008}
{\sc D.~J. Lloyd, B.~Sandstede, D.~Avitabile, and A.~R. Champneys}, {\em
  Localized hexagon patterns of the planar {S}wift--{H}ohenberg equation}, SIAM
  Journal on Applied Dynamical Systems, 7 (2008), pp.~1049--1100.

\bibitem{Longhi1996}
{\sc S.~Longhi and A.~Geraci}, {\em {S}wift-{H}ohenberg equation for optical
  parametric oscillators}, Physical Review A, 54 (1996), p.~4581.

\bibitem{Manneville1995}
{\sc P.~Manneville}, {\em Dissipative structures and weak turbulence}, in Chaos
  — The Interplay Between Stochastic and Deterministic Behaviour, Springer,
  1995, pp.~257--272.

\bibitem{McCalla2010}
{\sc S.~McCalla and B.~Sandstede}, {\em Snaking of radial solutions of the
  multi-dimensional {S}wift--{H}ohenberg equation: A numerical study}, Physica
  D: Nonlinear Phenomena, 239 (2010), pp.~1581--1592.

\bibitem{McCalla2013}
{\sc S.~G. McCalla and B.~Sandstede}, {\em Spots in the {S}wift--{H}ohenberg
  equation}, SIAM Journal on Applied Dynamical Systems, 12 (2013),
  pp.~831--877.

\bibitem{Mielke1992}
{\sc A.~Mielke}, {\em Reduction of {PDE}s on domains with several unbounded
  directions: A first step towards modulation equations}, Zeitschrift f{\"u}r
  Angewandte Mathematik und Physik ZAMP, 43 (1992), pp.~449--470.

\bibitem{Mielke1997}
\leavevmode\vrule height 2pt depth -1.6pt width 23pt, {\em The complex
  {G}inzburg-{L}andau equation on large and unbounded domains: Sharper bounds
  and attractors}, Nonlinearity, 10 (1997), p.~199.

\bibitem{Mielke1997b}
\leavevmode\vrule height 2pt depth -1.6pt width 23pt, {\em Instability and
  stability of rolls in the {S}wift--{H}ohenberg equation}, Communications in
  Mathematical Physics, 189 (1997), pp.~829--853.

\bibitem{Mielke1998}
\leavevmode\vrule height 2pt depth -1.6pt width 23pt, {\em Bounds for the
  solutions of the complex {G}inzburg-{L}andau equation in terms of the
  dispersion parameters}, Physica D: Nonlinear Phenomena, 117 (1998),
  pp.~106--116.

\bibitem{Mielke1995}
{\sc A.~Mielke and G.~Schneider}, {\em Attractors for modulation equations on
  unbounded domains-existence and comparison}, Nonlinearity, 8 (1995), p.~743.

\bibitem{Murray1989}
{\sc J.~D. Murray}, {\em Mathematical Biology}, Bioinformatics, Springer
  Berlin, Heidelberg, 1989.

\bibitem{Nefedov2003}
{\sc N.~N. Nefedov and K.~R. Schneider}, {\em Delay of exchange of stabilities
  in singularly perturbed parabolic problems}, Trudy Instituta Matematiki i
  Mekhaniki UrO RAN, 9 (2003), pp.~121--130.

\bibitem{Neishtadt1987}
{\sc A.~Neishtadt}, {\em Persistence of stability loss for dynamical
  bifurcations {I}}, Differential Equations, 23 (1987), pp.~1385--1391.

\bibitem{Neishtadt1988}
\leavevmode\vrule height 2pt depth -1.6pt width 23pt, {\em Persistence of
  stability loss for dynamical bifurcations {II}}, Differential Equations, 24
  (1988), pp.~171--176.

\bibitem{Newell1969}
{\sc A.~C. Newell and J.~A. Whitehead}, {\em Finite bandwidth, finite amplitude
  convection}, Journal of Fluid Mechanics, 38 (1969), pp.~279--303.

\bibitem{Rabinovich2000}
{\sc M.~I. Rabinovich, A.~B. Ezersky, and P.~D. Weidman}, {\em The dynamics of
  patterns}, World Scientific, 2000.

\bibitem{Saarloos1994}
{\sc W.~v. Saarloos, P.~Cladis, and P.~Palffy-Muhoray}, {\em The complex
  {G}inzburg-{L}andau equation for beginners}, Spatio-Temporal Patterns in
  Nonequilibrium Complex Systems (Santa Fe, NM, 1993), Santa Fe Inst. Stud.
  Sci. Complexity Proc., XXI, Addison-Wesley, Reading, MA,  (1994), pp.~19--31.

\bibitem{Schneider1994}
{\sc G.~Schneider}, {\em Error estimates for the {G}inzburg-{L}andau
  approximation}, Zeitschrift f{\"u}r Angewandte Mathematik und Physik ZAMP, 45
  (1994), pp.~433--457.

\bibitem{Schneider1994b}
\leavevmode\vrule height 2pt depth -1.6pt width 23pt, {\em Global existence via
  {G}inzburg-{L}andau formalism and pseudo-orbits of {G}inzburg-{L}andau
  approximations}, Communications in Mathematical Physics, 164 (1994),
  pp.~157--179.

\bibitem{Schneider1995b}
\leavevmode\vrule height 2pt depth -1.6pt width 23pt, {\em Analyticity of
  {G}inzburg-{L}andau modes}, Journal of Differential Equations, 121 (1995),
  pp.~233--257.

\bibitem{Schneider1995a}
\leavevmode\vrule height 2pt depth -1.6pt width 23pt, {\em Validity and
  limitation of the {N}ewell-{W}hitehead equation}, Mathematische Nachrichten,
  176 (1995), pp.~249--263.

\bibitem{Schneider1996b}
\leavevmode\vrule height 2pt depth -1.6pt width 23pt, {\em Diffusive stability
  of spatial periodic solutions of the {S}wift-{H}ohenberg equation},
  Communications in Mathematical Physics, 178 (1996), pp.~679--702.

\bibitem{Schneider1996}
\leavevmode\vrule height 2pt depth -1.6pt width 23pt, {\em The validity of
  generalized {G}inzburg-{L}andau equations}, Mathematical Methods in the
  Applied Sciences, 19 (1996), pp.~717--736.

\bibitem{Schneider1999}
\leavevmode\vrule height 2pt depth -1.6pt width 23pt, {\em Global existence
  results for pattern forming processes in infinite cylindrical
  domains—applications to 3{D} {N}avier--{S}tokes problems}, Journal de
  Math{\'e}matiques Pures et Appliqu{\'e}es, 78 (1999), pp.~265--312.

\bibitem{Schneider2005}
\leavevmode\vrule height 2pt depth -1.6pt width 23pt, {\em Justification and
  failure of the nonlinear {S}chr{\"o}dinger equation in case of non-trivial
  quadratic resonances}, Journal of Differential Equations, 216 (2005),
  pp.~354--386.

\bibitem{Schneider2015}
{\sc G.~Schneider, D.~A. Sunny, and D.~Zimmermann}, {\em The {NLS}
  approximation makes wrong predictions for the water wave problem in case of
  small surface tension and spatially periodic boundary conditions}, Journal of
  Dynamics and Differential Equations, 27 (2015), pp.~1077--1099.

\bibitem{Schneider2017}
{\sc G.~Schneider and H.~Uecker}, {\em Nonlinear {PDE}s}, vol.~182, American
  Mathematical Society, 2017.

\bibitem{Segel1969}
{\sc L.~A. Segel}, {\em Distant side-walls cause slow amplitude modulation of
  cellular convection}, Journal of Fluid Mechanics, 38 (1969), pp.~203--224.

\bibitem{Siettos2003}
{\sc C.~Siettos, I.~Kevrekidis, and P.~Kevrekidis}, {\em Focusing revisited: A
  renormalization/bifurcation approach}, Nonlinearity, 16 (2003), p.~497.

\bibitem{Straughan2008}
{\sc B.~Straughan}, {\em Stability and wave motion in porous media}, vol.~165,
  Springer Science \& Business Media, 2008.

\bibitem{Straughan2013}
\leavevmode\vrule height 2pt depth -1.6pt width 23pt, {\em The energy method,
  stability, and nonlinear convection}, vol.~91, Springer Science \& Business
  Media, 2013.

\bibitem{Su1994}
{\sc J.~Su}, {\em On delayed oscillation in nonspatially uniform {F}itz{H}ugh
  {N}agumo equation}, Journal of Differential Equations, 110 (1994),
  pp.~38--52.

\bibitem{Hohenberg1977}
{\sc J.~Swift and P.~C. Hohenberg}, {\em Hydrodynamic fluctuations at the
  convective instability}, Physical Review A, 15 (1977), p.~319.

\bibitem{Szmolyan2001}
{\sc P.~Szmolyan and M.~Wechselberger}, {\em Canards in {$\mathbb R^3$}},
  Journal of Differential Equations, 177 (2001), pp.~419--453.

\bibitem{Temme1996}
{\sc N.~M. Temme}, {\em Special functions: An introduction to the classical
  functions of mathematical physics}, John Wiley \& Sons, 1996.

\bibitem{Turing1952}
{\sc A.~M. Turing}, {\em The chemical basis of morphogenesis}, Philosophical
  Transactions of the Royal Society Londen B,  (1952), pp.~37--72.

\bibitem{VanHarten1991}
{\sc A.~van Harten}, {\em On the validity of the {G}inzburg-{L}andau equation},
  Journal of Nonlinear Science, 1 (1991), pp.~397--422.

\bibitem{Vanderbauwhede1992}
{\sc A.~Vanderbauwhede and G.~Iooss}, {\em Center manifold theory in infinite
  dimensions}, in Dynamics Reported, Springer, 1992, pp.~125--163.

\bibitem{Vo2020}
{\sc T.~Vo, R.~Bertram, and T.~J. Kaper}, {\em Multi-mode attractors and
  spatio-temporal canards}, Physica D: Nonlinear Phenomena, 411 (2020),
  p.~132544.

\bibitem{Volpert2014}
{\sc V.~Volpert}, {\em Elliptic partial differential equations}, vol.~2,
  Springer, 2014.

\bibitem{Wechselberger2012}
{\sc M.~Wechselberger}, {\em A propos de canards (apropos canards)},
  Transactions of the American Mathematical Society, 364 (2012),
  pp.~3289--3309.

\bibitem{Wechselberger2019}
\leavevmode\vrule height 2pt depth -1.6pt width 23pt, {\em Geometric singular
  perturbation theory beyond the standard form}, in Frontiers in Applied
  Dynamical Systems: Reviews and Tutorials, vol.~6, Springer International
  Publishing, 2020.

\end{thebibliography}

	\appendix

	\section{Technical estimates}
	\label{app:technical_estimates}
	
	In the following we provide a number of known results (or slight adaptations thereof) that are useful for the proofs in Sections \ref{sec:Proof_of_thm_dynamics} and \ref{sec:Proof_of_thm_error}.

	\subsubsection*{Estimate methods in $H_{ul}^\theta$}
	
	Functions approximated by the blow-up approximations $\Psi_{GL}$ and $\Psi_n$ defined in  Section \ref{sub:results_modulation_eqns} depend on two distinct spatial scales $x$ and $\bar x$, where the latter is defined via \eqref{eq:desingularization}. In order to to estimate the norm of functions like $\Psi_{GL}(x,t) = r(t) (A(\bar x,\bar t) \me^{ix} + c.c.)$ in $H_{ul}^\theta$ spaces, we use the following scaling property to relate the norms of $\| x \mapsto u(x) \|_{H_{ul}^\theta}$ and $\| x \mapsto u(rx) \|_{H_{ul}^\theta}$ for a non-negative scaling factor $r \geq 0$. 
	
	\begin{lemma}
		\label{lem:norm_rescaling}
		\textup{c.f.~\cite[Ch.~10]{Schneider2017}} 
		Let $u \in H_{ul}^{\theta + 1}$ and $r \geq 0$. Then 
		\[
		\| x \mapsto u(r x) \|_{H_{ul}^\theta} \leq 
		C \| u \|_{H_{ul}^{\theta + 1}} .
		\]
	\end{lemma}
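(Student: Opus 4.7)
The plan is to reduce the claim to pointwise bounds on the rescaled derivatives $\partial_x^j[u(rx)] = r^j u^{(j)}(rx)$ for $j = 0, 1, \ldots, \theta$, and then to estimate each contribution to
\[
\| x \mapsto u(rx) \|_{H_{ul}^\theta}^2 \;=\; \sum_{j=0}^{\theta} \sup_{y \in \R} \int_{y-1/2}^{y+1/2} \bigl| r^j u^{(j)}(rx) \bigr|^2 \, dx
\]
by splitting into the cases $r \in [0,1]$ and $r \in [1, R]$, where $R > 0$ is the (implicit) upper bound on $r$ coming from the applications in Sections \ref{sec:Proof_of_thm_dynamics} and \ref{sec:Proof_of_thm_error} (recall that $r$ takes values $r_l(t_l)$ in a bounded range in each chart $\mathcal K_l$, so $C$ will be allowed to depend on $R$).

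First I would treat $r \in (0,1]$. After the substitution $z = rx$, each term becomes
\[
r^{2j} \int_{y-1/2}^{y+1/2} |u^{(j)}(rx)|^2 \, dx \;=\; r^{2j-1} \int_{r(y-1/2)}^{r(y+1/2)} |u^{(j)}(z)|^2 \, dz ,
\]
and the integration interval has length $r \leq 1$. Bounding the integrand by $\|u^{(j)}\|_{L^\infty}^2$ and using the Sobolev-type embedding $H^{1/2+\eta}_{ul} \hookrightarrow L^\infty$ (which is precisely the reason for the extra derivative on the right-hand side; see Lemma \ref{lem:US_8.3.11} or \cite[Sec.~2]{Schneider1994b}) yields $r^{2j} \|u^{(j)}\|_{L^\infty}^2 \leq C r^{2j} \|u\|_{H^{j+1}_{ul}}^2 \leq C \|u\|_{H^{\theta+1}_{ul}}^2$ since $r \leq 1$ and $j \leq \theta$. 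The limiting case $r = 0$ is covered by the same pointwise argument, giving $|u(0)|^2 \leq C \|u\|_{H^{\theta+1}_{ul}}^2$.

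For $r \in [1,R]$ the change of variable produces an interval of length $r \leq R$ which can be covered by at most $\lceil R \rceil$ unit intervals, so that $\int_{r(y-1/2)}^{r(y+1/2)} |u^{(j)}(z)|^2 \, dz \leq \lceil R \rceil \|u^{(j)}\|_{L^2_{ul}}^2$; dividing by $r$ and multiplying by $r^{2j}$ gives a bound of order $R^{2j} \|u\|_{H^j_{ul}}^2 \leq C \|u\|_{H^\theta_{ul}}^2 \leq C \|u\|_{H^{\theta+1}_{ul}}^2$, with $C$ depending on $R$ and $\theta$. Taking the supremum over $y \in \R$ commutes with these pointwise estimates (the constants are uniform), so summing over $j = 0, \ldots, \theta$ yields the desired bound.

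The only delicate point is the small-$r$ regime, where the Jacobian $1/r$ arising in the change of variables would be singular; the trick is precisely to trade one degree of regularity for an $L^\infty$ control via the Sobolev embedding, which is why the right-hand norm is taken in $H_{ul}^{\theta+1}$ rather than $H_{ul}^\theta$. All remaining steps are routine.
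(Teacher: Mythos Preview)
Your proposal is correct and rests on the same key idea as the paper: trade one derivative for an $L^\infty$ bound via the Sobolev embedding $H_{ul}^1 \hookrightarrow C_b^0$ (Lemma~\ref{lem:US_8.3.11}). The paper's execution is considerably shorter, however. It dispenses with the change of variables and the case split on $r$ altogether, simply bounding
\[
\int_{y-1/2}^{y+1/2} |u(rx)|^2\, dx \;\leq\; \sup_{x}|u(rx)|^2 \;=\; \|u\|_{C_b^0}^2 \;\leq\; C\,\|u\|_{H_{ul}^1}^2
\]
uniformly in $r \geq 0$, and then asserts that this base case suffices for general $\theta$. Your version is in one respect more careful: the paper never addresses the factors $r^j$ that appear when passing to $\partial_x^j[u(rx)] = r^j u^{(j)}(rx)$ for $\theta \geq 1$, which are only harmless when $r$ stays bounded---an implicit assumption you make explicit by introducing $R$. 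Your change-of-variables detour in the regime $r \leq 1$ is unnecessary (the direct sup-norm argument already gives $r^{2j}\|u^{(j)}\|_{L^\infty}^2$ without it), but it is not wrong; and your separate treatment of $r \in [1,R]$ via a covering argument even avoids the extra derivative there, at the cost of a constant depending on $R$.
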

	
	\begin{proof}
		It suffices to show that $\| x \mapsto u(r x) \|_{L_{ul}^2} \leq 
		C \| u \|_{H_{ul}^1}$, which can be done by a direct calculation. Applying the definition of $\|\cdot\|_{L_{ul}^2}$ in \eqref{eq:L2_norm} we obtain
		\[
		\begin{split}
			\| u(r \cdot) \|_{H_{ul}^\theta} &= \sup_{y \in \R} \left( \int_y^{y+1} | u(rx) |^2 dx \right)^{1/2} \\
			&\leq \sup_{y \in \R} \left\{ \left( \int_{y-1/2}^{y+1/2} dx \right)^{1/2} \sup_{x \in [y-1/2,y+1/2]} | u(rx) | \right\} \leq \| u \|_{C_b^0} \leq C \| u \|_{H_{ul}^1} ,
		\end{split}
		\]
		where we used a Sobolev embedding in the final inequality, c.f.~\cite[Lemma 8.3.11]{Schneider2017} and Lemma \ref{lem:US_8.3.11} below.
	\end{proof}
	
	The next result comes from \textit{multiplier theory}, and is useful in order to estimate the norm of semigroup and evolution family operators in uniformly local Sobolev spaces $H_{ul}^\theta$. Multiplier theory takes advantage of the Fourier transform properties of Sobolev spaces in order to provide methods for estimating norms of translation invariant operators in $H_{ul}^\theta$, which is typically a very difficult task in physical space. An operator $M : H_{ul}^q \to H_{ul}^\theta$ 
	is a called a \textit{multiplier} if the corresponding Fourier operator $\widehat M : \R \to \mathbb C$ is a multiplication operator, i.e.~if $M = \mathscr F^{-1} (\widehat M \mathscr F)$. It is known that the the norms for multipliers on Sobolev spaces $H^\theta$ can be bounded in terms of the $C_b^0(\R, \mathbb C)$ norm (see e.g.~\cite[Lemma 8.3.6]{Schneider2017}). The following result extends this to multipliers on uniformly local Sobolev spaces $H_{ul}^\theta$. Although it was originally shown in \cite{Schneider1994}, we present here it in the formulation of \cite{Mielke1995,Schneider1994b,Schneider2017}, along with a specialisation to the particular case of interest for the proofs in Sections \ref{sec:Proof_of_thm_dynamics} and \ref{sec:Proof_of_thm_error}.
	
	\begin{lemma}
		\label{lem:US_8.3.7}
		\textup{\cite[Lemma 8.3.7]{Schneider2017}}
		For $\theta, q \geq 0$ and $w_{\theta-q}(\xi) = (1 + \xi^2)^{(\theta - q)/2} \widehat M(\xi) \in C^2_b(\R, \mathbb C)$, the map $M_{ul} : H_{ul}^q \to H_{ul}^\theta$, $u \mapsto \mathscr F^{-1} (\widehat M \mathscr F u)$ is well-defined and satisfies
		\[
		\| M u \|_{H_{ul}^\theta} \leq C \| w_{\theta-q} \|_{C_b^2(\R,\mathbb C)} \| u \|_{H_{ul}^q} ,
		\]
		where the constant $C$ depends on $q$ and $\theta$ but not on $\widehat M$. For the case $q = \theta$ occurring frequently in this work, $w_{\theta-q}(\xi) = w_0(\xi) = \widehat M(\xi)$ such that
		\[
		\| M_{ul} u \|_{H_{ul}^\theta} \leq C \| w_0 \|_{C_b^2(\R,\mathbb C)} \| u \|_{H_{ul}^\theta} ,
		\]
		i.e.~$\| M_{ul} u \|_{H_{ul}^\theta \to H_{ul}^\theta} \leq C \| w_0 \|_{C_b^2(\R,\mathbb C)}$.
	\end{lemma}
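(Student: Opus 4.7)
The plan is to reduce the uniformly local estimate to the classical multiplier estimate on $L^2(\R)$ (or $H^\theta(\R)$) via a localisation argument using a partition of unity. First I would fix a smooth non-negative cutoff $\chi \in C_c^\infty(\R)$ supported in, say, $[-1,1]$ with $\sum_{j \in \mathbb{Z}} \chi(\cdot - j) \equiv 1$, and write $u = \sum_j u_j$ with $u_j := \chi(\cdot - j) u$. By definition of the $H^\theta_{ul}$ norm, $\sup_j \|u_j\|_{H^\theta} \leq C \|u\|_{H^\theta_{ul}}$, with a constant depending only on $\chi$ and $\theta$. The aim is then to show that $\|M_{ul} u\|_{H^\theta_{ul}} \leq C \|w_{\theta-q}\|_{C^2_b} \|u\|_{H^q_{ul}}$ by localising $M_{ul} u$ in the same way and estimating each piece.

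Next I would exploit the standard Fourier multiplier estimate on the full-space Sobolev scale, namely $\|M v\|_{H^\theta} \leq C \|w_{\theta-q}\|_{C^2_b} \|v\|_{H^q}$ for $v \in H^q(\R)$, whose proof uses Plancherel together with the pointwise bound $|w_{\theta - q}(\xi)| \leq \|w_{\theta-q}\|_{C^0_b}$ and some interpolation to handle non-integer $\theta$. Applying this to each local piece $u_j$ gives $\|M u_j\|_{H^\theta} \leq C \|w_{\theta-q}\|_{C^2_b}\|u_j\|_{H^q}$. To recover the uniformly local norm of $M_{ul} u = \sum_j M u_j$, I would fix an arbitrary reference interval $[y-1/2, y+1/2]$ and decompose the sum into the ``near'' indices $j$ with $|j - y|$ small and the ``far'' indices where spatial separation produces decay of $M u_j$ on the reference interval.

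The near part is controlled directly: only finitely many (uniformly bounded in $y$) cutoffs $\chi(\cdot - j)$ have support meeting a fixed neighbourhood of $[y-1/2,y+1/2]$, so the classical multiplier estimate gives a contribution $\lesssim \|w_{\theta-q}\|_{C^2_b} \sup_j \|u_j\|_{H^q} \lesssim \|w_{\theta-q}\|_{C^2_b} \|u\|_{H^q_{ul}}$. The main obstacle, and where the $C^2_b$ hypothesis on $w_{\theta-q}$ enters, is the far part: here I need quantitative spatial decay of the Schwartz kernel of $M$ acting on the compactly supported $u_j$. The standard device is to integrate by parts twice against the oscillatory factor $e^{i(x-y)\xi}$ in the representation of $M u_j$ on Fourier side, yielding a decay rate like $|x - j|^{-2}$ (this is exactly the ``two derivatives'' encoded in $\|w_{\theta-q}\|_{C^2_b}$), which makes the sum over far $j$ absolutely convergent and again dominated by $\|w_{\theta-q}\|_{C^2_b}\|u\|_{H^q_{ul}}$.

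Finally, combining the near and far estimates and taking the supremum over $y \in \R$ produces the claimed bound $\|M_{ul} u\|_{H^\theta_{ul}} \leq C \|w_{\theta-q}\|_{C^2_b}\|u\|_{H^q_{ul}}$. The specialisation to $q = \theta$ is immediate since then $w_{\theta-q} = w_0 = \widehat{M}$. The only delicate point is verifying that the $C^2_b$ control on the symbol really suffices for the decay step in one spatial dimension; as noted in the text, the argument is due to \cite{Schneider1994}, and I would simply follow that approach in detail rather than redo the oscillatory integral bookkeeping from scratch.
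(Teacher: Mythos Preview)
The paper does not actually prove this lemma: it is stated in the appendix as a citation of \cite[Lemma 8.3.7]{Schneider2017} (originally \cite{Schneider1994}), with no argument given beyond the surrounding discussion. So there is nothing to compare your proposal against in terms of the paper's own proof.

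That said, your sketch is the standard localisation argument used to pass from $H^\theta$ multiplier bounds to $H^\theta_{ul}$, and it is essentially what the cited references do: partition of unity, classical multiplier estimate on each local piece, and $|x|^{-2}$ kernel decay from two integrations by parts (hence the $C^2_b$ hypothesis) to sum the far-field contributions. Your outline is correct and matches the approach in \cite{Schneider1994}; since you yourself note at the end that you would follow that reference for the details, there is no discrepancy to flag.
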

	
	Finally, we have the following result, which extends a well-known result on the multiplicative closure of Sobolev spaces $H^\theta$ to the uniformly local Sobolev spaces $H_{ul}^\theta$.
	
	\begin{lemma}
		\label{lem:US_8.3.11}
		\textup{\cite[Lemma 8.3.11]{Schneider2017}}
		Fix $\theta > 1/2$. Then $H_{ul}^\theta$ is an algebra with
		\[
		\| uv \|_{H_{ul}^\theta} \leq 9 \| u \|_{H_{ul}^\theta} \| v \|_{H_{ul}^\theta} 
		\]
		for all $u, v \in H_{ul}^\theta$, and can be continuously embedded in $C_{b,unif}^0$. In particular, Sobolev's embedding theorem gives
		\[
		\| u \|_{C_b^\theta} \leq \| u \|_{H_{ul}^{\theta+1}} .
		\]
	\end{lemma}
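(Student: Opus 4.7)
The plan is to reduce both the algebra property and the Sobolev embedding in $H_{ul}^\theta$ to their well-known counterparts on the full line $\R$, via a smooth partition-of-unity argument that respects the uniformly local norm. Fix a bump function $\chi \in C_c^\infty(\R)$ with $0 \leq \chi \leq 1$, $\chi \equiv 1$ on $[-1/2, 1/2]$, and $\mathrm{supp}\,\chi \subset [-1,1]$, and for each $y \in \R$ set $\chi_y(x) := \chi(x-y)$. I would first show the embedding $H_{ul}^{\theta+1} \hookrightarrow C_b^\theta$ (in particular $H_{ul}^\theta \hookrightarrow C_{b,unif}^0$ for $\theta > 1/2$), since it will be used in the algebra estimate: for any $u \in H_{ul}^{\theta+1}$, $\chi_y u \in H^{\theta+1}(\R)$ with $\|\chi_y u\|_{H^{\theta+1}} \leq C \|u\|_{H_{ul}^{\theta+1}}$ uniformly in $y$, and the classical one-dimensional Sobolev embedding $H^{\theta+1}(\R) \hookrightarrow C_b^{\theta}(\R)$ (valid since $\theta+1 > \theta + 1/2$) then controls $u$ on $[y-1/2, y+1/2]$; taking the supremum over $y$ gives the stated estimate. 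Uniform continuity follows because each $\chi_y u$ is uniformly continuous and the overlap of the cover is finite.

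Next I would handle the algebra property. For $u, v \in H_{ul}^\theta$, note that on $[y-1/2, y+1/2]$ we have $uv = (\chi_y u)(\chi_y v)$, so
\begin{equation*}
\|uv\|_{L^2([y-1/2,y+1/2])} + \sum_{j=1}^{\theta} \|\partial^j(uv)\|_{L^2([y-1/2,y+1/2])} \leq \|\chi_y u \cdot \chi_y v\|_{H^\theta(\R)}.
\end{equation*}
Apply the Leibniz rule $\partial^k((\chi_y u)(\chi_y v)) = \sum_{j=0}^k \binom{k}{j} \partial^j(\chi_y u)\, \partial^{k-j}(\chi_y v)$ and estimate each summand by putting one factor in $L^\infty$ using the embedding above (applied to $\chi_y u$ or $\chi_y v$) and the other in $L^2$, with the roles swapped when $j \leq \theta/2$ versus $j \geq \theta/2$ so that the low-order factor always goes in $L^\infty$. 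Collecting terms yields $\|\chi_y u \cdot \chi_y v\|_{H^\theta(\R)} \leq C \|\chi_y u\|_{H^\theta(\R)} \|\chi_y v\|_{H^\theta(\R)}$, and then $\|\chi_y w\|_{H^\theta(\R)} \leq C' \|w\|_{H_{ul}^\theta}$ uniformly in $y$ for $w \in H_{ul}^\theta$. Taking the supremum over $y$ produces the algebra bound, with some explicit numerical constant; to hit the value $9$ advertised in the statement one would optimise the choice of $\chi$ and track the multiplicities in the Leibniz expansion, but for a proof proposal it suffices to obtain the qualitative bound with \emph{some} universal $C$.

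The main obstacle is the sharp constant $9$: the cutoff strategy naturally generates multiplicative losses from $\chi$-derivatives and from the Sobolev constant, and obtaining $9$ requires either a more direct argument (e.g.\ a duality or Littlewood--Paley / paraproduct decomposition on $\R$ adapted to the uniformly local norm, exploiting that $\|\chi_y u\|_{H^\theta(\R)}$ can be bounded essentially by $\|u\|_{H_{ul}^\theta}$ with an explicit small constant) or a careful bookkeeping of the Leibniz constants combined with the best available one-dimensional Sobolev constant. Since the lemma is cited verbatim from \cite[Lemma 8.3.11]{Schneider2017}, I would defer the numerical optimisation to that reference and present only the qualitative bound $\|uv\|_{H_{ul}^\theta} \leq C \|u\|_{H_{ul}^\theta} \|v\|_{H_{ul}^\theta}$, which is all that the applications in Sections \ref{sec:Proof_of_thm_dynamics}--\ref{sec:Proof_of_thm_error} actually require.
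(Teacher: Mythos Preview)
The paper does not provide its own proof of this lemma: it is stated in Appendix~\ref{app:technical_estimates} purely as a citation of \cite[Lemma~8.3.11]{Schneider2017}, with no argument given. There is therefore nothing to compare your proposal against in the paper itself.

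Your sketch is a reasonable outline of the standard proof strategy for such results (localisation via smooth cutoffs, reduction to classical $H^\theta(\R)$ algebra and embedding properties, then supremum over translates). You are also right to flag that the specific constant $9$ would require sharper bookkeeping than your cutoff argument naturally delivers, and that the applications in the paper only use the qualitative bound. One minor point: your Leibniz-rule treatment is written as though $\theta$ is a non-negative integer, whereas the lemma is stated for all $\theta > 1/2$; for non-integer $\theta$ one would typically replace the Leibniz expansion by a paraproduct or Kato--Ponce type estimate on $\R$, still combined with the same localisation step.
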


	\subsubsection*{Integral identities and inequalities}
	
	The following Gr\"onwall-type inequality from \cite{Hummel2022} is frequently used in Sections \ref{sec:Proof_of_thm_dynamics} and \ref{sec:Proof_of_thm_error} in order to control the norms of integral equations obtained after applying the variation of constants formula.
	
	\begin{lemma}
		\label{lem:HK_2.8}
		\textup{\cite[Lemma 2.8]{Hummel2022}} Let $\alpha \in \R$, $\beta, N, T > 0$ and $\gamma \in (0,1]$. Assume that $a, b : [0,T] \to [0,\infty)$ are continuous, the derivative $b'$ is locally integrable in $L^2$ and that the function $[t \mapsto \me^{- \alpha t / \beta} b(t)]$ is non-decreasing. If
		\[
		a(t) \leq b(t) + N \int_0^t \frac{\me^{\alpha (t - s) / \beta}}{\beta^\gamma (t - s)^{1 - \gamma}} a(s) ds ,
		\]
		for all $t \in [0,T]$, then
		\[
		a(t) \leq 2 b(0) \me^{\tilde \alpha t / \beta} + 2 \int_0^t \left( b'(s) - \frac{\alpha}{\beta} b(s) \right) \me^{\tilde \alpha (t - s) / \beta} ds ,
		\]
		where $\tilde \alpha = \alpha + 2 N^{1/\gamma} (2 / \gamma)^{(1 - \gamma) / \gamma}$, for all $t \in [0,T]$. In the special case that $\beta = \gamma = 1$ and $b(t) = C \me^{\alpha t}$ for some constant $C > 0$, which arises frequently in this work, we have
		\[
		a(t) \leq 2 C \me^{(\alpha + 2N) t} ,
		\]
		for all $t \in [0,T]$.
	\end{lemma}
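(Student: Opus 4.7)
The plan is to follow the classical Henry--Gronwall iteration strategy, adapted to handle the weakly singular kernel when $\gamma < 1$ as well as the coupled exponential factor $\me^{\alpha(t-s)/\beta}$. First I would reduce to the autonomous, homogeneous case by introducing $\tilde a(t) := \me^{-\alpha t/\beta} a(t)$ and $\tilde b(t) := \me^{-\alpha t/\beta} b(t)$. The hypothesis becomes the cleaner inequality
\[
\tilde a(t) \leq \tilde b(t) + \frac{N}{\beta^\gamma} \int_0^t (t-s)^{\gamma - 1} \tilde a(s)\, ds, \qquad t \in [0,T],
\]
and crucially the monotonicity assumption on $t \mapsto \me^{-\alpha t/\beta} b(t)$ says exactly that $\tilde b$ is non-decreasing, which is the standing hypothesis for Henry's lemma.

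Second, I would iterate the inequality. Substituting the right-hand side into itself $n$ times and invoking the Beta-function identity $\int_s^t (t-\sigma)^{\gamma-1}(\sigma-s)^{n\gamma-1}\,d\sigma = \Gamma(\gamma)\Gamma(n\gamma)/\Gamma((n+1)\gamma)\,(t-s)^{(n+1)\gamma-1}$ yields, after controlling the $n$-th remainder (which vanishes using continuity and boundedness of $\tilde a$ on $[0,T]$), a closed-form resolvent bound
\[
\tilde a(t) \leq \tilde b(t) + \int_0^t K_\gamma(t-s)\,\tilde b(s)\,ds,
\]
where the kernel $K_\gamma$ is a Mittag-Leffler-type series $K_\gamma(\tau) = \sum_{n \geq 1} \bigl(N\Gamma(\gamma)/\beta^\gamma\bigr)^n \tau^{n\gamma-1}/\Gamma(n\gamma)$. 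Because $\tilde b$ is non-decreasing I can pull it out and obtain $\tilde a(t) \leq \tilde b(t)\,E_\gamma\!\bigl(N\Gamma(\gamma)(t/\beta^\gamma)^\gamma\cdot\text{const}\bigr)$, where $E_\gamma$ is the Mittag-Leffler function.

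Third, I would estimate the Mittag-Leffler term by a plain exponential via $E_\gamma(z) \leq C \exp(z^{1/\gamma})$ for $z \geq 0$, and optimize the constants: this is precisely the step that produces the slightly unusual exponent $2 N^{1/\gamma}(2/\gamma)^{(1-\gamma)/\gamma}$ and the prefactor $2$ in the conclusion. Finally, to recover the stated form involving $b'(s) - (\alpha/\beta)b(s)$, I would write $\tilde b(t) = \tilde b(0) + \int_0^t \tilde b'(s)\,ds$ (using the $L^2_{\text{loc}}$ regularity of $b'$, hence absolute continuity of $\tilde b$), interchange order of integration, and undo the substitution $\tilde a \mapsto a$, noting that $\tilde b'(s) = \me^{-\alpha s/\beta}(b'(s) - (\alpha/\beta)b(s))$.

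The main obstacle is the sharp constant in step three: getting the clean exponent $\tilde\alpha = \alpha + 2N^{1/\gamma}(2/\gamma)^{(1-\gamma)/\gamma}$ rather than an unspecified $C(N,\gamma)$ requires a careful Stirling-type comparison of $\Gamma(n\gamma + 1)$ against $(n!)^\gamma$ and then an optimization over a splitting parameter when estimating $\sum_n z^n/\Gamma(n\gamma+1)$. The special case $\beta = \gamma = 1$, $b(t) = C\me^{\alpha t}$ serves as a reassuring sanity check: then $b'(s) - \alpha b(s) \equiv 0$, the integral term drops, and one recovers $a(t) \leq 2C\me^{(\alpha + 2N)t}$ directly from the standard Gronwall lemma with the expected doubling factor.
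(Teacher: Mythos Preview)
The paper does not prove this lemma at all: it is stated in the appendix as a technical tool and attributed verbatim to \cite[Lemma~2.8]{Hummel2022}, with no proof or sketch given. So there is nothing in the paper to compare your argument against.

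That said, your outline is the standard route to such weakly-singular Gr\"onwall estimates (the Henry iteration producing a Mittag--Leffler resolvent, then an exponential upper bound), and the reduction via $\tilde a(t)=\me^{-\alpha t/\beta}a(t)$ together with the observation that the monotonicity hypothesis is exactly $\tilde b$ non-decreasing is correct. Your sanity check for the special case $\beta=\gamma=1$, $b(t)=C\me^{\alpha t}$ is also right: the integral term vanishes and one recovers $a(t)\le 2C\me^{(\alpha+2N)t}$. The only substantive gap you yourself flag is the sharp constant: obtaining precisely $\tilde\alpha=\alpha+2N^{1/\gamma}(2/\gamma)^{(1-\gamma)/\gamma}$ and the prefactor $2$ requires a careful quantitative bound on the Mittag--Leffler series rather than the generic $E_\gamma(z)\le C_\gamma\exp(z^{1/\gamma})$, and you would need to carry that through explicitly. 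One minor slip: in your Mittag--Leffler argument the scaling should read $N\Gamma(\gamma)(t/\beta)^\gamma$ rather than $(t/\beta^\gamma)^\gamma$.
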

	
	Finally, we provide an integral identity and a number of basic properties of the \textit{upper incomplete gamma function}
	\begin{equation}
		\label{eq:Gamma_def}
		\Gamma(a,z) := \int_z^\infty s^{a-1} \me^{-s} ds ,
	\end{equation}
	which arises frequently in the proofs in Sections \ref{sec:Proof_of_thm_dynamics} and \ref{sec:Proof_of_thm_error}. We refer to \cite[Ch.~11.2]{Temme1996} for more background and properties on incomplete gamma functions. The asymptotics
	\begin{equation}
		\label{eq:Gamma_asymptotics}
		\Gamma(a,z) \sim \me^{-z} z^{a-1} \left( 1 + O(z^{-1}) \right) , \qquad |z| \to \infty ,
	\end{equation}
	are particularly useful for our purposes. The integral identity presented in the following result is also helpful for many calculations.
	
	\begin{lemma}
		\label{lem:Gamma_integrals}
		Let $\alpha, \beta, \gamma \in \R$ and $t \in [0, |\beta^{-1}|)$. Then
		\begin{equation}
			\label{eq:Gamma_identity}
			Q(t) = \int_0^{t} \frac{\me^{- \alpha s}}{(1 + \beta s)^\gamma} ds = 
			\frac{\me^{ \alpha / \beta}}{\beta} \left( \frac{\beta}{\alpha} \right)^{1-\gamma} \left[ \Gamma \left(1 - \gamma, \frac{\alpha}{\beta}\right) - \Gamma \left(1 - \gamma, \frac{\alpha}{\beta} (1 + \beta t) \right) \right] ,
		\end{equation}
		which is increasing on $t \in [0,|\beta^{-1}|)$.
	\end{lemma}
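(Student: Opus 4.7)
The lemma reduces to a one-variable change of variables, so I would approach it as a direct computation. Setting $u = (\alpha/\beta)(1 + \beta s)$ gives $du = \alpha\, ds$ and maps $s \in [0,t]$ bijectively onto $u \in [\alpha/\beta, \,(\alpha/\beta)(1+\beta t)]$. Under this substitution one has
\[
\me^{-\alpha s} = \me^{\alpha/\beta}\me^{-u}, \qquad (1+\beta s)^{-\gamma} = (\alpha/\beta)^{\gamma}\,u^{-\gamma},
\]
so that
\[
Q(t) = \frac{\me^{\alpha/\beta}}{\alpha}\left(\frac{\alpha}{\beta}\right)^{\gamma} \int_{\alpha/\beta}^{(\alpha/\beta)(1+\beta t)} u^{-\gamma}\me^{-u}\, du.
\]

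The next step is to recognise the resulting integral in terms of upper incomplete gamma functions. By the defining formula \eqref{eq:Gamma_def} with parameter $a = 1-\gamma$,
\[
\int_{\alpha/\beta}^{(\alpha/\beta)(1+\beta t)} u^{-\gamma}\me^{-u}\, du
= \Gamma\!\left(1-\gamma, \frac{\alpha}{\beta}\right) - \Gamma\!\left(1-\gamma, \frac{\alpha}{\beta}(1+\beta t)\right).
\]
Finally, the algebraic identity $\tfrac{1}{\alpha}(\alpha/\beta)^{\gamma} = \tfrac{1}{\beta}(\beta/\alpha)^{1-\gamma}$ converts the prefactor to the form stated in \eqref{eq:Gamma_identity}. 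This yields the identity for all $\alpha,\beta \neq 0$; these non-degeneracy conditions are implicit in the formulation, since the right-hand side involves $(\beta/\alpha)^{1-\gamma}$ and the interval $[0,|\beta^{-1}|)$.

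Monotonicity of $Q$ is then immediate from the fundamental theorem of calculus: for every $s \in [0, |\beta^{-1}|)$ one has $1 + \beta s \in (0,1+\mathrm{sgn}(\beta)]$, hence the real power $(1+\beta s)^{\gamma}$ is positive, and $\me^{-\alpha s} > 0$, so the integrand is strictly positive and $Q'(t) > 0$. There is no serious obstacle here; the only point that requires a small amount of care is the bookkeeping of the constants relating $(\alpha/\beta)^{\gamma}/\alpha$ to $(\beta/\alpha)^{1-\gamma}/\beta$, and checking that the substitution remains valid for both signs of $\beta$ (in particular that $1+\beta s$ stays strictly positive on $[0,|\beta^{-1}|)$, which is precisely the reason for restricting to that interval in the statement).
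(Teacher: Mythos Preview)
Your proposal is correct and follows essentially the same approach as the paper: the paper suggests the substitution $\eta = \alpha(1+\beta s)/\beta$ (your $u$) together with the definition of $\Gamma(\cdot,\cdot)$, and obtains monotonicity from $Q'(t) = \me^{-\alpha t}(1+\beta t)^{-\gamma} > 0$ on $[0,|\beta^{-1}|)$.
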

	
	\begin{proof}
		This can be shown by a direct calculation using the substitution $\eta = \alpha (1 + \beta s) / \beta$ and the definition of $\Gamma(\cdot,\cdot)$ in \eqref{eq:Gamma_def}, or verified by directly differentiating the right-hand side in \eqref{eq:Gamma_identity} and appealing to the fundamental theorem of calculus. The  increasing property follows from the fact that
		\[
		Q'(t) = \frac{\me^{- \alpha t}}{(1 + \beta t)^\gamma} > 0 
		\]
		for all $t \in [0,|\beta^{-1}|)$.
	\end{proof}

\end{document}